\theoremstyle{theorem}
\newtheorem{thm}{Theorem}[chapter]
\newtheorem{prop}[thm]{Proposition}
\newtheorem{lem}[thm]{Lemma}
\newtheorem{cor}[thm]{Corollary}
\newtheorem{rem}[thm]{Remark}
\newcommand{\NN}{\mathbb{N}}
\newcommand{\RR}{\mathbb{R}}
\newcommand{\CC}{\mathbb{C}}
\renewcommand{\SS}{\mathbb{S}}
\def\Xint#1{\mathchoice
{\XXint\displaystyle\textstyle{#1}}%
{\XXint\textstyle\scriptstyle{#1}}%
{\XXint\scriptstyle\scriptscriptstyle{#1}}%
{\XXint\scriptscriptstyle\scriptscriptstyle{#1}}%
\!\int}
\def\XXint#1#2#3{{\setbox0=\hbox{$#1{#2#3}{\int}$}
\vcenter{\hbox{$#2#3$}}\kern-.5\wd0}}
\def\dashint{\Xint-}
\newcommand{\Erfc}{\mathop{\mathrm{Erfc}}}    
\newcommand{\re}{\mathop{\mathrm{Re}}}   
\newcommand{\dist}{\mathop{\mathrm{dist}}}  
\newcommand{\link}{\mathop{\circ\kern-.35em -}}
\newcommand{\ol}{\overline}
\newcommand{\pa}{\partial}
\newcommand{\ul}{\underline}
\newcommand{\lan}{\langle}
\newcommand{\ran}{\rangle}
\newcommand{\tr}{\mathop{\mathrm{tr}}}
\newcommand{\na}{\nabla}
\newcommand{\al}{\alpha}
\newcommand{\be}{\beta}
\newcommand{\ga}{\gamma}  
\newcommand{\Ga}{\Gamma}
\newcommand{\de}{\delta}
\newcommand{\De}{\Delta}
\newcommand{\ve}{\varepsilon}
\newcommand{\la}{\lambda}
\newcommand{\La}{\Lambda}    
\newcommand{\ka}{\kappa}
\newcommand{\si}{\sigma}
\newcommand{\Si}{\Sigma}
\newcommand{\te}{\theta}
\newcommand{\zi}{\zeta}
\newcommand{\om}{\omega}
\newcommand{\Om}{\Omega}
\newcommand{\cH}{{\mathcal H}}
\newcommand{\cK}{{\mathcal K}}
\newcommand{\cS}{{\mathcal S}}
\newcommand{\cX}{\mathcal{X}}
\begin{document}
\linespread{1.1}

\thispagestyle{empty}

\newgeometry{top=3cm,bottom=2cm}
\begin{center}
	\vspace{-4cm}
		\includegraphics[scale=0.465]{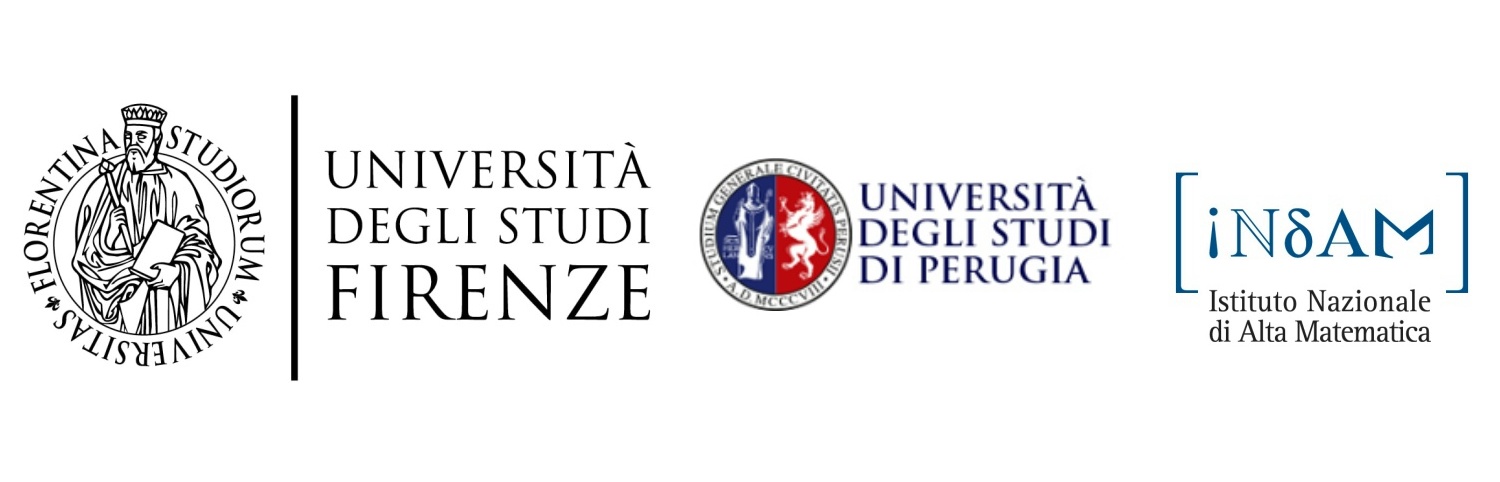}
	\par
	 { 
	  Universit\`a di Firenze, Universit\`a di Perugia, INdAM  consorziate nel {\small CIAFM}\\}
	 \vspace{1mm}
\vskip .8cm
	\par \vspace{2mm}
	\large
	\textbf{DOTTORATO DI RICERCA\\
	IN MATEMATICA, INFORMATICA, STATISTICA}\\
		 \vskip.2cm
		 CURRICULUM IN MATEMATICA\\
	CICLO XXXI
	\par \vspace{5mm}
	\large
	
		 
	 {\bf Sede amministrativa Universit\`a degli Studi di Firenze}\\
	Coordinatore Prof.~Graziano Gentili
	\par \vspace{8mm}
	\huge
	\textbf{Asymptotic analysis of solutions\\related to the \\ game-theoretic $p$-laplacian}
	\par \vspace{5mm}
	
	\large
	Settore Scientifico Disciplinare MAT/05
\end{center}
\par \vspace{10mm}

\normalsize
\hspace{1cm}\begin{minipage}{0.42\linewidth}
	\textbf{Dottorando}:
	\\
	{Diego~Berti}
\end{minipage}
\hspace{2cm}
\begin{minipage}{0.42\linewidth}
	\textbf{Tutore}
	\\
	{Prof.~Rolando~Magnanini}
	
\end{minipage}
\par \vspace{10mm}

\begin{center}
	\begin{minipage}{0.30\linewidth}
		\textbf{Coordinatore}
		\\
		{Prof.~Graziano Gentili}
	\end{minipage}
\end{center}
\par \vspace{9mm}
\begin{center}
	\hrule
	\par \vspace{5mm}
	Anni 2015/2018
	
\end{center}
\restoregeometry

%
\tableofcontents

\pagestyle{fancy}
\fancyhf{}
\fancyhead[LE]{\leftmark}
\fancyhead[RO]{\rightmark}
\fancyhead[RE,LO]{\thepage}



%
%
%
%
%
\chapter*{Acknowledgments}
Here, I mention those who have foremost contributed to the writing of this work as
the conclusion of my Ph.D.
\par
First of all, I express my gratitude to my advisor Prof. Rolando Magnanini for his constant
 help in the production of this thesis and in the progress of my academic figure.
He first introduced me to the topics and questions I have studied in this thesis and his
competence, passion and experience have been crucial in more than one step.
\par
I want to thank the members of the research group in Firenze, for involving me and
for their precious support through these years. In particular, a quote is due to Prof.
Paolo Salani, Prof. Chiara Bianchini, Prof. Andrea Colesanti and Prof. Elisa Francini.
Also, for having shared most of the steps of the journey with good advice, I thank my
academic brother Dr. Giorgio Poggesi.
\par

The visits to the prestigious Research Center for Pure and Applied Mathematics
 at the Tohoku University in Sendai have been 
extremely significant for all my work. I truly thank Prof. Shigeru Sakaguchi from
his warm hospitality. Exchanges of ideas with him and his research group have been
beneficial. I have appreciated the possibility of establishing contact with some young
researchers from the other side of the world.
\par
I would like to thank Prof. Mikko Parviainen for the hint that improved Lemma
1.23. I met him and his research group during a refreshing and stimulant week of
 the Summer School in Jyväskylä.
\par
I wish to thank the doctorate program in Firenze and the Dipartimento di Matematica
e Informatica “Ulisse Dini” in Firenze. In particular, I mention the director of
my Ph.D. program Prof. Graziano Gentili, for his attention to the needs of doctoral
students. I also thank my Ph.D. program for its essential financial support.
\par
For their priceless understanding, comfort and motivation, I would like to remark
the importance of my companions in the doctoral students room T/8, especially who
 I shared most of the last three years.
\par
Finally, I wish to end these acknowledgments dedicating special thanks to the support
I received during my life by my family and during the last two years by Masoomeh.
Thank you.

\chapter*{Introduction}
\addcontentsline{toc}{chapter}{Introduction}
\markboth{}{}
\label{ch:intro}
The {\it game-theoretic} (or {\it normalized} or {\it one-homogeneous}) {\it $p$-laplacian} is defined as
\begin{equation}
\label{p-Laplace}
\Delta_p^G u = \frac1{p}|\nabla u|^{2-p}\De_p u,
\end{equation}
for $p \in (1,\infty)$, where $\De_p u$ is the usual $p$-laplacian 
$$
\De_p u = \mathrm{div}\!\left(|\nabla u|^{p-2}\nabla u\right),
$$
and as
\begin{equation*}
\label{infinity}
\Delta_\infty^G u = \frac{\langle \nabla^2 u \nabla u, \nabla u \rangle}{|\nabla u|^2},
\end{equation*}
in the extremal case $p=\infty$. With these notations, by computing formally the divergence in \eqref{p-Laplace}, $\De_p^G$ can be written for any $p\in [1,\infty]$ as
\begin{equation}
\label{p-decomposition}
\De_p ^G u = \frac1{p} \De u +\left(1 -\frac{2}{p}\right) \De_\infty ^G u,
\end{equation}
where $\De$ is the classical Laplacian.
\par
Works by Peres, Schramm, Scheffield and Wilson (see \cite{PSSW, PS}) have emphasized the role of $\De_p^G$ in stochastic differential equations in the context of {\it game theory}. Indeed, equations for $\De_p^G$ appear when one considers the limiting value for vanishing length of steps of certain two-players games called {\it tug-of-war (or TOW) games with noise} in the case $p\in (1,\infty)$ and TOW games in the case $p = \infty$. In this context, it is possible to consider a stochastic game between two opponents, one who wants to maximize and the other who wants to minimize the payoff. Heuristically, the game consists of a combination of random moves (which correspond to noise and are dictated by $\De$) and moves that are orthogonal to the gradient (which correspond to TOW and depend on the operator $\De_\infty^G$). In this context, we also mention works by Manfredi, Parviainen and Rossi, in \cite{MPR-SJMA, MPR-ESAIM, MPR-PAMS, MPR-ASNS}.
\par 
The relation between TOW games and differential equations similar to those involving the game-theoretic $p$-laplacian are considered by Nystr\"om and Parviainen in the context of {\it market manipulation} and {\it option pricing} (see \cite{NP}).
\par
There is also a growing interest for equations involving the game-theoretic $p$-laplacian in connection to numerical methods for {\it image enhancement or restoration} (see \cite{Do} and \cite{BSA}). Typically, for a possibly corrupted image represented by a function $u_0$, it is considered an evolutionary process based on $\De_p^G$ with initial data $u_0$ and homogeneous Neumann boundary conditions. As explained in \cite{Do}, the different choice of $p$ affects the direction in which the brightness evolves; the $1$-homogeneity of $\De_p^G$ ensures that such an evolution does not depend on the brightness of the image. The relation between solutions of parabolic equations and a corresponding parametrized elliptic equation is examined in \cite{BSA} for the classical $p$-laplacian, and can be extended to the case of $\De_p^G$ in hand.

Besides the cited applications, problems for $\De_p^G$ have been recently studied by Attouchi and Parviainen \cite{AP-2018}, Attouchi, Parviainen and Ruosteenoja \cite{APR}, Parviainen and Ruosteenoja \cite{PR}, A. Bj\"orn, J. Bj\"orn and Parviainen \cite{BBP}, Parviainen and V\'azquez \cite{PV}, Banerjee and Garofalo \cite{BG-IUMJ, BG-CPAA}, Does \cite{Do}, Juutinen and Kawohl \cite{JK}, Kawohl, Kr\"omer and Kurtz \cite{KKK} as well as Banerjee and Kawohl \cite{BK} and Jin and Silvestre \cite{JS}.
\\

Observe that, having in mind the formal decomposition \eqref{p-decomposition}, when $p=2$ we simply obtain $\De_2^G = \De / 2$ and that, for $p \neq 2 $, $\De_p^G$ can be seen as a proper singular perturbation of $\De / p$. Indeed, one notices that $\De_\infty^G u$ has discontinuous coefficients when $\na u = 0$.
Nevertheless, $\De_p^G$ is uniformly elliptic (in case $p\in (1,\infty)$) and (degenerate) elliptic in the case $p=\infty$.
\par 
It is evident that, for $p\not=2$, $\De^G_p$ is nonlinear. However, $\De_p^G$ is somewhat reminiscent of the lost linearity of the Laplace operator, since it is $1$-homogeneous, that is 
$$
\De_p^G \left( \la u \right) = \la \De_p^G u \ \mbox{ for any } \ \la \in \RR,
$$
 differently from $\De_p$, which instead is $(p-1)$-homogeneous. The nonlinearity of  $\De_p^G$ is indeed due to its non-additivity. Nevertheless, $\De_p^G$ acts additively if one of the relevant summands is constant and, more importantly, on functions of one variable and on radially symmetric functions. We shall see in the sequel that these last properties are decisive for the purposes of this thesis.
\par
Also, when $p \neq 2 $, differently from $\De_p$, $\De_p^G$ is not in divergence form. This fact implies that we cannot apply the standard theory of distributional weak solutions. We need to consider the theory of {\it viscosity solutions}. The main tool of this theory we will use is the \textit{comparison principle}, that we recall and adapt to our purposes in Chapter \ref{ch:explicit}, together with some versions of the \textit{strong maximum principle} and the \textit{Hopf-Oleinik lemma}, which will be used in Chapter \ref{ch:applications}.

In this thesis, we focus on the connection between asymptotic formulas for solutions of certain game-theoretic $p$-laplacian problems and some geometrical features of the relevant domain. 
\par
We will generally consider a domain $\Om$ in $\RR^N$, with $N\ge 2$, not necessarily bounded, with boundary $\Ga \neq \varnothing$.
We shall consider viscosity solutions $u=u(x,t)$ of the following initial-boundary value problem:
\begin{eqnarray}
&u_t=\De_p^G u \ &\mbox{ in } \ \Om\times(0,\infty), \label{G-heat} \\
&u=0  \ &\mbox{ on } \ \Om\times\{ 0\}, \label{initial} \\
&u=1  \ &\mbox{ on } \ \Ga\times(0,\infty). \label{boundary}
\end{eqnarray}
\par
Also, we shall consider viscosity solutions $u^\ve$ of the one-parameter family of boundary value problems
\begin{eqnarray}
&u^\ve-\ve^2\De_p^G u^\ve=0 \ &\mbox{ in } \ \Om, \label{G-elliptic} \\
&u^\ve=1  \ &\mbox{ on } \ \Ga. \label{elliptic-boundary}
\end{eqnarray}
 Our attention will focus on the asymptotic analysis for small positive values of parameters $t$ and $\ve$.
 
When $p=2$, the two cases are strongly connected. Indeed, by taking advantage of the linearity of \eqref{G-heat}, one can use a modified {\it Laplace transform}, to obtain that, 
\begin{equation}
\label{eq:intro_modified laplace}
u^\ve(x) = \frac1{\ve^2} \int_{0}^{\infty} u(x,t) e^{-\frac{t}{\ve^2}}\,dt \ \mbox{ for } \ x \in \ol\Om,\ \ve > 0.
\end{equation}
\par
In the case $p\neq 2$ the two problems are no more equivalent. Nevertheless, if $\Om$ has spherical or one-dimensional symmetry, due to the fact that for radial or one-dimensional functions the operator $\De_p^G$ acts linearly, \eqref{eq:intro_modified laplace} still holds true. This observation will be crucial. A proper manipulation of \eqref{eq:intro_modified laplace} will give suitable barriers to estimate the parabolic solution (see Lemma \ref{ball control from above}).
\par
In what follows, we will describe the main results of this thesis, which are mainly contained in the two papers \cite{BM-JMPA, BM-AA}. The case of problem \eqref{G-heat}-\eqref{boundary} is considered in \cite{BM-JMPA}, whereas \cite{BM-AA} addresses the case of problem \eqref{G-elliptic}-\eqref{elliptic-boundary}.
Under the assumption that $\Om$ merely satisfies the topological assumption $\Ga= \pa\left(\RR^N \setminus \ol\Om\right)$,  in  Theorem \ref{th:parabolic-pointwise} we establish for $p\in (1,\infty]$ the asymptotic profile of the solution of \eqref{G-heat}-\eqref{boundary} for small values of time:
\begin{equation}
\label{eq:intro_BM-JMPA_varadhan}
\lim_{t \to 0^+} 4 t \log u(x,t) = -p'\, d_\Ga(x)^2, \ x\in\ol\Om.
\end{equation}
 Here, by $p'$ we mean the conjugate exponent of $p$, that is $1/p + 1/p' = 1$, for $p\in(1,\infty)$, and $p'=1$, when $p=\infty$. Also, by $d_\Ga(x)$, we mean the distance of the point $x \in \Om$ from the boundary $\Ga$, defined by
$$
d_\Ga(x) = \inf\{|x- y|: y \in \Ga \}, \ x\in\ol\Om.
$$
\par 
Moreover, in Theorem \ref{th:elliptic-pointwise}, we obtain the corresponding formula for the solution of the elliptic problem \eqref{G-elliptic}-\eqref{elliptic-boundary}:
\begin{equation}
\label{eq:intro_BM-AA_varadhan}
\lim_{\ve \to 0^+} \ve \log u^\ve(x) = -\sqrt{p'} \, d_\Ga(x), \ x\in\ol\Om.
\end{equation}
\par
These pointwise asymptotic profiles extend known formulas in the linear case, first obtained by Varadhan by using analytic methods (see \cite{Va-66, Va}). See also \cite{EI}, where Evans and Ishii used arguments pertaining the theory of viscosity solutions, and \cite[Section 10.1]{FW}, for a treatment with probabilistic methods by Freidlin and Wentzell. These formulas are common in the context of {\it large deviations theory}. There, random differential
 equations with small noise intensities are considered. The profiles for small values of $\ve$ and $t$ of the solutions of \eqref{G-elliptic}-\eqref{elliptic-boundary} and \eqref{G-heat}-\eqref{boundary} are respectively related to the behavior of the exit time and to the probability to exit from $\Om$ of a certain stochastic process (see \cite{EI} and \cite{FW}).  
 \par
Formulas \eqref{eq:intro_BM-JMPA_varadhan} and \eqref{eq:intro_BM-AA_varadhan} are obtained by employing barrier arguments based on accurate estimates on radial solutions. In particular, all we need is to control solutions both in a ball or in the complement of a ball. In the elliptic case we are able to compute them in terms of {\it Bessel functions} whereas the parabolic case is more delicate, since we need to properly involve \eqref{eq:intro_modified laplace} and the existence of a global solution of \eqref{G-heat}.
\par
We mention that proper versions of formula \eqref{eq:intro_BM-JMPA_varadhan} are included in a series of works by Magnanini-Sakaguchi in linear cases (see \cite{MS-AM, MS-AN, MS-PRSE, MS-IUMJ, MS-JDE, MS-MMAS}), in certain nonlinear contexts (see \cite{MS-PRSE, MS-AIHP, MS-JDE2}) and concerning initial-value problems  (\cite{MPeS, MPrS}). 
\\

A second type of asymptotic formulas that we obtain, which deeply link solutions of  \eqref{G-heat}-\eqref{boundary} or \eqref{G-elliptic}-\eqref{elliptic-boundary} to the geometry of the domain, involve certain statistical quantities called {\it $q$-means}. Given $x\in\Om$, $t>0$ and $q\in (1,\infty]$, the $q$-mean $\mu_q(x,t)$ on $B_R(x)\subset \Om$ of the solution $u$ of \eqref{G-heat}-\eqref{boundary} is the unique real number $\mu$ such that
 $$
 \Vert u (\cdot,t)- \mu\Vert_{L^q\left(B_R(x)\right)} \leq \Vert u(\cdot,t) -\la \Vert_{L^q\left(B_R(x)\right)} \ \mbox{ for any } \ \la \in\RR.
 $$
 Observe that $\mu_q$ generalizes the mean value of $u$ on $B_R(x)$, which is obtained when one chooses $q=2$ in the above definition.
 \par
 Consider a domain of class $C^2$. Suppose  that there exists $y_x \in \Ga$ such that $(\RR^N\setminus\Om)\cap\pa B_R(x)=\{ y_x\}$, where $R= d_\Ga(x)$. Assume that $\ka_1(y_x),\dots,\ka_{N-1}(y_x) < \frac1{R}$, where we have denoted with $\ka_1,\dots,\ka_{N-1}$ the principal curvatures with respect to the inward normal of $\Ga$ at $y_x$, and set
\begin{equation*}
\Pi_\Ga(y_x) =\prod\limits_{j=1}^{N-1}
\Bigl[1-R\,\ka_j(y_x)\Bigr].
\end{equation*}
\par
In Theorem \ref{th:JMPA-asymptotics-qmean} we establish that, for any $q \in(1,\infty)$ and $p\in (1,\infty]$,
 \begin{equation}
 \label{eq:introJMPA-qmean}
\lim_{t\to 0^+} \left(\frac{R^2}{t}\right)^{\frac{N+1}{4(q-1)}}\!\!\!\!\mu_q(x,t)= C_{N,p,q}\left\{\Pi_\Gamma(y_x)\right\}^{-\frac{1}{2(q-1)}},
\end{equation}
where $C_{N,q,p}$ is a positive constant only depending on the labelled parameters.
In the extremal case $q=\infty$ and for any $p\in (1,\infty]$, we obtain that
$$
\mu_\infty(x,t) \to \frac1{2} \ \mbox{ as } t \to 0^+.
$$
\par
Analogously, from an accurate improvement of barriers in the case of smooth domains we compute the asymptotic profile of $\mu_{q,\ve}$, the $q$-mean of $u^\ve$ on $B_R(x)$. In fact, in Theorem \ref{th:AA-qmean} we show that
\begin{equation}
  \label{eq:introAA-qmean}
\lim_{\ve\to 0^+}
  \left(\frac{R}{\ve}\right)^{\frac{N+1}{2(q-1)}}\mu_{q,\ve}(x)=
  \widetilde{C}_{N,p,q}\left\{ \Pi_\Ga(y_x)\right\}^{-\frac{1}{2(q-1)}}.
\end{equation}
Also in this case, we obtain that
$$
\mu_{\infty,\ve}(x) \to \frac{1}{2} \ \mbox{ as } \ \ve \to 0^+.
$$
\par
We emphasize that \eqref{eq:introJMPA-qmean} and \eqref{eq:introAA-qmean} generalize, to each $p\in(1,\infty]$, and extend, to any $q \in(1,\infty)$, known formulas in the linear case, for $p = q = 2$. In fact, we recall that in \cite[Theorem 4.2]{MS-PRSE}, it has been given the following asymptotic formula for the so-called {\it heat content} on $B_R(x)$:
\begin{equation}
\label{MS-PRSE}
\lim_{t\to 0^+}
t^{-\frac{N+1}{4}}\int_{B_R(x)}u(z,t)\,dz=
\frac{C_N R^{\frac{N-1}{2}}}{ \sqrt{\Pi_\Gamma(y_x)}},
\end{equation}
where $u$ is the solution of the heat equation satisfying \eqref{initial} and \eqref{boundary}. A normalization makes apparent the connection of the heat content to $\mu_2(x,t)$. See also  \cite[Theorem 2.3]{MS-AM} for a similar formula in the elliptic case. Similar formulas for the case $q=2$ in nonlinear settings can be found in \cite{MS-PRSE} for the evolutionary $p$-Laplace equation, and in \cite{MS-JDE2}, for a class of non-degenerate fast diffusion equations. 
\par
It is worth noting that \eqref{eq:introJMPA-qmean} and \eqref{eq:introAA-qmean} are essentially based on two ingredients. The first is the geometrical lemma \cite[Lemma 2.1]{MS-PRSE} which determines the behavior of $\cH_{N-1}\left(\Ga_s \cap B_R(x)\right)$ for vanishing $s > 0$ in terms of the function $\Pi_\Ga$. Here, $\Ga_s = \{ x\in\Om: d_\Ga(x) =s \}$, for $s > 0$ and $\cH_{N-1}$ is the $(N-1)$-Hausdorff measure. The second ingredient is the construction of 
 sharp uniform estimates of Varadhan-type formulas \eqref{eq:intro_BM-JMPA_varadhan} and \eqref{eq:intro_BM-AA_varadhan}, which are novelty even in the case $p=2$.
\par
In the case $\Om$ is of class $C^{0,\om}$, that is $\Ga$ is locally a graph of a continuous function with modulus of continuity controlled by $\om$ (see Section \ref{sec:uniform}), we provide in Theorem \ref{th:uniform-JMPA} the following estimate:
 \begin{equation}
 \label{eq:introBM-JMPA-uniform}
 4 \,t \log u(x,t) + p'\, d_\Ga(x)^2 = O\left(t \,\log \psi_\om(t)\right),
 \end{equation}
 for $t\to 0^+$, uniformly on every compact subset of $\ol\Om$. Here, $\psi_\om(t)$ is a function that depends on $\om$ and is positive and vanishes as $t \to 0^+$. In particular, if $\Om$ is smooth enough, \eqref{eq:introBM-JMPA-uniform} gives a sharp estimate of the rate of convergence in \eqref{eq:intro_BM-JMPA_varadhan}. In fact, if for example $\Om$ is a $\al$-H\"older domain, for some $0 < \al < 1$, we obtain that the right-hand side of \eqref{eq:introBM-JMPA-uniform} is $O\left(t \log t\right)$, as $t\to 0^+$.
 \par
 In the elliptic case, explicit barriers are available, 
we obtain more accurate uniform estimates. In fact, in Theorem \ref{th:uniform-elliptic}, we prove that 
 \begin{equation}
 \label{eq:introBM-AA-uniform1}
 \ve\log u^\ve(x) + \sqrt{p'}\,d_\Ga(x) = 
 \begin{cases}
 \displaystyle 
  O\left(\ve\right) \ \mbox{ if } \ p=\infty,\\
 \displaystyle 
  O\left( \ve \log \ve \right) \ \mbox{ if } \ p \in (N, \infty),
  \end{cases}
  \end{equation}
  as $\ve \to 0^+$, on every compact subset of $\ol\Om$.  In the case of a domain $\Om$ of class $C^{0,\om}$, it holds instead that
  \begin{equation}
  \label{eq:introBM-AA-uniform2}
  \ve\log u^\ve(x) +\sqrt{p'}\, d_\Ga(x)=
  \begin{cases}
  \displaystyle 
  O\left( \ve \log |\log \psi_\om(\ve)|\right)\ \mbox{ if } \ p=N,
  \\
  \displaystyle
  O\left(\ve \log \psi_\om(\ve)\right) \ \mbox{ if } \ p\in(1,N),
  \end{cases}
  \end{equation}
  for $\ve \to 0^+$, uniformly on every compact subset of $\ol\Om$.
  \par
  We observe that the presence of the threshold for the exponent $p$ in this last formula seems to be connected to the integrability of the global solution of \eqref{G-heat} with respect to the variable $t \in (0,\infty)$. This suggests that even in the parabolic case we may expect this kind of behavior. We were not able to prove it so far.
  \par
 Finally, notice that, by using comparison results, formulas \eqref{eq:introBM-JMPA-uniform} and \eqref{eq:introBM-AA-uniform1}-\eqref{eq:introBM-AA-uniform2} can be easily extended to the case of a prescribed non-constant data on the boundary. See Corollaries \ref{positive boundary data} and \ref{cor:elliptic positive boundary data}.
\\
\par
In Chapter \ref{ch:applications}, the obtained Varadhan-type formulas and formulas for $q$-means will find applications to geometric and symmetry results. The linearity of $\De$  was used in \cite{MS-AM} to derive radial symmetry of compact {\it stationary isothermic surfaces}, that is those level surfaces of the temperature which are invariant in time. In Chapter \ref{ch:applications}, we will extend this type of result to the case $p\ne 2$. In the case $p=2$, it was shown that the mean values $\mu_2(x,t)$ or   $\mu_{2,\ve}(x)$ do not depend on $x$ if this lies on a stationary isothermic surface, and hence, for instance, \eqref{MS-PRSE} gives that
\begin{equation*}
\Ga \ni y \mapsto \Pi_\Ga(y) \mbox{ is constant. }
\end{equation*} 
The radial symmetry then ensues from Alexandrov's Soap Bubble Theorem for Weingarten surfaces (see \cite{Al}). 
\par
 For $p\not=2$, this approach is no longer possible. However, when $\Ga$ is compact, an approach based on the {\it method of moving planes} (see \cite{Ser}, as in \cite{MS-AIHP} and \cite{CMS-JEMS}) is still feasible. We also treat a case in which $\Ga$ is unbounded, by using the {\it sliding method} (see \cite{BCN}), as in \cite{MS-IUMJ}, \cite{MS-AIHP}, \cite{MS-JDE} and \cite{Sa-INDAM} to obtain that $\Ga$ must be a hyperplane. We stress that a crucial step to apply the cited methods in our cases is the application of the (classical) strong comparison principle in a suitable subset (as done in \cite{AG} or in \cite{BK}), which is determined by an application of the strong maximum principle and Hopf lemma for viscosity solutions.
\\

We conclude this introduction by a summary of this thesis.
Chapter \ref{ch:viscosity solutions} recalls those technical tools of the theory of viscosity solutions which we will use in the remaining chapters. 
\par
In Chapter \ref{ch:explicit}, we consider the cases of symmetric domains, in which $\De_p^G$ acts as a linear operator. This allows us to deal with explicit solutions of \eqref{G-heat}-\eqref{boundary} and \eqref{G-elliptic}-\eqref{elliptic-boundary} and to compute their asymptotic profiles. The explicit formulas give sharp estimates that will be crucial to control the case of generic domains.
\par
Formulas of Varadhan-type, that is \eqref{eq:intro_BM-JMPA_varadhan} and \eqref{eq:intro_BM-AA_varadhan}, are collected in Chapter \ref{ch:asymptotics i}. There, also their uniform sharp versions  \eqref{eq:introBM-JMPA-uniform} and \eqref{eq:introBM-AA-uniform1}-\eqref{eq:introBM-AA-uniform2} shall be given.
\par
In Chapter \ref{ch:asymptotics ii}, we provide our asymptotic formulas for $q$-means  \eqref{eq:introJMPA-qmean} and \eqref{eq:introAA-qmean}.
\par
Finally, Chapter \ref{ch:applications} contains a few applications of the formulas derived in Chapters \ref{ch:asymptotics i} and \ref{ch:asymptotics ii}. In particular, we provide generalization of symmetry results present in the literature.


\chapter[Preliminaries]
{Preliminaries on the theory of	 viscosity solutions}
\label{ch:viscosity solutions}
 
 Several results of this thesis are based on some important properties of viscosity solutions. In the next chapters  we shall use comparison principles and we shall apply strong maximum principles and the Hopf-Oleinik Lemma. Since $\De_p^G$ has discontinuous coefficients, standard results for viscosity solutions cannot be directly applied, but they must be adapted. In this chapter we describe how, by pointing out the significant references.
 
 A recent summary on aspects of viscosity solutions of our interest, which includes a quite complete list of references, is a  dedicated chapter in \cite{Gi} (where the theory is instrumental to the study of {\it surface evolution equations}). We adopt that approach. Useful references on definitions and relevant properties of viscosity solutions are also the classical surveys \cite{CGG, CIL}, besides the beginner's guide \cite{Ko}. For more recent and specific works on $\De_p^G$, where viscosity solutions are adopted, we give the following (not complete) list of publications: \cite{AP-2018, APR, BG-IUMJ, BG-CPAA,  BM-AA, BM-JMPA, Do, JK, KKK, MPR-PAMS}.

In Section \ref{sec:definitions}, we shall begin with definitions and relevant properties of viscosity solutions of general singular differential equations. In this more general context, we shall also state those theorems (from \cite{BA-DaL, DaL, GGIS, Sat}) which we shall apply to our cases, in Section \ref{sec:game viscosity}. 
\par 
In Section \ref{sec:game viscosity}, in the specific case of the game-theoretic $p$-laplacian, we shall collect those results which we will use in the next chapters of this thesis. In particular, in Subsection \ref{ssec:game comparisons}, for both equations \eqref{G-heat} and \eqref{G-elliptic}, we will give comparison principles (see Corollaries \ref{cor:parabolic comparison} and \ref{cor:elliptic comparison}) as well as corresponding strong maximum principles (see Corollaries \ref{cor:parabolic maximum} and \ref{cor:elliptic maximum}).

Finally, in Subsection \ref{ssec:hopf lemma} we shall extend a sharp version of Hopf-Oleinik lemma (obtained by Mazya et al., in \cite{ABMMZ}), to viscosity solutions of \eqref{G-heat}-\eqref{boundary} and \eqref{G-elliptic}-\eqref{elliptic-boundary} (see Corollaries \ref{cor:parabolic hopf-oleinik} and \ref{cor:elliptic hopf-oleinik}). 


\section[Viscosity solutions]{Viscosity solutions of singular differential equations}
\label{sec:definitions}

To start with, we introduce some definitions on arbitrary functions, which we use in this chapter.  Let $f$ be a function on a metric space $\cX$, with values in $\RR$. We recall that a function $f$ is called {\it lower semicontinuous} if
\begin{equation*}
\label{lower semicontinuity}
\liminf_{y\to x} f(y) \geq f(x)
\end{equation*}
and that a function $f$ is called {\it upper semicontinuous} if $-f$ is lower semicontinuous. 
\par 
For a function $f$, the {\it lower semicontinuous envelope} $f_*$ is defined by 
$$
f_*(z)= \lim_{\de\to 0^+} \inf\{f(z'): z'\in B_\de(z)\subset \cX\},
$$
and the {\it upper semicontinuous envelope} $f^*$ is defined by $f^*=-(-f)_*$. Note that the previous definition differs from that of $\liminf$, since the infimum is taken upon the whole ball $B_\de(z)$. Of course, if $f$ is continuous, $f$ and its envelopes coincide. 

 For $N\geq 1$, let $\cS^N$ be the linear space of $N\times N$ symmetric matrices. An operator $F:\Om\times(0,\infty)\times\RR\times\left(\RR^N\setminus\{0\}\right)\times \cS^N \to \RR$, is called {\it (degenerate) elliptic} if it satisfies
\begin{equation*}
\label{degenerate ellipticity}
F(x,t,u,\xi,X) \geq F(x,t,u,\xi, Y) \ \mbox{ if } \ X\le Y,
\end{equation*}
for any $(x,t,u,\xi)\in \Om\times(0,\infty)\times \RR\times\left(\RR^N\setminus\{0\}\right)$ and $X,Y \in \cS^N$.
Here with $X\leq Y$ we mean that $\lan (X-Y) \eta, \eta\ran \leq 0$, for any $\eta\in\RR^N$.
\par
We give the following definitions (see \cite[Chapter 2]{Gi}). A lower semicontinuous function, $u:\Om\times(0,\infty)\to\RR$,  is a {\it viscosity subsolution} of 
\begin{equation}
\label{generic parabolic equation}
u_t+ F\left(x, t, u, \na u, \na^2 u\right) = 0 \ \mbox{ in }\ \Om,
\end{equation}
if for any $(x,t,\phi)\in \Om\times(0,\infty)\times C^2\left(\Om\times (0,\infty)\right)$, such that $u-\phi$ attains its maximum at $(x,t)$, it holds that
\begin{equation}
\label{generic parabolic subsolution}
\phi_t(x,t)+
F_* \left(x, t, u(x,t), \na \phi(x,t), \na^2 \phi (x,t)\right) \leq 0.
\end{equation}
\par
Analogously, an upper semicontinuous function, $v:\Om\times(0,\infty)\to\RR$, is called a {\it viscosity supersolution} of \eqref{generic parabolic equation}, if for any $(x,t,\psi)\in\Om\times(0,\infty)\times C^2\left(\Om\times (0,\infty)\right)$ such that $v-\psi$ attains its minimum at $(x,t)$, it holds that
\begin{equation}
\label{generic parabolic supersolution}
\psi_t(x,t)+
F^* \left(x, t, v(x,t), \na \psi(x,t), \na^2 \psi (x,t)\right) \geq 0.
\end{equation}
\par
Finally, we say that a function $u$ is a {\it viscosity solution} of \eqref{generic parabolic equation} if it is both a viscosity subsolution and a viscosity supersolution. 
\par 
The next lemma affirms that the theory is consistent with the classical definition of solutions. The proof is straightforward.

\begin{lem}[Consistency]
\label{consistency}
Assume that $F$ is (degenerate) elliptic.
\par
 Let $u\in C^2\left(\Om\times(0,\infty)\right)$ be such that
$$
F_*\left(x,t, u, \na u, \na^2 u\right) \leq u_t \leq F^*\left(x,t, u, \na u, \na^2 u\right),
$$
for every $(x,t)\in \Om\times (0,\infty)$, then $u$ is a viscosity solution of \eqref{generic parabolic equation}.
\end{lem}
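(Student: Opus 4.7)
The plan is a direct verification of the subsolution and supersolution conditions from their definitions, exploiting only elementary calculus of local extrema for smooth functions together with the degenerate ellipticity of the envelopes $F_*$ and $F^*$. Because $u$ is already in $C^2$, none of the heavier viscosity machinery (doubling of variables, sup- or inf-convolutions, parabolic mollification) will be needed: a genuine classical candidate is available pointwise.

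First I would record the preparatory observation that $F_*$ and $F^*$ inherit the degenerate ellipticity of $F$. Indeed, if $X\le Y$, then for every $(x',t',r',\xi')$ in any small ball around $(x,t,r,\xi)$ we have $F(x',t',r',\xi',X)\ge F(x',t',r',\xi',Y)$, and since $\liminf$ (respectively $\limsup$) preserves a pointwise inequality that holds on an arbitrarily small neighbourhood, this yields
$$
F_*(x,t,r,\xi,X)\ge F_*(x,t,r,\xi,Y), \qquad F^*(x,t,r,\xi,X)\ge F^*(x,t,r,\xi,Y).
$$

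Next, to check the subsolution property, I would fix $(x_0,t_0)\in\Om\times(0,\infty)$ and a test function $\phi\in C^2\bigl(\Om\times(0,\infty)\bigr)$ such that $u-\phi$ attains a local maximum at $(x_0,t_0)$. Standard first- and second-order calculus at a smooth local maximum of the $C^2$ function $u-\phi$ gives
$$
\phi_t(x_0,t_0)=u_t(x_0,t_0), \qquad \na\phi(x_0,t_0)=\na u(x_0,t_0), \qquad \na^2\phi(x_0,t_0)\ge \na^2 u(x_0,t_0).
$$
Substituting these identities and then applying the monotonicity of $F_*$ in the Hessian slot just established, I would conclude
$$
\phi_t(x_0,t_0)+F_*\!\left(x_0,t_0,u,\na\phi,\na^2\phi\right) \le u_t(x_0,t_0)+F_*\!\left(x_0,t_0,u,\na u,\na^2 u\right),
$$
at which point the hypothesis on $u$ supplies the sign required by \eqref{generic parabolic subsolution}. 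The supersolution half is symmetric: at a local minimum of $u-\psi$ the Hessian inequality reverses to $\na^2\psi\le \na^2 u$, the degenerate ellipticity of $F^*$ reverses the corresponding estimate, and the other half of the hypothesis yields \eqref{generic parabolic supersolution}.

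There is no single genuinely hard step. The only item that needs any care is the inheritance of degenerate ellipticity by the envelopes, which reduces to elementary manipulation of $\liminf$ and $\limsup$; everything else is the calculus of extrema, with the $C^2$ regularity of $u$ removing any need for approximation procedures.
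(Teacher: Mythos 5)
Your proof is correct and is precisely the standard direct verification --- the second-order necessary conditions at the extremum of $u-\phi$ (respectively $u-\psi$), combined with the fact that the envelopes $F_*$ and $F^*$ inherit degenerate ellipticity from $F$ --- which is evidently what the paper means by declaring the proof straightforward.

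Two small remarks. First, the hypothesis as printed, $F_*\le u_t\le F^*$, does not by itself supply the sign $u_t+F_*(x,t,u,\na u,\na^2 u)\le 0$ that your final step (and the subsolution definition \eqref{generic parabolic subsolution}) require; comparing with how the proof of Lemma \ref{extension lemma} invokes the assumption makes it clear that the intended hypothesis is $u_t+F_*\le 0\le u_t+F^*$, equivalently $-F^*\le u_t\le -F_*$, which is the normalization your closing sentence implicitly uses. This appears to be a sign typo in the paper rather than a gap in your reasoning, but it is worth being aware of. Second, the envelopes are defined as infima and suprema over a ball in \emph{all} the arguments, the Hessian slot included, so in the preparatory observation you should not hold $X$ and $Y$ fixed inside the envelope: rather, for $X\le Y$ compare $\inf_{B_\de(\,\cdot\,,X)}F$ with $\inf_{B_\de(\,\cdot\,,Y)}F$ via the translation $X'\mapsto X'+(Y-X)$, which maps the first ball onto the second and, by degenerate ellipticity, can only decrease $F$. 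The conclusion that $F_*$ and $F^*$ are degenerate elliptic is unchanged, but the argument as phrased is slightly incomplete.
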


For our purposes, we will need the following extension lemma, that claims  that, for smooth functions, it suffices to check the definitions away from isolated critical points.

\begin{lem} [Extension]
\label{extension lemma}
Assume that $F$ is (degenerate) elliptic. 
\par
Let $u\in C^2\left(\Om\times(0,\infty)\right)$ such that
\begin{enumerate}[(i)]
\item $(x_0 , t) \in \Om \times (0,\infty)$ is the unique point in $\Om\times (0,\infty)$ such that
$$
\na u (x_0,t) = 0.
$$
\item In $\left( \Om \setminus \{x_0\} \right) \times (0,\infty)$, it holds that
 $$
F_*\left(x,t, u, \na u, \na^2 u\right) \leq u_t \leq F^*\left(x,t, u, \na u, \na^2 u\right).
$$
\end{enumerate}
\par
Then $u$ is a solution of \eqref{generic parabolic equation} in $\Om \times (0,\infty)$.
\end{lem}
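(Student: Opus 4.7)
The plan is to avoid manipulating test functions at the critical set altogether and instead to extend the classical double inequality in hypothesis (ii) to the entire domain $\Omega\times(0,\infty)$, so that Lemma \ref{consistency} can then be invoked directly on the whole space--time cylinder. By (ii), the chain
$$F_*(x,t,u,\nabla u,\nabla^2 u)\le u_t\le F^*(x,t,u,\nabla u,\nabla^2 u)$$
already holds off the critical set $\{x_0\}\times(0,\infty)$; the only task is to propagate it to the critical points.

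First, I would fix $t_0>0$ and pick any sequence $(x_n,t_n)\to(x_0,t_0)$ with $x_n\neq x_0$. By hypothesis (i), each such point is non-critical, so $\nabla u(x_n,t_n)\neq 0$ and hypothesis (ii) applies, yielding
$$F_*\bigl(x_n,t_n,u(x_n,t_n),\nabla u(x_n,t_n),\nabla^2 u(x_n,t_n)\bigr)\le u_t(x_n,t_n)\le F^*\bigl(x_n,t_n,u(x_n,t_n),\nabla u(x_n,t_n),\nabla^2 u(x_n,t_n)\bigr).$$
Next, I would pass to the limit. Since $u\in C^2$, all arguments inside $F_*$ and $F^*$ converge to $(x_0,t_0,u(x_0,t_0),0,\nabla^2 u(x_0,t_0))$, and $u_t(x_n,t_n)\to u_t(x_0,t_0)$. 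Using that $F_*$ is lower semicontinuous and $F^*$ is upper semicontinuous \emph{by construction}---in particular across the singular slot $\xi=0$, which is precisely the point of introducing the envelopes---I would take $\liminf$ on the left inequality and $\limsup$ on the right to obtain
$$F_*\bigl(x_0,t_0,u(x_0,t_0),0,\nabla^2 u(x_0,t_0)\bigr)\le u_t(x_0,t_0)\le F^*\bigl(x_0,t_0,u(x_0,t_0),0,\nabla^2 u(x_0,t_0)\bigr).$$
Hence the classical double inequality holds on all of $\Omega\times(0,\infty)$, and Lemma \ref{consistency} yields that $u$ is a viscosity solution of \eqref{generic parabolic equation}.

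The main, mild, subtlety is precisely the singular behaviour of $F$ at $\xi=0$, which is why the envelopes $F_*,\,F^*$ rather than $F$ itself appear in the final inequality at the critical point. Hypothesis (i) is what makes the argument work, as it guarantees the existence of a sequence of non-critical points approaching $(x_0,t_0)$ along which $\nabla u\neq 0$, so that hypothesis (ii) actually provides meaningful bounds to pass to the limit; without (i), one could not rule out that $\nabla u$ vanishes in a full neighbourhood and no classical control on $F$ would be available.
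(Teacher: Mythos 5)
Your proof is correct and follows essentially the same route as the paper: approximate the critical point by a sequence of non-critical points where hypothesis (ii) holds classically, exploit the lower semicontinuity of $F_*$ and the upper semicontinuity of $F^*$ (together with the $C^2$ regularity of $u$) to pass to the limit, and then invoke the Consistency Lemma. The only cosmetic difference is that you allow the time variable to vary along the approximating sequence, whereas the paper fixes $t$ and lets only the spatial points $y_n\to x_0$; both versions are fine.
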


\begin{proof}
Take a sequence of points $y_n\in\Om\setminus\{x_0\}$ such that $y_n\to x_0$ as $n\to\infty$.  From our assumption on $u$, we have that both of the following inequalities hold at $(y_n,t)$:
$$
u_t + F_*\left(x, t, u, \na u, \na^2 u \right) \leq 0,
$$
and
$$
u_t + F^*\left(x, t, u, \na u, \na^2 u \right)\geq 0.
$$
Now, since $F_*$ is lower semicontinuous and $F^*$ is upper semicontinuous we have that
\begin{multline*}
u_t ( x_0 , t ) +F_*\left(x_0, t, u(x_0), \na u(x_0), \na^2 u(x_0) \right) \le
 \\
 \liminf_{n\to \infty} \left\{u_t ( y_n , t ) +F_*\left(y_n, t, u(y_n), \na u(y_n), \na^2 u(y_n) \right)\right\} \leq 0
\end{multline*}
and 
\begin{multline*}
u_t ( x_0 , t ) +F^*\left(x_0, t, u(x_0), \na u(x_0), \na^2 u(x_0) \right) \ge
  \\
\limsup_{n\to \infty} \left\{ u_t ( y_n , t ) +F^*\left(y_n, t, u(y_n), \na u(y_n), \na^2 u(y_n) \right)\right\} \geq 0.
\end{multline*}
The claim follows, thanks to Lemma \ref{consistency}.
\end{proof}

\begin{rem}
{
\rm
Obvious adjustments of definitions and Lemmas \ref{consistency} and \ref{extension lemma} are given in the case of singular elliptic differential equations (see for example \cite{CIL}).
 }
\end{rem}


\subsection{Comparison principles}
\label{ssec:generic principles}

For our purposes, we propose quite general comparison results, in a not necessarily bounded domain $\Om$. The relevant assumptions apply to the differential equations \eqref{G-heat} and \eqref{G-elliptic}.
\par 
As a \textit{modulus of continuity} we mean a continuous function $\om:[0,\infty)\to [0,\infty)$, such that $\om(0)=0$. If $X \in \cS^N$, by $|X|$ we intend the operator norm of $X$ on $\RR^N$.

\begin{thm}[{\cite[Theorem 2.1]{GGIS}}]
\label{GGIS comparison}
Assume that $F:\RR^N\setminus\{0\}\times \cS^N\to\RR$ is continuous and (degenerate) elliptic. In addition, let $F$ satisfy the following properties:
 \begin{enumerate}[(a)]
\item it holds that
$$
-\infty < F_*\left(0,0\right) = F^*\left(0,0\right) < +\infty;
$$

\item
 for every $R>0$,
$$
\sup\{|F\left(\xi, X\right)|: 0< |\xi|\leq R,\  |X|\leq R\} < \infty.
$$
\end{enumerate}
\par 
Let $u$ and $v$ be, respectively, a subsolution and a supersolution of 
$$
u_t + F\left(\na u, \na^2 u\right)=0
$$
 in $\Om\times (0,\infty)$. Assume that
\begin{enumerate}[(i)]
\item $u(x,t) \leq K(|x|+1)$, $v(x,t) \geq -K (|x|+1)$, for some $K>0 $  independent of $(x,t)\in\Om\times (0,\infty)$;
\item there is a modulus $\om$ such that 
$$
u^*(x,t) - v_*(y,t) \leq \om(|x-y|),
$$
for all  $(x,y,t)\in \pa\left(\Om\times\Om\right)\times(0,\infty)\cup \left(\Om\times\Om\right)\times\{0\}$;
\item $u^*(x,t) - v_*(y,t) \leq K(|x-y| + 1)$ on $\pa\left(\Om\times\Om\right)\times(0,\infty)\cup \left(\Om\times\Om\right)\times\{0\}$, for some $K>0$ independent of $(x,y,t)$.
\end{enumerate}
Then, it holds that 
$$
u^*\leq v_* \ \mbox{ on } \ \ol\Om\times (0,\infty).
$$
\end{thm}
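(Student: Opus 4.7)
The plan is the classical viscosity-solution contradiction argument via doubling of variables, carefully adapted to the singularity of $F$ at $\xi=0$. Suppose for contradiction that $\sup_{\ol\Om\times(0,\infty)}(u^*-v_*)>0$. Fix $T>0$ large, restrict to $\ol\Om\times(0,T]$, and for parameters $\alpha,\gamma>0$ and $\beta>0$ small consider the penalized function
\begin{equation*}
\Phi_{\alpha,\gamma,\beta}(x,y,t)=u^*(x,t)-v_*(y,t)-\tfrac{\alpha}{2}|x-y|^2-\gamma\bigl(|x|^2+|y|^2\bigr)-\tfrac{\beta}{T-t}.
\end{equation*}
The quadratic term in $\gamma$, together with the linear-growth assumption (i), forces $\Phi$ to attain a maximum at some $(\bar x,\bar y,\bar t)\in\ol\Om\times\ol\Om\times(0,T]$. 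Standard estimates (for example by comparing $\Phi$ at its maximum with $\Phi$ evaluated on the diagonal) yield $\alpha|\bar x-\bar y|^2\to 0$ as $\alpha\to\infty$ with $\gamma,\beta$ fixed, and one can arrange that for suitable small $\gamma,\beta$ the maximum value stays bounded below by a positive constant.

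The next step is to rule out that $(\bar x,\bar y,\bar t)$ lies on the parabolic boundary: hypothesis (ii) says that on $\partial(\Om\times\Om)\times(0,T]\cup(\Om\times\Om)\times\{0\}$ the gap $u^*-v_*$ is controlled by the modulus $\omega(|x-y|)$, while hypothesis (iii) provides linear control so that the quadratic penalty dominates boundary contributions as $\alpha\to\infty$. Combined, these imply that for $\alpha$ large enough (and $\gamma,\beta$ small) the maximum is realized at an \emph{interior} point of $\Om\times\Om\times(0,T)$.

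At such an interior maximum, the parabolic Crandall--Ishii theorem on sums produces numbers $a,b\in\RR$ and matrices $X,Y\in\cS^N$ with
\begin{equation*}
(a,p,X)\in\overline{\mathcal P}^{2,+}u^*(\bar x,\bar t),\qquad (b,q,Y)\in\overline{\mathcal P}^{2,-}v_*(\bar y,\bar t),
\end{equation*}
where $p=\alpha(\bar x-\bar y)+2\gamma\bar x$, $q=\alpha(\bar x-\bar y)-2\gamma\bar y$, $a-b=\beta/(T-\bar t)^2$, and $X\le Y+O(\gamma)$ in the sense of $\cS^N$. In the \emph{regular regime} $p\ne 0$ and $q\ne 0$ one applies the sub- and supersolution inequalities \eqref{generic parabolic subsolution}--\eqref{generic parabolic supersolution} and exploits continuity of $F$ on $(\RR^N\setminus\{0\})\times\cS^N$ together with (degenerate) ellipticity and $X\le Y+O(\gamma)$ to get $a+F(p,X)\le 0\le b+F(q,Y)$, which combined with $p\to 0$, $q\to 0$, $|X|,|Y|$ staying bounded (via (b)), and $a-b>0$ produces a contradiction as $\alpha\to\infty$, $\gamma,\beta\to 0$ in the correct order.

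The genuine obstacle is the \emph{singular regime} where $p$ or $q$ vanishes at $(\bar x,\bar y,\bar t)$, so that the equation cannot be evaluated directly. This is where assumption (a), namely $F_*(0,0)=F^*(0,0)\in\RR$, is decisive: using the definition of semicontinuous envelopes and letting $(\bar x,\bar y)$ approach a point where $p=q=0$ while $|X|,|Y|$ tend to zero (which can be arranged by further reducing $\alpha$ or by a secondary penalization of $|x-y|^4$ type to ensure the Hessians also shrink), both $F_*(p,X)$ and $F^*(q,Y)$ converge to the common value $F_*(0,0)=F^*(0,0)$, so the sub/supersolution inequalities again combine with $a-b=\beta/(T-\bar t)^2>0$ to yield a contradiction. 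Iterating the limits $\alpha\to\infty$, $\gamma\to 0$, $\beta\to 0$ then forces the original assumption to be false, so $u^*\le v_*$ throughout $\ol\Om\times(0,\infty)$. The delicate point of the whole argument is indeed the singular regime: the correct choice of the penalization (and possibly of higher-order terms) so that the matrix bounds decay simultaneously with the gradients is what makes assumption (a) usable.
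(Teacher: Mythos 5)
The paper states this theorem and cites \cite[Theorem 2.1]{GGIS} without reproducing a proof, so there is no internal argument to compare against; I can only judge whether your sketch would stand on its own as a reconstruction of the GGIS comparison principle. The overall architecture you describe --- doubling of variables with a spatial localization, ruling out parabolic-boundary maxima via (ii)--(iii), applying the parabolic theorem on sums, and treating separately the regular regime ($p,q\neq 0$) and the singular regime --- is indeed the shape of the GGIS argument. But the execution has genuine gaps precisely at the delicate step, namely the handling of the singularity at $\xi=0$, and those gaps are not cosmetic.

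First, you begin with the quadratic penalty $\tfrac{\alpha}{2}|x-y|^2$ but then need the Hessians $X,Y$ produced by the theorem on sums to vanish when $\bar x=\bar y$. With the quadratic penalty the Hessian of $\tfrac{\alpha}{2}|x-y|^2$ in $x$ is $\alpha I$, which \emph{grows} as $\alpha\to\infty$; the Crandall--Ishii estimate then only gives $X\le O(\alpha)$, not $X\to 0$. Your remedy --- ``further reducing $\alpha$'' --- is not available, since $\alpha$ must go to infinity for the doubling to work, and your mention of ``a secondary penalization of $|x-y|^4$ type'' is offered as an afterthought rather than as the structural choice it must be. In GGIS the penalty must be a function $\alpha\,\Theta(x-y)$ with $\Theta$, $D\Theta$, $D^2\Theta$ all vanishing at the origin (for instance $\Theta(z)=|z|^4$) \emph{from the outset}, so that when $\bar x=\bar y$ the matrix $A=D^2_{(x,y)}\psi$ feeding into the theorem on sums is entirely of order $\gamma$, forcing $|X|,|Y|=O(\gamma)$. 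Only then is hypothesis (a) usable.

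Second, with the localization term $\gamma(|x|^2+|y|^2)$, your own formulas give $p=\alpha D\Theta(\bar x-\bar y)+2\gamma\bar x$ and $q=\alpha D\Theta(\bar x-\bar y)-2\gamma\bar y$. When $\bar x=\bar y$ these are $2\gamma\bar x$ and $-2\gamma\bar x$, which are generically \emph{nonzero} and \emph{different} for $\gamma>0$. So the case ``$p=q=0$'' you want to invoke (a) on does not actually occur until after $\gamma\to 0$, and the regular regime has $p\neq q$ with both tending to zero, where continuity of $F$ away from $0$ alone does not give you $F(p,X)\ge F(q,Y)-o(1)$ without knowing more. The order of the limits $\alpha\to\infty$, $\gamma\to 0$, $\beta\to 0$ is therefore not a detail one can defer --- it is load-bearing, and your sketch does not pin it down. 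Finally, the parenthetical claim that (b) ensures $|X|,|Y|$ stay bounded misattributes the role of (b): (b) bounds $F$ on compacta of $(\xi,X)$; the bound on $X,Y$ comes from the penalty's Hessian via the theorem on sums. Taken together, the sketch identifies the right obstruction but does not actually overcome it; a complete proof would need the quartic-type penalty built in from the start and a careful bookkeeping of the three limits so that $p,q,X,Y$ all contract to the singular point $(0,0)$ compatibly, which is exactly the content of the GGIS proof.
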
 

\begin{rem}
\label{assumptions C}
{\rm
Note that (ii) is equivalent to the condition $u^* \leq v_*$ on $\pa\Om\times(0,\infty)$ and that (i) and (iii) are unnecessary, if $\Om$ is bounded. 
\par
It is also evident that, if $u^*$ and $v_*$ are bounded, then (i) and (iii) are satisfied.
}
\end{rem}

We now state a general comparison principle for elliptic equations, which is a corollary of \cite[Theorem 2.2]{Sat}, that treats general equations.

\begin{thm}
\label{sato comparison}
Let $F:\RR^N\setminus\{0\}\times \cS^N\to \RR$ be continuous, (degenerate) elliptic and such that:
\begin{enumerate}[(a)]
\item
$$
-\infty < F_*\left(0,0\right) = F^*\left(0,0\right) < +\infty;
$$

\item
 for every $R>0$,
$$
\sup\{|F\left(\xi, X\right)|: 0< |\xi|\leq R,\,|X|\leq R \} < \infty;
$$
\item for every $R>\rho>0$, there exists a modulus $\si$ such that
$$
|F\left(x,r,\xi,X\right)-F\left(x,r, \eta, X\right)| \leq \si(|\xi-\eta|),
$$
for all $x\in\Om$, $r\in\RR$, $\rho \leq |\xi|, |\eta|\leq R$, $|X|\leq R$.
\label{f.5}
\end{enumerate}
\par 
Let $u$ and $v$ be, respectively, a subsolution and a supersolution of 
$$
u + F\left( \na u, \na^2 u\right) =0 \ \mbox{ in } \ \Om.
$$
\par 
Moreover, suppose that
\begin{enumerate}[(i)]
\item $u(x) \leq K(|x|+1)$, $v(x) \geq -K (|x|+1)$, for some $K>0 $  independent of $x\in\Om$;
\item there is a modulus $\om$ such that 
$$
u^*(x) - v_*(y) \leq \om(|x-y|), \ \mbox{ for all } \ (x,y)\in \pa\left(\Om\times\Om\right);
$$
\item $u^*(x) - v_*(y) \leq K(|x-y| + 1)$ on $\pa\left(\Om\times\Om\right)$, for some $K>0$ independent of $(x,y)\in\pa\left(\Om\times \Om\right)$.
\end{enumerate}
\par
Then, it holds that
$$
u^*\leq v_*\ \mbox{ on } \ \Om.
$$
\end{thm}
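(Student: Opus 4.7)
The plan is to deduce the statement directly from \cite[Theorem 2.2]{Sat}, which supplies a comparison principle for viscosity sub- and supersolutions of general (possibly singular) elliptic equations of the form $G(x,u,\na u,\na^2 u)=0$. Setting $G(x,r,\xi,X) = r + F(\xi,X)$, the monotonicity of $G$ in $r$ with rate $1$ together with the (degenerate) ellipticity of $F$ supplies the structural ingredients required in Sato's framework, while hypotheses (a), (b), (c) are precisely the continuity conditions which allow one to handle the singularity of $F$ at $\xi=0$. The boundary and growth assumptions (i)-(iii) translate directly into the corresponding assumptions in \cite{Sat}: (ii) is the standard comparison condition on $\pa\Om$, whereas (i) and (iii) supply the sublinear control at infinity that is needed when $\Om$ is unbounded.

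It is instructive to sketch the mechanism underlying Sato's result in our situation. Arguing by contradiction, assume that $M:=\sup_{\ol\Om}(u^*-v_*)>0$. Introduce the doubling-of-variables functional
$$
\Phi_{\al,\eta}(x,y)=u^*(x)-v_*(y)-\frac{\al}{2}|x-y|^2-\eta\left(\sqrt{1+|x|^2}+\sqrt{1+|y|^2}\right),
$$
with $\eta>0$ small, so that a global maximizer $(x_\al,y_\al)\in\ol\Om\times\ol\Om$ exists (this is guaranteed by (i) and (iii)). A standard argument shows that, as $\al\to\infty$, such a maximizer must lie in the interior $\Om\times\Om$ (by (ii)) and that $\al|x_\al-y_\al|^2\to 0$. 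The Crandall--Ishii lemma then furnishes symmetric matrices $X_\al\le Y_\al$ and a common vector $\xi_\al=\al(x_\al-y_\al)$ (up to an $O(\eta)$ perturbation) that appear in the second-order subjet of $u^*$ at $x_\al$ and in the superjet of $v_*$ at $y_\al$. Subtracting the viscosity inequalities at $(x_\al,y_\al)$ leads to an inequality of the form
$$
u^*(x_\al)-v_*(y_\al)+F(\xi_\al,Y_\al)-F(\xi_\al,X_\al)\leq o_\eta(1).
$$
In the regime where $\xi_\al$ stays bounded away from $0$, condition (c) governs the gradient perturbations, (b) provides the requisite local boundedness, and the degenerate ellipticity of $F$ together with $X_\al\le Y_\al$ contradicts $M>0$ after sending $\al\to\infty$ and $\eta\to 0^+$.

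The hard part is the degenerate case $\xi_\al\to 0$ along a subsequence, in which (c) is no longer available. Here assumption (a) is decisive: because $F_*(0,0)=F^*(0,0)$ is finite, the lower and upper semicontinuous envelopes appearing in the subsolution and supersolution inequalities converge to a common finite value, so that the terms $F(\xi_\al,X_\al)$ and $F(\xi_\al,Y_\al)$ can both be passed to the limit. Combined with $X_\al\le Y_\al$ and (degenerate) ellipticity, this still yields the required contradiction. This is exactly the content of Sato's theorem specialized to our $G$, and completes the reduction. I expect that verifying the degenerate-limit step is the most delicate point, whereas the remaining checks amount to matching our hypotheses with the hypothesis list of \cite[Theorem 2.2]{Sat}.
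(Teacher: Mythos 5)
Your approach matches the paper exactly: the paper states Theorem~\ref{sato comparison} with no separate proof, simply remarking that it is ``a corollary of \cite[Theorem 2.2]{Sat}," and the reduction you carry out --- set $G(x,r,\xi,X)=r+F(\xi,X)$, check monotonicity in $r$ and degenerate ellipticity, and match hypotheses (a)--(c) and (i)--(iii) against Sato's hypothesis list --- is precisely what the paper intends. Your supplementary sketch of Sato's doubling-of-variables mechanism is a useful orientation; one small technical caveat (which you yourself flag as the delicate point) is that, to handle the degenerate regime $\xi_\alpha\to 0$, singular-equation comparison arguments such as those of Sato and of Giga--Goto--Ishii--Sato typically replace the quadratic penalization $|x-y|^2$ by a quartic one $|x-y|^4$, so that the second-order matrices furnished by the Crandall--Ishii lemma collapse along with the gradient, and only then does assumption (a) close the argument.
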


\begin{rem}
\label{assumptions on F}
{\rm
Observe that assumptions $(a)$ and $(b)$ of Theorem \ref{sato comparison} are fulfilled, if both $F_*$ and $F^*$ are continuous in their variables. In particular, the supremum in $(b)$ is less than or equal to
$$
\sup\{|F_*(\xi,X)| + |F^*(\xi,X)|: |\xi|,\,|X| \leq R\}.
$$
}
\end{rem}

\subsection{Strong maximum principles}
\label{ssec:maximum}

In Chapter \ref{ch:applications} we shall use the strong maximum principle for \eqref{G-heat} and \eqref{G-elliptic}.  Here, we state results for a quite large class of differential operators. 
\par
We start with the next theorem, which can be seen as a corollary of \cite[Corollary 2.3]{DaL}.

 \begin{thm}
 \label{dalio maximum}
 Assume that $F:\RR^N\setminus\{0\} \times \cS^N\to\RR$ is lower semicontinuous and (degenerate) elliptic. Moreover, assume that $F$ satisfies the following requirements:
 \begin{enumerate}[(a)]
 \item there exists $\rho_0 > 0 $ such that , for any choice of $0 < |s|, |\xi| <\rho_0$, there exists $\ga_0\geq 0$ such that
 $$
 s + F\left(\xi, I-\ga \xi\otimes \xi\right) >0
 $$
 holds for every $\ga\geq \ga_0>0$;
 \item for all $\eta >0 $ there exist a function $\varphi :(0,1) \to (0,\infty)$, $\ve_\eta>0$ and $\ga_0 \geq 0$ such that for all $\la \in(0, \ve_\eta]$ and $\ga > \ga_0$,
 $$
 \la s + F_*\left(\la \xi, \la \left(I - \ga \xi\otimes \xi \right)\right) \geq \varphi (\la) \left[s+F_*\left(\xi,\left( I -\ga \xi\otimes \xi\right)\right)\right]
 $$
 holds for all  $0< |\xi|\leq \eta$, $|s|\leq \eta$.
 \item there exists $\de_0>0$ such that	
 $$
 1+F_*\left(0, \de I\right) >0 
 $$
 for all $0< \de <\de_0$.
 \item for all $\eta >0$ there exist $\varphi:(0,1) \to (0,\infty)$, $\ve_\eta>0$ such that for each $K>0$ and for each $\la\in (0,\ve_\eta]$, then
 \begin{equation*}
 \la s + F_*\left(2 K \la (y-x), 2K \la I\right) \geq\\ \varphi (\la) \left[s + F_*\left(2K (y-x), 2K I\right)\right],
 \end{equation*}
 holds for all $-\eta \leq s\leq 0$.
 \end{enumerate}
 \par
 Let $u$ be a viscosity subsolution of \eqref{generic parabolic equation}
 in $\Om\times (0,\infty)$.  Suppose that $u$ achieves a  maximum at $(x_0,t_0)\in \Om\times (0,\infty)$. 
 \par
 Then $u$ is constant on the set of all points, which can be
connected to $(x_0,t_0)$ by a simple continuous curve in $\Om\times(0,t_0)$ along which the $t$-coordinate is nondecreasing.
 \end{thm}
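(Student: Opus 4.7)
The plan is to obtain this as a direct corollary of Da Lio's strong maximum principle \cite[Corollary 2.3]{DaL}, which is formulated precisely for parabolic operators that are only lower semicontinuous in the gradient and singular at $\xi=0$. The four structural conditions (a)--(d) are the abstract hypotheses required to run the standard barrier/propagation argument in this singular viscosity setting, so the real task is simply to check that these conditions line up with those in \cite{DaL}.

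To make transparent which role each assumption plays, I would briefly recall the mechanics of the strong maximum principle. Assume for contradiction that $u$ attains its maximum $M$ at $(x_0,t_0)$ but is not constant along some admissible curve. A standard continuity argument produces a parabolic cylinder $B_r(\bar x)\times(t_1,t_2)$, with $\bar x\neq x_0$, on part of whose lateral boundary $u$ is strictly below $M$, while $u(x_0,t_0)=M$. One then tests $u$ against a radial barrier of the form
\begin{equation*}
\phi(x,t) = M - \eta\bigl[e^{-\ga\,|x-\bar x|^2} - e^{-\ga r^2}\bigr],
\end{equation*}
whose Hessian is a scalar multiple of $I-\ga\,\xi\otimes\xi$, with $\xi=\na\phi/|\na\phi|$. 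Hypothesis (a) ensures that, for $\ga$ large, $\phi$ is a classical supersolution of $u_t + F=0$ wherever $\na\phi\neq 0$; hypothesis (c) handles the radial critical point where $\na\phi$ vanishes; hypotheses (b) and (d) are the quasi-homogeneity conditions that let one rescale $u-\phi$ by a factor $\la\in(0,\ve_\eta]$ without destroying the supersolution property, as required in the comparison step. Together they are the abstract conditions listed in \cite[Corollary 2.3]{DaL}.

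Having identified the role of each assumption, the theorem follows by direct invocation of that corollary, which yields exactly the stated propagation of the maximum along admissible curves. The main obstacle I expect is purely bookkeeping: I have to match the precise functional forms of (a) and (c) --- both tailored to the Hessian of a radial exponential barrier --- against the abstract hypotheses used in \cite{DaL}, and confirm that the subsolution notion employed here via the lower semicontinuous envelope $F_*$ (as set up in Section \ref{sec:definitions}) coincides with the one adopted there.
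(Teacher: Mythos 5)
Your proposal is correct and takes essentially the same route as the paper: the paper states Theorem~\ref{dalio maximum} as a direct corollary of \cite[Corollary 2.3]{DaL} and gives no separate proof, which is exactly the invocation you make. Your additional commentary on the role of hypotheses (a)--(d) in the barrier/propagation argument is a reasonable gloss but is not part of the paper's treatment, which relies entirely on the citation.
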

 
 \begin{rem}
  \label{strong minimum principle}
  {
\rm
Note that a {\it strong minimum principle} also holds for \eqref{generic parabolic equation}, once the obvious necessary changes in Theorem \ref{dalio maximum} are made.
 }
 \end{rem}

We now give the elliptic version of the strong maximum principle, which will be applied to solutions of equation \eqref{G-elliptic}. We report a corollary of	\cite[Corollary 1]{BA-DaL}, for a general operator $G=G\left(u, \na u, \na^2 u \right)$. Then, in next section, we shall apply to the case of the game-theoretic $p$-laplacian.
{
\thm
\label{bardi dalio maximum}
Let $\Om$ be a connected set, $G:\RR\times \RR^N\setminus\{0\}\times \cS^N \to \RR$.  Assume that $G$ is lower semicontinuous and satisfies
\enumerate[(a)]{
 \item for any  $s,r \in\RR$, $\xi\in\RR^N\setminus\{0\}$ and $X,Y\in\cS^N$, it holds that
  $$
  G \left(r,\xi,X\right)\leq G\left(s,\xi, Y\right)  \mbox{ if } \ s\geq r \ \mbox{ and } \ Y\leq X;
 $$
 \item for every $\eta > 0$, there exists a function $\varphi: (0,1) \to (0,1)$ such that 
 $$
 G\left( \la r, \la \xi, \la X \right) \geq \varphi(\la) G\left( r, \xi, X \right)
 $$
 holds for all $r\in[-1,0]$, $0 < |\xi| \leq \eta$, $|X|\leq \eta$;
 \item
there exists $\rho_0 > 0$ such that for any choice of $0 < |\xi| \leq \rho_0$, 
 $$
 G\left( 0, \xi, I - \ga \xi \otimes \xi\right) > 0 \ \mbox{ for } \ \ga > \ga_0
 $$
 holds for some $\ga_0\geq 0$.
}
\par
 Suppose that $u$ be a viscosity subsolution of 
$$
G\left(u, \na u, \na^2 u \right) = 0
$$
in $\Om$ that achieves a nonnegative maximum in $\Om$. Then $u$ is constant in $\Om$.
}

\section{The case of the game-theoretic $p$-laplacian}
\label{sec:game viscosity}

Now, we consider the case of the game-theoretic $p$-laplacian, that is the differential equations \eqref{G-heat} and \eqref{G-elliptic}. To begin with, we observe that $\De_p^G$ can be formally seen as a singular, quasi-linear operator $F: \left(\RR^N\setminus\{0\}\right)\times \cS^N \to \RR$, where
\begin{equation}
\label{p-laplace-coefficients}
F(\xi,X)=-\tr[A(\xi) X],
\end{equation}
with
\begin{equation}
\label{G-coefficients}
 A(\xi)=\frac1{p}\,I+ \left(1-\frac{2}{p} \right)\,\frac{\xi\otimes \xi}{|\xi|^2}
\end{equation}
for $\xi\in\RR^N\setminus\{0\}$.
Here, $I$ denotes the $N\times N$ identity matrix.
\par
Observe that, if $\xi\neq 0$, $F=F_*=F^*$ while, if $\xi=0$, we can explicitly calculate the semicontinuous envelopes of $F$ (see \cite{AP-2018} or \cite{Do}). It holds that
\begin{eqnarray}
\label{p-laplacian-inf-env}
p\,F_*(0,X)=-\tr(X)-\min(p-2,0)\,\la(X)-\max(p-2,0)\,\La(X),\\
\label{p-laplacian-sup-env}
p\,F^*(0,X)=-\tr(X)-\max(p-2,0)\,\la(X)-\min(p-2,0)\,\La(X),
\end{eqnarray}
where $\la(X)$ and $\La(X)$ are the maximum and minimum eigenvalue of $X$.
Since $F$ has a bounded discontinuity at $\xi=0$, we have that 
\begin{equation}
\label{bounded discontinuity}
-\infty < F_*\left(\xi,X\right) \leq F^*\left(\xi,X\right) < \infty,
\end{equation}
for any $\left(\xi, X\right)\in \RR^N\times \cS^N$.
\par
Note that $F$ is \emph{uniformly elliptic}, in the case $p\in (1,\infty)$, since
$$
\min(1/p',1/p)\,I\leq A(\xi)\leq\max(1/p',1/p)\,I,
$$
and merely (degenerate) elliptic in the case $p=1, \infty$. Moreover, for $\xi\neq 0$, $F$ is a linear operator in the variable $X$.

In the case of \eqref{G-heat}, \eqref{generic parabolic subsolution} and \eqref{generic parabolic supersolution} are replaced by the following. We say that an upper semicontinuous function in $\Om\times (0,\infty)$, $u:\Om\times(0,\infty)\to\RR$, is a {\it viscosity subsolution} of \eqref{G-heat} if, for every $(x,t,\phi)\in\Om\times\left(0,\infty\right)\times C^2\left(\Om\times (0,\infty)\right)$ such that $u-\phi$ attains its maximum at $(x,t)$, then
\begin{equation}
\label{parabolic subsolution}
\begin{cases}
\phi_t(x,t)-\De_p^G \phi(x,t) \leq 0\ \mbox{ if } \ \na\phi(x,t)\neq 0\\
\phi_t(x,t) + F_*\left(0,\na^2\phi(x,t)\right) \leq 0\ \mbox{ if } \na \phi(x,t) =0,
\end{cases}
\end{equation}
where $F_*$ is given by \eqref{p-laplacian-inf-env}.

We say that a lower semicontinuous function in $\Om\times(0,\infty)$, $v:\Om\times(0,\infty)\to\RR$, is a {\it viscosity supersolution} of \eqref{G-heat} if, for every $(x,t,\psi)\in\Om \times (0,\infty)\times C^2\left(\Om\times (0,\infty)\right)$, such that $v-\psi$ attains its minimum at $(x,t)$, then
\begin{equation}
\label{parabolic supersolution}
\begin{cases}
\psi_t(x,t) - \De_p^G \psi(x,t) \geq 0 \ \mbox{ if } \na\psi(x,t)\neq 0\\
\psi_t(x,t) + F^*\left(0,\na^2\psi(x,t)\right) \geq 0 \ \mbox{ if } \ \na\psi(x,t)=0,
\end{cases}
\end{equation}
where $F^*$ is given by \eqref{p-laplacian-sup-env}.

A function $u$ that is both a viscosity subsolution and viscosity supersolution is called a {\it viscosity solution} of \eqref{G-heat}.


Here is the case of the {\it resolvent equation} \eqref{G-elliptic}. We say that an upper semicontinuous function in $\Om$, $u:\Om\to\RR$, is a {\it viscosity subsolution} of \eqref{G-elliptic} if, for every $(x,\phi)\in\Om\times C^2\left(\Om\right)$ such that $u-\phi$ attains its maximum at $x$, then
\begin{equation*}
\label{elliptic subsolution}
\begin{cases}
u(x) - \ve^2 \De_p^G\phi(x)\leq 0 \ \mbox{ if } \ \na\phi(x) \neq 0;\\
u(x) +\ve^2 F_*\left(0, \na^2 \phi(x)\right) \leq 0 \ \mbox{ if } \ \na\phi(x)=0,
\end{cases}
\end{equation*}
where $F_*$ is given by \eqref{p-laplacian-inf-env}.


We say that a lower semicontinuous function $v:\Om\to\RR$, is a {\it viscosity supersolution} of \eqref{G-elliptic} if, for every $(x,\psi)\in\Om\times C^2\left(\Om\right)$, such that $v-\psi$ attains its minimum at $x$, then
\begin{equation*}
\label{elliptic supersolution}
\begin{cases}
v(x) - \ve^2 \De_p^G \psi(x)\geq 0 \ \mbox{ if } \ \na \psi(x) \neq 0,\\
v(x) + \ve^2 F^*\left(0, \na^2 \psi(x)\right) \geq 0 \ \mbox{ if } \ \na \psi(x) =0,
\end{cases}
\end{equation*}
where $F^*$ is given by \eqref{p-laplacian-sup-env}.
A function $u$ is a viscosity solution of \eqref{G-elliptic} if it is both a viscosity subsolution and a viscosity supersolution.

\begin{rem}
{\rm
Observe that, since $\De_p^G$ is degenerate elliptic, then both Lemmas \ref{consistency} and \ref{extension lemma} are valid.
}
\end{rem}

\subsection{Comparison and strong maximum principles}
\label{ssec:game comparisons}

In this section, we provide the ad hoc comparison results that will be applied in the rest of the thesis. 

\begin{cor}[Comparison principle for \eqref{G-heat}]
\label{cor:parabolic comparison}
Let $\Om$ be a domain in $\RR^N$, with non-empty boundary $\Ga$. Let $u$ and $v$ be two bounded viscosity solutions of \eqref{G-heat} in $\Om\times(0,\infty)$. Assume that $u$ and $v$ are continuous on $\Ga\times (0,\infty)$ and on $\Om\times\{0\}$.
\par
Then, if $u\leq v$ on $\Ga\times (0,\infty)\cup \Om\times\{0\}$, it holds that
$$
u\leq v \ \mbox{ on } \ \ol\Om\times (0,\infty).
$$
\end{cor}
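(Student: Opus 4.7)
The plan is to realize this as a direct application of Theorem~\ref{GGIS comparison}, with the operator $F$ associated to $\De_p^G$ as in \eqref{p-laplace-coefficients}--\eqref{G-coefficients}. The PDE $u_t = \De_p^G u$ reads $u_t + F(\na u, \na^2 u) = 0$ with $F(\xi,X) = -\tr[A(\xi)X]$, and on $(\RR^N\setminus\{0\})\times \cS^N$ this $F$ is continuous and degenerate elliptic, since
\[
\min(1/p,1/p')\, I \le A(\xi) \le \max(1/p,1/p')\, I.
\]
So the main task is to check the structural hypotheses (a)--(b) on $F$ and the growth/boundary hypotheses (i)--(iii) on $u,v$.

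For (a), I would read off \eqref{p-laplacian-inf-env} and \eqref{p-laplacian-sup-env} at $X=0$ to obtain $F_*(0,0) = F^*(0,0) = 0$, both finite. For (b), the uniform bound $|A(\xi)| \le \max(1/p,1/p')$ for all $\xi\neq 0$ yields $|F(\xi,X)| \le \max(1/p,1/p')\, |X|$, so $F$ is bounded whenever $|X|\le R$, irrespective of $\xi\neq 0$. Both conditions are consequences of the fact, already noted in \eqref{bounded discontinuity}, that $\Delta_p^G$ has only a bounded discontinuity at the gradient singularity.

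For (i) and (iii), I would use that $u$ and $v$ are bounded on $\Om\times(0,\infty)$: taking $K := \|u\|_\infty + \|v\|_\infty + 1$ makes the linear-growth bounds in $|x|$ and in $|x-y|$ automatic. For (ii), I would invoke Remark~\ref{assumptions C}: since $u$ and $v$ are continuous on the parabolic boundary $\Ga\times(0,\infty) \cup \Om\times\{0\}$ and satisfy $u\le v$ there, we have $u^* \le v_*$ on that set, and the remark converts this into the required joint modulus on $\pa(\Om\times\Om)\times(0,\infty) \cup (\Om\times\Om)\times\{0\}$.

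Having verified all hypotheses, Theorem~\ref{GGIS comparison} delivers $u^* \le v_*$ on $\ol\Om\times(0,\infty)$, which, because $u$ and $v$ are solutions and hence already semicontinuous, collapses to $u\le v$ on $\ol\Om\times(0,\infty)$. The most delicate step is arguably (ii): the joint modulus must be produced uniformly on a possibly unbounded parabolic boundary, and I would rely on the reformulation given by Remark~\ref{assumptions C} rather than trying to build $\om$ by hand. One should also be careful that $F=F_*=F^*$ on $\xi\neq 0$, while at $\xi=0$ the semicontinuous envelopes of $F$ must be used; but this is built into the definitions \eqref{parabolic subsolution}--\eqref{parabolic supersolution} and into the statement of Theorem~\ref{GGIS comparison}, so no further adjustment is required.
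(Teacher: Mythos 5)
Your proposal is correct and follows essentially the same route as the paper: both reduce the corollary to an application of Theorem~\ref{GGIS comparison}, checking the structural conditions (a)--(b) from the bounded discontinuity of $F$ at $\xi=0$ (cf.\ \eqref{bounded discontinuity} and Remark~\ref{assumptions on F}) and verifying (i)--(iii) from boundedness and continuity on the parabolic boundary via Remark~\ref{assumptions C}. Your write-up is slightly more explicit in computing $F_*(0,0)=F^*(0,0)=0$ and bounding $|F(\xi,X)|$, but this is a matter of level of detail, not a different argument.
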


\begin{proof}
It is enough to observe that we can apply Theorem \ref{GGIS comparison} to \eqref{G-heat}. Indeed, (i), (ii) and (iii) of Theorem \ref{GGIS comparison} are satisfied, thanks to Remark \ref{assumptions C} and the assumption for $u-v$ on the parabolic boundary. On the other hand,  (a) and (b) of Theorem \ref{GGIS comparison}, follow from \eqref{bounded discontinuity} and Remark \ref{assumptions on F}, since, in the case of $F_*$ and $F^*$ given by \eqref{p-laplacian-inf-env} and \eqref{p-laplacian-sup-env}, both $F_*$ and $F^*$ are continuous in their variables.
\end{proof}

A standard obvious corollary to comparison principle is the uniqueness of solutions of initial-boundary problems. We state the one of our interest.

\begin{cor}[Uniqueness of solutions of \eqref{G-heat}-\eqref{boundary}]
\label{cor:parabolic uniqueness}
Let $\Om$ be as in Corollary \ref{cor:parabolic comparison}. Then, the bounded viscosity solution of \eqref{G-heat}-\eqref{boundary} is unique.
\end{cor}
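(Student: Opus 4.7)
The strategy is the standard reduction of a uniqueness statement to a twofold application of the comparison principle (Corollary \ref{cor:parabolic comparison}), already established in the previous subsection. There is essentially nothing to invent here: the whole point of having proved the comparison result first is to make uniqueness an immediate consequence.

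Concretely, suppose that $u_1$ and $u_2$ are two bounded viscosity solutions of \eqref{G-heat}--\eqref{boundary}. Both functions attain the prescribed data on the parabolic boundary, namely $u_i = 0$ on $\Om \times \{0\}$ and $u_i = 1$ on $\Ga \times (0,\infty)$ for $i=1,2$. In particular, both $u_1$ and $u_2$ are continuous on $\Ga \times (0,\infty)$ and on $\Om \times \{0\}$, since the boundary/initial data are constants, and on this parabolic boundary one has $u_1 = u_2$, so in particular $u_1 \le u_2$ and $u_2 \le u_1$ there.

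I would then invoke Corollary \ref{cor:parabolic comparison} with the pair $(u,v) = (u_1,u_2)$ to obtain $u_1 \le u_2$ on $\ol\Om \times (0,\infty)$, and a second time with the pair $(u,v) = (u_2,u_1)$ to obtain the reverse inequality $u_2 \le u_1$ on $\ol\Om \times (0,\infty)$. Combining the two gives $u_1 \equiv u_2$ on $\ol\Om \times (0,\infty)$, which is the desired uniqueness.

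There is no real obstacle: the only point worth a word of care is checking that the hypotheses of Corollary \ref{cor:parabolic comparison} are verified, and this is straightforward since boundedness of $u_1, u_2$ is assumed and the boundary and initial data are continuous (indeed constant). Everything else is a direct quotation of the previous corollary.
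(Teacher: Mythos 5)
Your proof is correct and is precisely the standard double application of Corollary \ref{cor:parabolic comparison} that the paper has in mind; indeed, the paper omits the proof entirely, introducing the corollary with the remark that it is ``a standard obvious corollary to comparison principle.'' Nothing further is needed.
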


The comparison principle for equation \eqref{G-elliptic} follows from Theorem \ref{sato comparison}, as follows.

\begin{cor}[Comparison principle for \eqref{G-elliptic}]
\label{cor:elliptic comparison}
Assume that $\Om$ is a domain, with non-empty boundary $\Ga$. Let $u$ and $v$ be two bounded viscosity solutions of \eqref{G-elliptic} in $\Om$. Assume that $u$ and $v$ are continuous up to the boundary $\Ga$.
\par
Then, if $u\leq v$ on $\Ga$, it holds that
$$
u\leq v \ \mbox{ on } \ \ol\Om.
$$
\end{cor}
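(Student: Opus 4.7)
The plan is to reduce the statement to an application of Theorem \ref{sato comparison}, exactly parallel to the way Corollary \ref{cor:parabolic comparison} was derived from Theorem \ref{GGIS comparison}. Writing \eqref{G-elliptic} in the form $u + \widetilde F(\nabla u,\nabla^2 u)=0$ with $\widetilde F(\xi,X)=-\ve^2\,\mathrm{tr}[A(\xi)X]$ and $A$ given by \eqref{G-coefficients}, the operator $\widetilde F$ is continuous on $\left(\RR^N\setminus\{0\}\right)\times\cS^N$, (degenerate) elliptic (by the same argument used for $F$ in \eqref{p-laplace-coefficients}), and, by \eqref{p-laplacian-inf-env}-\eqref{p-laplacian-sup-env} and \eqref{bounded discontinuity}, its semicontinuous envelopes $\widetilde F_*$ and $\widetilde F^*$ are continuous functions of $(\xi,X)\in\RR^N\times\cS^N$. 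In view of Remark \ref{assumptions on F}, this immediately yields items $(a)$ and $(b)$ of Theorem \ref{sato comparison}.

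The only condition that needs a dedicated (but routine) verification is $(c)$, since this assumption does not appear in the parabolic statement. For $\rho\leq|\xi|,|\eta|\leq R$, I would use that $\xi\mapsto \xi\otimes\xi/|\xi|^2$ is $C^1$ away from the origin and Lipschitz on $\{|\xi|\ge\rho\}$ with constant depending on $\rho$, so that
\[
\bigl|A(\xi)-A(\eta)\bigr|\le C_{p,\rho}\,|\xi-\eta|.
\]
Since $\widetilde F$ is linear in $X$, this gives $|\widetilde F(\xi,X)-\widetilde F(\eta,X)|\le \ve^2 C_{p,\rho}\,R\,|\xi-\eta|$, i.e.\ a linear modulus $\sigma$, proving $(c)$.

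It remains to check the boundary and growth hypotheses $(i)$--$(iii)$ in Theorem \ref{sato comparison}. Since $u,v$ are bounded on $\Om$, items $(i)$ and $(iii)$ are satisfied trivially, as remarked after Theorem \ref{GGIS comparison}. For $(ii)$, continuity of $u$ and $v$ up to $\Ga$ together with the assumption $u\leq v$ on $\Ga$ imply that $u^*=u$ and $v_*=v$ there, and the difference $u^*(x)-v_*(y)$ admits a modulus of continuity on $\pa(\Om\times\Om)$ (for instance one may take the common modulus of $u|_\Ga$ and $v|_\Ga$ on a compact exhaustion, extended by boundedness elsewhere). Applying Theorem \ref{sato comparison} to the subsolution $u$ and supersolution $v$ of $u+\widetilde F(\nabla u,\nabla^2 u)=0$ yields $u=u^*\leq v_*=v$ on $\ol\Om$, as desired.

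The only step that requires more than bookkeeping is the verification of hypothesis $(c)$; the main obstacle is not conceptual but calculational, namely producing a uniform modulus for the matrix coefficient $A(\xi)$ on annuli $\{\rho\le|\xi|\le R\}$. Everything else is a direct translation of the argument already used for Corollary \ref{cor:parabolic comparison}.
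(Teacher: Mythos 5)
Your proof is correct and takes essentially the same route as the paper's: both reduce to Theorem \ref{sato comparison}, obtain $(a)$ and $(b)$ via Remark \ref{assumptions on F} and the continuity of the envelopes \eqref{p-laplacian-inf-env}--\eqref{p-laplacian-sup-env}, and verify $(c)$ by observing that $F$ (equivalently $\widetilde F = \ve^2 F$) is Lipschitz in $\xi$ on the annulus $\{\rho\le|\xi|\le R,\ |X|\le R\}$ thanks to differentiability of the coefficient matrix $A(\xi)$ away from the origin. You make the Lipschitz estimate for $A(\xi)$ and the boundary/growth checks $(i)$--$(iii)$ a bit more explicit than the paper does, but this is the same argument.
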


\begin{proof}
We need only to ensure that we can apply Theorem \ref{sato comparison} to the case in which $F$ is given by \eqref{p-laplace-coefficients}. In virtue of Remark \ref{assumptions on F}, since $F_*$ and $F^*$ given by \eqref{p-laplacian-inf-env} and \eqref{p-laplacian-sup-env} are continuous, it suffices to verify $(c)$ of Theorem \ref{sato comparison}.  The condition $(c)$ of Theorem \ref{sato comparison} is fulfilled since, away from $\xi = 0$, $F$  is differentiable with respect to $\xi$ and then, in particular, $F$ is Lipschitz continuous (with respect to $\xi$) in the compact set $\{(\xi,X):\rho \leq |\xi| \leq R, |X|\leq R\}$. 
\end{proof}

\begin{cor}[Uniqueness of solutions of \eqref{G-elliptic}-\eqref{elliptic-boundary}]
\label{cor:elliptic uniqueness}
Let $\Om$ be as in Corollary \ref{cor:elliptic comparison}. Then, the bounded viscosity solution of \eqref{G-elliptic}-\eqref{elliptic-boundary} is unique.
\end{cor}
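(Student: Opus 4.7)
The plan is to deduce uniqueness from the comparison principle (Corollary \ref{cor:elliptic comparison}) by the usual double-inequality argument. Let $u_1$ and $u_2$ be two bounded viscosity solutions of \eqref{G-elliptic}-\eqref{elliptic-boundary}. Both are defined on $\ol\Om$, continuous up to the boundary $\Ga$ (since they attain the prescribed continuous boundary datum $1$), and satisfy \eqref{G-elliptic} in the viscosity sense in $\Om$.

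First I would verify that the hypotheses of Corollary \ref{cor:elliptic comparison} are met for the pair $(u_1,u_2)$: both functions are bounded viscosity solutions (a fortiori subsolution and supersolution) of \eqref{G-elliptic} in $\Om$, both are continuous up to $\Ga$, and on the boundary they coincide, since $u_1=u_2=1$ on $\Ga$ by \eqref{elliptic-boundary}. In particular, $u_1 \leq u_2$ on $\Ga$, so Corollary \ref{cor:elliptic comparison} yields $u_1\leq u_2$ on $\ol\Om$. Next, by swapping the roles of $u_1$ and $u_2$ (using the same boundary equality $u_2\leq u_1$ on $\Ga$), the same corollary gives $u_2\leq u_1$ on $\ol\Om$. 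Combining the two inequalities,
\[
u_1 = u_2 \quad \text{on} \ \ol\Om,
\]
which is the claimed uniqueness.

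There is essentially no obstacle beyond checking that Corollary \ref{cor:elliptic comparison} is applicable: the only subtlety is that the corollary is stated for one function being a subsolution and the other a supersolution, so one should explicitly note that a viscosity solution is simultaneously a subsolution and a supersolution (consistent with the definitions in Section \ref{sec:game viscosity}). Since boundedness of each $u_i$ and continuity up to $\Ga$ are built into the notion of bounded viscosity solution of \eqref{G-elliptic}-\eqref{elliptic-boundary} that we are considering, no further regularity or compactness of $\Om$ is needed.
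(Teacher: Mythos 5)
Your proof is correct and is exactly the standard double-inequality argument the paper has in mind; the paper itself gives no explicit proof, calling uniqueness ``a standard obvious corollary to comparison principle.'' One small misreading: Corollary~\ref{cor:elliptic comparison} as stated in the paper already takes $u$ and $v$ to be two bounded viscosity \emph{solutions} (not a subsolution/supersolution pair), so the caveat you flag about passing from solutions to sub/supersolutions is unnecessary here, though of course harmless.
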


We give, as corollary of Theorem \ref{dalio maximum}, the following result. We observe that in \cite{BBP} the {\it strong minimum principle} for \eqref{G-heat} is proved, by means of a weak Harnack inequality.

\begin{cor}[Strong maximum principle for \eqref{G-heat}]
\label{cor:parabolic maximum}
Let $\Om$ be connected. Let $u$ be a viscosity subsolution of \eqref{G-heat} in $\Om\times (0,\infty)$. If $u$ attains its maximum at a point $(x_0,t_0)\in \Om\times(0,\infty)$, then $u$ must be constant in $\ol\Om\times [0,t_0]$.
\end{cor}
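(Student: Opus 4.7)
The plan is to read off this Corollary directly from Theorem \ref{dalio maximum}, applied to the operator $F(\xi,X)=-\tr[A(\xi)X]$ defined by \eqref{p-laplace-coefficients}--\eqref{G-coefficients}, so that \eqref{G-heat} becomes $u_{t}+F(\nabla u,\nabla^{2}u)=0$. The constancy conclusion provided by Theorem \ref{dalio maximum} will then be propagated from $\Omega\times(0,t_{0})$ to $\overline{\Omega}\times[0,t_{0}]$ by using connectedness of $\Omega$ together with the upper semicontinuity of $u$.

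The key structural fact I would exploit in verifying hypotheses (a)--(d) of Theorem \ref{dalio maximum} is the $1$-homogeneity of $\Delta_{p}^{G}$. Since $F$ is linear in $X$ and $A(\lambda\xi)=A(\xi)$ for every $\lambda>0$, one has
\[
F(\lambda\xi,\lambda X)=\lambda F(\xi,X)\quad\text{for }\lambda>0,\ \xi\neq 0,
\]
and, by direct inspection of \eqref{p-laplacian-inf-env}, the analogous identity $F_{*}(0,\lambda X)=\lambda F_{*}(0,X)$ holds for $\lambda>0$. This immediately yields (b) and (d) with the choice $\varphi(\lambda)=\lambda$. The remaining hypotheses (a) and (c) reduce to an elementary computation: the identities $\tr A(\xi)=(N+p-2)/p$ and $\langle A(\xi)\xi,\xi\rangle=(p-1)|\xi|^{2}/p$ give
\[
F(\xi,I-\gamma\,\xi\otimes\xi)=-\frac{N+p-2}{p}+\gamma\,\frac{p-1}{p}|\xi|^{2}\longrightarrow+\infty\quad\text{as }\gamma\to+\infty,
\]
which proves (a); and the same formula at $\xi=0$, $X=\delta I$ yields $F_{*}(0,\delta I)=-(N+p-2)\delta/p$, so that $1+F_{*}(0,\delta I)>0$ for all sufficiently small $\delta>0$, which is (c).

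Theorem \ref{dalio maximum} then asserts that $u$ equals $M:=u(x_{0},t_{0})$ at every point of $\Omega\times(0,t_{0})$ that can be joined to $(x_{0},t_{0})$ by a continuous nondecreasing-$t$ curve. Because $\Omega$ is connected, any such point is reachable (move inside $\Omega$ at constant time and then raise the time coordinate up to $t_{0}$), so $u\equiv M$ on $\Omega\times(0,t_{0})$. Upper semicontinuity of $u$ as a subsolution, combined with the maximality of $M$, then forces the same equality at $t=0$, at $t=t_{0}$ and up to the boundary $\Gamma$, giving $u\equiv M$ on $\overline{\Omega}\times[0,t_{0}]$. The only point where something could genuinely go wrong is the singular set $\{\xi=0\}$ of $F$, which might in principle spoil one of the technical hypotheses of Theorem \ref{dalio maximum}; what rescues the argument is precisely the $1$-homogeneity of $\Delta_{p}^{G}$, which makes the scaling conditions (b) and (d) hold trivially with $\varphi(\lambda)=\lambda$.
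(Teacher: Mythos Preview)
Your proof is correct and follows essentially the same route as the paper: verify hypotheses (a)--(d) of Theorem \ref{dalio maximum} for the operator $F$ in \eqref{p-laplace-coefficients}, using $1$-homogeneity with $\varphi(\la)=\la$ for (b) and (d), and direct computation of $F(\xi,I-\ga\,\xi\otimes\xi)$ and $F_*(0,\de I)$ for (a) and (c). Your value $F_*(0,\de I)=-(N+p-2)\de/p$ is in fact the correct one (the paper records $-\de N/p'$, which does not match \eqref{p-laplacian-inf-env}), but either way condition (c) follows; you also spell out the upper-semicontinuity step extending constancy to $\ol\Om\times[0,t_0]$, which the paper leaves implicit.
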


\begin{proof}
We need to check that $F$ in \eqref{p-laplace-coefficients} verifies assumptions $(a)$-$(d)$ of Theorem \ref{dalio maximum}. Conditions $(b)$ and $(d)$ are fulfilled by choosing $\varphi (\la) = \la$, since $-\De_p^G$ is one-homogeneous.
\par
Given $s, \xi \neq 0 $, condition $(a)$ can be read, in the case of game-theoretic $p$-laplacian, as
\begin{equation*}
p s -\tr\left\{
\left( I + (p-2) \frac{\xi\otimes \xi}{|\xi|^2}\right)
\left( I - \ga \xi\otimes \xi\right)
\right\}
=
ps - ( N + p - 2 ) + \ga (p-1) |\xi|^2 > 0,
\end{equation*}
which is true for any $\ga > \ga_0$, where $\ga_0 = 
\left[
\frac
{ N+p-2 - ps}
{ (p-1)|\xi|^2 }
\right]_+
$ and for $t \in \RR$, $\left[ t \right]_+ = \max\{ 0 , t \}$.
\par
Analogously, condition $(c)$ it is satisfied, since here we have that
$$
1 + F_*\left(0,\de I\right)=
1 - \de N / p' >0,
$$
for any $\de < p'/N$.
\end{proof}

\begin{rem}
\label{minimum principle}
{\rm
From Remark \ref{strong minimum principle} we have that also a strong minimum principle holds for \eqref{G-heat}.
}
\end{rem}

We shall also need an analogous result for equation \eqref{G-elliptic}, as application of Theorem \ref{bardi dalio maximum}. The proof is straightforward and follows the same line as that of Corollary \ref{cor:parabolic maximum}.

\begin{cor}[Strong maximum principle for \eqref{G-elliptic}]
\label{cor:elliptic maximum}
Let $\Om$ be connected and $u$ be a nonnegative (viscosity) solution of \eqref{G-elliptic} in $\Om$. If $u$ attains a maximum at an interior point, then $u$ must be constant in $\Om$.
\end{cor}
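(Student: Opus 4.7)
The plan is to apply Theorem \ref{bardi dalio maximum} with the operator
$$
G(r,\xi,X) = r + \ve^2\, F(\xi,X),
$$
where $F$ is the operator defined in \eqref{p-laplace-coefficients}--\eqref{G-coefficients}, extended at $\xi = 0$ by its lower semicontinuous envelope from \eqref{p-laplacian-inf-env} (so that $G$ is indeed lower semicontinuous, as required). A nonnegative viscosity solution of \eqref{G-elliptic} is in particular a viscosity subsolution of $G(u,\na u, \na^2 u)=0$; if it attains its maximum at an interior point, that maximum is automatically nonnegative, so the conclusion of Theorem \ref{bardi dalio maximum} will yield exactly the statement.

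The verification of hypotheses $(a)$--$(c)$ is the elliptic counterpart of the routine computations carried out for Corollary \ref{cor:parabolic maximum}. For $(a)$, $G$ is affine and strictly increasing in $r$, while for $\xi\neq 0$ the matrix $A(\xi)$ is positive definite, so $Y \leq X$ yields $\tr[A(\xi)(X-Y)] \geq 0$ and thus $G(r,\xi,X) \leq G(r,\xi,Y)$. For $(b)$, the fact that $A(\cdot)$ is $0$-homogeneous in $\xi$ gives
$$
G(\la r, \la\xi, \la X) = \la\, G(r,\xi,X),
$$
so choosing $\varphi(\la) = \la$ produces an equality, which in particular entails the required inequality. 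For $(c)$, using $A(\xi)\xi = \tfrac{p-1}{p}\,\xi$ and $\tr A(\xi) = \tfrac{N+p-2}{p}$, a direct computation gives
$$
G\bigl(0, \xi, I - \ga\, \xi\otimes\xi\bigr) = \frac{\ve^2}{p}\Bigl[\,\ga(p-1)|\xi|^2 - (N+p-2)\,\Bigr],
$$
which is strictly positive whenever $\ga > \ga_0 := (N+p-2)/\bigl[(p-1)|\xi|^2\bigr]$, establishing $(c)$.

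There is essentially no obstacle, the argument being a direct transcription of the parabolic calculation already exhibited in the proof of Corollary \ref{cor:parabolic maximum}. The only genuine adjustment is the appeal to the \emph{nonnegativity} of the solution, which serves to guarantee that the interior maximum is nonnegative, as demanded by Theorem \ref{bardi dalio maximum}; no such sign requirement was present in the parabolic case.
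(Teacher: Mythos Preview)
Your proof is correct and follows essentially the same approach as the paper: both apply Theorem \ref{bardi dalio maximum} with $G(r,\xi,X)=r+\ve^2 F(\xi,X)$, verifying $(a)$ via ellipticity, $(b)$ via the one-homogeneity of $F$ (with $\varphi(\la)=\la$), and $(c)$ by the direct computation already seen in Corollary \ref{cor:parabolic maximum}. Your write-up simply spells out these computations in greater detail and makes explicit the role of nonnegativity in ensuring the interior maximum is nonnegative.
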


\begin{proof}
We apply Theorem \ref{bardi dalio maximum} with $G\left(u, \na u, \na^2 u \right) = u + \ve^2 F \left( \na u, \na^2 u \right)$, where $F$ is given by \eqref{p-laplace-coefficients}. Indeed, conditions $(a)$ and $(c)$ of Theorem \ref{bardi dalio maximum} follow from the uniform ellipticity of $-\De_p^G u$ and the condition $(b)$ is due to the one-homogeneity of $-\De_p^G u$.
\end{proof}

\begin{rem}
\label{maximum principle}
{\rm 
Observe that we can apply Theorem \ref{bardi dalio maximum} also to $G\left(u,\na u,\na^2 u\right) = F\left(\na u,\na^2 u\right)$ where $F$ is given by \eqref{p-laplace-coefficients}. This gives the strong maximum principle for subsolutions of $-\De_p^G u=0$.
}
\end{rem}

\subsection{Hopf-Oleinik lemma}
\label{ssec:hopf lemma}

We conclude this chapter with a version of Hopf-Oleinik boundary point lemma. This lemma will be used in Chapter \ref{ch:applications}. In the recent work \cite[Theorem 4.4]{ABMMZ} it has been given a sharp version of it, involving the so-called {\it pseudo-balls}, defined as follows. Given $a, b , R>0$ and $\om:[0,R] \to [0,\infty)$, the pseudo-ball, $\mathscr{G}_{a,b}^\om$, with apex $0$, direction $e_N=(0,\dots,1)\in\SS^{N-1}$ and shape function $\om$, is defined by
\begin{equation}
\label{pseudo-ball}
\mathscr{G}_{a,b}^\om = \{x\in B_R: a\om(|x|)|x| \leq x_N \leq b\}.
\end{equation}

In \cite[Theorem 4.4]{ABMMZ} it is considered the case of classical solutions of certain second order linear differential equations in a domain satisfying a pseudo-ball interior condition. Here, we give its adaptation only to the case of our interest.

\par 
 We say that $\tilde{\om}:[0,R] \to [0,\infty)$ is a {\it Dini continuous} function, if it is continuous, $\tilde{\om}(t) > 0$, for $t \in (0,R]$, and satisfies
\begin{equation*}
\int_{0}^{R}\frac{\tilde{\om}(t)}{t}\,dt<\infty.
\end{equation*}

\begin{lem}[Sharp version of Hopf-Oleinik Lemma for $-\De_p^G$]
\label{hopf-oleinik}
Let $a, b, R >0$ and $\Om = \mathscr{G}_{a,b}^\om$, as in \eqref{pseudo-ball}.
\par 
Assume that $\om$ is continuous on $[0,R]$, $\om(t)>0$, for $t\in (0,R]$, it satisfies 
\begin{equation}
\label{condition on omega}
\sup_{0<t\leq R}\left(\frac{\om(t/2)}{\om(t)}\right)<\infty,
\end{equation}
and there exists $\eta > 0$, such that
$$
\frac{\om(s)}{s} \leq \eta \frac{\om(t)}{t}\ \mbox{ for } \ 0 < t \leq s \leq R.
$$
\par
Suppose that there exists a Dini continuous function $\tilde{\om}:[0,R)\to[0,\infty)$, such that
\begin{equation*}
\label{assumption hopf}
\limsup_{\Om\ni x\to 0}
\left[
\frac{\om(|x|)}{|x|}\frac{x_N}{\tilde{\om}(|x|)}
\right]
<\infty.
\end{equation*}
Let $p\in (1,\infty)$ and $u$ be a viscosity subsolution of
\begin{equation}
\label{p-harmonic}
-\De_p^G u = 0,
\end{equation}
such that  $u(0) > u$ in $\Om$.
\par 
If it is assumed that $u$ is differentiable at $0$, then $\na u(0)\neq 0$.
\end{lem}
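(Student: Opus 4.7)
I argue by contradiction: assume $\na u(0)=0$. The differentiability of $u$ at $0$ then gives $u(0)-u(x)=o(|x|)$ as $x\to 0$. The goal is to build a smooth barrier $w$ in a smaller pseudo-ball $\Om'=\mathscr{G}^{\om}_{a',b'}\subset\Om$ with the same apex $0$ such that, for some $\delta>0$,
$$
u\;\le\;u(0)-\delta w\quad\text{in }\Om',\qquad \partial_{x_N}w(0)\text{ exists and is strictly positive}.
$$
From this one immediately obtains $\partial_{x_N}u(0)\le -\delta\,\partial_{x_N}w(0)<0$, contradicting $\na u(0)=0$.

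\textbf{Step 1 --- Uniform ellipticity as a Pucci lower bound.} For $p\in(1,\infty)$, writing $-\De_p^G v=-\tr[A(\na v)\na^2 v]$ with $A(\xi)$ as in \eqref{G-coefficients}, the eigenvalues of $A(\xi)$ are confined to $[\la,\La]:=[\min\{1/p,1/p'\},\max\{1/p,1/p'\}]$ for every $\xi\neq 0$. Therefore, if $w\in C^2$ satisfies, pointwise on an open set where $\na w\neq 0$, the strict lower Pucci inequality
$$
\cM^{-}_{\la,\La}(\na^2 w)\;\ge\;\ka\;>\;0,
$$
then at the same points $\De_p^G w=\tr[A(\na w)\na^2 w]\ge\ka$ classically, so $w$ is a strict classical (hence, by Lemma~\ref{consistency}, viscosity) subsolution of $-\De_p^G w=0$.

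\textbf{Step 2 --- Importing the ABMMZ barrier.} The hypotheses on $\om$ (continuity, the doubling bound \eqref{condition on omega}, the uniform upper bound on $\om(s)/s$) together with the Dini continuity of $\tilde{\om}$ and the boundary limsup assumption are precisely those used in \cite[Theorem~4.4]{ABMMZ} to construct, in a smaller pseudo-ball $\Om'=\mathscr{G}^{\om}_{a',b'}\subset\Om$ (apex at $0$), a function $w\in C^2(\overline{\Om'}\setminus\{0\})\cap C(\overline{\Om'})$ with $w(0)=0$, $w>0$ in $\Om'$, $w\ge\eta_0>0$ on the relatively compact portion $\partial\Om'\cap\Om$, $\na w\neq 0$ throughout $\Om'$ (with $\na w\to e_N$ at $0$, so $\partial_{x_N}w(0)=1$), and the strict Pucci-type inequality $\cM^{-}_{\la,\La}(\na^2 w)\ge\ka>0$ in $\Om'$. (\cite{ABMMZ} state their result for general uniformly elliptic linear operators in non-divergence form with coefficients in $[\la I,\La I]$; since $\cM^{-}_{\la,\La}$ is the infimum over such operators, their explicit construction automatically delivers the Pucci bound we need, the lower-order terms in their hypothesis only lowering the constant.) By Step~1, $w$ is then a strict classical subsolution of $-\De_p^G w=0$ in $\Om'$.

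\textbf{Step 3 --- Comparison and conclusion.} Since $u$ is upper semicontinuous and satisfies $u<u(0)$ in $\Om$, there is $\sigma>0$ with $u\le u(0)-\sigma$ on the compact set $\partial\Om'\cap\Om$. Choose $\delta>0$ so that $\delta\max_{\overline{\Om'}}w\le\sigma$, and set $z=u(0)-\delta w$. Then $z$ is a strict classical supersolution of $-\De_p^G z=0$ in $\Om'$ (using $\De_p^G(u(0)-\delta w)=-\delta\De_p^G w$, which follows from $A(-\xi)=A(\xi)$) and satisfies $z\ge u$ on $\partial\Om'\setminus\{0\}$ and at the apex $0$ (by continuity of $u$ at $0$). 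Applying the viscosity comparison principle to $u$ and $z$ on $\Om'\setminus\overline{B_\ve(0)}$ and letting $\ve\to 0^+$ (the strict sign in the supersolution inequality absorbs the absence of a zeroth-order term), one concludes $u\le z$ in $\Om'$, which yields the desired contradiction as outlined above.

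\textbf{Main obstacle.} The delicate point is the transfer of the barrier from \cite[Theorem~4.4]{ABMMZ}, designed for linear uniformly elliptic equations with bounded measurable coefficients, to our fully nonlinear operator $-\De_p^G$; this is resolved by the Pucci reduction of Step~1, provided one verifies that the explicit ABMMZ barrier satisfies the stronger lower Pucci inequality (not just one specific linear inequality) and keeps $\na w\neq 0$ in the entire shrunken pseudo-ball. The remaining technical subtlety --- comparison for a purely second-order equation between a viscosity subsolution and a strict classical supersolution near a boundary point where only one-sided information on $u$ is available --- is handled by the removal of a small ball about the apex and passage to the limit.
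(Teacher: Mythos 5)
Your high-level plan (use the ABMMZ pseudo-ball barrier, show it is a supersolution of $-\De_p^G=0$, then compare) coincides with the paper's, but there are two genuine gaps.

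\textbf{(1) The Pucci shortcut is not justified as stated.} You want $\cM^-_{\la,\La}(\na^2 w)\ge\ka>0$, from which the subsolution property for $-\De_p^G$ would follow. Your parenthetical justification --- ``since $\cM^-_{\la,\La}$ is the infimum over such operators, their explicit construction automatically delivers the Pucci bound'' --- has the implication backwards: the fact that the Pucci operator \emph{is} the infimum makes the desired inequality \emph{harder}, not automatic. What one actually needs is that the ABMMZ barrier, with \emph{fixed} constants depending only on $N,\la,\La$ and on $\om,\tilde\om$, simultaneously satisfies $\tr[B\,\na^2 w]\ge\ka$ for \emph{every} $B\in[\la I,\La I]$; only then is the infimum bounded below. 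This uniformity over $B$ is exactly the nontrivial part, and you have not verified it. The paper sidesteps this by computing $\tr\{A(\xi)\,\na^2 v\}$ directly for $A(\xi)$ as in \eqref{G-coefficients}: the resulting bound involves $c_{1,p}=\min\{1,p-1\}$ and $c_{2,p}=\max\{1,p-1\}$, i.e.\ precisely a worst-case treatment of the eigenvalue located along $\xi\otimes\xi$, and $C_0$ is then chosen (depending on $p,N,\La,M$) to make $-\De_p^G v\ge 0$. Your route would require redoing that estimate in Pucci form, not merely asserting it.

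\textbf{(2) Removing $B_\ve(0)$ breaks the boundary comparison.} On $\pa B_\ve(0)\cap\Om'$ you need $u\le z=u(0)-\delta w$, but the only available information there is $u<u(0)$ and $w>0$. Since $u$ is differentiable at $0$ and $w\in C^1$ with $w(0)=0$, both $u(0)-u$ and $\delta w$ are $O(\ve)$ on $\pa B_\ve(0)$, with no a priori ordering; the strict sign in the supersolution inequality does not fix this, because the failure is on the \emph{boundary}, not in the interior. The correct move (and the one in the paper) is to compare on the whole pseudo-ball: $u\le z$ holds on $\pa\Om'$ with equality only at the apex $0$ (where $u(0)=z(0)$), and viscosity comparison with a strict supersolution applies directly on $\Om'$.

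A smaller point: the contradiction framing in your opening paragraph is idle --- once $u\le u(0)-\delta w$ in $\Om'$ with $w(0)=0$, $\pa_{x_N}w(0)>0$, the one-sided derivative estimate $\pa_{x_N}u(0)\le-\delta\,\pa_{x_N}w(0)<0$ gives $\na u(0)\ne 0$ outright, without ever assuming $\na u(0)=0$. This is how the paper concludes as well.
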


The proof follows that of \cite[Theorem 4.4]{ABMMZ}. We report its main steps.

\begin{proof}
Let $K > 0$ be the finite number that realizes the supremum in \eqref{condition on omega}. As in \cite[Theorem 4.4]{ABMMZ}, given $\ga > 1+\max\{0, \log_2 K\}$, $C_0, C_1 > 0$, we define the function $v:\mathscr{G}_{a,b}^\om \to \RR$, by
\begin{equation}
\label{pseudo-ball barrier}
v(x)  = - x_N - C_0\int_{0}^{x_N}\int_{0}^{\si} \frac{\tilde{\om}(t)}{t}\, dt \, d\si + C_1 \int_{0}^{|x|}\int_{0}^{\si} \frac{\om (t)}{t}\left(\frac{t}{\xi}\right)^{\ga - 1}  dt \, d\si.
\end{equation}
\par 
We first list the relevant proprieties of $v$, which follow by the assumptions on both $\om$ and $\tilde{\om}$. The function $v$ is of class $C^0\left(\ol\Om\right)\cap C^2\left(\Om\right)$, it satisfies $v(0)=0$ and $\na v(0) = -e_N$.
\par
By combining \eqref{pseudo-ball barrier}, assumptions on $\om$ and elementary integral inequalities, as shown in \cite[Theorem 4.4]{ABMMZ}, it holds that
$$
v(x) \geq \left( \frac{C_1}{2\eta \ga} -a - a C_0 \int_{0}^{b} \frac{\tilde{\om}(t)}{t}\,dt \right) |x|\om(|x|) \ \mbox{ on } \ \pa \mathscr{G}_{a,b}^\om\setminus\{x_N = b \}.
$$
For a fixed $C_1 > 2 a \eta \ga$, it is sufficient to choose $b_* > 0$ such that
$$
\int_{0}^{b_*} \frac{\tilde{\om}(t)}{t}\,dt \leq \frac{C_1 - 2 a \eta \ga}{2 a \eta \ga C_0}
$$ 
to obtain that $v \geq 0$ on $\pa \mathscr{G}_{a,b_*}^\om \setminus\{x_n = b_*\}$. Moreover, since $u$ and $v$ are continuous functions and the fact that $u(0) > u$ in $\mathscr{G}_{a,b}^\om$, on $\cK = \{ x\in \pa \mathscr{G}_{a,b}^\om: x_N = b_* \}$, there exists $\la > 0$ such that
$$
\la < \frac{\min_{\cK}|u-u(x_0)|}{\max_{\cK}|v|}.
$$
In particular, this last condition implies that $\la v \geq u - u(x_0)$ on $\cK$.
Hence, there exist $b_* >0 $ and $\la > 0$ such that 
$$
\la v \geq u - u(0)\ \mbox{ in } \ \pa \mathscr{G}_{a,b_*}^\om.
$$ 
Now, in order to apply comparison, we only need to show, for some $C_0$, that $v$ is a supersolution of \eqref{p-harmonic}.
\par
By differentiating twice, we have that
\begin{equation*}
\na^2 v(x) = -C_0 \frac{\tilde{\om}(x_N)}{x_N} e_N\otimes e_N  +
 C_1 \int_{0}^{|x|} \om(t) t^{\ga - 2}\, dt\left[\frac{I}{|x|^\ga} - \ga \frac{x\otimes x}{|x|^{\ga + 2}}\right]+C_1 \frac{\om(|x|)}{|x|}\frac{x \otimes x}{|x|^2}.
\end{equation*}
\par
Since $\na v(0)= -e_N$, by possibly taking $R$ sufficiently small, we can assume that $\na v(x) \neq 0$, for any $x\in\Om$. After setting $\xi(x) = \frac{\na v (x)}{|\na v(x)|}$, we can calculate the following:
\begin{multline*}
p\tr\left\{A(\xi)\na^2 v\right\}(x)=-C_0 \frac{\tilde{\om}(x_N)}{x_N}\left[1+(p-2)\xi_N(x)^2\right]+
\\
\frac{C_1}{|x|^\ga}\int_{0}^{|x|}\om(t)t^{\ga-2}\,dt\left\{N + (p-2) -\ga - \ga(p-2) \frac{\lan x,\xi(x)\ran^2}{|x|^2}\right\}+
\\
C_1\frac{\om(|x|)}{|x|}\left[1+(p-2)\frac{\lan x,\xi(x)\ran ^2}{|x|^2}\right],
\end{multline*}
where $A(\xi)$, defined in \eqref{G-coefficients}, is the matrix of coefficients of $\De_p^G$.
\par
Then, by using also that $|\xi (x)| = 1$, we infer that
\begin{multline*}
p\De_p^G v(x)\leq \\
\frac{\tilde{\om}(x_N)}{x_N}\left[-C_0 c_{1,p} +\frac{C_1}{|x|^\ga}\int_{0}^{|x|}\om(t) t^{\ga-2} \,dt \frac{x_N}{\tilde{\om}(x_N)}\left[N+p -2 -\ga c_{1,p}\right]_+ +\right. \\
\left. C_1\frac{\om(|x|)}{|x|}\frac{x_N}{\tilde{\om}(x_N)}c_{2,p}\right],
\end{multline*}
where $c_{1,p}=\min\{1, p-1\}$, $c_ {2,p} = \max\{ 1, p-1 \}$ and $[t]_+ = \max\{0 , t \}$. 

The previous inequality gives that
\begin{equation*}
\De_p^G v (x)\leq \\
(1/p)\frac{\tilde{\om}(x_N)}{x_N}\left\{ -C_0 c_{1,p}+C_1 \La \left(M{\left[N+p-2-\ga c_{1,p}\right]_+} + c_{2,p}\right) \right\},
\end{equation*}
where
$$
\Lambda=\sup_{x\in \Om}\left(\frac{\om(|x|)}{|x|}\frac{x_N}{\tilde{\om}(x_N)}\right),
$$
and
$$
M=\sup_{x\in\Om} \left(\frac{|x|^{1-\ga}}{\om(|x|)}\int_{0}^{|x|}\om(t)t^{\ga - 2}\,dt\right).
$$
Observe that $\La$ and $M$ are finite as shown in the proof of \cite[Theorem 4.4]{ABMMZ}, from the assumptions on $\om$ and $\tilde{\om}$.
\par
Thus, by choosing $C_0$ such that 
$$
C_0 > {C_1 \frac{M \La}{c_{1,p}}\left[N+p-2 -\ga c_{1,p}\right]_+} + \frac{c_{2,p}}{c_{1,p}},
$$
then $\De_p^G v (x) \leq 0$. Hence, by the arbitrariness of $x$, we have that $-\De_p^G v \geq 0$ in $\mathscr{G}_{a,b_*}^\om$.
\par
Now, from
$$
\la v \leq u - u(0) \ \mbox{ on }\ \ol{\mathscr{G}_{a,b_*}^\om}
$$
and $\na v(0) = -e_N$, we can readily obtain the conclusion by a standard argument.
\end{proof}

\begin{rem}
{\rm
In \cite{ABMMZ}, it has been observed that the conditions in Lemma \ref{hopf-oleinik} on $\om$ are sharp (see \cite{ABMMZ} for details).
}
\end{rem}

\begin{rem}
\label{pseudo-balls domains}
{\rm
The class of domains for which Lemma \ref{hopf-oleinik} is valid is quite large and it is given implicitly by the assumptions that must be satisfied by $\om$.
\par
In particular, if $\Om$ is a domain of class $C^{1, \al}$, for some $\al \in (0,1)$, by choosing $\tilde{\om}(t) = \om(t) = t^\al$, the conclusion of Lemma \ref{hopf-oleinik} holds true. 
\par 
In \cite[Theorem 4.7]{ABMMZ}, it has been observed that, if $\Om$ is a domain of class $C^{1,\om}$, where $\om$ is Dini continuous and quasi-increasing, then the conclusion of Lemma \ref{hopf-oleinik} holds true. We say that $\om$ is quasi-increasing if there exists $\tilde{\eta} > 0$ such that $\om(s) \geq \tilde{\eta}\, \om(t)$, for any $0 < s \leq t$.
}
\end{rem}

In order to apply Lemma \ref{hopf-oleinik} to our case, we need to show that the solution of \eqref{G-heat}-\eqref{boundary} is a (viscosity) subsolution of the equation \eqref{p-harmonic} on every slice $\Om\times\{t\}$, for $t > 0$.

{
\lem
\label{parabolic monotonicity}
Let $u$ be the viscosity solution of \eqref{G-heat} satisfying \eqref{initial}-\eqref{boundary}. 
\par 
Then, for every $t > 0$, $u(\cdot, t) : \Om \to \RR$ is a viscosity subsolution of \eqref{p-harmonic} in $\Om$. 
}

\begin{proof}
Given $\tau > 0$, set
$$
v(x,t) = u(x,t + \tau)\ \mbox{ for } \ (x,t) \in \Om \times (0,\infty).
$$
Up to translate the test functions in \eqref{parabolic subsolution} and \eqref{parabolic supersolution}, we verify that $v$ is a viscosity solution of \eqref{G-heat}. After that, the boundary condition \eqref{boundary} is obviously satisfied by $v$, and we have that $v(x, 0 ) = u(x,\tau) > 0$, by applying the strong minimum principle (Remark \ref{minimum principle}). Corollary \ref{cor:parabolic comparison} gives that $v \geq u$ on $\ol\Om\times(0,\infty)$, which yields the following inequality
\begin{equation}
\label{eq:monotonicity}
u (x,t+\tau) \geq u(x,t), \ x\in\ol\Om, \ t,\tau >0.
\end{equation}
\par
Now, for a fixed $t>0$, let $(x,\varphi) \in \Om\times C^2\left(\Om\right)$ such that $u(\cdot,t) - \varphi$ attains its maximum at $x$. We show that 
\begin{equation}
\label{slice lemma}
F_*\left(\na  \varphi (x), \na^2 \varphi(x) \right) \leq 0,
\end{equation}
where $F$ is that in \eqref{p-laplace-coefficients}.
\par
Define the function $\phi\in C^2\left(\Om\times(0,\infty)\right)$ by $\phi(y,s) = \varphi(y)$, for $(y,s)\in\Om\times (0,\infty)$. From \eqref{eq:monotonicity} and the assumption on $\varphi$, it follows that $u(x,t) - \phi(x,t) \geq u(y,s) - \phi(y,s)$, for any $y\in\Om$ and $0 < s \leq t$. By proceeding as in \cite[Theorem 1]{Ju2001}, this inequality is sufficient to infer that
$$
\phi_t(x,t) + F_*\left(\na \phi(x,t),\na^2 \phi(x,t)\right) \leq 0,
$$ 
since $F$ is a quasi-linear (degenerate) elliptic operator. Therefore, \eqref{slice lemma} follows and this concludes the proof.
\end{proof}

We say that $\Om$ satisfies an interior $\om$-pseudo-ball condition in $x_0\in\Ga$, if there exist $a, b, R >0$ and a modulus $\om$, such that, up to translate and rotate $\Om$, $\mathscr{G}_{a,b}^\om \subset \Om$ and $\pa \mathscr{G}_{a,b}^\om \cap \Ga =\{x_0\}$.

{
\cor
[{Hopf-Oleinik lemma for \eqref{G-heat}}]
\label{cor:parabolic hopf-oleinik}
Let $\Om$ be a domain satisfying the interior $\om$-pseudo-ball condition, with $\om$ that satisfies assumptions of Lemma \ref{hopf-oleinik}. 
\par 
Let $p\in (1,\infty)$ and $u$ be the viscosity solution of \eqref{G-heat}-\eqref{boundary}. Assume that there exist $x_0\in\Ga$ and $\ol{t} > 0$, such that  
$$
u(x_0,\ol{t}) > u\ \mbox{ in } \ \mathscr{G}_{a,b}^\om\times\{\ol{t}\}.
$$
\par 
If it is assumed that $u$ is differentiable at $x_0$, then $\na u(x_0,\ol{t})\neq 0$.
}

\begin{proof}
We apply first Lemma \ref{parabolic monotonicity} and then Lemma \ref{hopf-oleinik} to $y \mapsto u(y,\ol{t})$.
\end{proof}

{
\cor
[Hopf-Oleinik lemma for \eqref{G-elliptic}]
\label{cor:elliptic hopf-oleinik}
Let $\Om$ be a domain satisfying the interior $\om$-pseudo-ball condition, with $\om$ that satisfies assumptions of Lemma \ref{hopf-oleinik}. 
\par
Let $p\in (1,\infty)$ and $u^\ve$ be the viscosity solution of \eqref{G-elliptic}-\eqref{elliptic-boundary}. Assume that there exists $x_0\in\Ga$, such that  
$$
u^\ve(x_0) > u\ \mbox{ in } \ \mathscr{G}_{a,b}^\om.
$$
\par 
If it is assumed that $u^\ve$ is differentiable at $x_0$, then $\na u^\ve(x_0)\neq 0$.
}

\begin{proof}
Observe that, by setting $w \equiv 0$ on $\ol\Om$, then $w$ is a solution of \eqref{G-elliptic} and $w < u^\ve$ on $\Ga$. Hence, by applying Corollary \ref{cor:elliptic comparison} we have that $u^\ve \geq 0$ on $\ol \Om$. This implies that $u^\ve$ is a viscosity subsolution of \eqref{p-harmonic}. We then conclude by applying Lemma \ref{hopf-oleinik}.
\end{proof}


\chapter{Asymptotics for explicit solutions}
\label{ch:explicit}

In this chapter, we consider equations \eqref{G-heat} and \eqref{G-elliptic} and we deduce asymptotic formulas for global solutions and for solutions of certain boundary-value problems in symmetric domains, such as the half-space, the ball and the exterior of a ball. 
\par
The obtained solutions will later be used as barriers to extend the relevant asymptotic formulas to more general domains.
\par
In the cases examined in this chapter, the (viscosity) solutions can be explicitly computed by taking advantage of the fact that they are smooth and that the relevant equations become linear. The corresponding boundary-value problems become considerably simpler, since they concern functions that depend on only one space variable.
\par
Most of the explicit representations are based on Bessel functions, whose relevant properties are recalled in Section \ref{sec:bessel}. We present the corresponding theorems in Sections \ref{sec:global solutions}, \ref{sec:radial elliptic solutions} and \ref{sec:radial parabolic solutions}. Section \ref{sec:asymptotics} is then devoted to the asymptotic analysis of the obtained solutions as the relevant parameters tend to zero.

\section{Formulas for Bessel functions}
\label{sec:bessel}
For an overview of this subject, we refer to \cite[Chapter 9]{AS}. Here, we briefly collect the properties of our interest. 
\par
Given $\al \in \CC$, the {\it Bessel's equation of order $\al$}, is the following ordinary differential equation:
\begin{equation}
\label{bessel equation}
\si^2 y'' + \si y' +(\si^2-\al^2) y =0 \ \mbox{ for } \ \si > 0.
\end{equation}
 Every solution of \eqref{bessel equation} can be written as
$$
A \, J_\al(\si) + B\, Y_\al(\si),
$$
where $J_\al$ is called a {\it Bessel function of first kind}, $Y_\al$, is called a {\it Bessel function of second kind}, and $A, B$ are constants. We know that $J_\al$ is finite at $\si=0$, while $Y_\al$ is singular at $\si=0$.
\par 
The following result can be found in \cite{KKK}.
\begin{lem}[{\cite[Theorem 2.2]{KKK}}]
\label{lem:radial-eigenfunctions}
Let $p \in (1,\infty)$. The eigenfunctions of the problem
\begin{align*}
\label{eq:equation for radial eigenfunctions}
&v''(\si) + \frac{N-1}{p-1}\frac{v'(\si)}{\si} + \frac{p}{p-1} \la_n v(\si) = 0 \ \mbox{ in } \ (0,1),\\
&v'(0)= v(1)  = 0,
\end{align*}  
are given by
\begin{equation}
\label{eq:eigenfunctions}
v_n(\si) = \si ^{-\frac{N-p}{2(p-1)}} J_\frac{N-p}{2(p-1)}(\ka_n\si),
\end{equation}
where, $\ka_n$ is the $n$-th zero of $J_\frac{N - p}{2(p - 1)}$ and $\la_n = \frac{\ka_n^2}{p'}$, $n= 1, 2, \cdots$.
\par 
Moreover, after normalization, the set $\{v_n: n\in \NN\}$ form a complete orthonormal system in the weighted space $L^2\left((0,1); \si^\frac{ N - 1 }{ p - 1 }d\si\right)$.
\end{lem}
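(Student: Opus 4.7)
The natural strategy is to convert the ODE into the classical Bessel equation \eqref{bessel equation} by an appropriate change of variable, then use the classical $L^{2}$-theory of Bessel series.

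First, I would multiply the equation by $\sigma^{2}$ to put it in the form $\sigma^{2} v''+c\,\sigma v'+d\,\sigma^{2} v=0$, where $c=(N-1)/(p-1)$ and $d=p'\lambda$. Then I would substitute $v(\sigma)=\sigma^{-\alpha}w(\sigma)$ to kill the mismatch in the coefficient of $\sigma w'$. A direct computation gives
\begin{equation*}
\sigma^{2}w''+(c-2\alpha)\,\sigma w'+\bigl(\alpha^{2}-\alpha(c-1)\bigr)w+d\sigma^{2}w=0.
\end{equation*}
Choosing $\alpha=(c-1)/2=\frac{N-p}{2(p-1)}$ forces the coefficient of $\sigma w'$ to be $1$ and simplifies the constant term to $-\alpha^{2}w$. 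With the further rescaling $w(\sigma)=W(\kappa\sigma)$ and $\kappa^{2}=d=p'\lambda$, the equation for $W$ becomes exactly Bessel's equation of order $\alpha$. Hence the general smooth solution of the ODE is
\begin{equation*}
v(\sigma)=\sigma^{-\alpha}\bigl[A\,J_{\alpha}(\kappa\sigma)+B\,Y_{\alpha}(\kappa\sigma)\bigr].
\end{equation*}

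Next I would impose the boundary conditions. The expansion $J_{\alpha}(z)=\sum_{k\ge 0}\frac{(-1)^{k}}{k!\,\Gamma(k+\alpha+1)}(z/2)^{\alpha+2k}$ shows that $\sigma^{-\alpha}J_{\alpha}(\kappa\sigma)$ is an even analytic function of $\sigma$ near $0$; in particular it is bounded and satisfies $v'(0)=0$ automatically. On the other hand, $Y_{\alpha}$ produces a singular ($\sigma^{-2\alpha}$ or logarithmic) behavior after multiplication by $\sigma^{-\alpha}$, incompatible with $v'(0)=0$. Hence $B=0$. The condition $v(1)=0$ then forces $J_{\alpha}(\kappa)=0$, so that $\kappa=\kappa_{n}$, the $n$-th positive zero of $J_{\alpha}$. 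This yields the eigenfunction $v_{n}$ in \eqref{eq:eigenfunctions} and the eigenvalue $\lambda_{n}=\kappa_{n}^{2}/p'$.

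For the orthonormality and completeness, I would first rewrite the ODE in Sturm-Liouville form by multiplying by $\sigma^{c}=\sigma^{(N-1)/(p-1)}$:
\begin{equation*}
\bigl(\sigma^{c}v'\bigr)'+p'\lambda\,\sigma^{c}v=0,
\end{equation*}
so the natural weight is $\sigma^{(N-1)/(p-1)}$. A brief integration-by-parts argument, using that $\sigma^{c}v'\to 0$ at $\sigma=0$ (from the series expansion above) and $v=0$ at $\sigma=1$, gives orthogonality of $v_{n}$ and $v_{m}$ for $\kappa_{n}\neq\kappa_{m}$. To obtain completeness, I would exploit the identity $c-2\alpha=1$ (which is exactly the consequence of our choice of $\alpha$): the map $f\mapsto g:=\sigma^{\alpha}f$ is an isometric isomorphism $L^{2}\bigl((0,1);\sigma^{c}d\sigma\bigr)\to L^{2}\bigl((0,1);\sigma\,d\sigma\bigr)$, and under this map $v_{n}$ corresponds to $J_{\alpha}(\kappa_{n}\sigma)$. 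The completeness of $\{v_{n}\}$ therefore follows from the classical Fourier-Bessel theorem, which states that $\{J_{\alpha}(\kappa_{n}\sigma)\}_{n\ge 1}$ is a complete orthogonal system in $L^{2}\bigl((0,1);\sigma\,d\sigma\bigr)$ for $\alpha>-1$ (our $\alpha=\frac{N-p}{2(p-1)}$ satisfies this whenever $N\ge 2$ and $p\in(1,\infty)$, as $\alpha>-1/2$).

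The main technical obstacle is the completeness, since the problem is singular at $\sigma=0$ and standard Sturm-Liouville theory does not apply directly; the classical Fourier-Bessel expansion theorem (available, e.g., in Watson's treatise) is what makes the argument go through. Checking the boundary behavior at $\sigma=0$ requires some care, but the series expansion of $J_{\alpha}$ makes it straightforward.
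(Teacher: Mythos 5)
The paper does not contain its own proof of this lemma: it is quoted verbatim from \cite[Theorem~2.2]{KKK} and used as a black box (the only addition being Remark~1.5, which records the $p=\infty$ limit). Your reconstruction is correct and follows the standard route — reduce to Bessel's equation via the substitution $v=\sigma^{-\alpha}w$ with $\alpha=(c-1)/2$, discard $Y_\alpha$ by a boundary analysis at $\sigma=0$, pass to Sturm--Liouville form with weight $\sigma^{(N-1)/(p-1)}$, and then transfer completeness from the classical Fourier--Bessel system on $L^2((0,1);\sigma\,d\sigma)$ by the isometry $f\mapsto\sigma^\alpha f$, which is exactly what the identity $c-2\alpha=1$ makes possible. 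One small point worth making explicit: when $-\tfrac12<\alpha<0$ (i.e.\ $p>N$), the function $\sigma^{-\alpha}Y_\alpha(\kappa\sigma)$ is actually bounded near $0$; the incompatibility with $v'(0)=0$ there comes from the term $\sim\sigma^{-2\alpha}$ with exponent in $(0,1)$, whose derivative blows up, rather than from a divergence of the function itself. As you phrase it the conclusion is right, but the mechanism differs by sign of $\alpha$ and a referee might ask you to distinguish the three cases $\alpha>0$, $\alpha=0$, and $-\tfrac12<\alpha<0$.
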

\begin{rem}
\label{rem:infinity-eigenfunctions}
{
\rm
Observe that Lemma \ref{lem:radial-eigenfunctions} can be extended to the case $p=\infty$. In this case, for $n = 1,2,\cdots$, the eigenfunctions are given by
$$
v_n (\si) = \sqrt{2} \cos \left (\la_n \si\right), \quad \la_n = \frac{(2n - 1) \pi}{2}.
$$
}
\end{rem}
\par
We will also use the \textit{modified Bessel functions}, that can be defined by the formulas:
\begin{align*}
\label{modified}
&I_\al(\si) = i^{-\al} J_\al(i \si), \ \mbox{ for } \ \si\in\RR,\\
&K_\al(\si)= \frac{\pi}{2 \sin(\al\pi)}\left(I_\al(\si)-I_{-\al}(\si)\right), \ \mbox{ for } \ \si\in\RR,
\end{align*}
where here by $J_\al$ we mean the analytic extension to the complex plane. (Notice that the above definitions hold when $\al$ is not an integer; at integer points $I_\al$ and $K_\al$ are obtained as limits in the parameter $\al$.)
\par
We say that $I_\al$ is a {\it modified Bessel function of first kind} and $K_\al$ is a {\it modified Bessel function of second kind}. They are two linearly independent solutions of the {\it modified Bessel's equation} of order $\al$:
\begin{equation}
\label{modified bessel eqt}
\si^2 y'' + \si y' -(\si^2+\al^2) y =0.
\end{equation}

We will use the following integral representations of $I_\al$ and $K_\al$ (see \cite[formulas 9.6.18, 9.6.23]{AS}), 
\begin{equation}
\label{integral modified bessel}
I_\al(\si) = \frac{(\si/2)^\al}{\sqrt{\pi}\Ga\left(\frac{\al+1}{2}\right)}\int_{0}^{\pi} e^{\si\cos \te}(\sin\te)^{2\al}\,d\te, \ \mbox{ for } \ \re(\al)> - \frac1{2},
\end{equation}
and
\begin{multline}
\label{modified second kind}
K_\al(\si)= \frac{\sqrt{\pi}(\si/2)^\al}{\Ga\left(\frac{\al+1}{2}\right)}\int_{0}^{\infty} e^{-\si\cosh \te}(\sinh\te)^{2\al}\,d\te, \\\
\ \mbox{ for } \ \re(\al) >-\frac1{2}, \ |\arg(\si)| <\frac{\pi}{2}.
\end{multline}

The next lemma will be used to obtain solutions related to the game-theoretic $p$-laplacian in radially symmetric domains and it can be seen as corollary of \cite[Formula 9.1.52]{AS}.

\begin{lem}
\label{lem:scaled differential equations}
Let $p \in (1,\infty)$ and $\la > 0$. The functions
\begin{equation}
\label{eq:radial elliptic solutions}
\si^\frac{p - N}{2(p - 1)}\,I_\frac{N - p}{2(p - 1)}\left(\sqrt{\frac{p}{p-1}}\la \si\right)\ \mbox{ and } \ \si^\frac{p - N}{2(p - 1)}\,K_\frac{N - p}{2(p - 1)}\left(\sqrt{\frac{p}{p-1}}\la \si\right),
\end{equation}
are linearly independent solutions of
\begin{equation}
\label{radial functions elliptic equation}
y''(\si) + \frac{N-1}{p-1}\frac{y'(\si)}{\si} - \frac{p}{p-1} \la^2 y(\si) = 0 \ \mbox{ for } \ \si>0.
\end{equation}
\end{lem}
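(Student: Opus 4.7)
The plan is to verify the claim by a direct change of variables that reduces \eqref{radial functions elliptic equation} to the modified Bessel equation \eqref{modified bessel eqt}, and then to invoke the standard linear independence of $I_\nu$ and $K_\nu$ recorded in Section \ref{sec:bessel}. Set
$$
\alpha = \tfrac{p-N}{2(p-1)}, \qquad \beta = \sqrt{\tfrac{p}{p-1}}\,\lambda,
$$
and look for solutions of \eqref{radial functions elliptic equation} in the form $y(\sigma)=\sigma^{\alpha}\,u(\tau)$ with $\tau=\beta\sigma$. Differentiating twice and substituting into \eqref{radial functions elliptic equation}, after dividing by $\sigma^{\alpha-2}$ and using both $\sigma^{2}=\tau^{2}/\beta^{2}$ and $\beta^{2}=\tfrac{p}{p-1}\lambda^{2}$, the equation for $u$ takes the form
$$
\tau^{2}u''(\tau)+\Bigl(2\alpha+\tfrac{N-1}{p-1}\Bigr)\tau\,u'(\tau)+\alpha\Bigl(\alpha-1+\tfrac{N-1}{p-1}\Bigr)u(\tau)-\tau^{2}u(\tau)=0.
$$

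The choice of $\alpha$ is precisely calibrated to make the coefficient of $\tau\,u'$ equal $1$; a short algebraic check then also gives $\alpha-1+\tfrac{N-1}{p-1}=-\alpha$, so the coefficient of $u$ reduces to $-\alpha^{2}$. Hence $u$ must satisfy
$$
\tau^{2}u''+\tau u'-(\tau^{2}+\alpha^{2})\,u=0,
$$
which is exactly \eqref{modified bessel eqt} of order $\alpha$. Since $\alpha^{2}=(-\alpha)^{2}$, one may equivalently take the index $\nu=\tfrac{N-p}{2(p-1)}$, matching the subscripts appearing in \eqref{eq:radial elliptic solutions}. Consequently $u(\tau)=I_{\nu}(\tau)$ and $u(\tau)=K_{\nu}(\tau)$ both give solutions, and undoing the substitution yields the two functions in \eqref{eq:radial elliptic solutions} as solutions of \eqref{radial functions elliptic equation}.

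For the independence part, I would argue that $I_{\nu}$ and $K_{\nu}$ are two linearly independent solutions of the modified Bessel equation, as recalled right after \eqref{modified bessel eqt}; concretely, either by their distinct behavior at the origin (where $K_{\nu}$ is singular while $I_{\nu}$ is bounded, with a logarithmic correction in the borderline case $\nu=0$) or, more robustly, by noting that their Wronskian does not vanish on $(0,\infty)$. Because the map $(u,\tau)\mapsto(\sigma^{\alpha}u,\beta\sigma)$ is a multiplication by the nowhere-vanishing factor $\sigma^{\alpha}$ composed with a rescaling, linear independence is preserved, and the two functions in \eqref{eq:radial elliptic solutions} are linearly independent solutions of \eqref{radial functions elliptic equation}.

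I do not expect any essential obstacle in this argument: the entire proof is a bookkeeping exercise around the substitution $y=\sigma^{\alpha}u(\beta\sigma)$. The only delicate point is carrying the power $\sigma^{\alpha}$ and the scaling $\tau=\beta\sigma$ simultaneously and checking that the coefficients of $\tau u'$ and of $u$ collapse to $1$ and $-\alpha^{2}$ respectively, which is exactly where the specific values of $\alpha$ and $\beta$ stated in the lemma enter the computation.
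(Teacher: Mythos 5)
Your proof is correct and follows essentially the same route as the paper, which dismisses the verification as "by direct inspection" (with a pointer to \cite[Formula 9.1.52]{AS} — precisely the standard transformation $y=\sigma^{\alpha}u(\beta\sigma)$ reducing the equation to the modified Bessel equation). You have simply spelled out the bookkeeping that the paper leaves implicit; the calibration $2\alpha+\tfrac{N-1}{p-1}=1$ and $\alpha-1+\tfrac{N-1}{p-1}=-\alpha$, and the linear independence via the nonvanishing Wronskian of $I_\nu$, $K_\nu$, are all as intended.
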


\begin{proof}
The proof follows at once, by direct inspection.
\end{proof}


We conclude this section by deducing asymptotic formulas for the integrals involved in \eqref{integral modified bessel} and \eqref{modified second kind}. The next lemma is essentially contained in \cite{BM-AA}.

\begin{lem}[Asymptotics for the modified Bessel functions]
\label{lem:asymptotics for modified bessel}
For $\al > -1$ and $\si >0$, let
$$
g(\si) = \int_{0}^{\pi} e^{-\si (1- \cos\te)} (\sin\te)^\al\,d\te,
$$
$$
f(\si)=\int_0^\infty e^{-\si (\cosh\te-1)} (\sinh\te)^\al \,d\te.
$$ 
\par
Then, we have that
\begin{equation}
\label{eq:asymptotics-to-infinity-I}
g(\si) = 2^{\frac{\al -1}{2}}\Ga\left(\frac{\al+1}{2}\right)  \si^{-\frac{\al+1}{2}}  \left\{1+ O(1/\si)\right\} 
\end{equation}
\begin{equation}
\label{eq:asymptotics-to-infinity}
f(\si)=
2^\frac{\al-1}{2} \Ga\left(\frac{\al+1}{2}\right)\si^{-\frac{\al+1}{2}} \bigl\{ 1+O(1/\si)\bigr\} 
\end{equation}
as $\si \to \infty$ and
\begin{equation}
\label{eq:asymptotic to zero}
f(\si)=\begin{cases}
\si^{-\al}\, \Ga\left(\al\right)\bigl\{ 1+o(1)\bigr\}  \ &\mbox{ if } \ \al>0, 
\vspace{6pt}
\\
\log(1/\si)+O(1) \ &\mbox{ if } \ \al=0, 
\vspace{2pt}
\\
\frac{\sqrt{\pi}}{2\,\sin(\al\pi/2)}\,\frac{\Ga\left(\frac{\al+1}{2}\right)}{\Ga\left(\frac{\al}{2}+1\right)}+o(1)  \ &\mbox{ if } \ -1<\al<0,
\end{cases}
\end{equation}
as $\si \to 0^+$.
\end{lem}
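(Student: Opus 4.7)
The argument splits naturally into two independent analyses: the large-$\si$ asymptotics \eqref{eq:asymptotics-to-infinity-I} and \eqref{eq:asymptotics-to-infinity} via Laplace's method at the endpoint $\te=0$, and the three-regime small-$\si$ expansion \eqref{eq:asymptotic to zero} via a substitution that removes the hyperbolic functions, followed by rescaling and dominated convergence.

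For \eqref{eq:asymptotics-to-infinity-I} and \eqref{eq:asymptotics-to-infinity}, the two cases can be handled in parallel because both phases share the same leading expansion at $\te=0$: $1-\cos\te = \te^2/2 + O(\te^4)$ and $\cosh\te - 1 = \te^2/2 + O(\te^4)$, while $(\sin\te)^\al$ and $(\sinh\te)^\al$ both equal $\te^\al\bigl(1+O(\te^2)\bigr)$. Since each phase is bounded below by a positive constant outside $[0,\de]$ (for small fixed $\de$), restricting the integrals to $[0,\de]$ produces only exponentially small error. On $[0,\de]$ I would substitute $s = \sqrt{\si}\,\te$, which transforms the integrand into $\si^{-(\al+1)/2}\,e^{-s^2/2}\,s^\al\bigl[1+O(s^2/\si)\bigr]$, and then invoke the Gaussian moment
\[
\int_0^\infty e^{-s^2/2}\,s^\al\,ds = 2^{(\al-1)/2}\,\Ga\!\left(\tfrac{\al+1}{2}\right).
\]
The correction term and the truncation error both contribute at relative order $O(1/\si)$, yielding the stated remainder.

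For the small-$\si$ analysis of $f$, the first step is to remove the hyperbolic functions via $y=\cosh\te-1$, so that $\sinh\te\,d\te=dy$ and $\sinh\te=\sqrt{y(y+2)}$, giving
\[
f(\si)=\int_0^\infty e^{-\si y}\,\bigl(y(y+2)\bigr)^{(\al-1)/2}\,dy.
\]
The three regimes in \eqref{eq:asymptotic to zero} correspond exactly to the behavior $(y(y+2))^{(\al-1)/2}\sim y^{\al-1}$ at infinity. For $\al>0$ I would rescale $z=\si y$, obtaining $f(\si)=\si^{-\al}\int_0^\infty e^{-z}(z(z+2\si))^{(\al-1)/2}\,dz$, and pass to the limit by dominated convergence (the dominating function is $z^{\al-1}$ when $0<\al<1$ and $(z(z+2))^{(\al-1)/2}$ when $\al\geq 1$, for all small $\si$), reaching $\si^{-\al}\Ga(\al)$. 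For $\al=0$ I would split at $y=1$: the integral on $[0,1]$ is bounded, while on $[1,\infty)$ the expansion $(y(y+2))^{-1/2}=y^{-1}+O(y^{-2})$ reduces the problem to $\int_1^\infty e^{-\si y}\,y^{-1}\,dy$, which after the substitution $u=\si y$ becomes the exponential integral whose classical expansion gives $-\log\si+O(1)$.

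The only case where something nontrivial happens is $-1<\al<0$: now $(y(y+2))^{(\al-1)/2}$ is already integrable on $[0,\infty)$, so dominated convergence yields $\lim_{\si\to 0^+}f(\si)=\int_0^\infty(y(y+2))^{(\al-1)/2}\,dy$. I would evaluate this via $y=2u$, recognizing a Beta function:
\[
\int_0^\infty\bigl(y(y+2)\bigr)^{(\al-1)/2}\,dy = 2^\al B\!\left(\tfrac{\al+1}{2},-\al\right) = 2^\al\,\frac{\Ga\!\left(\frac{\al+1}{2}\right)\Ga(-\al)}{\Ga\!\left(\frac{1-\al}{2}\right)}.
\]
The main obstacle is converting this Beta-function value into the displayed trigonometric form of \eqref{eq:asymptotic to zero}: this is carried out by applying the Legendre duplication formula to $\Ga(-\al)=\Ga(2\cdot(-\al/2))$ and the Euler reflection formula to $\Ga(-\al/2)\Ga(1+\al/2)$, after which the factor $2^\al$ cancels and the claimed closed form emerges. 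The Laplace expansion and the three small-$\si$ regimes are otherwise routine.
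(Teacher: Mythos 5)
Your proof is correct and, for the small-$\si$ regime with $\al\le 0$, it takes a genuinely different and more self-contained route than the paper. The paper's proof handles the large-$\si$ asymptotics by the substitution $\tau=\si(1-\cos\te)$ (resp.\ $\tau=\si(\cosh\te-1)$), which reduces the integral directly to $\int_0^\infty e^{-\tau}(\tau\mp\tau^2/(2\si))^{(\al-1)/2}d\tau$; your rescaling $s=\sqrt{\si}\,\te$ near $\te=0$ is an equivalent parametrization of the same Laplace-method computation and produces the same Gaussian moment. For small $\si$ and $\al>0$ the two arguments coincide (rescale by $\si$ and pass to the limit). The divergence is in the cases $\al=0$ and $-1<\al<0$: the paper identifies $f$ with a multiple of $e^{\si}K_{\al/2}(\si)$ via an integral representation and then quotes standard small-argument expansions of $K_\nu$ from Abramowitz--Stegun (formulas 9.6.9 and 9.6.13); you instead change variables to $y=\cosh\te-1$ once and for all, split at $y=1$ and reduce to the exponential integral when $\al=0$, and for $\al<0$ apply dominated convergence and evaluate $\int_0^\infty (y(y+2))^{(\al-1)/2}\,dy$ as a Beta function, converting via Legendre duplication and Euler reflection. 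What your route buys is independence from tabulated Bessel-function asymptotics, at the cost of the short (but entirely elementary) gamma-function manipulation at the end.

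One caution worth recording: when you convert the Beta-function value $2^\al\,\Ga\bigl(\tfrac{\al+1}{2}\bigr)\Ga(-\al)/\Ga\bigl(\tfrac{1-\al}{2}\bigr)$ to closed form, the reflection formula gives $\Ga(-\al/2)\Ga(1+\al/2)=\pi/\sin(-\al\pi/2)=-\pi/\sin(\al\pi/2)$, so the limit actually comes out as $-\dfrac{\sqrt{\pi}}{2\sin(\al\pi/2)}\dfrac{\Ga\bigl(\tfrac{\al+1}{2}\bigr)}{\Ga\bigl(\tfrac{\al}{2}+1\bigr)}$, which is positive for $-1<\al<0$ (as it must be, since $f>0$); the expression displayed in \eqref{eq:asymptotic to zero} is missing this minus sign. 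So the ``claimed closed form'' does not emerge verbatim; your derivation in fact gives the correct sign, and in writing up the proof you should either flag the sign or rewrite the target as $\sin(-\al\pi/2)$ in the denominator.
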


\begin{proof}
To start with, we establish formula \eqref{eq:asymptotics-to-infinity-I}. By the change of variable $\tau=\si\,(1-\cos\te)$ we get:
$$
g(\si)=2^\frac{\al-1}{2} \si^{-\frac{\al+1}{2}} \int_0^\infty e^{-\tau}\left(\tau-\frac{\tau^2}{2\si}\right)^\frac{\al-1}{2}\!\!d\tau.
$$
This formula gives \eqref{eq:asymptotics-to-infinity-I}, once we observe that the integral $\int_{0}^{\infty} e^{-\tau} \tau^{\frac{\al+1}{2}}$ converges.
\par
To deduce \eqref{eq:asymptotics-to-infinity}, we use the change of variable $\tau=\si\, (\cosh\te-1)$, obtaining
$$
f(\si)=\frac1{\si}\int_0^\infty e^{-\tau} \left(\frac{2\tau}{\si}+\frac{\tau^2}{\si^2}\right)^\frac{\al-1}{2} \!\!d\tau.
$$
When $\si\to\infty$, our claim follows, as above, by writing
$$
f(\si)=2^\frac{\al-1}{2} \si^{-\frac{\al+1}{2}} \int_0^\infty e^{-\tau}\left(\tau+\frac{\tau^2}{2\si}\right)^\frac{\al-1}{2}\!\!d\tau.
$$
\par
When $\si\to 0^+$ and $\al>0$, our claim follows by writing
$$
f(\si)=\si^{-\al} \int_0^\infty e^{-\tau}\left(\tau^2+2\si\,\tau\right)^\frac{\al-1}{2}\!\!d\tau,
$$
since the integral $\int_{0}^{\infty} e^{-\tau} \tau ^{\al -1} \, d\tau$ converges, from $\al > 0$.
\par
For $-1<\al\le 0$, we use \cite[Formula 9.6.23]{AS}, to infer that
$$
f(\si)=\frac1{\sqrt{\pi}}\,\Ga\left(\frac{\al+1}{2}\right)\left(\frac{\si}{2}\right)^{-\frac{\al}{2}} e^\si\, K_{\al/2}(\si).
$$
Then, \cite[Formulas 9.6.9 and 9.6.13]{AS} give our claims for $\al=0$ and $-1<\al<0$, respectively.
\end{proof}


\section{The global solutions}
\label{sec:global solutions}

In this section, we present the global solutions of the parabolic equation \eqref{G-heat} and of the resolvent equation \eqref{G-elliptic}. The former can be found in \cite[Proposition 2.1]{BG-IUMJ} . We recall that $p' = p/(p-1)$ and we mean $p'=1$ for $p=\infty$.

\begin{prop}
\label{prop:parabolic global solution}
Let $p \in (1,\infty]$ and $\Phi$ be the function defined as
\begin{equation}
\label{eq:parabolic global solution}
\Phi(x,t) =
\begin{cases}
\displaystyle
 t^{-\frac{N+p-2}{2(p-1)}}e^{-p'\frac{|x|^2}{4 t}} \ \mbox{ if } \ p\in(1,\infty),
 \vspace{8pt}\\
 \displaystyle
t^{-\frac12} e^{-\frac{|x|^2}{4t}} \ \mbox{ if } \ p=\infty,
\end{cases}
\end{equation}
for $x\in\RR^N$ and $ t > 0$.
\par
Then, $\Phi$ is a viscosity solution of \eqref{G-heat} in $\RR^N \times (0,\infty)$ and is bounded on $\left(\RR^N\setminus B_\de \right) \times (0,\infty)$, for $\de > 0$.
\end{prop}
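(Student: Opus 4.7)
\medskip

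\noindent\textbf{Proof plan.}

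The plan is to exhibit $\Phi$ as a \emph{classical} solution of $u_t=\Delta_p^G u$ and then invoke Lemma \ref{consistency} (together with Lemma \ref{extension lemma} to handle the single critical point $x=0$ where $\nabla\Phi$ vanishes). The key observation is that $\Phi(\cdot,t)$ is radially symmetric, and $\Delta_p^G$ acts linearly on smooth radial functions: if $u(x,t)=w(r,t)$ with $r=|x|$ and $r>0$, then
\begin{equation*}
\Delta_p^G u=\frac{p-1}{p}\,w_{rr}+\frac{N-1}{p\,r}\,w_r\quad (p\in(1,\infty)),\qquad \Delta_\infty^G u=w_{rr}.
\end{equation*}
This reduces the verification to a one-dimensional computation on a Gaussian profile, analogous to the classical check that the Euclidean heat kernel solves the heat equation after a rescaling of $t$.

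First, I would treat the case $p\in(1,\infty)$ on $(\RR^N\setminus\{0\})\times(0,\infty)$. Setting $\alpha=\frac{N+p-2}{2(p-1)}$ and $w(r,t)=t^{-\alpha}e^{-p'r^2/(4t)}$, direct differentiation gives $w_t=t^{-\alpha-1}e^{-p'r^2/(4t)}\bigl(-\alpha+\tfrac{p'r^2}{4t}\bigr)$, while a short calculation yields
\begin{equation*}
\frac{p-1}{p}w_{rr}+\frac{N-1}{p\,r}w_r
=t^{-\alpha-1}e^{-p'r^2/(4t)}\left(-\tfrac12-\tfrac{N-1}{2(p-1)}+\tfrac{p'r^2}{4t}\right).
\end{equation*}
The $r^2$-terms agree, and the remaining identity $\alpha=\tfrac12+\tfrac{N-1}{2(p-1)}=\tfrac{N+p-2}{2(p-1)}$ matches the chosen exponent exactly. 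The case $p=\infty$ is even simpler: with $w(r,t)=t^{-1/2}e^{-r^2/(4t)}$, both $w_t$ and $w_{rr}$ equal $t^{-1/2}e^{-r^2/(4t)}\bigl(-\tfrac{1}{2t}+\tfrac{r^2}{4t^2}\bigr)$.

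Next I would handle the critical point $x=0$, where $\nabla\Phi=0$, by verifying the definition directly through the semicontinuous envelopes \eqref{p-laplacian-inf-env}--\eqref{p-laplacian-sup-env}. Since $\nabla^2\Phi(0,t)=c(t)\,I$ with $c(t)=-\tfrac{p'}{2t}\,t^{-\alpha}$ (respectively $c(t)=-\tfrac{1}{2t}\,t^{-1/2}$ when $p=\infty$), all eigenvalues coincide and hence $F_*(0,cI)=F^*(0,cI)=-c(N+p-2)/p$ (with the natural interpretation $-c$ when $p=\infty$). Comparing with $\Phi_t(0,t)=-\alpha\,t^{-\alpha-1}$ shows $\Phi_t+F_*(0,\nabla^2\Phi)=\Phi_t+F^*(0,\nabla^2\Phi)=0$ at the origin, so both viscosity inequalities \eqref{parabolic subsolution}--\eqref{parabolic supersolution} hold. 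Equivalently, since $\{0\}$ is the unique spatial zero of $\nabla\Phi(\cdot,t)$, one may simply appeal to Lemma \ref{extension lemma}. Either route yields that $\Phi$ is a viscosity solution on all of $\RR^N\times(0,\infty)$.

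Finally, the boundedness assertion on $(\RR^N\setminus B_\delta)\times(0,\infty)$ is routine: on $|x|\geq\delta$, the factor $e^{-p'|x|^2/(4t)}$ decays super-polynomially as $t\to 0^+$, dominating the blow-up of $t^{-\alpha}$, while as $t\to\infty$ the exponential tends to $1$ and the negative power of $t$ forces decay. A uniform bound follows from maximizing $t^{-\alpha}e^{-p'\delta^2/(4t)}$ in $t\in(0,\infty)$, which is attained at $t=\tfrac{p'\delta^2}{4\alpha}$. I do not expect any real obstacle; the only point that requires a moment of care is the behaviour at $x=0$, and this is dispatched by the envelope computation above.
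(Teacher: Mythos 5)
Your proposal is correct and follows essentially the same route as the paper: verify the radial one-dimensional equation $w_t=\frac{p-1}{p}w_{rr}+\frac{N-1}{pr}w_r$ away from $r=0$, and then invoke Lemma~\ref{extension lemma} to handle the isolated critical point at the origin. The additional direct envelope computation at $x=0$ is a valid consistency check but redundant once the extension lemma is cited.
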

\begin{proof}
Observe that $\Phi \in C^\infty \left( \RR^N \times(0,\infty) \right)$ and that $\Phi(x,t) = \phi(|x|,t)$, where $\phi=\phi(r,t)$ is clearly defined by \eqref{eq:parabolic global solution}. Equation \eqref{G-heat} for a radially symmetric function $u$ reads as
$$
u_t=\frac{p-1}{p} u_{rr} + \frac{N-1}{p} \frac{u_r}{r}.
$$ 
By simple computations, we verify that this equation is satisfied (pointwise) by $\phi(r,t)$ for $r\ne 0$ and $t>0$.
\par
Thus, $\Phi$ is a classical solution in $\left(\RR^N \setminus\{0\} \right) \times (0,\infty)$, where its spatial gradient does not vanish. Then, we apply Lemma \ref{extension lemma} to conclude.
\par 
The stated boundness of $\Phi$ easily follows by its definition.
\end{proof}

\begin{prop}
\label{prop:elliptic global solution}
Let $p \in (1,\infty]$ and $\Phi_\ve$ be the function defined, for $x\in \RR^N$, by
\begin{equation*}
\label{elliptic global solution}
\Phi_\ve(x) = 
\begin{cases}
\displaystyle
\int_{0}^{\infty}e^{-\sqrt{p'}\frac{|x|}{\ve}\cosh\te}(\sinh\te)^\frac{ N - p }{ p - 1 }\,d\te\ &\mbox{ if }\ p\in(1,\infty), 
\vspace{8pt}\\
\displaystyle
e^{-\frac{|x|}{\ve}} \ &\mbox{ if } \ p=\infty.
\end{cases}
\end{equation*}
\par
Then $\Phi_\ve$ is a viscosity solution of \eqref{G-elliptic} in $\RR^N\setminus\{ 0\}$. Moreover, in the case $p\in(N,\infty]$, $\Phi_\ve$ is bounded in $\RR^N$ while, in the case $p\in (1,N]$, it is bounded on the complement of any ball centered at $0$.
\end{prop}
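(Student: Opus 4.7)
The plan is to reduce the resolvent equation to a linear ODE by exploiting the spherical symmetry of $\Phi_\ve$, identify $\Phi_\ve$ with a modified Bessel function of the second kind via \eqref{modified second kind}, and then upgrade the resulting classical solution to a viscosity solution by observing that the radial derivative is strictly negative. I treat $p\in(1,\infty)$; the case $p=\infty$ is immediate from the direct verification that $\De_\infty^G e^{-|x|/\ve}=\ve^{-2}e^{-|x|/\ve}$ wherever $x\ne 0$.

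First I would write $\Phi_\ve(x)=\phi(|x|)$ and note that, for radial functions with non-vanishing radial derivative, the identities $\De u=\phi''(r)+(N-1)\phi'(r)/r$ and $\De_\infty^G u=\phi''(r)$ combined with the decomposition \eqref{p-decomposition} give
\[
\De_p^G\phi(|x|)=\frac{p-1}{p}\,\phi''(r)+\frac{N-1}{p}\,\frac{\phi'(r)}{r},\qquad r=|x|>0.
\]
Therefore $\Phi_\ve-\ve^2\De_p^G\Phi_\ve=0$ reduces to exactly \eqref{radial functions elliptic equation} with $\la=1/\ve$.

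Next I would identify $\phi$ with the second fundamental solution in \eqref{eq:radial elliptic solutions}. Choosing the Bessel index $\be=(N-p)/[2(p-1)]$ (whose range condition $\be>-1/2$ in \eqref{modified second kind} is automatic, since $N\ge 2$) and the argument $\si=\sqrt{p'}\,r/\ve$, the integral defining $\Phi_\ve$ is recognised as a positive constant multiple of $r^{-\be}K_\be\bigl(\sqrt{p'}\,r/\ve\bigr)$, i.e.\ of the second function in \eqref{eq:radial elliptic solutions} with $\la=1/\ve$. Lemma \ref{lem:scaled differential equations} thus gives $\Phi_\ve\in C^2(\RR^N\setminus\{0\})$ and ensures that it solves \eqref{G-elliptic} classically there. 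Differentiation under the integral sign in the definition of $\Phi_\ve$ yields $\phi'(r)<0$ for $r>0$, so $\na\Phi_\ve(x)=\phi'(|x|)\,x/|x|$ does not vanish in $\RR^N\setminus\{0\}$; Lemma \ref{consistency} then promotes $\Phi_\ve$ to a viscosity solution with no residual critical-point case to check.

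For the boundedness I would write $\Phi_\ve(x)=e^{-\si}f(\si)$ with $\si=\sqrt{p'}\,|x|/\ve$ and $f$ as in Lemma \ref{lem:asymptotics for modified bessel}, whose exponent $\al=(N-p)/(p-1)>-1$ always. The asymptotic \eqref{eq:asymptotics-to-infinity} gives exponential decay as $|x|\to\infty$, so $\Phi_\ve$ is bounded outside any neighbourhood of the origin regardless of $p$. The behaviour at the origin is then read off from \eqref{eq:asymptotic to zero}: a finite positive limit when $N<p<\infty$ (the case $-1<\al<0$), a logarithmic blow-up when $p=N$ (the case $\al=0$), and a power-type singularity of order $|x|^{-\al}$ when $p<N$ (the case $\al>0$). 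Combined with the trivial bound $e^{-|x|/\ve}\le 1$ for $p=\infty$, this gives exactly the stated dichotomy. The only delicate point is the correct matching of parameters between the integral defining $\Phi_\ve$ and formula \eqref{modified second kind}, including the range condition $\be>-1/2$; once this is in place every remaining step is a routine application of the machinery assembled in Sections \ref{sec:bessel} and \ref{sec:definitions}.
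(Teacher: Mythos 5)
Your proof is correct and follows essentially the same route as the paper's: reduce to the radial ODE \eqref{radial functions elliptic equation}, identify $\Phi_\ve$ with a multiple of $r^{-\be}K_\be(\sqrt{p'}\,r/\ve)$ via \eqref{modified second kind} and Lemma \ref{lem:scaled differential equations}, and read off boundedness from the asymptotics of $K_\be$. The only difference is that you spell out details the paper leaves implicit — the explicit computation of $\De_p^G$ on radial functions, the observation that $\phi'<0$ makes Lemma \ref{consistency} apply with no critical points to worry about, and the precise use of Lemma \ref{lem:asymptotics for modified bessel} to obtain the dichotomy in $p$ versus $N$ — all of which are correct and welcome.
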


\begin{proof}
The case $p=\infty$ is immediate. In the case $p\in(1,\infty)$, from \eqref{modified second kind}, we have that $\Phi_\ve = \phi(|x|)$, where $\phi(r)$ is a solution of \eqref{radial functions elliptic equation}, for $r > 0$. A direct check shows that \eqref{radial functions elliptic equation} is simply the equation \eqref{G-elliptic} for radial functions. Hence, $\Phi_\ve$ is a solution of \eqref{G-elliptic}, outside the origin. The stated boundedness follows from the properties of the function $K_\frac{N-p}{2(p-1)}$.
\end{proof}


\section{Elliptic solutions in symmetric domains}
\label{sec:radial elliptic solutions}

%
{

\begin{lem}[{Elliptic solution in the ball, \cite[Lemma 2.1]{BM-JMPA}}]
\label{th:ball elliptic solution}
Set $p\in (1,\infty]$.
\par
Then, for $x\in\ol B_R$, the following function,
\begin{equation} 
\label{ball solution formula}
u^\ve(x)=\begin{cases}
\displaystyle 
\frac{\int_{0}^{\pi}e^{\sqrt{p'}\frac{|x|}{\ve}\cos \te}(\sin\te) ^{\frac{ N - p }{ p - 1 }}\,d\te}{\int_{0}^{\pi}e^{\sqrt{p'}\frac{R}{\ve}\cos\te}(\sin\te) ^{\frac{ N - p }{ p - 1 }}\,d\te} \ &\mbox{ if } \ p\in(1,\infty),
\vspace{8pt} \\
\displaystyle
\frac{\cosh(|x|/\ve)}{\cosh(R/\ve)}  \ &\mbox{ if } \ p=\infty,
\end{cases} 
\end{equation}
is the (viscosity) solution of \eqref{G-elliptic}-\eqref{elliptic-boundary}.
\end{lem}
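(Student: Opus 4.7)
The plan is to exploit the radial symmetry of the problem, which reduces \eqref{G-elliptic} to a linear ODE that we can solve explicitly via modified Bessel functions.

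First, I would reduce to the radial ODE. Since $-\De_p^G$ acts linearly on smooth radial functions, if we look for a radial solution $u^\ve(x) = \phi(|x|)$ of \eqref{G-elliptic}, then at any $r>0$ where $\phi'(r)\neq 0$ one has
$$
\De_p^G u^\ve = \frac{p-1}{p}\,\phi''(r) + \frac{N-1}{p}\,\frac{\phi'(r)}{r},
$$
so that \eqref{G-elliptic} becomes exactly equation \eqref{radial functions elliptic equation} with $\la = 1/\ve$. By Lemma \ref{lem:scaled differential equations}, the two linearly independent solutions are those in \eqref{eq:radial elliptic solutions}; I would discard the $K$-Bessel branch, since it is singular at $0$, and keep
$$
\phi(r) = C\, r^{\frac{p-N}{2(p-1)}} I_{\frac{N-p}{2(p-1)}}\!\!\left(\sqrt{p'}\,r/\ve\right),
$$
where $C$ is chosen so that $\phi(R)=1$, in accordance with \eqref{elliptic-boundary}.

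Next, I would pass from the Bessel notation to the integral expression in \eqref{ball solution formula}. Applying the integral representation \eqref{integral modified bessel} with $\al = \frac{N-p}{2(p-1)} > -\frac12$ and $\si = \sqrt{p'}\,r/\ve$, the factor $r^{\frac{p-N}{2(p-1)}}$ cancels exactly the prefactor $(\si/2)^\al$, leaving
$$
\phi(r) = C'\int_0^\pi e^{\sqrt{p'}\,r\cos\te/\ve}(\sin\te)^{\frac{N-p}{p-1}}\,d\te,
$$
where $C'$ is a positive constant. Dividing by $\phi(R)$ to enforce the boundary condition yields precisely the ratio in \eqref{ball solution formula}. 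The case $p=\infty$ is direct: the radial form of $u^\ve - \ve^2 \De_\infty^G u^\ve = 0$ is $\phi - \ve^2\phi'' = 0$, whose bounded smooth solution satisfying $\phi(R)=1$ is $\cosh(r/\ve)/\cosh(R/\ve)$.

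Finally, I would verify that the $C^\infty$ function $u^\ve$ so obtained is a viscosity solution on the whole ball. Away from the origin $\na u^\ve \neq 0$ (the integrand in the numerator is even and has strictly positive derivative in $r$, and analogously $\sinh$ in the $p=\infty$ case), so $u^\ve$ is a classical, hence viscosity, solution there by the consistency Lemma \ref{consistency}. At the single critical point $x=0$, one has $\na u^\ve(0)=0$, and I would invoke the extension Lemma \ref{extension lemma} to conclude that $u^\ve$ is a viscosity solution on all of $B_R$; since $u^\ve$ is continuous up to $\pa B_R$ with boundary value $1$, uniqueness follows from Corollary \ref{cor:elliptic uniqueness}. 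The only mildly delicate point in the argument is checking the hypotheses of Lemma \ref{extension lemma} at the isolated critical point $x=0$ (and verifying that the Bessel integrand really yields a smooth radial function of $x$, not merely a smooth function of $r$), but this is routine since $I_\al(\si)\si^{-\al}$ is an even entire function of $\si$.
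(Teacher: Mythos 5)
Your proposal follows essentially the same route as the paper: write $u^\ve(x)=\phi(|x|)$ with $\phi$ built from the regular modified Bessel function, reduce to the linear radial ODE \eqref{radial functions elliptic equation} via Lemma \ref{lem:scaled differential equations}, pass to the integral representation \eqref{integral modified bessel}, invoke the extension Lemma \ref{extension lemma} at the isolated critical point $x=0$, and conclude by uniqueness (Corollary \ref{cor:elliptic uniqueness}). You spell out a couple of steps the paper leaves implicit (discarding the singular $K$-branch, the nonvanishing of $\na u^\ve$ off the origin), but the argument is the same.
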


\begin{proof}
We only consider the case $ p \in (1,\infty)$, while the extremal case $p=\infty$ is similar and simpler. We have that $u^\ve(x) = \phi(|x|)$, where $\phi(r) = r^\frac{p-N}{2(p-1)}I_\frac{N-p}{2(p-1)}(r)$. For Lemma \ref{lem:scaled differential equations}, $\phi(r)$ is a solution of \eqref{radial functions elliptic equation}, which is the equation \eqref{G-elliptic} for radial functions. Hence, $u^\ve$ is a classical solution of \eqref{G-elliptic}, away from the origin. Since $u^\ve$ is of class $C^2$ in $B_R$, then Lemma \ref{extension lemma} informs  us that it is also a viscosity solution in the whole ball. Finally, it is clear that $u^\ve = 1$ on the boundary, then it is the solution of \eqref{G-elliptic}-\eqref{elliptic-boundary}, by uniqueness (Corollary \ref{cor:elliptic uniqueness}).
\end{proof}


\begin{lem}[{Elliptic solution in the exterior of the ball, \cite[Lemma 2.2]{BM-AA}}]
\label{th:exterior elliptic solution}
Set $ p \in (1, \infty]$.
\par
Then, the following function, defined for $x\in \RR^N \setminus B_R$, by
\begin{equation}
\label{solution-exterior}
u^\ve(x)=
\begin{cases}
\displaystyle
\frac{\int_{0}^{\infty}e^{-\sqrt{p'}\frac{|x|}{\ve}\cosh\te}(\sinh\te)^\frac{ N - p }{ p - 1 }\,d\te}{\int_{0}^{\infty}e^{-\sqrt{p'} \frac{R}{\ve}\cosh\te}(\sinh\te)^\frac{ N - p }{ p - 1 }\,d\te}\ &\mbox{ if }\ p\in(1,\infty),  
\vspace{8pt}\\
\displaystyle
e^{-\frac{|x|-R}{\ve}} \ &\mbox{ if } \ p=\infty,
\end{cases}
\end{equation}
is the bounded (viscosity) solution of \eqref{G-elliptic}-\eqref{elliptic-boundary}.
\end{lem}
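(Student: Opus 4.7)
The plan is to mimic the proof of Lemma \ref{th:ball elliptic solution}, reducing to a one-dimensional ODE analysis via the fact that $\Delta_p^G$ acts linearly on radial functions. For the extremal case $p=\infty$, the function $u^\ve(x) = e^{-(|x|-R)/\ve}$ is smooth on $\RR^N\setminus\{0\}$ with non-vanishing gradient, and a direct differentiation along the radial variable $r$ gives $\Delta_\infty^G u^\ve = u^\ve_{rr} = u^\ve/\ve^2$, so \eqref{G-elliptic} holds classically on $\RR^N\setminus\ol B_R$; the boundary condition is built into the formula, $u^\ve$ is bounded, and Corollary \ref{cor:elliptic uniqueness} closes the case.

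For $p\in (1,\infty)$, I would write $u^\ve(x) = \phi(|x|)$ and, by comparing the integrand in \eqref{solution-exterior} with the representation \eqref{modified second kind} (taking $2\al = (N-p)/(p-1)$ and $\si = \sqrt{p'}\,r/\ve$), identify $\phi$ as a scalar multiple of
$$
r^{(p-N)/(2(p-1))}\,K_{(N-p)/(2(p-1))}\!\left(\sqrt{p'}\,r/\ve\right).
$$
By Lemma \ref{lem:scaled differential equations} with $\la = 1/\ve$, this function solves the ODE \eqref{radial functions elliptic equation}, which is precisely the radial form of \eqref{G-elliptic}. Since $K_\al$ is strictly positive and strictly decreasing on $(0,\infty)$, the radial profile $\phi$ is monotone and $\nabla u^\ve\neq 0$ throughout $\RR^N\setminus\ol B_R$; hence $u^\ve$ is a classical, and a fortiori viscosity, solution of \eqref{G-elliptic}, with no need to invoke Lemma \ref{extension lemma} as was required in the interior case, since the critical point $\{0\}$ lies outside the domain.

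It remains to check the boundary condition, boundedness, and uniqueness. The equality $u^\ve = 1$ on $\partial B_R$ is enforced by the denominator in \eqref{solution-exterior}. Boundedness on $\RR^N\setminus B_R$ follows from the decay of the integral as $|x|\to\infty$, which can be read off from formula \eqref{eq:asymptotics-to-infinity} in Lemma \ref{lem:asymptotics for modified bessel}: the numerator in \eqref{solution-exterior} tends to zero at infinity, while the denominator is a positive constant. Uniqueness among bounded viscosity solutions is then given by Corollary \ref{cor:elliptic uniqueness}. The only delicate point---hardly an obstacle---is the Bessel-function bookkeeping, namely matching the exponent $2\al$ in the integrand with the index $\al$ of $K_\al$ and the outside factor $r^{-\al}$ appearing in \eqref{eq:radial elliptic solutions}; this is routine once the index $\al = (N-p)/(2(p-1))$ is fixed.
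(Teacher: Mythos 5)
Your proof is correct and follows essentially the same route as the paper's, which simply observes that \eqref{solution-exterior} is the global solution $\Phi_\ve$ of Proposition \ref{prop:elliptic global solution} normalized to equal $1$ on $\partial B_R$, and then invokes Corollary \ref{cor:elliptic uniqueness}; you are effectively re-deriving the Bessel/ODE content of Proposition \ref{prop:elliptic global solution} instead of citing it, but the mathematics is the same. One small sharp edge: you justify $\nabla u^\ve\neq 0$ by saying $K_\al$ is decreasing, but the radial profile is $r^{-\al}K_\al(\sqrt{p'}r/\ve)$ with $\al=(N-p)/(2(p-1))$, which is negative when $p>N$, so the prefactor $r^{-\al}$ is then increasing and the product's monotonicity is not immediate from that remark; it is, however, immediate from the unreduced integral representation, since $\phi(r)=\int_0^\infty e^{-\sqrt{p'}\,r\cosh\te/\ve}(\sinh\te)^{(N-p)/(p-1)}\,d\te$ has a strictly negative $r$-derivative (and in fact monotonicity is not even needed, since once $u^\ve$ is identified with a smooth solution on $r>0$ and $0\notin\RR^N\setminus\ol{B_R}$, no critical-point extension is required).
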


\begin{proof}
We just need to observe that $u^\ve$ is just $\Phi_\ve$ of Proposition \ref{prop:elliptic global solution}, once it is normalized to the value $1$ on the boundary of $B_R$. Hence, $u^\ve$ is a bounded solution of \eqref{G-elliptic}-\eqref{elliptic-boundary}. By uniqueness (Corollary \ref{cor:elliptic uniqueness}), we conclude.
\end{proof}

\section{Parabolic solutions in symmetric domains}
\label{sec:radial parabolic solutions}


Before going on, we point out that, from now on, with $\Ga$ we indicate the boundary of the relevant set, which it will be evident by the context.
\par
First, we focus our attention on the case of the half-space of $\RR^N$. We will use the {\it complementary error function} defined by 
$$
\Erfc(\si)=\frac{2}{\sqrt{\pi}}\,\int_\si^\infty e^{-\tau^2} d\tau, \quad \si\in\RR.
$$

\begin{prop}[{Parabolic solution in the half-space, \cite[Proposition 2.3]{BM-JMPA}}]
\label{Asymp hspace}
Let $p\in (1,\infty]$ and $H$ be the half-space in which $x_1>0$. The function $\Psi$, defined by 
$$
\Psi(x,t)=\sqrt{\frac{p'}{4\pi}}\int_{\frac{x_1}{\sqrt{t}}}^{\infty} e^{-\frac14 p' \si^2}\,d\si=
\Erfc\left(\frac{\sqrt{p'} x_1}{2\sqrt{t}}\right) \ \mbox{ for } \ (x,t)\in\ol{H}\times (0,\infty).
$$
is the bounded solution of \eqref{G-heat}-\eqref{boundary}.
\end{prop}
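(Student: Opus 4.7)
The plan is to exploit the fact that $\Psi$ depends only on the single spatial variable $x_1$ (and $t$), so that $\Delta_p^G$ degenerates to a linear operator on $\Psi$. Indeed, for a function $u=u(x_1,t)$, one has $\Delta u = u_{x_1 x_1}$ and $\Delta_\infty^G u = u_{x_1 x_1}$ wherever $u_{x_1}\neq 0$, and the decomposition \eqref{p-decomposition} then yields
\begin{equation*}
\Delta_p^G u = \frac{1}{p}u_{x_1 x_1} + \left(1-\frac{2}{p}\right) u_{x_1 x_1} = \frac{1}{p'}\, u_{x_1 x_1}.
\end{equation*}
So equation \eqref{G-heat} for $\Psi$ reduces to the linear heat equation $\Psi_t = (1/p')\,\Psi_{x_1 x_1}$, and the factor $\sqrt{p'}$ appearing in the definition of $\Psi$ is tuned precisely to make the standard complementary-error-function profile a solution.

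Next, I would carry out a direct verification. Setting $\eta = \sqrt{p'}\,x_1/(2\sqrt{t})$, a straightforward computation gives
\begin{equation*}
\Psi_t = \frac{\sqrt{p'}\,x_1}{2\sqrt{\pi}\,t^{3/2}}\,e^{-p' x_1^2/(4t)}, \qquad
\Psi_{x_1 x_1} = \frac{p'\sqrt{p'}\,x_1}{2\sqrt{\pi}\,t^{3/2}}\,e^{-p' x_1^2/(4t)},
\end{equation*}
so $\Psi_t = (1/p')\,\Psi_{x_1 x_1}$ holds pointwise in $H\times(0,\infty)$. Since $\Psi_{x_1} = -(\sqrt{p'}/\sqrt{\pi t})\,e^{-p' x_1^2/(4t)}$ is strictly negative, one has $\nabla \Psi \neq 0$ everywhere in $H\times(0,\infty)$, so $\Psi$ is a classical $C^\infty$-solution of \eqref{G-heat} there and, by consistency (Lemma \ref{consistency}), also a viscosity solution. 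No appeal to Lemma \ref{extension lemma} is needed because no critical point of $\Psi$ ever appears.

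It remains to check the boundary and initial data and the boundedness. On $\Gamma = \{x_1 = 0\}$, the argument of $\Erfc$ vanishes and $\Psi \equiv \Erfc(0) = 1$, giving \eqref{boundary}; for fixed $x \in H$, as $t \to 0^+$ the argument tends to $+\infty$ and $\Psi \to 0$, giving \eqref{initial}. Since $\Erfc$ is monotone with range $(0,2)$ and here its argument is nonnegative, we have $0 \le \Psi \le 1$, so $\Psi$ is bounded. Finally, uniqueness of the bounded viscosity solution follows from Corollary \ref{cor:parabolic uniqueness}. I do not expect any serious obstacle: the whole point of the statement is that the problem has enough symmetry to collapse the nonlinear operator $\Delta_p^G$ to a multiple of $\partial_{x_1 x_1}$, after which it is the classical heat-in-a-half-line computation.
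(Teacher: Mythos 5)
Your proposal is correct and follows essentially the same route as the paper: reduce $\De_p^G$ to $(1/p')\,\pa_{x_1x_1}$ on functions of $x_1$ alone, verify by direct computation that $\Psi$ is a classical solution of the resulting linear heat equation with nonvanishing gradient, invoke consistency to obtain a viscosity solution, check the data, and conclude by uniqueness (Corollary~\ref{cor:parabolic uniqueness}). (Minor point: your displayed expressions for $\Psi_t$ and $\Psi_{x_1}$ are each off by a factor of $2$ — one should read $\Psi_t=\tfrac{\sqrt{p'}\,x_1}{4\sqrt{\pi}\,t^{3/2}}e^{-p'x_1^2/(4t)}$ and $\Psi_{x_1}=-\tfrac{\sqrt{p'}}{2\sqrt{\pi t}}\,e^{-p'x_1^2/(4t)}$ — but the error cancels in the ratio, so the identity $\Psi_t=(1/p')\Psi_{x_1x_1}$ is still correctly verified.)
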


\begin{proof}
In virtue of Corollary \ref{cor:parabolic uniqueness}, the bounded solution of \eqref{G-heat}-\eqref{boundary} is unique. The function $\Psi$ is smooth with no critical points. To conclude, it is enough to note that, after an inspection, it satisfies pointwise \eqref{G-heat} in $H \times (0,\infty)$ and both \eqref{initial} and \eqref{boundary}.
\end{proof}


Now, we establish a series representation of parabolic solutions in the ball.

\begin{lem}[{Parabolic solution in the ball}]
\label{lem:series solution}
Let $ p \in (1,\infty]$.
\par
Then, the solution of \eqref{G-heat}-\eqref{boundary} has the following representation:
\begin{enumerate}[(i)]
\item 
if $p \in(1, \infty)$, we have that, for $x\in B_R$ and $t > 0$,
\begin{equation}
\label{solution by series}
u(x,t) = 2 \left( \frac{ R }{ |x| } \right)^\frac{ N- p }{ 2(p - 1) }
\sum_{ n = 1 }^{ \infty }
\frac
{ J_\frac{ N - p }{ 2(p - 1) }\!\left( \frac{ \ka_n }{ R }|x| \right) }
{ \ka_n \, J_\frac{ N + p - 2 }{ 2(p - 1) }\left( \ka_n \right) }
\left( 1 - e^{-\frac{\ka_n^2}{p'\, R^2} t} \right).
\end{equation}
where $\ka_n$ is the $n$-th positive zeros of $J_\frac{N - p}{2(p - 1)}$. 
\item
If $p=\infty$, we get, for $x\in B_R$ and $ t > 0$:
\begin{equation}
\label{infinity solution by series}
u(x,t)=
\frac4{\pi}\sum_{n=1}^{\infty}
\frac
{ (-1)^{n-1} \cos\left( (2n-1)\frac{\pi |x|}{2R}\right) }
{ 2n-1 }
 \left( 1 - e^{-\frac{(2n-1)^2 \pi^2}{4R^2}\,t} \right).
\end{equation}
\end{enumerate}
\end{lem}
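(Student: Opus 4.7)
The plan is to exploit radial symmetry to linearize the equation, solve the resulting one-dimensional problem by Fourier--Bessel series, and then invoke uniqueness. Since the PDE and the data in \eqref{G-heat}-\eqref{boundary} are invariant under rotations and bounded solutions are unique by Corollary \ref{cor:parabolic uniqueness}, it suffices to exhibit a bounded radial solution $u(x,t)=\phi(|x|,t)$. For radial functions, $\De_p^G$ acts linearly, so $\phi$ must satisfy
\begin{equation*}
\phi_t=\frac{p-1}{p}\,\phi_{rr}+\frac{N-1}{p\,r}\,\phi_r,\qquad\phi(R,t)=1,\quad \phi(r,0)=0,\quad \phi_r(0,t)=0.
\end{equation*}
The substitution $\psi=1-\phi$ homogenizes the boundary condition and turns the initial datum into the constant $1$.

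The spatial operator above is self-adjoint on the weighted Hilbert space $L^2\bigl((0,R);r^{(N-1)/(p-1)}\,dr\bigr)$. By Lemma \ref{lem:radial-eigenfunctions}, after the rescaling $\sigma=r/R$, an orthogonal basis of eigenfunctions is
\begin{equation*}
\tilde v_n(r)=\left(\frac{r}{R}\right)^{\!-\al}\!\!J_\al\!\left(\frac{\ka_n\,r}{R}\right),\qquad \al=\frac{N-p}{2(p-1)},
\end{equation*}
with corresponding eigenvalues $\ka_n^2/(p'R^2)$. Separating variables yields $\psi(r,t)=\sum_n a_n\,e^{-\ka_n^2 t/(p'R^2)}\tilde v_n(r)$, where the $a_n$ are the Fourier--Bessel coefficients of the constant $1$ in this basis.

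The coefficients collapse to a closed form: after substituting $\sigma=r/R$, the weight $r^{(N-1)/(p-1)}$ combines with the factor $r^{-\al}$ in $\tilde v_n$ so that both the numerator $\int_0^R \tilde v_n\,r^{(N-1)/(p-1)}\,dr$ and the squared norm $\int_0^R \tilde v_n^{\,2}\,r^{(N-1)/(p-1)}\,dr$ reduce to the classical identities
\begin{equation*}
\int_0^1\!\sigma^{\al+1}J_\al(\ka_n\sigma)\,d\sigma=\frac{J_{\al+1}(\ka_n)}{\ka_n},\qquad \int_0^1\!\sigma\,J_\al(\ka_n\sigma)^2\,d\sigma=\frac{J_{\al+1}(\ka_n)^2}{2}.
\end{equation*}
Their ratio gives $a_n=2/[\ka_n J_{\al+1}(\ka_n)]$, and the identity $\al+1=(N+p-2)/(2(p-1))$ converts this to the coefficients appearing in \eqref{solution by series}. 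Writing $u=1-\psi$ and using the companion Fourier--Bessel expansion $1=\sum_n a_n\tilde v_n(r)$ (convergent in the weighted $L^2$), the common coefficients combine to produce precisely the factor $1-e^{-\ka_n^2 t/(p'R^2)}$, yielding \eqref{solution by series}.

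Finally, I would verify that this candidate is the bounded viscosity solution. Using the asymptotic $|J_{\al+1}(\ka_n)|\sim\sqrt{2/(\pi\ka_n)}$ and the spacing $\ka_n\sim n\pi$, the series and all of its term-by-term derivatives converge absolutely and uniformly on compact subsets of $\ol{B_R}\times[\tau,\infty)$ for every $\tau>0$; hence $u\in C^\infty(B_R\times(0,\infty))$ and the PDE is satisfied classically wherever $\na u\neq 0$. Lemma \ref{extension lemma} promotes it to a viscosity solution across the sole critical point $x=0$, and Corollary \ref{cor:parabolic uniqueness} then identifies it with the solution of \eqref{G-heat}-\eqref{boundary}. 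The case $p=\infty$ is entirely parallel: using the cosine basis of Remark \ref{rem:infinity-eigenfunctions}, the computation of $a_n$ reduces to the classical Fourier coefficients $4(-1)^{n-1}/[(2n-1)\pi]$ of the function $1$ on $(0,R)$ with mixed Neumann/Dirichlet boundary conditions, giving \eqref{infinity solution by series}. The most delicate step is the uniform convergence and termwise differentiation of the Fourier--Bessel series; uniform control fails at $t=0$ because of the jump of the initial datum on $\pa B_R$, which is why the analysis must be localized to $t\geq\tau>0$.
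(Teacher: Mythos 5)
Your proposal is correct and follows essentially the same route as the paper's proof: both pass to $1-u$, expand the constant $1$ in the Fourier--Bessel basis of Lemma~\ref{lem:radial-eigenfunctions}, evaluate the coefficients with the same classical Bessel integral identities, identify each mode $|x|^{-\al}J_\al(\ka_n|x|/R)\,e^{-\ka_n^2 t/(p'R^2)}$ as a solution vanishing on $\Ga$, and conclude by uniqueness via Corollary~\ref{cor:parabolic uniqueness}. The only difference is that you make the convergence, termwise differentiation, and the appeal to Lemma~\ref{extension lemma} explicit, whereas the paper leaves these verification steps implicit.
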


\begin{proof}
Preliminarily, we observe that, since \eqref{G-heat} is invariant if we add a constant to $u$, $ 1 - u $ is also a solution of \eqref{G-heat}. Moreover, $ 1 - u $ vanishes on $\Ga \times (0,\infty)$ and is equal to $1$ on $B_R\times \{0\}$. We want to find the series representation of $1 - u$.
\par
We use the same scheme, in all cases $p \in (1,\infty]$, clarifying that, in the extremal case $p = \infty$, we use Remark \ref{rem:infinity-eigenfunctions} instead of Lemma \ref{lem:radial-eigenfunctions}.
Now, set $\al = \frac{N-p}{2(p-1)}$.
By applying Lemma \ref{lem:radial-eigenfunctions}, we infer that $\{\si^{-\al} J_\al \left( \frac{\ka_n}{R} \si \right):n=1,2,\cdots\}$ is a complete system in  $L^2\left((0,R); \si^{2\al +1 }\,d\si \right)$. This fact implies that 
\begin{equation}
\label{coefficients}
\sum_{n=1}^{\infty} c_n \left\{\si^{-\al} J_\al\left(\frac{\ka_n}{R}\si\right)\right\} = 1 \ \mbox{ for } \ \si \in (0,R),
\end{equation}
for
\begin{equation*}
c_n = 
\frac
{ \int_{0}^{R}\si^{\al + 1} J_\al\left(\frac{\ka_n}{R} \si\right) \, d\si }
{ \int_{0}^{R}\si J_\al\left( \frac{\ka_n}{R} \si \right)^2\, d\si }
=
\frac
{ \int_{0}^{R}\si^{\al + 1} J_\al\left(\frac{\ka_n}{R} \si\right) \, d\si }
{ R^2 J_{\al + 1}(\ka_n)^2 /2},
\end{equation*}
where, in the last equality, we have used \cite[formulas 11.4.5 and 9.5.4]{AS}. 
Now, since, by formula \cite[formula 11.3.20]{AS} we know that
$$
\int_{ 0 }^{ 1 } \si^{ \al + 1 } J_\al\left( \ka_n \si \right) \, d\si = \frac{ J_{ \al + 1 }(\ka_n) }{ \ka_n },
$$
we obtain
$$
c_n = \frac{2 R^\al}{\ka_n J_{\al +1 }(\ka_n)}.
$$
\par
We see that, for any $n=1,2,\cdots$, by reason of \eqref{eq:eigenfunctions}, the function
$$
u_n (x,t) = |x|^{-\al} J_\al\left(\frac{\ka_n}{R} |x|\right) e^{-\frac{ \ka_n^2 }{ p'\,R^2 } t},
$$
 is a solution of \eqref{G-heat} in $B_R\times (0,\infty)$ and it vanishes on $\Ga \times (0,\infty)$. 
 \par
Thus, by uniqueness (Corollary \ref{cor:parabolic uniqueness}, we have that
$$
1 - u(x,t) = \sum_{n=1}^{\infty} \frac{2 R^\al}{\ka_n J_{\al + 1}\left(\ka_n\right)} |x|^{-\al} J_\al\left(\frac{\ka_n}{R} |x|\right) e^{-\frac{ \ka_n^2 }{ p'\,R^2 } t},
$$
which, by using \eqref{coefficients}, gives \eqref{solution by series}. 
%
\par
Finally, we observe that, thanks to Remark \ref{rem:infinity-eigenfunctions}, in the case $p = \infty$, \eqref{solution by series} is just \eqref{infinity solution by series}.
\end{proof}

Last, we report the following connection between elliptic and parabolic radial (viscosity) solutions, as stated in \cite[Lemma 2.5]{BM-JMPA}.

\begin{lem}[Laplace transform and symmetric solutions]
\label{radial laplace transform}
Let $ p \in (1,\infty]$. Let $u(x,t)$ be given by Lemma \ref{lem:series solution}. Then, it holds
\begin{equation}
\label{eq:modified laplace transform}
u^\ve(x) = \ve^{-2} \int_{0}^{\infty} u(x,\tau) e^{-\tau/\ve^2}\,d\tau \ \mbox{ for } \ x\in\ol{B_R},
\end{equation}
where $u^\ve$ is the solution of \eqref{G-elliptic}-\eqref{elliptic-boundary}, in $B_R$.
\end{lem}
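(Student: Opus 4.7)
The plan is to define
\[
v(x):=\ve^{-2}\int_{0}^{\infty} u(x,\tau)\,e^{-\tau/\ve^{2}}\,d\tau\quad\text{for } x\in\ol{B_R},
\]
and show that $v$ coincides with $u^{\ve}$. First I would verify the basic properties: by the explicit series in Lemma \ref{lem:series solution} (and bound $0\le u\le 1$ from the comparison principle, Corollary \ref{cor:parabolic comparison}), the integrand decays like $e^{-\tau/\ve^2}$ in $\tau$, so $v$ is well-defined, bounded, and inherits radial symmetry from $u$. Moreover, $v$ is smooth on $B_R$ and continuous up to the boundary; since $u(y,\tau)=1$ for $y\in\Gamma$, $\tau>0$, one has $v(y)=\ve^{-2}\int_{0}^{\infty}e^{-\tau/\ve^2}d\tau=1$ on $\Gamma$, matching \eqref{elliptic-boundary}.

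Next I would use the \textbf{linearity of $\Delta_p^G$ on radial functions}: for a smooth radial function $w=w(|x|)$ and $p\in(1,\infty)$ one has $\Delta_p^G w=\tfrac{p-1}{p}w''+\tfrac{N-1}{p}\tfrac{w'}{r}$ (with the obvious limit for $p=\infty$), which is a linear differential operator in $r$. This allows me to differentiate under the integral sign (the smoothness and exponential decay easily justify the interchange):
\[
\De_{p}^{G}v(x)=\ve^{-2}\int_{0}^{\infty}\De_{p}^{G}u(x,\tau)\,e^{-\tau/\ve^{2}}\,d\tau
=\ve^{-2}\int_{0}^{\infty}u_{\tau}(x,\tau)\,e^{-\tau/\ve^{2}}\,d\tau,
\]
where the second equality uses \eqref{G-heat}. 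An integration by parts in $\tau$, exploiting $u(\cdot,0)=0$ (from \eqref{initial}) and $u(x,\tau)e^{-\tau/\ve^{2}}\to 0$ as $\tau\to\infty$ (boundedness of $u$), yields
\[
\int_{0}^{\infty}u_{\tau}(x,\tau)\,e^{-\tau/\ve^{2}}\,d\tau
=\ve^{-2}\int_{0}^{\infty}u(x,\tau)\,e^{-\tau/\ve^{2}}\,d\tau=v(x).
\]
Combining, $\De_{p}^{G}v=\ve^{-2}v$ in $B_R\setminus\{0\}$, i.e.\ $v-\ve^{2}\De_{p}^{G}v=0$ there. Since $v$ is $C^2$ radial near the origin, Lemma \ref{extension lemma} extends this to a (viscosity) solution of \eqref{G-elliptic} in the whole of $B_R$.

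Finally, $v$ and $u^{\ve}$ are both bounded viscosity solutions of \eqref{G-elliptic}--\eqref{elliptic-boundary} in $B_R$ agreeing on $\Gamma$, so uniqueness (Corollary \ref{cor:elliptic uniqueness}) gives $v=u^{\ve}$, which is exactly \eqref{eq:modified laplace transform}. The only delicate point is the justification of the interchange of $\De_p^G$ with the integral; this is the ``hard'' step conceptually, because $\De_p^G$ is nonlinear, but it becomes routine here thanks to the crucial observation that the operator reduces to a linear ODE-type operator on radial smooth functions, together with the uniform-in-$\tau$ smoothness and exponential integrability visible from the series representation in Lemma \ref{lem:series solution}.
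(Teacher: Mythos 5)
Your proof is correct, but it follows a genuinely different route from the paper. The paper's argument stays entirely with the series representation from Lemma~\ref{lem:series solution}: it computes the modified Laplace transform term by term, producing a series in the eigenfunctions $|x|^{-\alpha}J_\alpha(\ka_n|x|/R)$, and then shows this coincides with the eigenfunction expansion of $u^\ve$ by matching coefficients via the Bessel integral formula \cite[Formula 11.3.29]{AS}. This avoids any need to interchange $\De_p^G$ with the $\tau$-integral, since everything is reduced to explicit algebraic identities on coefficients. Your approach instead treats the modified Laplace transform of the PDE directly: it uses the observation that $\De_p^G$ acts as a linear ODE-type operator on radial functions to differentiate under the integral sign, then integrates by parts in $\tau$ to convert $u_t$ into the zero-order term, and finally invokes uniqueness (Corollary~\ref{cor:elliptic uniqueness}). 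What your route buys is conceptual transparency and generality — it makes clear why the Laplace transform works for any radially symmetric (or one-dimensional) domain, and it mirrors the informal discussion around \eqref{eq:intro_modified laplace} in the introduction. What it costs is that the interchange of spatial derivatives with the $\tau$-integral must genuinely be justified: the weight $e^{-\tau/\ve^2}$ controls $\tau\to\infty$, and you should also note that for $x\in B_R$ the solution and all its spatial derivatives decay (rather than blow up) as $\tau\to 0^+$, so the interchange is indeed legitimate on compact subsets of $B_R\setminus\{0\}$; your appeal to Lemma~\ref{extension lemma} then correctly handles the origin.
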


\begin{proof}
We prove that both sides of \eqref{eq:modified laplace transform} have the same eigenfunction expansion. Here, we treat the case $p\in (1,\infty)$. In the extremal case $p=\infty$ we have only to utilize the expressions of eigenfunctions in Remark \ref{rem:infinity-eigenfunctions}.
\par
Let $\ka_n$ be given by Lemma \ref{lem:radial-eigenfunctions}, then, for any $\ve>0$, it holds that
\begin{equation*}
\label{eq:laplace transform partial}
\ve^{-2}
\int_{0}^{\infty}\left(1 - e^\frac{\ka_n^2 \tau}{p'R^2}\right) e^{-\tau/\ve^2}\,d\tau
 = 
\frac{\ve^2\ka_n^2/(p'R^2)}{\ve^2\ka_n^2/(p' R^2) +1} 
=
 \frac{\ve^2 \ka_n^2}{\ve^2 \ka_n^2+ p' R^2}.
\end{equation*}
\par
Hence, from \eqref{solution by series}, we obtain that the right-hand side of \eqref{eq:modified laplace transform} equals
\begin{equation}
\label{eq:series expression of u^ve}
2 \left(R/|x|\right)^\frac{N-p}{2(p-1)} R^{- 2} 
\sum_{ n=1 }^{ \infty } \frac{ \ka_n }{ J_\frac{ N + p - 2 }{ 2(p - 1) }(\ka_n) } \left\{\frac{p'}{\ve^2} + \frac{\ka_n ^2}{R^2}\right\}^{-1}\, J_\frac{N-p}{2(p-1)}\left(\frac{\ka_n}{R}|x|\right),
\end{equation}
for any $x\in B_R$ and $\ve > 0$. 
\par 
Now, we observe that the sum of the last series is the function $u^\ve$. Indeed, by comparing \eqref{ball solution formula} and \eqref{integral modified bessel}, we have that $u^\ve$ is given by 
\begin{equation*}
\label{eq:expression for u^ve}
u^\ve(x) = 
(R/|x|)^\frac{ N - p }{ 2(p - 1) } \, I_\frac{ N - p }{ 2(p - 1) }\left(\frac{\sqrt{p'}}{\ve}|x|\right) / I_\frac{N-p}{2(p-1)}\left(\frac{\sqrt{p'}}{\ve}R\right)
\end{equation*}
and then the relevant coefficients can be calculated by
 applying \cite[Formula 11.3.29]{AS}, that is
\begin{multline*}
\int_{0}^{R} \si I_\frac{N-p}{2(p-1)}\left(\frac{\sqrt{p'}\si}{\ve}\right)\, J_\frac{N-p}{2(p-1)}\left(\frac{\ka_n}{R}\si\right)\,d\si =\\
 \ka_n I_\frac{N-p}{2(p-1)}\left(\frac{\sqrt{p'} R}{ \ve}\right) J_\frac{N+p-2}{2(p-1)}\left(\ka_n\right)\left\{\frac{p'}{\ve^2} + \frac{\ka_n ^2}{R^2}\right\}^{-1},
\end{multline*}
which gives \eqref{eq:series expression of u^ve}.
\end{proof}

\section{Asymptotics}
\label{sec:asymptotics}

In this section, we collect asymptotic formulas for the functions presented in Sections \ref{sec:radial elliptic solutions} and \ref{sec:radial parabolic solutions}.

\subsection{The elliptic case}
\label{ssec:elliptic asymptotics}

\begin{thm}[{Asymptotics in the ball, \cite[Lemma 2.1]{BM-AA}}] 
\label{th:asymptotics in the ball}
Let $p\in (1,\infty]$. Assume that $u^\ve$ be the (viscosity) solution of \eqref{G-elliptic}-\eqref{elliptic-boundary} in $B_R$.
\par
Then, it holds that
\begin{equation}
\label{uniform-ball}
\ve\log u^\ve+\sqrt{p'} d_\Ga=
\begin{cases}
\displaystyle 
O(\ve\,\log\ve) \ &\mbox{ if } \ 1<p<\infty,
\\
\displaystyle
O(\ve)  \ &\mbox{ if } \ p=\infty,
\end{cases} 
\end{equation}
uniformly on $\ol{B_R}$ as $\ve\to 0^+$.
\end{thm}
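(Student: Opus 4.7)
The approach is to exploit the explicit representations of $u^\ve$ from Lemma \ref{th:ball elliptic solution} and reduce the statement to the asymptotics of the scalar kernel
\[
h(s)=\int_0^\pi e^{s\cos\theta}(\sin\theta)^{\al}\,d\theta,\qquad \al=\tfrac{N-p}{p-1},
\]
in the case $p\in(1,\infty)$, and of $\cosh(s)$ in the case $p=\infty$. Since $d_\Ga(x)=R-|x|$ in $\ol{B_R}$, I expect the leading order $-\sqrt{p'}\,d_\Ga(x)/\ve$ to drop out of $\log u^\ve(x)$ automatically from the structure of the quotient; the real work is to control the correction uniformly in $x$, including near the origin, where $\sqrt{p'}|x|/\ve$ may fail to be large.

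\textbf{The case $p=\infty$.} Here $\sqrt{p'}=1$ and I would use the elementary identity $\log\cosh(s)=s-\log 2+\log(1+e^{-2s})$ with $s=|x|/\ve$ and $s=R/\ve$. The linear terms combine to $-(R-|x|)/\ve=-d_\Ga(x)/\ve$ and the remaining contribution $\log(1+e^{-2|x|/\ve})-\log(1+e^{-2R/\ve})$ is bounded in $[-\log 2,\log 2]$ uniformly in $x\in\ol{B_R}$. Multiplying by $\ve$ gives $O(\ve)$ immediately, with no splitting needed. This is why $p=\infty$ is better than the finite-$p$ case.

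\textbf{The case $p\in(1,\infty)$.} Factor $h(s)=e^s g(s)$, where $g$ is the function of Lemma \ref{lem:asymptotics for modified bessel}. Applying \eqref{eq:asymptotics-to-infinity-I} yields, uniformly for $s\ge 1$,
\[
\log h(s)=s-\tfrac{\al+1}{2}\log s+O(1).
\]
Set $s_R=\sqrt{p'}R/\ve$ and $s_x=\sqrt{p'}|x|/\ve$. Since $s_R\to\infty$ the denominator always satisfies this expansion. I would then split $\ol{B_R}$ into two regions. On $\{s_x\ge 1\}$, applying the expansion to both numerator and denominator gives
\[
\ve\log u^\ve(x)+\sqrt{p'}d_\Ga(x)=\tfrac{\al+1}{2}\,\ve\log(R/|x|)+O(\ve),
\]
and the factor $\log(R/|x|)$ is bounded by $\log(R\sqrt{p'}/\ve)=O(|\log\ve|)$, yielding the required $O(\ve\log\ve)$. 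On $\{s_x<1\}$, that is for $|x|<\ve/\sqrt{p'}$, positivity and continuity of $h$ on $[0,1]$ give $\log h(s_x)=O(1)$, while $\sqrt{p'}d_\Ga(x)=\sqrt{p'}R+O(\ve)$. Using only the asymptotics of the denominator,
\[
\ve\log u^\ve(x)+\sqrt{p'}d_\Ga(x)=-\tfrac{\al+1}{2}\,\ve\log\ve+O(\ve),
\]
which is also $O(\ve\log\ve)$. These two pieces glue together to give the estimate uniformly on $\ol{B_R}$.

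\textbf{Main obstacle.} The delicate point is uniformity near $x=0$: the one-term saddle-type expansion for $h(s_x)$ breaks down when $s_x$ is not large, so the naïve argument valid for $x$ bounded away from the origin cannot be pushed through everywhere. The dichotomy $s_x\ge 1$ vs.\ $s_x<1$ resolves this. Moreover, the term $-\tfrac{\al+1}{2}\ve\log\ve$ arising from the $s_x<1$ regime is precisely what forces the rate $O(\ve\log\ve)$, and shows that this rate is sharp for $p\in(1,\infty)$, in contrast with the cleaner $O(\ve)$ obtained for $p=\infty$.
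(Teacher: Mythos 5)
Your proposal is correct. The $p=\infty$ part is exactly the paper's argument (the identity $\log\cosh s = s - \log 2 + \log(1+e^{-2s})$ is what the paper's quotient $\log[(1+e^{-2|x|/\ve})/(1+e^{-2R/\ve})]$ encodes). For $p\in(1,\infty)$ your route is genuinely different, and the difference is instructive. The paper first rewrites
$$
\ve\log u^\ve(x)+\sqrt{p'}d_\Ga(x)=\ve\log\!\left[\frac{\int_0^\pi e^{-\sqrt{p'}(1-\cos\theta)|x|/\ve}(\sin\theta)^\al d\theta}{\int_0^\pi e^{-\sqrt{p'}(1-\cos\theta)R/\ve}(\sin\theta)^\al d\theta}\right],
$$
then observes the bracketed quotient is \emph{decreasing in $|x|$}, so the whole expression is sandwiched between $0$ (value at $|x|=R$) and its value at $|x|=0$. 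This monotonicity step kills the need for any case split: only the denominator's asymptotics via \eqref{eq:asymptotics-to-infinity-I} are ever used, once. Your version forgoes that observation and instead applies the expansion $\log h(s)=s-\tfrac{\al+1}{2}\log s+O(1)$ to \emph{both} integrals, which forces you to separate the regime $s_x=\sqrt{p'}|x|/\ve\ge1$ (where the expansion is uniform) from $s_x<1$ (where $h$ is merely bounded). Both computations land on $O(\ve\log\ve)$, and your argument has the small added merit of making visible that the $\ve\log\ve$ rate comes precisely from the term $\tfrac{\al+1}{2}\ve\log s_R$ in the small-$|x|$ regime, so the estimate is sharp; the paper's argument is shorter but hides this. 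You should make explicit that the $O(1)$ in $\log g(\si)=-\tfrac{\al+1}{2}\log\si+O(1)$ is uniform on $[1,\infty)$ — this follows from continuity and positivity of $g$ on compacts together with \eqref{eq:asymptotics-to-infinity-I} at infinity — since the uniformity is what your two-case glue relies on.
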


\begin{proof}
\par
First, we observe that $d_\Ga ( x ) = R - |x| $.
The case $p=\infty$ follows at once, since \eqref{ball solution formula} gives:
$$
\ve\log\left\{u^\ve(x)\right\}+d_\Ga(x)=\ve\log\left[\frac{1+e^{-2\frac{|x|}{\ve}}}{1+e^{-2\frac{R}{\ve}}}\right].
$$
\par 
If $1<p<\infty$, by \eqref{ball solution formula} we have that
\begin{equation*}
\ve\log\left\{u^\ve(x)\right\}+\sqrt{p'}\,d_\Ga(x)=
\ve\,\log\left[\frac{\int_{0 }^{\pi}e^{-\sqrt{p'}(1-\cos\te)\,\frac{|x|}{\ve}}(\sin\te)^\frac{N-p}{p-1}\,d\te}{\int_{0}^{\pi}e^{-\sqrt{p'}(1-\cos\te)\,\frac{R}{\ve}}(\sin\te)^\frac{N-p}{p-1}\,d\te}\right]
\end{equation*}
and the right-hand side is decreasing in $|x|$, so that 
\begin{equation*}
0
\le
\ve\,\log\left\{u^\ve(x)\right\}+\sqrt{p'}\,d_\Ga(x)\le 
\ve\,\log\left[
\frac{\int_{0}^{\pi}(\sin\te)^\frac{N-p}{p-1}\,d\te}
{\int_{0}^{\pi}e^{-\sqrt{p'}\,(1-\cos\te)\,\frac{R}{\ve}}(\sin\te)^\frac{N-p}{p-1}\,d\te}
\right].
\end{equation*}
This formula gives \eqref{uniform-ball}, since we have that 
\begin{equation*}
\int_{0}^{\pi}e^{-\sqrt{p'}\,(1-\cos\te)\frac{R}{\ve}}(\sin\te)^\frac{N-p}{p-1}\,d\te=2^{\frac{N - 2p  +1}{2(p-1)}}\Ga\left(\frac{N - 1}{2p-2}\right)\left(\frac{R\sqrt{p'}}{\ve}\right)^{-\frac{N - 1}{2p - 2}}\,[1+O(\ve)]
\end{equation*}
as $\ve\to 0^+$, by using \eqref{eq:asymptotics-to-infinity-I}, with $\si = \sqrt{p'} R/\ve$.
\end{proof}

The next theorem is contained in \cite[Lemma 2.2]{BM-AA}.

\begin{thm}[Asymptotics in the exterior of the ball]
Let $p \in (1,\infty]$. Assume that $u^\ve$ be the (viscosity) solution of \eqref{G-elliptic}-\eqref{elliptic-boundary} in $\RR^N\setminus \ol{B_R}$.
\par 
Then, it holds that
\begin{equation}
\label{eq:uniform-exterior-estimate}
  \ve\log u^\ve+\sqrt{p'}\,d_\Ga=O(\ve)  \ \mbox{ as } \ \ve\to 0^+,
\end{equation}
uniformly on every compact subset of $\RR^N\setminus B_R$.
\end{thm}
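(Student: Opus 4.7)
The plan is to exploit the explicit representation \eqref{solution-exterior} of $u^\ve$ and reduce the claim to the large-$\sigma$ asymptotics of the integral $f(\sigma)$ provided by Lemma \ref{lem:asymptotics for modified bessel}, working case by case in $p$.

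First, I dispose of the case $p=\infty$. Here $u^\ve(x)=e^{-(|x|-R)/\ve}$ and $d_\Ga(x)=|x|-R$ on $\RR^N\setminus B_R$, so $\ve\log u^\ve+\sqrt{p'}\,d_\Ga=0$ identically, which is certainly $O(\ve)$.

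For $p\in(1,\infty)$, with $\alpha=\frac{N-p}{p-1}$, I pull out the factor $e^{-\sigma\cosh\theta}=e^{-\sigma}e^{-\sigma(\cosh\theta-1)}$ from both the numerator and the denominator of \eqref{solution-exterior}. Setting $\sigma_x=\sqrt{p'}|x|/\ve$ and $\sigma_R=\sqrt{p'}R/\ve$, I rewrite
\begin{equation*}
u^\ve(x)=e^{-\sqrt{p'}(|x|-R)/\ve}\,\frac{f(\sigma_x)}{f(\sigma_R)},
\end{equation*}
where $f$ is as in Lemma \ref{lem:asymptotics for modified bessel}. Since $d_\Ga(x)=|x|-R$ on $\RR^N\setminus B_R$, this yields the key identity
\begin{equation*}
\ve\log u^\ve(x)+\sqrt{p'}\,d_\Ga(x)=\ve\log\frac{f(\sigma_x)}{f(\sigma_R)}.
\end{equation*}

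Fix a compact $K\subset\RR^N\setminus B_R$; then $|x|$ ranges over a compact interval $[R,M]$ with $M>R$, and both $\sigma_x$ and $\sigma_R$ tend to $+\infty$ uniformly in $x\in K$ as $\ve\to 0^+$. Applying \eqref{eq:asymptotics-to-infinity} with $\alpha=\frac{N-p}{p-1}$ (so that $\frac{\alpha+1}{2}=\frac{N-1}{2(p-1)}$),
\begin{equation*}
\frac{f(\sigma_x)}{f(\sigma_R)}=\left(\frac{|x|}{R}\right)^{-\frac{N-1}{2(p-1)}}\frac{1+O(\ve/|x|)}{1+O(\ve/R)}=\left(\frac{|x|}{R}\right)^{-\frac{N-1}{2(p-1)}}\bigl\{1+O(\ve)\bigr\},
\end{equation*}
uniformly on $K$. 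Taking the logarithm and multiplying by $\ve$ gives
\begin{equation*}
\ve\log\frac{f(\sigma_x)}{f(\sigma_R)}=-\frac{N-1}{2(p-1)}\,\ve\log\!\left(\frac{|x|}{R}\right)+O(\ve^2)=O(\ve)
\end{equation*}
uniformly on $K$, which is the desired \eqref{eq:uniform-exterior-estimate}.

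No step here looks delicate: the only subtlety is ensuring that the $O(1/\sigma)$ terms coming from \eqref{eq:asymptotics-to-infinity} remain controlled uniformly in $x\in K$, which follows from the fact that $|x|$ is bounded away from $0$ (and bounded above) on $K$. In particular, in this exterior-of-the-ball case, one gets a sharper $O(\ve)$ bound than in the ball case of Theorem \ref{th:asymptotics in the ball}, because the explicit barrier involves $K_{(N-p)/(2(p-1))}$ at large argument rather than $I_{(N-p)/(2(p-1))}$, and the asymptotics for $f$ at infinity are regular for all $p\in(1,\infty)$.
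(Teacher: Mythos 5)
Your proof is correct and follows essentially the same path as the paper's: both reduce the problem to the ratio $f(\sigma_x)/f(\sigma_R)$ via the identity $\ve\log u^\ve+\sqrt{p'}d_\Ga=\ve\log\bigl[f(\sigma_x)/f(\sigma_R)\bigr]$ and then invoke the large-argument asymptotics \eqref{eq:asymptotics-to-infinity}. The only cosmetic difference is that the paper first squeezes the quantity between $0$ and $\ve\log\bigl[f(\sigma_{R'})/f(\sigma_R)\bigr]$ (an $x$-free bound) using the monotonicity of $f$ before applying the asymptotic lemma, whereas you substitute the asymptotics directly at $\sigma_x$ and $\sigma_R$ and check uniformity in $x$ explicitly; both arguments are complete and equivalent.
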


\begin{proof}
\par
Notice that $d_\Ga(x)=|x|-R$. If $p=\infty$, \eqref{eq:uniform-exterior-estimate} holds exactly as
$$
\ve\log\left\{u^\ve(x)\right\}+d_\Ga(x)\equiv 0.
$$
\par
If $1<p<\infty$, we write that
$$
\ve\log\left\{u^\ve(x)\right\}+\sqrt{p'}\,d_\Ga(x)=
\ve \log \left[
\frac{\int_{0}^{\infty}e^{-\sqrt{p'}\frac{|x|}{\ve}(\cosh\te-1)}(\sinh\te)^\frac{ N - p }{ p - 1 }\,d\te}{\int_{0}^{\infty}e^{-\sqrt{p'}\frac{R}{\ve}(\cosh\te-1)}(\sinh\te)^\frac{ N - p}{ p - 1 }\,d\te}
\right]
$$
and hence, by monotonicity,  we have that
$$
\ve\,\log\left\{\frac{\int_{0}^{\infty}e^{-\sqrt{p'}\frac{R'}{\ve}(\cosh\te-1)}(\sinh\te)^\frac{N-p}{p-1}\,d\te}{\int_{0}^{\infty}e^{-\sqrt{p'}\frac{R}{\ve}(\cosh\te-1)}(\sinh\te)^\frac{N-p}{p-1}\,d\te}\right\}\le\ve\log\left\{u^\ve(x)\right\}+\sqrt{p'}\,d_\Ga(x)\le 0,
$$
for every $x$ such that $R\le|x|\le R'$, with $R'>R$.
Our claim then follows by an inspection on the left-hand side, after applying \eqref{eq:asymptotics-to-infinity}, with $\si=\sqrt{p'} R'/\ve$ and $\si=\sqrt{p'} R/\ve$.
\end{proof}

\subsection{The parabolic case}
\label{ssec:parabolic asymptotics}

We state the following theorem, contained essentially in \cite[Proposition 2.3]{BM-JMPA}.

\begin{thm}[Asymptotics in the half-space]
\label{th:asymptotics in half-space}
Let $p \in (1, \infty]$ and let $H$ be the half-space in which $x_1 > 0$. Let $\Psi$ be given by 
$$
\Psi(x,t)=\sqrt{\frac{p'}{4\pi}}\int_{\frac{x_1}{\sqrt{t}}}^{\infty} e^{-\frac14 p' \si^2}\,d\si,
$$
for $x\in H $ and $t >0$.
\par
Then, it holds that 
$$
4t\log \left\{\Psi(x,t)\right\} = -p'\,d_\Ga(x)^2 + O(t \log t),
$$
uniformly for $x$ in every strip $\{ x\in\RR^N: 0\le x_1\le \de\}$ with $\de>0$.
\end{thm}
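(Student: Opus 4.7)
The plan is to reduce the claim to the classical tail asymptotics of the complementary error function. Since $H=\{x_1>0\}$, we have $\Gamma=\{x_1=0\}$, hence $d_\Gamma(x)=x_1$ throughout the strip. Setting
$$
a=a(x,t)=\frac{\sqrt{p'}\,x_1}{2\sqrt{t}},
$$
the substitution $\tau=\sqrt{p'}\,\sigma/2$ in the defining integral shows that $\Psi$ equals $\Erfc(a)$ up to a positive multiplicative constant depending only on $p$. Hence the target estimate is equivalent to
$$
4t\log\Erfc(a(x,t))+4t\,a(x,t)^2=O(t\,\log t), \qquad t\to 0^+,
$$
uniformly in $x_1\in[0,\delta]$.

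I would split the strip into two regimes via a fixed threshold $A_0>1/\sqrt{2}$. In the first regime, where $a\le A_0$, one has $x_1\le 2A_0\sqrt{t}/\sqrt{p'}$, so $p'\,d_\Gamma(x)^2=O(t)$; moreover $\Erfc$ is continuous and strictly positive on $[0,A_0]$, so $\log\Psi$ is uniformly bounded and $4t\log\Psi=O(t)$. The claim in this regime then holds with error $O(t)\subset O(t\,\log t)$.

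In the second regime, where $a\ge A_0$, a single integration by parts yields
$$
\int_a^\infty e^{-\tau^2}\,d\tau=\frac{e^{-a^2}}{2a}-\frac12\int_a^\infty\frac{e^{-\tau^2}}{\tau^2}\,d\tau,
$$
whence the two-sided bound
$$
\frac{e^{-a^2}}{2a}\Bigl(1-\frac{1}{2a^2}\Bigr)\le\int_a^\infty e^{-\tau^2}\,d\tau\le\frac{e^{-a^2}}{2a}
$$
is immediate. It follows that $\log\Erfc(a)=-a^2-\log a+O(1)$ uniformly in $a\ge A_0$. Plugging in $a^2=p'\,x_1^2/(4t)$ and $\log a=\log x_1-\tfrac12\log t+O(1)$, I obtain
$$
4t\log\Psi(x,t)+p'\,d_\Gamma(x)^2=-4t\log x_1+2t\log t+O(t).
$$
Since in this regime $c\sqrt{t}\le x_1\le\delta$ with $c=2A_0/\sqrt{p'}$, we have $\tfrac12\log t+\log c\le\log x_1\le\log\delta$, hence $|4t\log x_1|\le 2t\,|\log t|+O(t)$; combined with $|2t\log t|=2t\,|\log t|$, the total error is of order $O(t\,\log t)$.

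The delicate point, which I expect to be the main obstacle in writing out a fully detailed proof, is the uniformity of the error across the whole strip: $\log x_1$ is unbounded on $[0,\delta]$, but in the second regime the constraint $x_1\gtrsim\sqrt{t}$ ties the size of $\log x_1$ to that of $\log t$, producing exactly the claimed rate of decay for the error. Patching the two regimes at $a=A_0$ then completes the proof.
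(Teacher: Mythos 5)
Your proof is correct, but it organizes the argument differently from the paper. The paper's key move is to factor out the main exponential first, writing
$$
4t \log\Psi(x,t)+p' x_1^2= 4t\,\log\left( \sqrt{\tfrac{p'}{4\pi}}\int_0^\infty e^{-\frac12 p' \frac{x_1}{\sqrt{t}}\,\si-\frac14 p' \si^2}\,d\si \right),
$$
and then to observe that the right-hand side is a nonpositive function of $x_1$ that is \emph{decreasing} in $x_1$ on $[0,\delta]$; so it is sandwiched between $0$ (at $x_1=0$) and its value at $x_1=\delta$. A single integration by parts on the remaining one-parameter Gaussian-type integral then gives $\int_0^\infty e^{-\frac{p'\de}{2\sqrt{t}}\si - \frac{p'\si^2}{4}}d\si = \frac{2\sqrt{t}}{p'\de}[1+O(t)]$, and hence the uniform rate $O(t\log t)$ at once, with no case-splitting. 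You instead keep $\Psi=\tfrac12\Erfc(a)$ with $a=\sqrt{p'}x_1/(2\sqrt{t})$ and split into a small-$a$ regime (where both $\log\Psi$ and $d_\Ga^2/t$ are $O(1)$) and a large-$a$ regime (where the standard Gaussian tail asymptotics $\log\Erfc(a)=-a^2-\log a+O(1)$ apply), then carefully show that in the large-$a$ regime the constraint $x_1\gtrsim\sqrt t$ bounds $|\log x_1|$ by $\tfrac12|\log t|+O(1)$. Both approaches hinge on the same integration-by-parts estimate, and both yield the claimed uniform $O(t\log t)$ rate; the paper's monotonicity sandwich is shorter and avoids patching two regimes, while your regime split is a more "generic" strategy that does not require noticing that the residual factor $e^{a^2}\Erfc(a)$ is monotone in $a$. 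One minor point worth being fully explicit about: your choice of threshold $A_0>1/\sqrt2$ is exactly what makes the lower bound $\int_a^\infty e^{-\tau^2}\,d\tau\ge\tfrac{e^{-a^2}}{2a}\bigl(1-\tfrac1{2a^2}\bigr)$ nontrivial and uniform in $a\ge A_0$, so that the $O(1)$ in $\log\Erfc(a)=-a^2-\log a+O(1)$ is indeed bounded independently of $a$; this is correct as written but deserves a sentence in a final write-up.
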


\begin{proof}
Observe that $d_\Ga(x) = x_1$. By employing a change of variables in the expression of $\Psi$, we get that
$$
\Psi(x,t)=e^{-\frac{p' x_1^2}{4t}}\,\sqrt{\frac{p'}{4\pi}}\int_0^\infty e^{-\frac12 p' \frac{x_1}{\sqrt{t}}\,\si-\frac14 p' \si^2}\,d\si,
$$
and hence
$$
4t \log\Psi(x,t)+p' x_1^2= 4t\,\log\left( \sqrt{\frac{p'}{4\pi}}\int_0^\infty e^{-\frac12 p' \frac{x_1}{\sqrt{t}}\,\si-\frac14 p' \te^2}\,d\si \right).
$$
\par
Thus, for $0\le x_1\le\de$, we have that
\begin{equation}
\label{eq:one-dimensional rate}
4t\,\log\left( \sqrt{\frac{p'}{4\pi}}\int_0^\infty e^{-\frac{p' \de}{2\sqrt{t}}\si-\frac{p' \si^2}{4}}\,d\si  \right)
\le
 4t \log\Psi(x,t)+p' x_1^2\le 0.
\end{equation}
\par
Integrating by parts on the two functions
$$
e^{-\frac{p'\de}{2\sqrt{t}}\si}\ \mbox{ and } \ e^{-\frac{p' \si^2}{4}}
$$
and a change of variable give that
$$
\int_{0}^{\infty} e^{-\frac{p' \de}{2\sqrt{t}}\si-\frac{p' \si^2}{4}}\,d\si 
=
\frac{2}{ p'\de}\sqrt{t} \left[ 1 + O(t) \right]
$$
as $t \to 0^+$  (see also \cite[Formula 7.1.23]{AS}). Then \eqref{eq:one-dimensional rate} gives the desired uniform convergence.

\end{proof}

In the case of the ball, the series representation of $u(x,t)$, established in Lemma \ref{lem:series solution},
is not convenient to obtain an asymptotic formula for $t\to 0$. We then proceed differently. To start with, we state the next lemma. 

\begin{lem}
\label{ball control from above}
Let $p \in (1,\infty]$. Assume that $u$ is the solution of \eqref{G-heat}-\eqref{boundary}.
\par
If $p \in (1,\infty)$, then, for every $x \in \ol{B_R}$, $t > 0$ and $\la > 0$, it holds that
\begin{equation}
\label{eq:ball control from above}
4t \log u(x,t) 
\leq \frac{4}{\la^2} -
\\
 \frac{4}{\la}\sqrt{p'} d_\Ga(x)
+
4 t \log
\left[
\frac
{\int_{0}^{\pi} (\sin \te)^\frac{N-p}{p-1}\, d\te}
{\int_{0}^{\pi} e^{-\frac{\sqrt{p'}}{\la t} (1-\cos\te) d_\Ga(x)}\,(\sin\te)^\frac{N-p}{p-1}\,d\te}
 \right].
\end{equation}
\par
If $p=\infty$, for every $x\in\ol{B_R}$, $t > 0$ and $\la > 0$, we have that
\begin{equation}
\label{eq:infinity ball control from above}
4t\log u(x,t) \leq \frac{4}{\la^2} - \frac{4}{\la}d_\Ga(x) + 4t \log \left[ \frac{2}{1 + e^{-2 \frac{d_\Ga(x)}{\la t}}} \right].
\end{equation}
\end{lem}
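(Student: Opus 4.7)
The strategy is to compare $u$ with the solution of the same problem on the largest ball centred at $x$ that fits inside $B_R$, where the Laplace-transform identity of Lemma \ref{radial laplace transform} is available, and then invoke the explicit formula for the elliptic solution from Lemma \ref{th:ball elliptic solution}.

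Fix $x\in B_R$ and set $B=B_{d_\Ga(x)}(x)\subset B_R$. Let $v$ solve \eqref{G-heat}-\eqref{boundary} in $B$. Since $u\le 1$ on $\ol{B_R}\times(0,\infty)$ (by comparing with the constant $1$ via Corollary \ref{cor:parabolic comparison}) while $v=1$ on $\pa B\times(0,\infty)$, and $u=v=0$ at $t=0$, Corollary \ref{cor:parabolic comparison} applied in $B$ yields $u(x,t)\le v(x,t)$ for every $t>0$. Being radial about $x$, $v$ satisfies the hypotheses of Lemma \ref{radial laplace transform}, so $v^\ve(x)=\ve^{-2}\int_0^\infty v(x,\tau)e^{-\tau/\ve^2}d\tau$, where $v^\ve$ solves \eqref{G-elliptic}-\eqref{elliptic-boundary} in $B$. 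The time-monotonicity derived in the proof of Lemma \ref{parabolic monotonicity} (see \eqref{eq:monotonicity}) gives $v(x,\tau)\ge v(x,t)$ for $\tau\ge t$, so truncating the Laplace integral at $\tau=t$ produces the key barrier estimate
$$v(x,t)\le v^\ve(x)\,e^{t/\ve^2}.$$

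To conclude, pick $\ve=\la t$ and compute $v^\ve(x)$ at the centre of $B$ from \eqref{ball solution formula}: factoring $e^{\sqrt{p'}d_\Ga(x)\cos\te/\ve}=e^{\sqrt{p'}d_\Ga(x)/\ve}\,e^{-\sqrt{p'}d_\Ga(x)(1-\cos\te)/\ve}$ in the denominator yields, for $p\in(1,\infty)$,
$$v^\ve(x)=e^{-\sqrt{p'}d_\Ga(x)/\ve}\,\frac{\int_0^\pi(\sin\te)^{(N-p)/(p-1)}\,d\te}{\int_0^\pi e^{-\sqrt{p'}d_\Ga(x)(1-\cos\te)/\ve}(\sin\te)^{(N-p)/(p-1)}\,d\te},$$
and, for $p=\infty$, $v^\ve(x)=1/\cosh(d_\Ga(x)/\ve)=2e^{-d_\Ga(x)/\ve}/(1+e^{-2d_\Ga(x)/\ve})$. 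Taking the logarithm of the barrier inequality, multiplying by $4t$, and substituting $\ve=\la t$ then produces exactly \eqref{eq:ball control from above} in the first case and \eqref{eq:infinity ball control from above} in the second.

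The only non-routine step is identifying the correct barrier: the inscribed ball centred at $x$ is chosen precisely so that $d_\Ga(x)$, rather than $R$, appears in the exponentials on the right-hand side, which is what is needed for the uniform asymptotic formulas of the next chapter. Once this geometric reduction is in place, everything else is routine bookkeeping with the Bessel-type formula for $v^\ve$ and the Laplace-transform identity.
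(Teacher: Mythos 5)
Your argument is correct, and it reaches the same inequality as the paper by swapping the order of the two key ingredients. The paper first applies the Laplace-transform identity of Lemma \ref{radial laplace transform} on the full ball $B_R$, extracting the one-parameter inequality $u(x,t)\le u^{\la t}(x)\,e^{1/(\la^2 t)}$, and only afterwards invokes the \emph{elliptic} comparison principle (Corollary \ref{cor:elliptic comparison}) to dominate $u^{\la t}(x)$ by $u_{B^x}^{\la t}(x)$, the elliptic solution in the inscribed ball $B^x=B_{d_\Ga(x)}(x)$ evaluated at its centre. You instead invoke the \emph{parabolic} comparison principle (Corollary \ref{cor:parabolic comparison}) at the outset to dominate $u$ by the parabolic solution $v$ in $B^x$, and then apply the Laplace-transform identity directly in $B^x$. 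Both routes terminate at the same explicit quantity, the elliptic solution in $B_{d_\Ga(x)}$ evaluated at its centre with $\ve=\la t$, so the bookkeeping with \eqref{ball solution formula} is identical; the only genuine difference is whether the reduction from $B_R$ to $B^x$ is performed at the elliptic or the parabolic level. Your ordering is arguably the more streamlined one, since the Laplace transform is invoked only on the ball where the explicit Bessel formula will actually be used, while the paper's ordering has the advantage of isolating $u(x,t)\le u^\ve(x)\,e^{t/\ve^2}$ as a clean intermediate fact about $B_R$ itself. Both are equally rigorous: your parabolic comparison step needs $u\le 1$ on $\pa B^x\times(0,\infty)$ and $u=v=0$ at $t=0$, which you correctly supply. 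One minor omission is the trivial case $x\in\Ga$, where $d_\Ga(x)=0$ and $u\equiv 1$ on $\Ga\times(0,\infty)$, which the paper disposes of first; your argument as written only covers $x\in B_R$.
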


\begin{proof}
Preliminarly, observe that if $x\in\Ga$, \eqref{eq:ball control from above} and \eqref{eq:infinity ball control from above} are obviously satisfied. 
\par
For a fixed $x\in B_R$, we argue as follows. By  Lemma \ref{radial laplace transform}, for every $\ve>0$ the function
$u^\ve$ defined in \eqref{eq:modified laplace transform} is the solution of \eqref{G-elliptic}-\eqref{elliptic-boundary} in $B_R$. Moreover, from \eqref{eq:monotonicity} in the proof of Lemma \ref{parabolic monotonicity}, we have that $t\mapsto u(x,t)$ is increasing.
\par
Thus, it holds that
$$
\ve^2 u(x,t)\,e^{-t/\ve^2}\le \int_t^\infty u(x,\tau)\,e^{-\tau/\ve^2}\,d\tau \leq \int_{0}^{\infty}u(x,\tau)e^{-\tau/\ve^2}\,d\tau = \ve^2 u^\ve(x),
$$
and hence
$$
u(x,t)\le u^\ve(x)\,e^{t/\ve^2};
$$
the last inequality holds for any $t, \ve>0$. Next, we choose
$\ve=\la\, t$ and obtain that
$$
u(x,t)\le u^{\la t}(x)\,e^{1/(\la^{2} t)} \ \mbox{ for any } \ t>0.
$$
Therefore, 
\begin{equation*}
\label{first lambda-inequality}
4t\,\log u(x,t)\le 4t\,\log u^{\la t}(x)+\frac4{\la^2}.
\end{equation*}
\par
Now, let $B^x=B_{d_\Ga(x)}(x)$, be the ball centered at $x$, with radius $d_\Ga(x)$. Let $u_{B^x}^\ve$ be the solution of \eqref{G-elliptic}-\eqref{elliptic-boundary} in $B^x$. By the comparison principle (Corollary \ref{cor:elliptic comparison}), we have that, for any $t,\la > 0$ and $x\in B_R$,
$$
u^{\la t} \leq u_{B^x}^{\la t}\ \mbox{ on } \ \ol{B^x}
$$
and, in particular,  
$$
u^{\la t}(x) \leq u_{B^x}^{\la t}(x).
$$
By using this fact and the previous inequality, we obtain that, for any $x\in B_R$, $t>0$ and $\la>0$,
\begin{equation*}
\label{lambda-inequality}
4t\,\log u(x,t)\le 4t\,\log u_{B^x}^{\la t}(x)+\frac4{\la^2},
\end{equation*}
which, together with  \eqref{ball solution formula}, gives \eqref{eq:ball control from above} and \eqref{eq:infinity ball control from above}. Indeed, we observe that, by translating, $u^{\la t}_{B^x}(x)$ is simply the solution of \eqref{G-elliptic}-\eqref{elliptic-boundary} in $B_{d_\Ga(x)}$, with $\ve=\la t$, evaluated at the origin. 
\end{proof}

We are ready to obtain the uniform asymptotics for $u(x,t)$ in the ball. The pointwise formula is given in \cite[Theorem 2.6]{BM-JMPA}.

\begin{thm}[Asymptotics in the ball]
\label{th:parabolic ball asymptotic}
Set $p \in (1,\infty]$ and let $u(x,t)$ be the viscosity solution of \eqref{G-heat}-\eqref{boundary}. 
\par
Then, it holds that
\begin{equation}
\label{limit}
4t\log u(x,t) + p'\,d_\Ga(x)^2 = O \left( t \log t\right),
\end{equation}
as $t \to 0^+$, uniformly on $\ol{B_R}$.
\end{thm}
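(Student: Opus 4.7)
The plan is to bracket the quantity $4t\log u(x,t)+p'\,d_\Ga(x)^2$ between two bounds of order $O(t|\log t|)$, uniformly for $x\in\overline{B_R}$.

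\emph{Upper bound.} For $x\in B_R$ (the case $x\in\Ga$ is trivial, since there $u\equiv 1$ and $d_\Ga(x)=0$), I would invoke Lemma \ref{ball control from above} with the minimizing choice
$$
\la=\frac{2}{\sqrt{p'}\,d_\Ga(x)},
$$
which collapses the affine part $4/\la^{2}-4\sqrt{p'}\,d_\Ga(x)/\la$ to exactly $-p'\,d_\Ga(x)^2$. Substitution then yields, for $p\in(1,\infty)$,
$$
4t\log u(x,t)+p'\,d_\Ga(x)^2 \leq 4t\log\!\left[\frac{\int_{0}^{\pi}(\sin\te)^{\al}\,d\te}{\int_{0}^{\pi}e^{-\si(1-\cos\te)}(\sin\te)^{\al}\,d\te}\right],
$$
with $\al=(N-p)/(p-1)$ and $\si=p'\,d_\Ga(x)^2/(2t)$. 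The denominator is decreasing in $\si$, equals the numerator at $\si=0$, and by Lemma \ref{lem:asymptotics for modified bessel} behaves like a positive constant times $\si^{-(\al+1)/2}$ as $\si\to\infty$. Interpolating the two regimes gives an estimate of the form $\log[\,\cdot\,]\leq C_1+\tfrac{\al+1}{2}[\log\si]_+$. Since $d_\Ga(x)\leq R$, we have $[\log\si]_+\leq|\log t|+\log(p'R^2/2)$, whence the right-hand side is $O(t|\log t|)$ uniformly in $x$. The case $p=\infty$ is analogous via \eqref{eq:infinity ball control from above} with $\la=2/d_\Ga(x)$, the logarithmic correction being bounded by $4t\log 2$.

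\emph{Lower bound.} I would fix $x\in B_R$, pick a closest boundary point $y_x\in\Ga$, and let $H$ be the open half-space tangent to $\Ga$ at $y_x$ and containing $B_R$. The half-space solution $\Psi$ from Proposition \ref{Asymp hspace} satisfies $\Psi<1$ inside $H$, so $\Psi\leq 1=u$ on $\Ga\times(0,\infty)$ and $\Psi=0=u$ on $B_R\times\{0\}$. The comparison principle (Corollary \ref{cor:parabolic comparison}) then forces $u\geq\Psi$ on $\overline{B_R}\times(0,\infty)$. Since by the tangency construction the distance from $x$ to $\pa H$ coincides with $d_\Ga(x)$, Theorem \ref{th:asymptotics in half-space}, applied on the strip $0\leq x_1\leq R$, yields
$$
4t\log u(x,t)+p'\,d_\Ga(x)^2 \geq 4t\log\Psi(x,t)+p'\,d_\Ga(x)^2 = O(t\log t),
$$
uniformly on $\overline{B_R}$.

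\emph{Main obstacle.} The delicate point is the uniformity of the upper bound as $d_\Ga(x)\to 0$: the optimal $\la$ then blows up and the parameter $\si$ can lie anywhere in $[0,p'R^2/(2t)]$. What is needed is a single estimate on the integral ratio valid for every $\si\geq 0$; this is obtained by gluing the small-$\si$ regime (where the ratio stays near $1$) to the large-$\si$ regime (where the polynomial asymptotics of Lemma \ref{lem:asymptotics for modified bessel} apply), exploiting the monotonicity of the ratio in $\si$. That interpolation is the crux of the whole argument.
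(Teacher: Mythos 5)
Your proof is correct and follows the same two-sided barrier argument as the paper: the lower bound via comparison with the tangent half-space solution and Theorem \ref{th:asymptotics in half-space}, the upper bound via Lemma \ref{ball control from above} with the optimizing choice $\la=2/(\sqrt{p'}\,d_\Ga(x))$. The interpolation you supply for the uniformity of the upper bound—bounding the integral ratio by $C_1+\tfrac{\al+1}{2}[\log\si]_+$ across the full range of $\si=p'\,d_\Ga(x)^2/(2t)$—is a point the paper leaves implicit when it simply ``applies \eqref{eq:asymptotics-to-infinity-I}''; your reading correctly fills it in. The only small slip is the strip width in the lower bound: since $d_{\pa H}$ ranges up to $2R$ on $\ol{B_R}$, one should take $\de=2R$ (as the paper does), not $\de=R$, but this is cosmetic.
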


\begin{proof}
Given $x\in B_R$ there exists $y\in\Ga$ such that $|x-y|=d_\Ga(x)$ ($y$ is unique unless $x=0$). Let $H$ be the half-space containing $B_R$ and such that $\pa H\cap\Ga=\{ y\}$; notice that $d_\Ga(x)=d_{\pa H}(x)$.
\par 
Let $\Psi^y$ be the solution of \eqref{G-heat}-\eqref{boundary} in $H\times(0,\infty)$; since $B_R$ is contained in $H$, $\Psi^y$ obviously satisfies \eqref{G-heat} and \eqref{initial} for $B_R$ and, also, $\Psi^y\le 1$ on $\Ga\times(0,\infty)$.
By the comparison principle (Corollary \ref{cor:parabolic comparison}), we get that $u\ge\Psi^y$ and hence
\begin{equation*}
\label{half-space inequality}
4t\log u(x,t)\geq 4t\log\Psi^y(x,t) \ \mbox{ for } \ (x,t)\in \ol{B_R}\times(0,\infty).
\end{equation*}
Thus,  Theorem \ref{th:asymptotics in half-space} with $\de = 2R$, implies that
\begin{equation}
\label{liminf ball}
 4t\log u(x,t)\ge -p'\,d_{\pa H}(x)^2 + O(t \log t),
\end{equation}
uniformly on $\ol{B_R}$, as $t \to 0^+$, which is \eqref{limit} by one side, since $d_{\pa H}(x) = d_\Ga(x)$.
\par
On the other hand, by applying Lemma \ref{ball control from above} with  $ \la= \la^*>0$ such that 
$$
-\frac{4 \sqrt{p'} d_\Ga(x)}{\la^*}+\frac4{(\la^*)^2}=-p'\,d_\Ga(x)^2, \ \mbox{ that is } \ \la^*=\frac2{\sqrt{p'}\, d_\Ga(x)},
$$
we obtain, if $p\in(1,\infty)$,
\begin{equation*}
4\,t\log u(x,t)\le -p'\,d_\Ga(x)^2 + 4 \,t \log
\left[
\frac
{\int_{0}^{\pi} (\sin \te)^\frac{N-p}{p-1}\, d\te}
{\int_{0}^{\pi} e^{-\frac{p'}{2 t} (1-\cos\te) d_\Ga(x)^2}\,(\sin\te)^\frac{N-p}{p-1}\,d\te}
 \right],
\end{equation*}
and the obvious corresponding inequality, in the case $p=\infty$.
\par
Hence, by applying \eqref{eq:asymptotics-to-infinity-I} with $\si = p' d_\Ga(x)^2 / (2\, t)$, we conclude that
\begin{equation}
\label{limsup ball}
4\,t\log u(x,t)\le -p'\,d_\Ga(x)^2 + O\left ( t\log t\right),
\end{equation}
uniformly on $\ol{B_R}$, as $t \to 0^+$. Putting together \eqref{liminf ball} and \eqref{limsup ball}, we conclude the proof.
\end{proof}


 
\chapter{Varadhan-type formulas}
\label{ch:asymptotics i}


In his paper \cite{Va}, S.R.S. Varadhan considered the following problems for the heat and resolvent equations: 
\begin{eqnarray*}
&u_t - \frac1{2}\tr\left[A \,\na^2 u\right] = 0\ &\mbox{ in } \ \Om\times(0,\infty),\\
&u=1\ &\mbox{ on } \ \Ga\times (0,\infty),\\
&u=0 \ &\mbox{ on } \ \Om\times\{0\},
\end{eqnarray*}
and
\begin{eqnarray*}
&\ve^{-2} u^\ve - \frac1{2}\tr\left[ A \, \na^2 u^\ve \right]=0\ &\mbox{ in } \ \Om,\\
&u^\ve=1 \ &\mbox{ on } \Ga,
\end{eqnarray*}
where $A=A(x)$, for $x\in\Om$, is a symmetric and positive definite $N\times N$ matrix.
\par 
When $A$ is uniformly elliptic and uniformly H\"older continuous, Varadhan proved the following asymptotic formulas: 
\begin{equation*}
-\lim_{t\to 0^+}2t\log u(x,t) = d_\Ga^A(x)^2
\end{equation*}
and
\begin{equation*}
-\lim_{\ve \to 0^+}\ve\log u^\ve(x) = \sqrt{2}\,d_\Ga^A(x),
\end{equation*}
where $d_\Ga^A$ is the distance from the boundary of $\Om$, induced by a Riemannian metric related to the matrix $A$. In particular, when $A$ is the identity matrix, $d_\Ga^A=d_\Ga$ coincides with the usual euclidean distance, defined by
$$
d_\Ga(x) = \inf\{|x-y|:y\in \Ga \}.
$$
\par
In this chapter, we establish asymptotic Varadhan-type formulas for the solutions of problems \eqref{G-heat}-\eqref{boundary} and \eqref{G-elliptic}-\eqref{elliptic-boundary}, in quite general domains. See Theorems \ref{th:parabolic-pointwise} and \ref{th:elliptic-pointwise}.
\par
In particular, we prove the pointwise formulas \eqref{eq:intro_BM-JMPA_varadhan} and \eqref{eq:intro_BM-AA_varadhan}, that is
\begin{equation*}
-\lim_{t\to 0^+} 4 t\log u(x,t) = p'\,d_\Ga(x)^2
\end{equation*}
and 
\begin{equation*}
-\lim_{\ve\to 0^+} \ve \log u^\ve(x) = \sqrt{p'} \, d_\Ga(x).
\end{equation*}
These formulas hold for a (not necessarily bounded) domain $\Om$ which merely satisfies the topological assumption $\Ga = \pa \left(\RR^N \setminus \ol \Om\right)$. 
\par
We also compute the uniform rate of convergence in \eqref{eq:intro_BM-JMPA_varadhan} and \eqref{eq:intro_BM-AA_varadhan}. In fact, we show that
\begin{equation*}
 4 t\log u(x,t) + p'\,d_\Ga(x)^2=O(t\log \psi(t)) \ \mbox{ as } \ t\to 0^+,
 \end{equation*}
 and, for $\ve \to 0$, that
 \begin{equation*}
 \ve\log u^\ve(x) + \sqrt{p'}\,d_\Ga(x) = 
 \begin{cases}
 \displaystyle 
  O\left(\ve\right) \ \mbox{ if } \ p=\infty,\\
 \displaystyle 
  O\left( \ve \log \ve \right) \ \mbox{ if } \ p \in (N, \infty),
  \end{cases}
  \end{equation*}
and
  \begin{equation*}
  \ve\log u^\ve(x) +\sqrt{p'}\, d_\Ga(x)=
  \begin{cases}
  \displaystyle 
  O\left( \ve \log |\log \psi(\ve)|\right)\ \mbox{ if } \ p=N,
  \\
  \displaystyle
  O\left(\ve \log \psi(\ve)\right) \ \mbox{ if } \ p\in(1,N),
  \end{cases}
  \end{equation*}
uniformly on every compact subset of $\ol\Om$. The function $\psi$ depends on appropriate regularity assumptions on $\Ga$. See Theorems  \ref{th:uniform-JMPA} and \ref{th:uniform-elliptic}.
\par
The results presented in this chapter are based on the construction of suitable barriers, which essentially employ the radial solutions, deduced in Chapter \ref{ch:explicit}. In Sections \ref{sec:parabolic barriers} and \ref{sec:elliptic barriers}, we present such barriers.  We stress the fact that no regularity assumption on $\Om$ is needed. 
\par
Sections \ref{sec:first-order} and \ref{sec:uniform} are then dedicated to deduce the already mentioned pointwise and uniform asymptotic formulas.

\section{Barriers in the parabolic case}
\label{sec:parabolic barriers}
%
%
%

The next two lemmas give global barriers from below and above for the solution of \eqref{G-heat}-\eqref{boundary}. 

\begin{lem}[A pointwise barrier from below]
\label{lem:parabolic control from below}
Set $1<p\leq\infty$. Let $\Om\subset \RR^N$ be a domain and let $z\in\RR^N\setminus\ol\Om$. Assume that $u(x,t)$ is the bounded (viscosity) solution of \eqref{G-heat}-\eqref{boundary}.
\par
Then, for every  $x\in\ol\Om$ and $t>0$, it holds that
\begin{equation}
\label{eq:control from above of -log}
4 t\log \left\{u(x,t)\right\} + p'\,|x - z|^2
 \geq
 4 t \,\log E^-\left(d_\Ga(z),t\right).
\end{equation} 
Here, for $\si > 0$ and $t > 0$, we mean that
\begin{equation}
\label{eq:chi-rate}
E^-\left(\si,t\right) =
A_{N,p} t^{-\frac{ N + p -2 }{ 2(p - 1) }}\si^\frac{ N + p - 2 }{ p - 1 },
\end{equation}
where
$$
A_{N,p} = \left\{ \frac{pe}{2(N + p -2)} \right\}^\frac{N + p -2}{2(p - 1)}.
$$
\end{lem}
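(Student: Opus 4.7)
The plan is to build an explicit subsolution from the global heat--type solution $\Phi$ provided by Proposition \ref{prop:parabolic global solution}, and then to compare it with $u$ via Corollary \ref{cor:parabolic comparison}. Fix $z\in\RR^N\setminus\ol\Om$ and set $\de=d_\Ga(z)>0$ and $\al=\frac{N+p-2}{2(p-1)}$ (with the obvious reading $\al=1/2$ when $p=\infty$). Define, for $(x,t)\in\ol\Om\times(0,\infty)$,
$$
v(x,t)=A_{N,p}\,\de^{2\al}\,\Phi(x-z,t).
$$
Since $z\not\in\ol\Om$, the translated singularity of $\Phi(\cdot-z,\cdot)$ sits outside $\Om$, so Proposition \ref{prop:parabolic global solution} gives that $v$ is a smooth, bounded viscosity solution of \eqref{G-heat} in $\Om\times(0,\infty)$. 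Moreover, on $\ol\Om$ one has $|x-z|\geq \de>0$, so the Gaussian exponential drives $v(\cdot,t)\to 0$ uniformly on $\ol\Om$ as $t\to 0^+$; in particular $v$ extends continuously to $\ol\Om\times\{0\}$ with value $0$.

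The crucial step is to verify the boundary inequality $v\le 1$ on $\Ga\times(0,\infty)$. Because $|x-z|\ge\de$ for $x\in\Ga$ and the Gaussian factor is decreasing in $|x-z|$,
$$
v(x,t)\,\le\, A_{N,p}\,\de^{2\al}\,t^{-\al}\,e^{-p'\de^{2}/(4t)}\quad\mbox{on}\quad\Ga\times(0,\infty).
$$
Substituting $s=\de^{2}/t$ reduces the right-hand side to $A_{N,p}\,s^{\al}\,e^{-p's/4}$. Elementary calculus shows $s\mapsto s^{\al}\,e^{-p's/4}$ attains its maximum on $(0,\infty)$ at $s_{*}=4\al/p'=2(N+p-2)/p$, with maximum value $\bigl(2(N+p-2)/(pe)\bigr)^{\al}=A_{N,p}^{-1}$. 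Hence $v\le 1=u$ on $\Ga\times(0,\infty)$. This is precisely the point of the calibration of the constant $A_{N,p}$: it is chosen so that the envelope of $v$ on $\Ga\times(0,\infty)$ is exactly one.

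At this stage $v$ is a bounded viscosity solution of \eqref{G-heat} in $\Om\times(0,\infty)$, continuous up to the parabolic boundary, satisfying $v\le u$ on $\Ga\times(0,\infty)\cup\ol\Om\times\{0\}$. The comparison principle (Corollary \ref{cor:parabolic comparison}) then yields $v\le u$ on $\ol\Om\times(0,\infty)$. Since $E^{-}(\de,t)=A_{N,p}\,\de^{2\al}\,t^{-\al}$, taking logarithms, multiplying by $4t$, and rearranging the resulting inequality produces \eqref{eq:control from above of -log}. The case $p=\infty$ proceeds identically using the second line of \eqref{eq:parabolic global solution}, with $\al=1/2$ and $A_{N,\infty}=\sqrt{e/2}$, and the same one-variable optimization.

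The main---indeed only---delicate step is the calibration of $A_{N,p}$ in the second paragraph. The rest of the argument is the standard barrier-plus-comparison scheme, and no regularity of $\Ga$ is required, since both the subsolution and the comparison inequality only use the definition $\de=d_\Ga(z)$.
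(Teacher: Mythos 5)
Your proof is correct and follows exactly the same route as the paper: build the subsolution $U^z=A_{N,p}\,d_\Ga(z)^{\frac{N+p-2}{p-1}}\,\Phi(\cdot-z,\cdot)$ from the global solution $\Phi$, check $U^z\le 1$ on $\Ga\times(0,\infty)$ via the one-variable optimization (which the paper states as a fact without computation), and conclude by the comparison principle. Your write-up actually supplies more detail than the paper's proof, in particular the explicit verification that $s\mapsto s^{\al}e^{-p's/4}$ is maximized at $s_*=4\al/p'$ and that $A_{N,p}$ is precisely the reciprocal of that maximum.
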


\begin{proof}
The function $U^z$, defined, for $x\in\ol\Om$ and $t>0$, by
$$
U^z(x,t) = \left\{ A_{N,p} d_\Ga(z)^\frac{N+ p -2}{p-1} \right\} \Phi(x-z,t) \ \mbox{ for } \ (x,t) \in \ol\Om \times (0,\infty),
$$
 is a solution of \eqref{G-heat} in $\Om \times(0,\infty)$, since $\Phi$ is a global solution (Proposition \ref{prop:parabolic global solution}). Moreover, $U^z(x,0^+)=0$, for any $x\in \Om$ and, in virtue of the fact that
$$
\max\{ \Phi(x-z,t): x\in \Ga,\, t>0 \} = \left(A_{N,p} d_\Ga(z)^{\frac{N+p-2}{p-1}}\right)^{-1},
$$
we also have that $U^z \leq 1$ on $\Ga \times(0,\infty)$. 
\par
Hence, we just apply the comparison principle (Corollary \ref{cor:parabolic comparison}), to conclude that $ U^z \leq u$, on $\ol\Om \times (0,\infty)$, which implies  \eqref{eq:control from above of -log}, by recalling the definition of $\Phi$.
\end{proof}

\begin{lem}[An uniform barrier from above]
\label{lem:parabolic control from above}
Set $1 < p \leq \infty$. Let $\Om\subset\RR^N$ be a domain. Let $u$ be the bounded (viscosity) solution of \eqref{G-heat}-\eqref{boundary}.
\par
Then, for every $x\in\ol\Om$ and $ t > 0$, it holds that
\begin{equation}
\label{eq:control from below of - log}
4t\log \left\{u(x,t)\right\} +p'\,d_\Ga(x)^2
\leq
4 t \log E^+\left(d_\Ga(x), t\right),
\end{equation}
where $E^+\left(\si, t\right)$ is given by
\begin{equation*}
\label{eq:Ep}
E^+(\si, t)=
\begin{cases}
\displaystyle
\frac
{\int_{0}^{\pi} (\sin \te)^\frac{N-p}{p-1}\, d\te}
{\int_{0}^{\pi} e^{-\frac{p'}{2 t} (1-\cos\te) \si^2}\,(\sin\te)^\frac{N-p}{p-1}\,d\te} \ &\mbox{ if } \ p\in(1,\infty),
\vspace{8pt}\\
\displaystyle
\frac{2}{1 + e^{-\frac{p'}{t}\si^2}} \ &\mbox{ if } \ p=\infty,
\end{cases}
\end{equation*}
for $\si \geq 0$ and $t>0$.
\par
In particular, it holds that
\begin{equation*}
t \log E^+\left(d_\Ga(x), t\right)=
\begin{cases}
O(t\,\log t) \ \mbox{ if } p \in (1,\infty),\\
O( t)\ \mbox{ if } p=\infty,
\end{cases}
\end{equation*}
as $ t\to 0^+$, uniformly on every subset of $\ol\Om$ in which $d_\Ga$ is bounded.
\end{lem}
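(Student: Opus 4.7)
The strategy is to reduce the general domain situation to the ball case already handled in Lemma \ref{ball control from above}, by placing a maximal inscribed ball at each point.

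First, fix $x\in\Om$ with $d_\Ga(x)>0$ and set $B^x:=B_{d_\Ga(x)}(x)$. A standard line-segment argument shows that $B^x\subset\Om$: any segment from $x$ to a point outside $\Om$ must cross $\Ga$, at a point whose distance from $x$ is strictly less than $d_\Ga(x)$, contradicting the definition of $d_\Ga$. Next, let $v$ denote the bounded viscosity solution of \eqref{G-heat}-\eqref{boundary} posed in $B^x$ (with boundary $\pa B^x$). Since the constants $0$ and $1$ are solutions of \eqref{G-heat} and $u$ satisfies $0\leq u\leq 1$ on the parabolic boundary of $\Om\times(0,\infty)$, Corollary \ref{cor:parabolic comparison} applied on $\Om\times(0,\infty)$ gives $0\leq u\leq 1$ everywhere. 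A second application of the comparison principle on $B^x\times(0,\infty)$ then yields $u\leq v$ on $\ol{B^x}\times(0,\infty)$, and in particular $u(x,t)\leq v(x,t)$.

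The second step is to apply Lemma \ref{ball control from above} to $v$ at the center $x$ of the ball $B^x$ of radius $d_\Ga(x)$, and then optimize in the free parameter $\la>0$. For $p\in(1,\infty)$, the affine part of the right-hand side of \eqref{eq:ball control from above} is $\frac{4}{\la^{2}}-\frac{4\sqrt{p'}}{\la}\,d_\Ga(x)$, which is minimized by the choice $\la=\la^{*}:=\frac{2}{\sqrt{p'}\,d_\Ga(x)}$ and attains value $-p'\,d_\Ga(x)^{2}$. Substituting $\la^*$ also collapses the exponent inside the logarithm to $-\tfrac{p'}{2t}(1-\cos\te)\,d_\Ga(x)^{2}$, producing exactly $E^+(d_\Ga(x),t)$ as defined in the statement, and hence \eqref{eq:control from below of - log}. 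The case $p=\infty$ is handled identically using \eqref{eq:infinity ball control from above} with the optimal $\la^{*}=2/d_\Ga(x)$. For $x\in\Ga$ one has $d_\Ga(x)=0$ and $u(x,t)=1$, so both sides of \eqref{eq:control from below of - log} vanish.

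For the asymptotic behavior of $t\log E^+(d_\Ga(x),t)$, observe that $E^+(\si,t)$ is nondecreasing in $\si$ (the denominator decreases in $\si^{2}$ while the numerator is constant) and satisfies $E^+\geq 1$. Hence, on any subset where $d_\Ga\leq M$,
\[
0\leq t\log E^+(d_\Ga(x),t)\leq t\log E^+(M,t).
\]
When $p\in(1,\infty)$, formula \eqref{eq:asymptotics-to-infinity-I} of Lemma \ref{lem:asymptotics for modified bessel}, applied with $\al=(N-p)/(p-1)$ and argument $p'M^{2}/(2t)\to\infty$, yields $E^+(M,t)=O\bigl(t^{-(N-1)/(2(p-1))}\bigr)$ as $t\to 0^+$, so $t\log E^+(M,t)=O(t\log t)$. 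When $p=\infty$, the elementary bound $E^+(\si,t)\leq 2$ is immediate, giving $t\log E^+=O(t)$. This produces the uniform asymptotics claimed in the final part of the lemma.

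The only conceptual step is the passage from a general domain to the ball $B^x$ via comparison; everything else is either an algebraic optimization in $\la$ (with the minimizer chosen precisely to turn the linear-in-$d_\Ga$ penalty into the quadratic term $-p'd_\Ga^{2}$) or a direct application of the Bessel asymptotics of Lemma \ref{lem:asymptotics for modified bessel}. The main subtlety to verify carefully is that the inequality \eqref{eq:ball control from above} of Lemma \ref{ball control from above} is sharp enough at the center of the inscribed ball for the uniform bound to retain the order $O(t\log t)$ independently of $d_\Ga(x)$, which follows from monotonicity of $E^+$ in $\si$.
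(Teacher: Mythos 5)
Your proof is correct and follows essentially the same route as the paper: comparison with the solution on the inscribed ball $B^x=B_{d_\Ga(x)}(x)$, application of Lemma \ref{ball control from above}, and optimization over $\la$, with $\la^*=2/(\sqrt{p'}d_\Ga(x))$ producing exactly the quadratic term $-p'd_\Ga(x)^2$. The only (cosmetic) deviation is that the paper first rescales $v^x$ to the solution $u_B$ on the unit ball and applies Lemma \ref{ball control from above} with $R=1$, $t'=t/d_\Ga(x)^2$, $\la=2/\sqrt{p'}$, whereas you apply it directly at radius $R=d_\Ga(x)$, and you spell out the final asymptotics (via monotonicity of $E^+$ in $\si$ together with \eqref{eq:asymptotics-to-infinity-I}) more explicitly than the published proof does.
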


\begin{proof}
Let $u_B$ be the solution of  \eqref{G-heat}-\eqref{boundary} in the unit ball $B$. We prove that 
$
u(x,t)\leq u_B(0,t/d_\Ga(x)^2),
$
for $(x,t)\in\ol\Om\times(0,\infty)$, where we mean that  $u_B(0,t/d_\Ga(x)^2)=1$ when $x\in\Ga$. 
\par
Indeed, for $x\in\Ga$, the inequality is satisfied as an equality. Let $x\in\Om$ and let $v^x=v^x(y,t)$ be the solution of \eqref{G-heat}-\eqref{boundary} in $B^x\times(0,\infty)$, where $B^x$ is the ball centered at $x$ with radius $d_\Ga(x)$. Corollaries \ref{cor:parabolic maximum} and \ref{cor:parabolic comparison} give that
$$
u(y,t)\leq v^x(y,t)\ \mbox{ for every } \ (y,t)\in\ol{B^x}\times(0,\infty),
$$ 
and hence, in particular, $u(x,t)\leq v^x(x,t)$ for every $t>0$. Since $x$ is arbitrary in $\Om$, we infer that
\begin{equation}
\label{inequal}
u(x,t)\le v^x(x,t)\ \mbox{ for }\ (x,t)\in\Om\times(0,\infty).
\end{equation}
\par
Now, for fixed $x\in \Om$, consider the function defined by 
$$
w(y,t)=v^x(x+d_\Ga(x)\,y,d_\Ga(x)^2 t)\ \mbox{ for }\ (y,t)\in \ol{B}\times(0,\infty).
$$
By the translation and scaling invariance of \eqref{G-heat}, we have that $w$ satisfies the problem \eqref{G-heat}-\eqref{boundary} in $B$, and hence equals $u_B$ on $\ol{B}\times(0,\infty)$, by uniqueness.
\par
Therefore, evaluating $u_B$ at $(0, t/d_\Ga(x)^2) $ gives that
$$
u_B(0,t/d_\Ga(x)^2)=w(0,t/d_\Ga(x)^2)=v^x(x,t)\ge u(x,t),
$$
by \eqref{inequal}. 
\par
We conclude and obtain \eqref{eq:control from below of - log} by using \eqref{eq:ball control from above} with $R=1$, $x'=0$, $t'= \frac{t}{d_\Ga(x)^2}$. Indeed, in the case $p \in (1,\infty)$, we have that
\begin{multline*}
4 \left\{ \frac{ t }{ d_\Ga(x)^2 } \right\} \log v^x(x,t) =
4 \left\{ \frac{t}{d_\Ga(x)^2} \right\} \log u_B \left( 0, \frac{t}{d_\Ga(x)^2} \right) \le
\\
 \frac{4}{\la^2} - \frac{4}{\la} \sqrt{p'} + 4 \left\{ \frac{t}{d_\Ga(x)^2} \right\} \log 
 \left[ 
\frac
{\int_{0}^{\pi} (\sin \te)^\frac{N-p}{p-1}\, d\te}
{\int_{0}^{\pi} e^{-\frac{\sqrt{p'}}{\la t} (1-\cos\te)}\,(\sin\te)^\frac{N-p}{p-1}\,d\te}
\right].
\end{multline*}
Hence, by choosing $\la = \frac{2}{\sqrt{p'}}$, we conclude that
\begin{equation*}
4 t \log u_B\left( 0, \frac{t}{d_\Ga(x)^2} \right) \leq
 -p' d_\Ga(x)^2 + 4 t \log \left[ 
\frac
{\int_{0}^{\pi} (\sin \te)^\frac{N-p}{p-1}\, d\te}
{\int_{0}^{\pi} e^{-\frac{p'}{2 t} (1-\cos\te)d_\Ga(x)^2}\,(\sin\te)^\frac{N-p}{p-1}\,d\te}
\right],
\end{equation*}
which implies \eqref{eq:control from below of - log}. In the case $p=\infty$ we just need to replace \eqref{eq:ball control from above} with \eqref{eq:infinity ball control from above} to obtain the conclusion.
\end{proof} 

\section{Barriers in the elliptic case}
\label{sec:elliptic barriers}

The next two lemmas give explicit barriers for the solution $u^\ve$ of \eqref{G-elliptic}-\eqref{elliptic-boundary} in a general domain $\Om$. Compared to the parabolic case, here we obtain a sharper barrier from below, since we have a more convenient formula for the solution in the exterior of a ball.

\begin{lem}[{An elliptic barrier from below, \cite[Lemma 2.4]{BM-AA}}]
\label{elliptic-control-from-below}
Let $\Om\subset\RR^N$ be a domain and $p\in (1,\infty]$. Let $u^\ve$ be the bounded (viscosity) solution of \eqref{G-elliptic}-\eqref{elliptic-boundary}.  Pick $z\in\RR^N\setminus\ol\Om$.
\par
Then, we have that
$$
\ve\log\left\{u^\ve(x)\right\}+\sqrt{p'}\,\{|x-z|-d_\Ga(z)\}\geq \ve\log e_{p,z}^\ve(x)
\ \mbox{ for any } \ x\in\ol\Om,
$$
where
\begin{equation}
\label{error-from-below}
e_{p,z}^\ve(x)=
\begin{cases}
\displaystyle
\frac{\int_{0}^{\infty}e^{-\sqrt{p'}\frac{\cosh\te-1}{\ve}|x-z|}\left(\sinh\te\right)^\frac{N-p}{p-1}\,d\te}
{\int_{0}^{\infty}e^{-\sqrt{p'}\frac{\cosh\te-1}{\ve}d_\Ga(z)}\left(\sinh\te\right)^\frac{N-p}{p-1}\,d\te} \ &\mbox{ if }\ 1<p<\infty,
\vspace{7pt}\\
\displaystyle
1  \ &\mbox{ if } \ p=\infty.
\end{cases}
\end{equation}
\end{lem}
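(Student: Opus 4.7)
The plan is to use the explicit exterior-of-ball solution from Lemma \ref{th:exterior elliptic solution} as a barrier from below. Set $R:=d_\Ga(z)>0$. Because $z\in\RR^N\setminus\ol\Om$ and the open ball $B_R(z)$ meets no point of $\Ga$ by the very definition of $d_\Ga(z)$, connectedness of $B_R(z)$ together with the disjoint decomposition $\RR^N=\Om\cup\Ga\cup(\RR^N\setminus\ol\Om)$ forces $B_R(z)\subset\RR^N\setminus\ol\Om$. A simple argument on $\pa B_R(z)$ (using that points of $\Om$ are interior) then upgrades this to $\Om\subset\RR^N\setminus\ol{B_R(z)}$. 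Let $v^\ve$ denote the bounded viscosity solution of \eqref{G-elliptic}-\eqref{elliptic-boundary} on $\RR^N\setminus\ol{B_R(z)}$ supplied by Lemma \ref{th:exterior elliptic solution}, with the ball centered at $z$ rather than at the origin.

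Next I would compare $v^\ve$ with $u^\ve$ on $\ol\Om$. For $p\in(1,\infty)$ the integrand defining $v^\ve$ is the product of a positive weight in $\te$ and the strictly decreasing function $|x-z|\mapsto e^{-\sqrt{p'}|x-z|\cosh\te/\ve}$, so $v^\ve$ is a decreasing radial function of $|x-z|$; the analogous monotonicity for $p=\infty$ is immediate from the closed-form expression. Every $y\in\Ga$ satisfies $|y-z|\geq R$, hence $v^\ve(y)\leq 1=u^\ve(y)$ on $\Ga$. Both functions are bounded and continuous up to $\Ga$ (no singularity of $v^\ve$ enters $\ol\Om$ since $z\notin\ol\Om$), so Corollary \ref{cor:elliptic comparison} applied in $\Om$ yields $v^\ve\leq u^\ve$ on $\ol\Om$.

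The final step is to isolate the leading exponential decay. Writing
$$
e^{-\sqrt{p'}\frac{|x-z|}{\ve}\cosh\te}=e^{-\sqrt{p'}\frac{|x-z|}{\ve}}\,e^{-\sqrt{p'}\frac{|x-z|}{\ve}(\cosh\te-1)}
$$
in the numerator of the formula for $v^\ve$, and doing the same with $R$ in place of $|x-z|$ in the denominator, one obtains the factorization
$$
v^\ve(x)=e^{-\sqrt{p'}\{|x-z|-d_\Ga(z)\}/\ve}\,e_{p,z}^\ve(x),
$$
with $e_{p,z}^\ve$ precisely the quantity in \eqref{error-from-below}; in the extremal case $p=\infty$ the identity holds trivially with $e_{p,z}^\ve\equiv 1$. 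Taking $\ve\log$ of the comparison $u^\ve\geq v^\ve$ and rearranging yields the asserted inequality.

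The proof is thus mostly bookkeeping; the two points that must be handled carefully, rather than any real obstacle, are the topological inclusion $\Om\subset\RR^N\setminus\ol{B_R(z)}$ (which uses that $\Ga=\pa(\RR^N\setminus\ol\Om)$ separates $\Om$ from its complement) and the radial monotonicity of $v^\ve$, since the boundary comparison on $\Ga$ rests on both.
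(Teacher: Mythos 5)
Your argument is correct and follows the same route as the paper: compare $u^\ve$ with the explicit exterior-ball solution $v^\ve$ of Lemma \ref{th:exterior elliptic solution} centered at $z$ with radius $d_\Ga(z)$, use radial monotonicity to get $v^\ve\le 1=u^\ve$ on $\Ga$, invoke Corollary \ref{cor:elliptic comparison}, and factor out the leading exponential. One small correction to your closing remark: the inclusion $\Om\subset\RR^N\setminus\ol{B_{d_\Ga(z)}(z)}$ does not need the hypothesis $\Ga=\pa(\RR^N\setminus\ol\Om)$ (which is not assumed in this lemma); it follows already from $\Ga=\pa\Om$ together with the connectedness argument you gave.
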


\begin{proof}
 We consider the ball $B=B_R(z)$ with radius $R=d_\Ga(z)$ and let $v^\ve$ be the bounded solution of \eqref{G-elliptic}-\eqref{elliptic-boundary} relative to $\RR^N\setminus\ol{B}\supset \Om$. From the fact that $z\in\RR^N\setminus\ol\Om$, we have that $\Ga\subset \RR^N\setminus B$, which implies that
$$
v^\ve\leq 1 \ \mbox{ on }\ \Ga,
$$
by the explicit expression of $v^\ve$ given in \eqref{solution-exterior}. Thus, by the comparison principle, we infer that $v^\ve\leq u^\ve$ on $\ol\Om$. The desired claim then follows by easy manipulations on \eqref{solution-exterior}.
\end{proof}

\begin{lem}[{An elliptic barrier from above, \cite[Lemma 2.3]{BM-AA}}]
  \label{elliptic-control-from-above}
  Let $\Om\subset \RR^N$ be a domain and $p\in (1,\infty]$. Let $u^\ve$ be the bounded (viscosity) solution of \eqref{G-elliptic}-\eqref{elliptic-boundary}. 
\par 
Then, we have that
$$
\ve\log\left\{u^\ve(x)\right\}+\sqrt{p'}\,d_\Ga(x)\leq \ve\log E_p^\ve\left(d_\Ga(x)\right),
$$
for every $x\in\ol\Om$, where, for $\si \ge 0$,
\begin{equation*}
E_p^\ve(\si)=
\begin{cases}
\displaystyle
\frac{\int_{0}^{\pi}(\sin\te)^\frac{N-p}{p-1}\,d\te}{\int_{0}^{\pi}e^{-\sqrt{p'}\frac{1-\cos\te}{\ve} \si}(\sin\te)^\frac{N-p}{p-1}\,d\te}\ &\mbox{ if }\ 1<p<\infty,
\vspace{8pt}\\
\displaystyle
\frac{2}{1+e^{-\frac{2 \si}{\ve}}}\ &\mbox{ if }\ p=\infty.
\end{cases}
\end{equation*}
\par
In particular, it holds that
\begin{equation*}
\ve \log  E_p^\ve\left(d_\Ga\right)=
\begin{cases}
O(\ve\,\log\ve) \ \mbox{ if } 1<p<\infty,\\
O(\ve)\ \mbox{ if } p=\infty,
\end{cases}
\end{equation*}
as $\ve\to 0^+$, on every subset of $\ol\Om$ in which $d_\Ga$ is bounded.
\end{lem}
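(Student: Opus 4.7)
The plan is to mirror the strategy of Lemma \ref{lem:parabolic control from above} by comparing $u^\ve$ with the explicit radial solution in the largest inscribed ball at $x$. Concretely, if $x\in\Ga$ the inequality is trivial, since both sides vanish (note that $E_p^\ve(0)=1$). For $x\in\Om$, let $B^x$ denote the open ball $B_{d_\Ga(x)}(x)$, which is contained in $\Om$ and has $\pa B^x\cap\Ga\ne\varnothing$. Let $v$ be the (viscosity) solution of \eqref{G-elliptic}-\eqref{elliptic-boundary} in $B^x$. A preliminary step is to observe that $u^\ve\leq 1$ on $\ol\Om$: the constant function $1$ is a (classical and hence viscosity) supersolution of \eqref{G-elliptic} that dominates $u^\ve$ on $\Ga$, so Corollary \ref{cor:elliptic comparison} (or Theorem \ref{sato comparison} applied to sub/supersolutions) yields the bound. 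Consequently $u^\ve\le 1=v$ on $\pa B^x$ and, again by Corollary \ref{cor:elliptic comparison} inside $B^x$, we get $u^\ve(x)\leq v(x)$.

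The next step is to compute $v(x)$ explicitly using the translation invariance of \eqref{G-elliptic} and Lemma \ref{th:ball elliptic solution}: $v(x)$ equals the solution of \eqref{G-elliptic}-\eqref{elliptic-boundary} in $B_{d_\Ga(x)}$ evaluated at the origin. For $p\in(1,\infty)$, \eqref{ball solution formula} at the origin reads
\begin{equation*}
v(x)=\frac{\int_0^\pi(\sin\te)^{\frac{N-p}{p-1}}\,d\te}{\int_0^\pi e^{\sqrt{p'}\frac{d_\Ga(x)}{\ve}\cos\te}(\sin\te)^{\frac{N-p}{p-1}}\,d\te}.
\end{equation*}
Writing $\cos\te=1-(1-\cos\te)$ in the denominator factors out $e^{\sqrt{p'}\,d_\Ga(x)/\ve}$, yielding precisely $v(x)=e^{-\sqrt{p'}\,d_\Ga(x)/\ve}\,E_p^\ve(d_\Ga(x))$. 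Taking $\ve\log$ of the inequality $u^\ve(x)\le v(x)$ produces the stated bound. The extremal case $p=\infty$ is similar but simpler: the ratio $\cosh(|x|/\ve)/\cosh(R/\ve)$ at the origin is $1/\cosh(d_\Ga(x)/\ve)=e^{-d_\Ga(x)/\ve}\cdot 2/(1+e^{-2d_\Ga(x)/\ve})$, giving directly the expression of $E_p^\ve$ in the case $p=\infty$ (noting that $\sqrt{p'}=1$ there).

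For the asymptotic part, I would invoke Lemma \ref{lem:asymptotics for modified bessel}, precisely formula \eqref{eq:asymptotics-to-infinity-I}, with $\al=\frac{N-p}{p-1}$ (so that $\al+1=\frac{N-1}{p-1}>0$ since $N\ge 2$) and $\si=\sqrt{p'}\,d_\Ga(x)/\ve$. On any subset of $\ol\Om$ where $d_\Ga$ is bounded from above and below by positive constants, this parameter tends to $+\infty$ as $\ve\to 0^+$, uniformly; consequently the denominator in $E_p^\ve$ behaves like a positive constant times $\ve^{(\al+1)/2}$ uniformly, hence $E_p^\ve(d_\Ga(x))\asymp \ve^{-(N-1)/(2(p-1))}$. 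Multiplying by $\ve$ after taking logarithms yields $\ve\log E_p^\ve(d_\Ga(x))=O(\ve\log\ve)$ as $\ve\to 0^+$. Points where $d_\Ga(x)=0$ (i.e.~$x\in\Ga$) give no contribution since the left-hand side of the bound is $0$ there, and a straightforward interpolation between these and a fixed compact set inside $\Om$ extends the estimate to every subset of $\ol\Om$ on which $d_\Ga$ is bounded. In the case $p=\infty$ the estimate is even more direct: $1\le E_p^\ve\le 2$ everywhere, so $\ve\log E_p^\ve=O(\ve)$.

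The only mildly delicate step is verifying that comparison principles apply in the form used above --- specifically, using Corollary \ref{cor:elliptic comparison} with a subsolution against a supersolution to justify $u^\ve\le 1$, and again to justify $u^\ve\le v$ in $\ol{B^x}$. Once this is in place, the remainder is an algebraic manipulation of \eqref{ball solution formula} followed by the saddle-point asymptotics of Lemma \ref{lem:asymptotics for modified bessel}. No regularity of $\Ga$ is needed, which is the main feature one wants to preserve.
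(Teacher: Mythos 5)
Your argument is correct and follows essentially the same route as the paper: compare $u^\ve$ with the explicit radial solution in the inscribed ball $B_{d_\Ga(x)}(x)$, rewrite \eqref{ball solution formula} by extracting the factor $e^{-\sqrt{p'}\,d_\Ga(x)/\ve}$, and invoke the Bessel-integral asymptotics of Lemma \ref{lem:asymptotics for modified bessel}. The paper streamlines the last two steps by scaling to the unit ball and then citing Theorem \ref{th:asymptotics in the ball}, which already contains the uniform estimate; the precise reason it is uniform near $\Ga$ (which your ``interpolation'' remark gestures at) is the monotonicity of $E_p^\ve$ in $\si$, giving $0\le \ve\log E_p^\ve(\si)\le \ve\log E_p^\ve(\sup d_\Ga)$ on any set where $d_\Ga$ is bounded.
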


\begin{proof}
For a fixed $x\in\Om$, we consider the ball $B^x=B_R(x)$ with $R=d_\Ga(x)$ and denote by $u_{B^x}^\ve$ the solution of \eqref{G-elliptic}-\eqref{elliptic-boundary} with $\Om=B^x$. The comparison principle gives that
$$
u^\ve\leq u_{B^x}^\ve\ \mbox{ on }\ \ol{B^x}
$$  
and, in particular, 
\begin{equation}
\label{eq:ball-inside}
u^\ve(x)\leq u_{B^x}^\ve(x).
\end{equation}
Observe that the uniqueness of  the solution of \eqref{G-elliptic}-\eqref{elliptic-boundary} and the scaling properties of $\De_p^G$ imply that
$$
u_{B^x}^\ve(x)=u_B^{\ve/R}(0),
$$
where $u_B^\ve$ is the solution of \eqref{G-elliptic}-\eqref{elliptic-boundary} with $\Om=B$, the unit ball. The explicit expressions in \eqref{ball solution formula} and \eqref{eq:ball-inside} then yield the pointwise estimate, since $R=d_\Ga(x)$.
\par
The last uniform formula then follows from \eqref{uniform-ball} in Theorem \ref{th:asymptotics in the ball}.
\end{proof}

\section{Pointwise Varadhan-type formulas}
\label{sec:first-order}

\begin{thm}
\label{th:parabolic-pointwise}
Set $p\in (1,\infty]$. Let $\Om$ be a domain in $\RR^N$, with boundary $\Ga$ such that $\Ga=\pa(\RR^N\setminus\ol\Om)$, and let $u$ be the bounded (viscosity) solution of \eqref{G-heat}-\eqref{boundary}.
\par 
Then, we have that
\begin{equation}
\label{pointwise-limit}
\lim_{t\to 0^+}4t\log \left\{u(x,t)\right\}=-p'\,d_\Ga(x)^2 \ \mbox{ for every } x\in\ol{\Om}.
\end{equation}
\end{thm}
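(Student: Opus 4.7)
The plan is to sandwich $4t\log u(x,t)$ between the two barriers constructed in Section~\ref{sec:parabolic barriers} and let $t\to 0^+$, using the topological hypothesis $\Ga=\pa(\RR^N\setminus\ol\Om)$ only at the last step, to match the two bounds.

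For the upper bound, I would apply Lemma~\ref{lem:parabolic control from above} directly at $x\in\ol\Om$:
\begin{equation*}
4t\log u(x,t)+p'\,d_\Ga(x)^2\le 4t\log E^+\bigl(d_\Ga(x),t\bigr).
\end{equation*}
Since the same lemma records that the right-hand side is $O(t\log t)$ (or $O(t)$ for $p=\infty$) as $t\to 0^+$, passing to the $\limsup$ yields $\limsup_{t\to 0^+} 4t\log u(x,t)\le -p'\,d_\Ga(x)^2$.

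For the lower bound, fix $z\in\RR^N\setminus\ol\Om$ (so $d_\Ga(z)>0$). Lemma~\ref{lem:parabolic control from below} gives
\begin{equation*}
4t\log u(x,t)+p'\,|x-z|^2 \ge 4t\log E^-\bigl(d_\Ga(z),t\bigr)
=4t\log A_{N,p}-\tfrac{2(N+p-2)}{p-1}\,t\log t+\tfrac{4(N+p-2)}{p-1}\,t\log d_\Ga(z).
\end{equation*}
For fixed $z$, each of the three terms on the right vanishes as $t\to 0^+$, so $\liminf_{t\to 0^+} 4t\log u(x,t)\ge -p'\,|x-z|^2$. (When $p=\infty$ the exponents degenerate to $1/2$ and $1$, and the same conclusion holds.)

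It remains to let $z$ approach $\Ga$ optimally. Fix $\eta>0$, pick $y_\eta\in\Ga$ with $|x-y_\eta|<d_\Ga(x)+\eta/2$, and then use the topological assumption $\Ga=\pa(\RR^N\setminus\ol\Om)$ to select $z_\eta\in\RR^N\setminus\ol\Om$ with $|z_\eta-y_\eta|<\eta/2$, so that $|x-z_\eta|<d_\Ga(x)+\eta$. Applying the bound above with $z=z_\eta$ yields
\begin{equation*}
\liminf_{t\to 0^+} 4t\log u(x,t)\ge -p'\,(d_\Ga(x)+\eta)^2,
\end{equation*}
and sending $\eta\to 0^+$ gives $\liminf_{t\to 0^+} 4t\log u(x,t)\ge -p'\,d_\Ga(x)^2$, completing the sandwich. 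The case $x\in\Ga$ is immediate since $u(x,t)\equiv 1$ there for $t>0$. The only subtle point—which is the step that genuinely requires the hypothesis $\Ga=\pa(\RR^N\setminus\ol\Om)$—is the last approximation argument; without it, a boundary point could fail to be approachable from outside $\ol\Om$, and one would not be able to match the exterior-ball barrier's basepoint $z$ to the nearest point on $\Ga$.
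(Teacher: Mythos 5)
Your proposal is correct and follows essentially the same route as the paper: it sandwiches $4t\log u(x,t)$ between the two barriers of Lemmas \ref{lem:parabolic control from below} and \ref{lem:parabolic control from above}, sends $t\to 0^+$ to kill the error terms $t\log E^{\pm}$, and then exploits $\Ga=\pa(\RR^N\setminus\ol\Om)$ to move the basepoint $z$ of the lower barrier toward a nearest boundary point. The paper picks $y\in\Ga$ realizing $d_\Ga(x)$ exactly (which is legitimate since $\Ga$ is closed) and takes $z_n\to y$ from outside $\ol\Om$; your $\eta$-argument avoids invoking attainment of the infimum, but the difference is purely cosmetic.
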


\begin{proof}
It is clear that \eqref{pointwise-limit} holds for $x\in\Ga$. Let $x\in\Om$. We only need to apply Lemmas \ref{lem:parabolic control from below}  and \ref{lem:parabolic control from above}. Indeed, for $z \in \RR^N \setminus\ol\Om$, for every and $ t > 0$, Lemmas \ref{lem:parabolic control from below}  and \ref{lem:parabolic control from above} lead to
\begin{multline}
\label{eq:parabolic double control}
 p'\left( d_\Ga(x)^2 - |x - z|^2 \right)
 +4 t \,\log E^-\left(d_\Ga(z), t\right)\le
 \\
4t\log\left\{u(x,t)\right\} + p' \, d_\Ga(x)^2 \leq 
4 t \log E^+\left(d_\Ga(x), t\right).
\end{multline}
\par
The last chain of inequalities implies at once that
\begin{multline*}
 p'\left(d_\Ga(x)^2 - |x-z|^2\right) \le \liminf_{t\to 0^+} \left[4t\log u(x,t) + p'\, d_\Ga(x)^2\right]\le \\
 \limsup_{t\to 0^+}\left[4t\log u(x,t) + p'\,d_\Ga(x)^2\right] \leq 0,
\end{multline*}
where we have used Lemma \ref{lem:parabolic control from above} and the fact that $t\log E^-\left(d_\Ga(z), t\right) \to 0$, as $t\to 0^+$.
\par
Since $\Ga = \pa \left( \RR^N \setminus\ol\Om \right)$, we can find always a sequence of $z_n$ that converges to a point $y\in\Ga$ such that $d_\Ga(x)=|x-y|$; by taking $z=z_n$ in the last formula and letting $n\to\infty$, we obtain the desired claim. 
%
\end{proof}
%

\begin{thm}[{\cite[Theorem 2.5]{BM-AA}}]
\label{th:elliptic-pointwise}
Let $p\in (1, \infty]$ and $\Om$ be a domain in $\RR^N$ satisfying $\Ga=\pa\left(\RR^N\setminus\ol{\Om}\right)$; assume that $u^\ve$ is the bounded (viscosity) solution of \eqref{G-elliptic}-\eqref{elliptic-boundary}.
\par 
Then, it holds that
\begin{equation}
  \label{eq:pointwise-elliptic}
  \lim_{\ve\to 0^+}
\ve\log\left\{u^\ve(x)\right\}=
-\sqrt{p'}\,d_\Ga(x) \ \mbox{ for any } \ x\in\ol\Om.
\end{equation}
\end{thm}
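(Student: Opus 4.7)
The plan is to adapt the proof strategy of Theorem \ref{th:parabolic-pointwise} to the elliptic setting, sandwiching $\ve\log u^\ve(x)+\sqrt{p'}\,d_\Ga(x)$ between two quantities that vanish as $\ve\to 0^+$. For $x\in\Ga$ the identity \eqref{eq:pointwise-elliptic} is trivial since $u^\ve=1$ there, so I fix $x\in\Om$ and work separately on the limsup and the liminf.

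For the upper bound I invoke Lemma \ref{elliptic-control-from-above} directly: it gives
\[
\ve\log u^\ve(x)+\sqrt{p'}\,d_\Ga(x)\le \ve\log E_p^\ve\!\left(d_\Ga(x)\right),
\]
and the last statement of that lemma guarantees that the right-hand side is $O(\ve\log\ve)$ (resp.~$O(\ve)$ for $p=\infty$) as $\ve\to 0^+$. Hence $\limsup_{\ve\to 0^+}[\ve\log u^\ve(x)+\sqrt{p'}\,d_\Ga(x)]\le 0$.

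For the lower bound I use Lemma \ref{elliptic-control-from-below}: for every $z\in\RR^N\setminus\ol\Om$,
\[
\ve\log u^\ve(x)+\sqrt{p'}\,d_\Ga(x)\ge \sqrt{p'}\bigl(d_\Ga(x)-|x-z|+d_\Ga(z)\bigr)+\ve\log e_{p,z}^\ve(x).
\]
The key step, and the only real work, is to show that $\ve\log e_{p,z}^\ve(x)\to 0$ as $\ve\to 0^+$ for $z$ fixed. For $p=\infty$ this is immediate since $e_{p,z}^\ve\equiv 1$. For $1<p<\infty$, I recognise $e_{p,z}^\ve(x)$ as the ratio $f(\sqrt{p'}|x-z|/\ve)/f(\sqrt{p'}d_\Ga(z)/\ve)$ with $f$ and $\al=(N-p)/(p-1)$ as in Lemma \ref{lem:asymptotics for modified bessel}; applying \eqref{eq:asymptotics-to-infinity} to numerator and denominator shows that this ratio converges to the positive constant $(|x-z|/d_\Ga(z))^{-(\al+1)/2}$, so its $\ve\log$ indeed tends to $0$. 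Consequently
\[
\liminf_{\ve\to 0^+}\bigl[\ve\log u^\ve(x)+\sqrt{p'}\,d_\Ga(x)\bigr]\ge \sqrt{p'}\bigl(d_\Ga(x)-|x-z|+d_\Ga(z)\bigr).
\]

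Finally, the topological hypothesis $\Ga=\pa(\RR^N\setminus\ol\Om)$ lets me pick $y\in\Ga$ realising $|x-y|=d_\Ga(x)$ and approximate it by a sequence $z_n\in\RR^N\setminus\ol\Om$ with $z_n\to y$; then $|x-z_n|\to d_\Ga(x)$ and $d_\Ga(z_n)\to 0$, so the right-hand side above tends to $0$. Combining this with the limsup estimate yields \eqref{eq:pointwise-elliptic}. I expect the main subtle point to be not the sandwich argument itself, which is a direct transcription of the parabolic proof, but verifying that $\ve\log e_{p,z}^\ve(x)\to 0$; this relies essentially on the sharpness of the modified Bessel asymptotics \eqref{eq:asymptotics-to-infinity}, which cancel the potentially divergent $\ve\log(\ve^{(\al+1)/2})$ terms between numerator and denominator.
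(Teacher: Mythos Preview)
Your proof is correct and follows essentially the same approach as the paper's: sandwich $\ve\log u^\ve(x)+\sqrt{p'}\,d_\Ga(x)$ using Lemmas \ref{elliptic-control-from-above} and \ref{elliptic-control-from-below}, verify that $\ve\log e_{p,z}^\ve(x)\to 0$ via the Bessel asymptotics \eqref{eq:asymptotics-to-infinity}, and then let $z\to y\in\Ga$ with $|x-y|=d_\Ga(x)$ using the topological hypothesis. Your write-up actually spells out the limiting constant $(|x-z|/d_\Ga(z))^{-(\al+1)/2}$ for the ratio more explicitly than the paper does, but the argument is the same.
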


\begin{proof}
 Given $z\in\RR^N\setminus\ol\Om$ and $\ve>0$, combining Lemmas \ref{elliptic-control-from-above} and  \ref{elliptic-control-from-below} gives at $x\in\ol{\Om}$ that
\begin{multline}
\label{barrier-chain}
\sqrt{p'}\left\{-|x-z|+d_\Ga(x)+d_\Ga(z)\right\}+\ve\log e_{p,z}^\ve(x)\leq\\
\ve\log\{u^\ve(x)\} +\sqrt{p'}\,d_\Ga(x)\leq
\ve \log E_p^\ve \left(d_\Ga(x)\right).
\end{multline}
Letting $\ve\to 0^+$ then gives that
\begin{multline*}
\sqrt{p'}\left\{ -|x-z|+d_\Ga(x)+d_\Ga(z)\right\}\le\\
 \liminf_{\ve\to 0^+}\left[ \ve\log\{u^\ve(x)\}+\sqrt{p'}\,d_\Ga(x)\right] \leq\\
\limsup_{\ve\to 0^+}\left[\ve\log\{u^\ve(x)\}+\sqrt{p'}\,d_\Ga(x)\right]\le
0, 
\end{multline*}
where we have used Lemma  \ref{elliptic-control-from-above} and the fact that $\ve\log e_{p,z}^\ve(x)$ vanishes, as $\ve\to 0^+$. This follows by applying \eqref{eq:asymptotics-to-infinity} to \eqref{error-from-below}.
\par 
We conclude the proof as in the previous theorem, by letting $z$ tend to $y\in\Ga$ such that  $|x-y|=d_\Ga(x)$.
\end{proof}

\section{Quantitative uniform formulas}
\label{sec:uniform}

For an open set of class $C^0$, we mean that its boundary is locally the graph of a continuous function. For the sequel, it is convenient to specify the modulus of continuity, by the following definition (used in \cite{BM-JMPA, BM-AA}). Let $\om:(0,\infty)\to (0,\infty)$ be a strictly increasing continuous function such that $\om(\tau)\to 0$ as $\tau\to 0^+$. We say that an open set $\Om$ is of class $C^{0,\om}$, if there exists a number $r>0$ such that, for every point $x_0\in\Ga$, there is a coordinate system $(y',y_N)\in\RR^{N-1}\times\RR$, and a function $\zi:\RR^{N-1}\to\RR$ such that
\begin{enumerate}[(i)]
\item
$B_r(x_0)\cap\Om=\{(y',y_N)\in B_r(x_0):y_N<\zi(y')\}$;
\item
$B_r(x_0)\cap\Ga=\{(y',y_N)\in B_r(x_0):y_N=\zi(y')\}$
\item
$|\zi(y')-\zi(z')|\le\om(|y'-z'|)$ for all $(y',\zi(y')), (z',\zi(z'))\in B_r(x_0)\cap\Ga$.
\end{enumerate}
In the sequel, it will be useful the function defined by 
$$
\psi_\om(\si)=\min_{0\leq s \leq r}\sqrt{s^2+\left[\om(s)-\si\right]^2}, \ \mbox{ for } \ \si\geq 0.
$$
This is the distance of the point $z=(0',\si)\in\RR^{N-1}\times\RR$ from the graph of the function $\om$.
Notice that 
\begin{equation}
\label{eq:C2domain psi}
\psi(\si)=\si \ \mbox{ if } \zi\in C^k \ \mbox{ with } \ k\ge 2
\end{equation}
 and, otherwise, $\psi(\si)\sim C\,\om^{-1}(\si)$, for some positive constant $C$, where $\om^{-1}$ is the inverse function of $\om$. For instance, if $\Om$ is of class $C^\al$, with $0<\al<1$ --- that means that $\Ga$ is locally a graph of an $\al$-H\"older continuous function --- then $\psi(\si)\ge a\,\si^{1/\al}$ as $\si\to 0^+$.

Figure \ref{fig:picture} describes the geometric setting considered throughout Section \ref{sec:first-order}.

\begin{figure}
  \centering
  \includegraphics[scale=0.6]{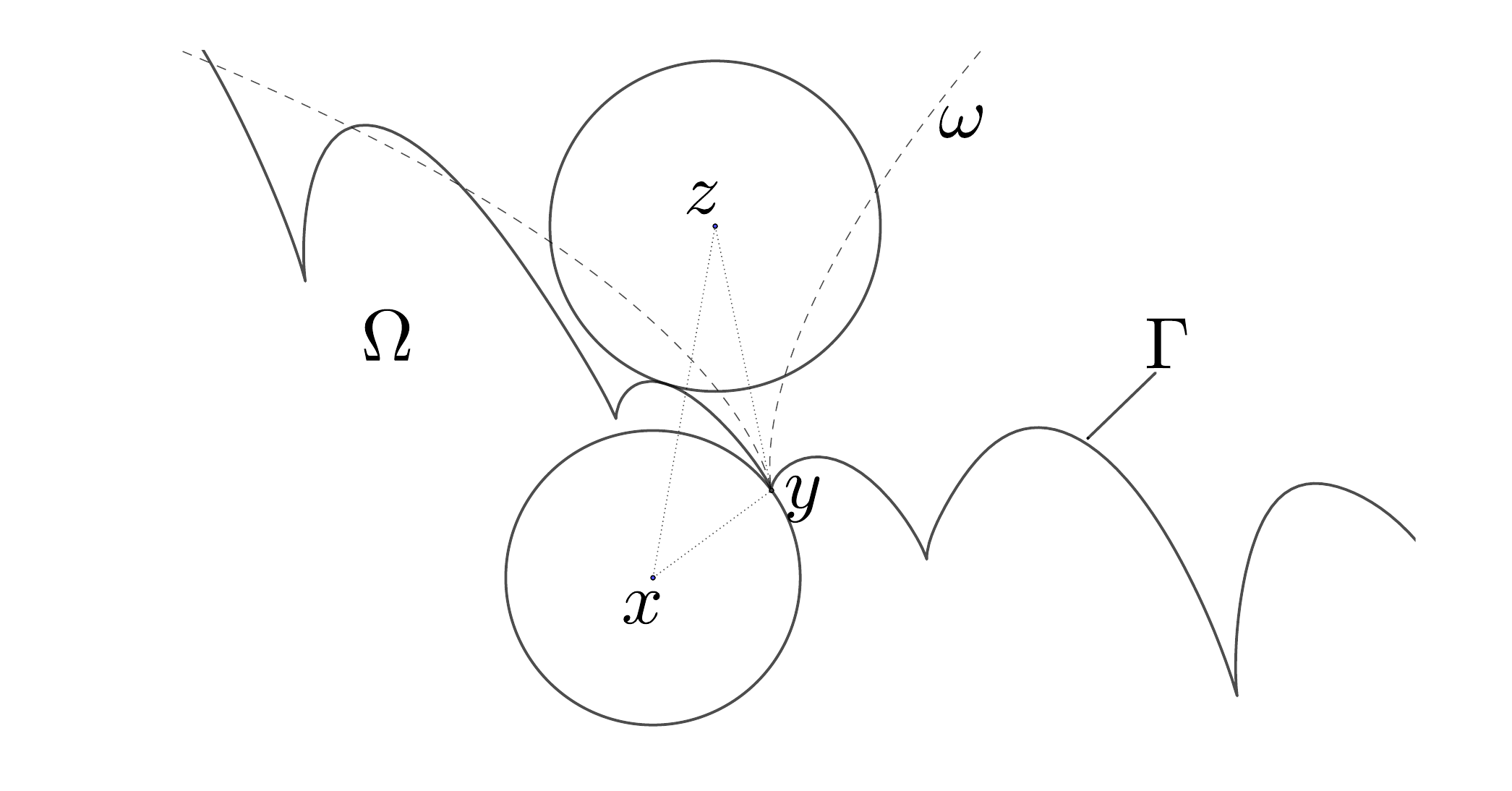} 
  \caption{The geometric description of the argument in the proof of Theorems \ref{th:uniform-JMPA} and \ref{th:uniform-elliptic}.}
  \label{fig:picture}
\end{figure}

\begin{thm}
\label{th:uniform-JMPA}
Let $ p \in (1,\infty]$ and suppose that $\Om$ is a domain of class $C^{0,\om}$. Let $u$ be the bounded (viscosity) solution of \eqref{G-heat}-\eqref{boundary}.
\par
Then, it holds that
\begin{equation}
\label{uniform-estimate}
4t\,\log u(x,t)+p'\,d_\Ga(x)^2=O(t \log \psi_\om(t)) \ \mbox{  as } \ t\to 0^+,
\end{equation}
uniformly on every compact subset of $\ol\Om$. 
In particular, if $t \log\psi_\om(t)\to 0$ as $t\to 0^+$,
then the solution $u$ of \eqref{G-heat}-\eqref{boundary} satisfies \eqref{pointwise-limit} uniformly on every compact subset of $\ol\Om$.
\end{thm}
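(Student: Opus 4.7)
The upper bound follows immediately from Lemma \ref{lem:parabolic control from above}: for $x$ in a compact set $K\subset\ol\Om$, $4t\log u(x,t)+p'\,d_\Ga(x)^2 \leq 4t\log E^+(d_\Ga(x),t) = O(t\log t)$ uniformly on $K$. Since $\psi_\om(t)\leq t$ for all $t$ small (from taking $s=0$ in the defining minimum), one has $|\log t|\leq|\log\psi_\om(t)|$, and the bound is automatically $O(t\log\psi_\om(t))$.

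For the matching lower bound, I refine the pointwise strategy of Theorem \ref{th:parabolic-pointwise} by choosing the auxiliary point $z$ in Lemma \ref{lem:parabolic control from below} to depend on both $x$ and $t$. Fix $x\in\Om$, pick $y\in\Ga$ realizing $d_\Ga(x)=|x-y|$, and pass to the $C^{0,\om}$ local chart at $y$, so that in the local coordinates $y=0$, $\Om\cap B_r(y)=\{y_N<\zi(y')\}$ with $\zi(0)=0$ and $|\zi(y')-\zi(z')|\le\om(|y'-z'|)$. Take $z=t\,e_N$ in these coordinates. Then the triangle inequality gives $|x-z|\le d_\Ga(x)+t$, so
\[
p'\bigl(d_\Ga(x)^2-|x-z|^2\bigr) \;\geq\; -p'\bigl(2d_\Ga(x)+t\bigr)t \;=\; O(t)
\]
uniformly for $x\in K$. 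The central geometric claim is $d_\Ga(z)\ge\psi_\om(t)$; granted this, the explicit form of $E^-$ in \eqref{eq:chi-rate} gives
\[
4t\log E^-(d_\Ga(z),t)\;\geq\; -C_1\,t\log t + C_2\,t\log\psi_\om(t) + O(t) \;=\; O\!\left(t\log\psi_\om(t)\right),
\]
once more using $|\log t|\le|\log\psi_\om(t)|$ to absorb the $t\log t$ term. Combining these two estimates via Lemma \ref{lem:parabolic control from below} produces the desired lower bound, uniformly in $x\in K$.

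The hard step is establishing the geometric inequality $d_\Ga(z)\ge\psi_\om(t)$. Two ingredients are needed. First, for $t$ sufficiently small -- uniformly in $x\in K$, thanks to compactness and the globally fixed $C^{0,\om}$ data $(r,\om)$ -- the closest point of $\Ga$ to $z$ lies inside the chart $B_r(y)$, so only the graph of $\zi$ is relevant. Second, one estimates $\min_{y'}\sqrt{|y'|^2+(t-\zi(y'))^2}$ from below by $\psi_\om(t)$: splitting into the cases $|y'|\geq\psi_\om(t)$ (trivial) and $|y'|<\psi_\om(t)$, in the latter case one uses $|\zi(y')|\le\om(|y'|)\le\om(\psi_\om(t))\le t$, whence $t-\zi(y')\ge t-\om(|y'|)\geq 0$, so $\sqrt{|y'|^2+(t-\zi(y'))^2}\ge\sqrt{|y'|^2+(t-\om(|y'|))^2}\ge\psi_\om(t)$ by definition of $\psi_\om$. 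The key auxiliary bound $\om(\psi_\om(t))\le t$ follows from $\psi_\om(t)\le\om^{-1}(t)$, obtained by evaluating the defining minimum at $s=\om^{-1}(t)$. Full uniformity in $x\in K$ then comes from the uniform character of the $C^{0,\om}$ data and the boundedness of $d_\Ga$ on $K$.
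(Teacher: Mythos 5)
Your proposal is correct and follows essentially the same route as the paper: the upper bound from Lemma \ref{lem:parabolic control from above}, and for the lower bound the choice $z(t)=(0',t)$ in the local $C^{0,\omega}$ chart centered at a foot point of $x$, fed into Lemma \ref{lem:parabolic control from below}, with the geometric estimate $d_\Gamma(z(t))\ge\psi_\omega(t)$ and the explicit form of $E^-$ supplying the rate. The one place you go beyond the paper is that you actually prove the inequality $d_\Gamma(z(t))\ge\psi_\omega(t)$ (via the dichotomy $|y'|\gtrless\psi_\omega(t)$ and the auxiliary bound $\omega(\psi_\omega(t))\le t$), whereas the paper simply asserts it with an ``Observe that''; this is a genuine and worthwhile addition, since the comparison between the distance to the graph of $\zi$ and the distance to the graph of $\omega$ is not immediate when $t<\omega(|y'|)$ and requires exactly the case split you supply.
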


\begin{proof}
We need to choose $z$, in \eqref{eq:parabolic double control}, uniformly with respect to $x\in\ol\Om$. For every $x\in\ol\Om$, we fix a coordinate system $(y',y_N)\in\RR^{N-1}\times\RR$, with its origin at a point in $\Ga$ at minimal distance $d_\Ga(x)$ from $x$. In this coordinate system, we choose $z(t)=(0',t)$ that, if $t$ is small enough is by construction a point in $\RR^N\setminus\ol{\Om}$, since $t>\zi(0')$. Also, by our assumptions on $\Om$, $d_\Ga(z(t))$ is bounded from below by the distance of $z(t)$ from the graph of the function $y'\mapsto\om(|y'|)$ defined for $y'\in\{y\in B_r(0): y_N=0\}$, that is
$$
d_\Ga(z(t))\ge \min_{0\le s\le r}\sqrt{s^2+[\om(s)-t]^2}.
$$ 
\par
It is clear that this construction does not depend on the particular point $x\in\ol{\Om}$ chosen, but only on the regularity assumptions on $\Om$. 
\par
Then, \eqref{eq:parabolic double control} reads as
\begin{multline*}
 p'\left( d_\Ga(x)^2 - |x - z(t)|^2 \right)
 +4 t \,\log E^-(d_\Ga(z(t)), t) \le
 \\
4t\log\left\{u(x,t)\right\} + p' \, d_\Ga(x)^2 \leq 
4 t \log
E^+\left( d_\Ga(x), t\right),
 \end{multline*}
for every $x\in\ol\Om$ and $t>0$.
 \par
Observe that $d_\Ga(z(t)) \geq \psi_\om(t)$ and $|x-z(t)| \leq d_\Ga(x) + |y - z(t)|$. Hence, if $x$ is such that $d_\Ga(x) \leq \de$, we have that
\begin{multline*}
 -p' \left( |y-z(t)|^2 + 2\de|y-z(t)| \right)
 +4 t \,\log E^-(\psi_\om(t), t) \le
 \\
4t\log\left\{u(x,t)\right\} + p' \, d_\Ga(x)^2
\leq 
4 t \log E^+\left(\de, t\right).
\end{multline*}
\par
From Lemma \ref{lem:parabolic control from above}, the last term is $O(t \log t)$, as $t\to 0^+$.  Whereas, from the choice of $z(t)$, the first term can be read as $-p' (t^2 + 2\de t) + 4 t\log E^-(\psi_\om(t),t)$ and hence its leading term is due to $4 t\log E^-(\psi_\om(t),t)$, which is a $O\left( t \log \psi_\om(t) \right)$, thanks to \eqref{eq:chi-rate}.
\\
\end{proof}


The same assertion of Theorem \ref{th:uniform-JMPA} holds true even if we replace $1$ in  \eqref{boundary} by a bounded time-dependent non-constant  boundary data, provided that this is bounded away from zero.

\begin{cor}
\label{positive boundary data}
Let $w$ be the bounded solution of \eqref{G-heat}, \eqref{initial} satisfying
$$
w=h\ \mbox{ on }\ \Ga\times(0,\infty),
$$
where the function $h:\Ga\times(0,\infty)\to\RR$ is such that 
$$
\ul{h}\leq h \leq \ol{h} \ \mbox{ on } \ \Ga\times(0,\infty),
$$ 
for some positive numbers $\ul{h}, \ol{h}$.
\par 
Then, we have that
$$
4 t \log w(x,t)= -p'd_\Ga(x)^2+O(t\log \psi_\om(t)) \ \mbox{ as }\ t\to 0^+,
$$
uniformly on every compact subset of $\ol\Om$.
\end{cor}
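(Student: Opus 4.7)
The plan is to reduce the claim to Theorem \ref{th:uniform-JMPA} by sandwiching $w$ between two scalar multiples of the solution $u$ of \eqref{G-heat}-\eqref{boundary}. The key observation is the one-homogeneity of $\De_p^G$: for any constant $c>0$, the function $c\,u$ is still a viscosity solution of \eqref{G-heat}. This follows directly by scaling the test functions in the definitions \eqref{parabolic subsolution}-\eqref{parabolic supersolution}, since $\De_p^G(c\varphi) = c\,\De_p^G\varphi$ and the envelopes $F_*$, $F^*$ in \eqref{p-laplacian-inf-env}-\eqref{p-laplacian-sup-env} are homogeneous of degree one in the Hessian variable.

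Consequently, $\ul{h}\,u$ and $\ol{h}\,u$ are bounded viscosity solutions of \eqref{G-heat} in $\Om\times(0,\infty)$, both vanishing on $\Om\times\{0\}$ (since $u$ does), and taking the constant boundary values $\ul{h}$ and $\ol{h}$, respectively. Since $\ul{h}\le h\le \ol{h}$ on $\Ga\times(0,\infty)$ and $w$ agrees with $\ul{h}\,u$ and $\ol{h}\,u$ on the parabolic boundary up to the right inequalities, the comparison principle (Corollary \ref{cor:parabolic comparison}) yields
\begin{equation*}
\ul{h}\,u(x,t)\ \le\ w(x,t)\ \le\ \ol{h}\,u(x,t)\quad\text{for all}\ (x,t)\in\ol\Om\times(0,\infty).
\end{equation*}
Taking logarithms (both $\ul{h}$ and $\ol{h}$ are positive, and $u>0$ in $\Om\times(0,\infty)$ by the strong minimum principle, Remark \ref{minimum principle}) and multiplying by $4t$ gives
\begin{equation*}
4t\log \ul{h}+4t\log u(x,t)\ \le\ 4t\log w(x,t)\ \le\ 4t\log \ol{h}+4t\log u(x,t).
\end{equation*}

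To conclude, I apply Theorem \ref{th:uniform-JMPA} to $u$, which gives $4t\log u(x,t)+p'\,d_\Ga(x)^2=O(t\log\psi_\om(t))$ as $t\to 0^+$, uniformly on compact subsets of $\ol\Om$. The two extra terms $4t\log\ul{h}$ and $4t\log\ol{h}$ are clearly $O(t)$. The final step is to note that $\psi_\om(t)\to 0^+$ as $t\to 0^+$ (by choosing $s$ with $\om(s)=t$ in the definition), so $|\log\psi_\om(t)|\to\infty$ and therefore $O(t)\subset O(t\log\psi_\om(t))$ for small $t>0$. This absorbs the boundary correction and yields the stated uniform asymptotic.

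The only real subtlety is the first step — checking that multiplying by a constant preserves the viscosity-solution property for \eqref{G-heat} despite its quasi-linear, singular nature — but this is immediate from the one-homogeneity structure displayed in \eqref{p-laplace-coefficients}-\eqref{p-laplacian-sup-env}. No regularity assumption beyond what Theorem \ref{th:uniform-JMPA} already requires on $\Om$ is needed, and the boundary datum $h$ is allowed to be merely bounded above and below by positive constants, without any continuity hypothesis.
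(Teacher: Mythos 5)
Your proof is correct and follows essentially the same route as the paper: sandwich $w$ between $\ul{h}\,u$ and $\ol{h}\,u$ using the one-homogeneity of $\De_p^G$ and the comparison principle (Corollary \ref{cor:parabolic comparison}), take logarithms, and invoke Theorem \ref{th:uniform-JMPA}. You spell out slightly more carefully than the paper why $c\,u$ remains a viscosity solution and why the $O(t)$ correction from $\log\ul{h}$, $\log\ol{h}$ is absorbed into $O(t\log\psi_\om(t))$, but these are exactly the steps the paper's "easily follows" elides.
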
	
\begin{proof}
Since $\ul{h}\, u \leq w \leq \ol{h}\, u$ on $\Ga\times(0,\infty)$, we can apply Corollary \ref{cor:parabolic comparison} to get:
$$
\ul{h}\, u(x,t)\leq w(x,t) \leq \ol{h}\, u(x,t) \ \mbox{ on } \ \ol\Om\times(0,\infty).
$$
This implies that, for every $x\in\ol\Om$ and $t>0$,
\begin{equation*}
  4t \log\ul{h}+4 t \log u(x,t) \leq 4 t \log w(x,t) \leq  4t \log\ol{h}+4 t \log u(x,t).
\end{equation*}
The conclusion then easily follows from Theorem \ref{th:uniform-JMPA}.
\end{proof}

\begin{thm}[{\cite[Theorem 2.6]{BM-AA}}]
\label{th:uniform-elliptic}
Let $ p \in (1,\infty]$ and $\Om$ be a domain of class $C^0$. Suppose that $u^\ve$ is the bounded (viscosity) solution of \eqref{G-elliptic}-\eqref{elliptic-boundary}.
\par
Then, as $\ve\to 0^+$, we have that 
\begin{equation}
\label{eq:uniform-convergence-infinity}
\ve\log\left\{u^\ve(x)\right\}+ \sqrt{p'}\,d_\Ga(x)=
\begin{cases}
O(\ve) \ &\mbox{ if } \ p=\infty, \\
O(\ve\log \ve) \ &\mbox{ if } \ p>N.
\end{cases}
\end{equation}
Moreover,  if $\Om$ is of class $C^{0,\om}$, it holds that
\begin{equation}
  \label{eq:uniform-convergence}
  \ve\,\log \left\{u^\ve(x)\right\}+\sqrt{p'}\,d_\Ga(x)=
  \begin{cases}
  \displaystyle
    O(\ve\log|\log\psi_\om(\ve)|)\ &\mbox{ if } \ N=p, \\  
\displaystyle 
    O(\ve\log\psi_\om(\ve))\ &\mbox{ if }\ 1<p<N.
  \end{cases}
\end{equation}
The formulas \eqref{eq:uniform-convergence-infinity} and \eqref{eq:uniform-convergence} hold uniformly on the compact subsets of $\ol\Om$.
\par
In particular, if $\ve\log\psi_\om(\ve)\to 0$ as $\ve\to 0^+$, then the convergence in \eqref{eq:pointwise-elliptic} is uniform on every compact subset of $\ol\Om$.
\end{thm}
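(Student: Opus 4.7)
The argument will run in parallel to the proof of Theorem \ref{th:uniform-JMPA}, with the parabolic barriers replaced by the elliptic ones of Lemmas \ref{elliptic-control-from-above} and \ref{elliptic-control-from-below}, and the Gaussian estimate \eqref{eq:chi-rate} replaced by the Bessel asymptotics of Lemma \ref{lem:asymptotics for modified bessel}. Fix a compact set $K\subset\ol\Om$. The upper estimate is immediate from Lemma \ref{elliptic-control-from-above}:
\[
\ve\log u^\ve(x)+\sqrt{p'}\,d_\Ga(x)\le \ve\log E_p^\ve(d_\Ga(x));
\]
since $d_\Ga$ is bounded on $K$, the closing asymptotic of that lemma already supplies $O(\ve)$ for $p=\infty$ and $O(\ve\log\ve)$ for $1<p<\infty$.

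For the lower estimate I will choose the auxiliary point $z=z(\ve)$ of Lemma \ref{elliptic-control-from-below} uniformly in $x\in K$, exactly as in the parabolic argument. For each $x\in K$ I pick $y_x\in\Ga$ with $|x-y_x|=d_\Ga(x)$ and use the $C^{0,\om}$-chart at $y_x$: local coordinates $(y',y_N)$ with origin at $y_x$, $\Ga$ locally the graph of a function $\zi$ with $|\zi(y')|\le\om(|y'|)$, and $x$ on the negative $y_N$-axis. Setting $z(\ve)=(0',\ve)$, one has $z(\ve)\in\RR^N\setminus\ol\Om$ for $\ve$ small enough (uniformly in $x\in K$), together with $|x-z(\ve)|=d_\Ga(x)+\ve$ and $d_\Ga(z(\ve))\ge\psi_\om(\ve)$ by the very definition of $\psi_\om$. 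Substituting in Lemma \ref{elliptic-control-from-below} I obtain
\[
\ve\log u^\ve(x)+\sqrt{p'}\,d_\Ga(x)\ge \sqrt{p'}\bigl[\psi_\om(\ve)-\ve\bigr]+\ve\log e_{p,z(\ve)}^\ve(x),
\]
whose first summand is $O(\ve)$, since $\psi_\om(\ve)\le\ve$.

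The task thus reduces to controlling $\ve\log e_{p,z(\ve)}^\ve(x)$ through Lemma \ref{lem:asymptotics for modified bessel}. Setting $\al=(N-p)/(p-1)$, the numerator argument $\sigma_1=\sqrt{p'}|x-z(\ve)|/\ve$ blows up like $1/\ve$, and \eqref{eq:asymptotics-to-infinity} gives $\ve\log f(\sigma_1)=O(\ve\log\ve)$; the denominator argument $\sigma_2=\sqrt{p'}d_\Ga(z(\ve))/\ve\ge\sqrt{p'}\psi_\om(\ve)/\ve$ may tend to $0^+$ when $\Om$ is less regular than $C^1$, and here the zero-asymptotics \eqref{eq:asymptotic to zero} of $f$ come into play with three different regimes according to the sign of $\al$. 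For $p=\infty$, $e_{p,z(\ve)}^\ve\equiv 1$ and nothing more is required; for $p\in(N,\infty)$ (so $\al\in(-1,0)$) the function $f$ has a finite nonzero limit at $0$, so that $\ve\log f(\sigma_2)=O(\ve)$ and the total contribution is $O(\ve\log\ve)$; for $p=N$ (so $\al=0$) the relation $f(\sigma)\sim\log(1/\sigma)$ produces the iterated logarithm $\ve\log|\log\psi_\om(\ve)|$; and for $p\in(1,N)$ (so $\al>0$) the relation $f(\sigma)\sim\Ga(\al)\sigma^{-\al}$ yields $\ve\log\psi_\om(\ve)$. Combining these with the upper bound will establish \eqref{eq:uniform-convergence-infinity} and \eqref{eq:uniform-convergence}.

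The main obstacle will be the range $1<p\le N$, where $f(\sigma_2)$ is singular at the origin: the precise shape of that singularity (logarithmic at the threshold $p=N$, polynomial below it) dictates the boundary-regularity-dependent error rate, so one has to track carefully how $\sigma_2$ compares with the length scale $\psi_\om(\ve)/\ve$, which is what produces the dichotomy in \eqref{eq:uniform-convergence}.
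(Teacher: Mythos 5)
Your plan reproduces the paper's argument essentially verbatim: fix a compact $K$, use Lemma \ref{elliptic-control-from-above} for the upper bound, choose $z_\ve=(0',\ve)$ in a $C^{0,\om}$-chart at a foot point $y_x$ (giving $d_\Ga(z_\ve)\ge\psi_\om(\ve)$ and $|x-z_\ve|\le d_\Ga(x)+\ve$), and then split into the three regimes of Lemma \ref{lem:asymptotics for modified bessel} according to the sign of $(N-p)/(p-1)$. The only cosmetic slip is asserting $|x-z(\ve)|=d_\Ga(x)+\ve$ and that $x$ lies on the $y_N$-axis — the $C^{0,\om}$ chart fixes the axes, so one only has the triangle inequality $|x-z(\ve)|\le d_\Ga(x)+\ve$ via $|y_x-z(\ve)|=\ve$ — but this is exactly the bound you then use, so it does not affect the proof.
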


\begin{proof}
For any fixed compact subset $K$ of $\ol\Om$ we let $d$ be the positive number, defined as  
$$
d=
\max_{x'\in K}\{d_\Ga(x'), |x'|\}.
$$ 
To obtain the uniform convergence in \eqref{eq:pointwise-elliptic} we will choose $z=z_\ve$ independently on $x\in K$, as follows.
\par 
If $\Om$ is of class $C^{0,\om}$, fix $x\in K$, take $y\in\Ga$ minimizing the distance to $x$, and 
consider a coordinate system in $\RR^{N-1}\times\RR$ such that $y=(0', 0)$. If we take $z_\ve=(0',\ve)$, then $z_\ve\in\RR^N\setminus\ol\Om$ when $\ve$ is sufficiently small. 
With this choice, \eqref{barrier-chain} reads as
\begin{multline*}
\sqrt{p'}\left\{-|x-z_\ve|+d_\Ga(x)+d_\Ga(z_\ve)\right\}+\ve\log e_{p,z_\ve}^\ve(x)\leq\\
\ve\log u^\ve(x) +\sqrt{p'}\,d_\Ga(x)\leq
\ve \log E_p^\ve\left(d_\Ga(x)\right).
\end{multline*}
Hence, we get:
\begin{equation*}
\label{barrier-chain-uniform}
-\sqrt{p'}\,\ve+\ve\log e_{p,z_\ve}^\ve(x)\leq
\ve\log u^\ve(x) +\sqrt{p'}\,d_\Ga(x)\leq
\ve \log E_p^\ve\left(d_\Ga(x)\right),
\end{equation*}
since $d_\Ga(z_\ve)\ge 0$ and $|x-z_\ve|\le d_\Ga(x)+\ve$. 
\par
Thus, if $p=\infty$, Lemmas \ref{elliptic-control-from-above} and \eqref{elliptic-control-from-below} give that
$$
-\ve\le \ve \log \left\{u^\ve(x)\right\}+d_\Ga(x)\le \ve\log\left\{\frac{2}{1+e^{-\frac{d}{\ve}}}\right\},
$$
being $d_\Ga(x)\le d$, and \eqref{eq:uniform-convergence-infinity} follows at once.

Next, 
if $1<p<\infty$, we recall that $\ve \log E_p^\ve\left(d_\Ga(x)\right)=O(\ve\log \ve)$ on $K$ as $\ve\to 0^+$, by Lemma \ref{elliptic-control-from-above}. On the other hand, by observing that $d_\Ga(z_\ve)\ge \psi(\ve)$, by our assumption on $\Om$, and that also $|x-z_\ve|\le 2 d$ for $\ve\le d$,
\eqref{error-from-below} gives on $K$ that
$$
  e_{p,z_\ve}^\ve\ge
 \frac{\int_{0}^{\infty}e^{-\frac{2 d\,\sqrt{p'}}{\ve}(\cosh\te-1)}\left(\sinh\te\right)^\frac{N-p}{p-1}d\te}
{\int_{0}^{\infty}e^{-\frac{\sqrt{p'}\psi(\ve)}{\ve}(\cosh\te-1)}\left(\sinh\te\right)^\frac{N-p}{p-1}d\te}.
$$
\par
Now, after setting $\al=\frac{N-p}{2(p-1)}$, to this formula we apply \eqref{eq:asymptotics-to-infinity} with $\si=2 d \sqrt{p'}/\ve$ at the numerator and \eqref{eq:asymptotic to zero} $\si=\sqrt{p'} \psi(\ve)/\ve$ at the denominator. Thus, since the sign of $\al$ is that of $N-p$, on $K$ we have as $\ve\to 0$  that
\begin{equation*}
 \ve\log\left(e_{p,z_\ve}^\ve\right)\ge \al\,\ve\log\psi(\ve)-\frac{\al-1}{2}\,\ve\log\ve+O(\ve)=\al\ve\log\psi(\ve)+O(\ve\log\ve),
\end{equation*}
if $p<N$,
$$
 \ve\log\left(e_{p,z_\ve}^\ve\right)\ge -\ve\,\log|\log\psi(\ve)|+O(\ve \log\ve),
$$
if $p=N$, and
$$
 \ve\log\left(e_{p,z_\ve}^\ve\right)\ge \frac{\al+1}{2}\,\ve\log\ve+O(\ve),
$$
if $p>N$.
\end{proof}

\begin{cor}
\label{cor:elliptic positive boundary data}
Let $v^\ve:\Om\to \RR$ be the bounded solution of \eqref{G-elliptic} satisfying
$$
v^\ve=h_\ve\ \mbox{ on }\ \Ga\times(0,\infty),
$$
where, for any $\ve>0$, the function $h_\ve:\Ga\to\RR$ is such that
$$
\ul{h}\leq h_\ve \leq \ol{h} \ \mbox{ on } \ \Ga,
$$ 
for some positive numbers $\ul{h}, \ol{h}$.
\par 
Then, we have that
\begin{equation*}
\ve \log v^\ve (x) + \sqrt{p'}\,d_\Ga(x) = 
\begin{cases}
\displaystyle 
O(\ve) \ \mbox{ if } \ p=\infty,
\\
\displaystyle 
O\left( \ve \log \ve \right) \ \mbox{ if } \ p\in (N,\infty),
\\
\displaystyle 
O\left( \ve \log |\log \psi_\om(\ve)| \right) \ \mbox{ if } \ p=N,
\\
\displaystyle
O \left( \ve \log \psi_\om (\ve) \right) \ \mbox{ if } p\in (1,N).
\end{cases}
\end{equation*}
uniformly on every compact subset of $\ol\Om$.
\end{cor}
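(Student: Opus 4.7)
The plan is to mimic the strategy of Corollary \ref{positive boundary data}: use the one-homogeneity of $\De_p^G$ together with the elliptic comparison principle to sandwich $v^\ve$ between two constant multiples of the solution $u^\ve$ of the unit-boundary-data problem \eqref{G-elliptic}-\eqref{elliptic-boundary}, and then invoke Theorem \ref{th:uniform-elliptic} to transfer the asymptotic estimate to $v^\ve$.

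More concretely, let $u^\ve$ denote the bounded viscosity solution of \eqref{G-elliptic}-\eqref{elliptic-boundary}. Since $\De_p^G$ is $1$-homogeneous, for any constant $c>0$ the function $c\,u^\ve$ is again a viscosity solution of $w-\ve^2\De_p^G w=0$ in $\Om$. On $\Ga$ we have $\ul{h}\,u^\ve=\ul{h}\le h_\ve\le\ol{h}=\ol{h}\,u^\ve$, so Corollary \ref{cor:elliptic comparison} yields
\begin{equation*}
\ul{h}\,u^\ve(x)\le v^\ve(x)\le \ol{h}\,u^\ve(x)\ \mbox{ for every }\ x\in\ol\Om.
\end{equation*}
Taking $\ve\log$ and adding $\sqrt{p'}\,d_\Ga(x)$ to each side we get
\begin{equation*}
\ve\log\ul{h}+\bigl[\ve\log u^\ve(x)+\sqrt{p'}\,d_\Ga(x)\bigr]\le \ve\log v^\ve(x)+\sqrt{p'}\,d_\Ga(x)\le \ve\log\ol{h}+\bigl[\ve\log u^\ve(x)+\sqrt{p'}\,d_\Ga(x)\bigr].
\end{equation*}

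Since $\ul{h},\ol{h}>0$ are fixed constants, both $\ve\log\ul{h}$ and $\ve\log\ol{h}$ are $O(\ve)$ as $\ve\to 0^+$. Moreover, by Theorem \ref{th:uniform-elliptic}, on every compact subset of $\ol\Om$ the quantity $\ve\log u^\ve(x)+\sqrt{p'}\,d_\Ga(x)$ is of order $O(\ve)$ when $p=\infty$, $O(\ve\log\ve)$ when $p>N$, $O(\ve\log|\log\psi_\om(\ve)|)$ when $p=N$, and $O(\ve\log\psi_\om(\ve))$ when $p\in(1,N)$. In each of these four cases the extra $O(\ve)$ term is dominated by (or matches, in the case $p=\infty$) the rate provided by Theorem \ref{th:uniform-elliptic}, hence the stated asymptotic behavior for $\ve\log v^\ve(x)+\sqrt{p'}\,d_\Ga(x)$ follows, uniformly on every compact subset of $\ol\Om$.

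I do not foresee any serious obstacle: the proof is essentially a two-line consequence of the homogeneity of $\De_p^G$, the comparison principle, and Theorem \ref{th:uniform-elliptic}. The only mild point to be explicit about is that multiplying a viscosity solution of \eqref{G-elliptic} by a positive constant still gives a viscosity solution, which is immediate from the $1$-homogeneity of $\De_p^G$ and the definition of viscosity sub-/super-solution (one rescales the test function by the same constant).
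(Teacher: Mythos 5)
Your proof is correct and follows exactly the same route as the paper: the paper's own proof of this corollary simply refers back to Corollary \ref{positive boundary data}, replacing Theorem \ref{th:uniform-JMPA} by Theorem \ref{th:uniform-elliptic}, which is precisely the comparison-plus-homogeneity argument you spell out. Nothing to add.
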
	

\begin{proof}
The proof runs similarly to that of Corollary \ref{positive boundary data}, having in mind Theorem \ref{th:uniform-elliptic} instead of Theorem \ref{th:uniform-JMPA}.
\end{proof}



\chapter{Asymptotics for $q$-means}
\label{ch:asymptotics ii}

In this chapter, we mainly consider domains of class $C^2$. For these domains, we are able to provide further asymptotic formulas involving solutions of \eqref{G-heat}-\eqref{boundary} and \eqref{G-elliptic}-\eqref{elliptic-boundary}. In particular, we consider statistical nonlinear quantities, called {\it $q$-means}, defined, in the case of our interest, as follows. Given $q\in [1,\infty]$, $B$ a ball in $\RR^N$ and a function $u:B\to \RR$, the $q$-mean of $u$ on $B$ is the unique real value $\mu$, such that
\begin{equation}
\label{eq:def-qmean}
\Vert u-\mu\Vert_{L^q\left(B\right)} \leq \Vert u - \la \Vert_{L^q\left(B\right)}, \ \mbox{ for } \ \la \in\RR.
\end{equation}

These quantities generalize the standard mean value, which corresponds to the case $q=2$. These means (there named $p$-means) have also been studied by Ishiwata, Magnanini and Wadade, in \cite{IMW}, in connection with asymptotic mean value properties for $p$-harmonic functions.

Formulas that we give here are proper generalizations of those due to Magnanini and Sakaguchi (see \cite{MS-AM} and \cite{MS-PRSE}) concerning the linear cases. In \cite{MS-PRSE},  the solution of the heat equation subject to conditions \eqref{initial} and \eqref{boundary} is considered and the following formula for the mean value of $u$ on a ball touching the boundary $\Ga$ at only one point $y_x\in\Ga$ is proved:
$$
\lim_{t\to 0^+}
\left(\frac{R^2}{t}\right)^{\frac{N+1}{4}}\dashint_{B_R(x)}u(z,t)\,dz=
\frac{C_N}{ \sqrt{\Pi_\Gamma(y_x)}}.
$$
Here, $C_N$ is a positive constant, $x\in\Om$, $R=d_\Ga(x)$ and $\{ y_x\}=\ol{B_R(x)}\cap\Ga$. Also,
\begin{equation}
\label{eq:Pi-gamma}
\Pi_\Ga(y)= \prod_{j=1}^{N-1} \left[ 1 - R\ka_j(y) \right], \ \mbox{ for } \ y\in \Ga,
\end{equation}
where $\ka_1(y),\dots, \ka_{N-1}(y)$ denote the principal curvatures of $\Ga$ at $y$ with respect to the interior normal direction to $\Ga$. In \cite{MS-AM}, a corresponding elliptic case is considered.
\par
In Section \ref{sec:second-order}, we consider the $q$-mean $\mu_q(x,t)$ on $B_R(x)$ of the solution of \eqref{G-heat}-\eqref{boundary} and obtain the formula:
$$
\lim_{t\to 0^+}
 \left(\frac{R^2}{t}\right)^{\frac{N+1}{4(q-1)}}\mu_{q}(x,t) =
 C_{N,p,q}\,\left\{ \Pi_\Gamma(y_x)\right\}^{-\frac{1}{2(q-1)}},
$$
This formula holds for any $p\in(1,\infty]$ and $q\in(1,\infty)$. The positive constant $C_{N,p,q}$ will be specified in Theorem \ref{th:JMPA-asymptotics-qmean}.
\par
In the elliptic case, we consider the $q$-mean of the solution of \eqref{G-elliptic}-\eqref{elliptic-boundary} and, for the same values of $p$ and $q$, we compute:
$$
\lim_{\ve\to 0^+}
\left(\frac{R}{\ve}\right)^{\frac{N+1}{2(q-1)}}\mu_{q,\ve}(x)=
   \widetilde{C}_{N,p,q}\,\left\{\,\Pi_\Gamma(y_x)\right\}^{-\frac{1}{2(q-1)}},
  $$
The value of $\widetilde{C}_{N,p,q}$ can be found in Theorem \ref{th:AA-qmean}.
\par
In Theorems \ref{th:JMPA-asymptotics-qmean} and \ref{th:AA-qmean} also the extremal case in which $q=\infty$ will be treated obtaining that
$$
\lim_{t\to 0^+} \mu_\infty(x,t) = \lim_{\ve\to 0^+} \mu_{\infty,\ve}(x) = \frac1{2}.
$$
\par
The above limits are obtained by using improved versions of the barriers we have constructed in Chapter \ref{ch:asymptotics i}. These versions, that are valid for $C^2$-regular domains, are presented in Sections \ref{sec:parabolic enhanced barriers} and \ref{sec:elliptic enhanced barriers}. In Section \ref{sec:second-order}, we first prove the asymptotic formulas for the improved barriers (see Lemmas \ref{lem:JMPA-barriers-qmean} and \ref{lem:AA-barriers-qmean}) and hence thanks to appropriate properties of monotonicity of the $q$-mean, we extend the formulas to the relevant solutions.
\par
It is worth noting that the results of this chapter are based on Lemma \ref{lem:geometric-asymptotics}, a geometrical lemma proved in \cite{MS-PRSE}. In Section \ref{sec:second-order}, we recall it, from \cite{MS-PRSE}, with its complete proof.

\section{Improving of barriers in the parabolic case}
\label{sec:parabolic enhanced barriers}

The following lemma is a consequence of Theorem  \ref{th:uniform-JMPA}.


\begin{lem}[{\cite[Corollary 2.12]{BM-JMPA}}]
\label{lem:JMPA-enhanced barriers}
Set $p\in(1,\infty]$. 
Let $\Om$ be a domain of class $C^{0,\om}$. Let $v: \ol\Om\times(0,\infty)\to\RR$ be defined by 
$$
\Erfc\left(\frac{\sqrt{p'}\, v(x,t)}{2 \sqrt{t}}\right)= u(x,t)\ \mbox{ for } \ (x,t)\in\ol{\Om}\times(0,\infty),
$$
where $u(x,t)$ is the (bounded) viscosity solution of \eqref{G-heat}-\eqref{boundary}.
\par
Then, 
$$
v(x,t)=d_\Ga(x)+O(t\log \psi_\om(t)) \ \mbox{ as } \ t\to 0^+,
$$
uniformly on every compact subset of $\ol\Om$.
\end{lem}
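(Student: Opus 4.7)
The plan is to relate the defining equation $u(x,t) = \Erfc\bigl(\sqrt{p'}\,v(x,t)/(2\sqrt{t})\bigr)$ to the uniform Varadhan-type bound of Theorem \ref{th:uniform-JMPA} via the explicit half-space asymptotic of Theorem \ref{th:asymptotics in half-space}. Concretely, Theorem \ref{th:uniform-JMPA} gives, uniformly on any fixed compact $K\subset\ol\Om$,
$$
4t\log u(x,t) = -p'\,d_\Ga(x)^2 + O(t\log\psi_\om(t)) \quad\text{as } t\to 0^+.
$$

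Next, the map $s\mapsto\Erfc\bigl(\sqrt{p'}\,s/(2\sqrt{t})\bigr)$ coincides with the half-space solution $\Psi$ of Proposition \ref{Asymp hspace}, viewed as a function of the distance $s$ from the boundary. Hence Theorem \ref{th:asymptotics in half-space} applied with $x_1 = v(x,t)$ yields
$$
4t\log u(x,t) = -p'\,v(x,t)^2 + O(t\log t),
$$
uniformly provided $v$ stays in a bounded range $[0,\delta]$. This I would ensure via the elementary bound $\Erfc(\sigma)\leq e^{-\sigma^2}$, which combined with the previous display gives $v^2\leq d_\Ga^2+O(t|\log\psi_\om(t)|)$, hence uniformly bounded on $K$ for small $t$.

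Since $\psi_\om(t)\leq t$ implies $O(t\log t)\subseteq O(t\log\psi_\om(t))$, equating the two expressions for $4t\log u$ produces
$$
v(x,t)^2 - d_\Ga(x)^2 = O(t\log\psi_\om(t))
$$
uniformly on $K$. The factorization $v-d_\Ga=(v^2-d_\Ga^2)/(v+d_\Ga)$, combined with the uniform lower bound $v+d_\Ga\geq\eta>0$ available on compact subsets of $\Om$ where $d_\Ga\geq\eta$, then immediately delivers the conclusion.

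The hard part will be handling the stated uniformity on compact subsets of $\ol\Om$ that touch $\Ga$: there the denominator $v+d_\Ga$ degenerates, and the quadratic estimate alone only delivers $v-d_\Ga=O(\sqrt{t|\log\psi_\om(t)|})$, which is weaker than claimed. To recover the sharp $O(t\log\psi_\om(t))$ rate all the way to $\Ga$, I would exploit the small-$\sigma$ expansion $\Erfc(\sigma) = 1-(2/\sqrt{\pi})\sigma+O(\sigma^3)$, which near the boundary gives $v\approx \sqrt{\pi t/p'}\,(1-u)$ and, combined with a sharp boundary comparison between $1-u$ and $d_\Ga$ furnished by the barriers of Lemmas \ref{lem:parabolic control from below}--\ref{lem:parabolic control from above}, should close the gap.
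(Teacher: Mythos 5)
Your outline reproduces the paper's own argument almost step for step: write $4t\log u + p'v^2 = 4t\log I$ (with $I$ the explicit integral from the half-space proof of Theorem \ref{th:asymptotics in half-space}), bound $I$ above by its value at $v=0$ and below by its value at $v=\de$ once $v$ is shown to stay in $[0,\de]$, and subtract the uniform Varadhan estimate of Theorem \ref{th:uniform-JMPA} to obtain $p'\bigl(v^2-d_\Ga^2\bigr)=O(t\log\psi_\om(t))$ uniformly on $K$. Up to this point the two proofs are the same in substance (the paper re-derives the integral identity rather than citing the half-space theorem, but that is cosmetic).

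The issue you flag in your final paragraph is genuine, and in fact the paper's own proof suffers from it too: the step from $v^2-d_\Ga^2=O(t\log\psi_\om(t))$ to $v-d_\Ga=O(t\log\psi_\om(t))$ is asserted with the phrase \emph{``which implies the desired uniform estimate''} and no justification, and on a compact $K\subset\ol\Om$ that touches $\Ga$ the denominator $v+d_\Ga$ is not bounded below, so the factorization only yields the weaker $v-d_\Ga=O\bigl(\sqrt{t|\log\psi_\om(t)|}\,\bigr)$. This weaker rate would not suffice downstream, since the quantity $\eta_{u,K}(t)=t^{-1/2}\max_K|v-d_\Ga|$ in Corollary \ref{cor:parabolic-barriers} and Lemma \ref{lem:JMPA-barriers-qmean} must tend to zero. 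You are therefore right that an extra boundary-layer argument is needed. However, the fix you sketch is unlikely to close the gap as stated: the barriers of Lemmas \ref{lem:parabolic control from below} and \ref{lem:parabolic control from above} control $4t\log u$, and near $\Ga$ one has $u\to 1$ and $\log u\to 0$, so those log-barriers degenerate and carry no quantitative information on $1-u$; to exploit the expansion $\Erfc(\si)=1-(2/\sqrt\pi)\si+O(\si^3)$ you would need two-sided bounds on $1-u$ directly (for instance a comparison with the explicit exterior-ball/interior-ball solutions that is sharp in the regime $d_\Ga\lesssim\sqrt{t}$), and this is not furnished by the cited lemmas. So there is a concrete missing piece near $\Ga$, both in your write-up and in the paper's terse argument, and your proposed route to fill it is not yet a proof.
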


\begin{proof}
From the definition of $v(x,t)$, operating as in the proof of Theorem \ref{th:asymptotics in half-space} yields that
$$
4t\,\log u(x,t)+p'\,v(x,t)^2=4t\,\log\left( \sqrt{\frac{p'}{4\pi}}\int_0^\infty e^{-\frac12 p' \frac{v(x,t)}{\sqrt{t}}\,\si-\frac14 p' \si^2}d\si \right)\le 0.
$$
\par
By this inequality, since the first summand at the left-hand side converges uniformly on every compact $K\subset\ol{\Om}$ as $t\to 0^+$, we can infer that there exist $\ol{t}>0$ and $\de>0$ such that $0\le v(x,t)\le\de$ for any $x\in K$ and $0<t<\ol{t}$. 
\par
Thus, for $x\in K$ we have that 
\begin{multline*}
-\left[4t\,\log u(x,t)+p'\,d_\Ga(x)^2\right]+4t\,\log\left( \sqrt{\frac{p'}{\pi}}\int_0^\infty e^{-\frac12 p' \frac{\de}{\sqrt{t}}\,\si-\frac14 p' \si^2}d\si \right)\le \\
p'\,\left[v(x,t)^2-d_\Ga(x)^2\right]\le -\left[4t\,\log u(x,t)+p'\,d_\Ga(x)^2\right],
\end{multline*}
which implies the desired uniform estimate, by means of \eqref{uniform-estimate}.
\end{proof}

We can now refine the barriers given in Section \ref{sec:parabolic barriers}. We define a function of $t$ by
\begin{equation}
\label{eq:eta-K}
\eta_{u,K}(t) = \frac1{\sqrt{t}}\max\left\{ |v(x,t) - d_\Ga(x)| : x\in K \right\}\ \mbox{ for } \ t>0.
\end{equation}

\begin{cor}
\label{cor:parabolic-barriers}
Set $p\in(1,\infty]$. Let $\Om$ be a $C^2$ domain. For any compact set $K\subseteq \ol\Om$, we have that
\begin{equation}
\label{eq:parabolic-barriers}
\Erfc\left( \sqrt{\frac{p'}{4t}} d_\Ga(x) + \eta_{u,K}(t) \right) 
\leq 
u(x,t)
\leq
\Erfc\left( \sqrt{\frac{p'}{4t}} d_\Ga(x)  - \eta_{u,K}(t) \right)
\end{equation}
for $(x,t) \in K\times(0,\infty)$. It holds that $\eta_{u,K}(t) = O\left(\sqrt{t} \log t\right)$, as $t\to 0^+$.
\end{cor}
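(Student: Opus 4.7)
The strategy would be to read off the two-sided estimate directly from the defining identity for $v$ together with the monotonicity of $\Erfc$, and then to use the $C^2$ hypothesis to convert Lemma \ref{lem:JMPA-enhanced barriers} into the stated rate on $\eta_{u,K}$.

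First, I would note that Corollary \ref{cor:parabolic maximum} (together with Remark \ref{minimum principle}) ensures $0 < u < 1$ in $\Om \times (0,\infty)$, and since $\Erfc : \RR \to (0,2)$ is a strictly decreasing $C^\infty$ bijection, the implicit definition of $v$ given in Lemma \ref{lem:JMPA-enhanced barriers} produces a well-defined function with $v(x,t) > 0$ on $\Om \times (0,\infty)$; in particular, $\eta_{u,K}(t)$ in \eqref{eq:eta-K} is a finite nonnegative number for each $t>0$, since the max is taken on the compact set $K$ of a continuous function.

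Next, from \eqref{eq:eta-K} one has, for every $x \in K$ and $t > 0$,
$$
d_\Ga(x) - \sqrt{t}\,\eta_{u,K}(t) \;\le\; v(x,t) \;\le\; d_\Ga(x) + \sqrt{t}\,\eta_{u,K}(t);
$$
multiplying by $\sqrt{p'}/(2\sqrt{t})$ and applying $\Erfc$, which is decreasing, then yields
$$
\Erfc\!\left(\sqrt{\tfrac{p'}{4t}}\,d_\Ga(x) + \tfrac{\sqrt{p'}}{2}\,\eta_{u,K}(t)\right) \le u(x,t) \le \Erfc\!\left(\sqrt{\tfrac{p'}{4t}}\,d_\Ga(x) - \tfrac{\sqrt{p'}}{2}\,\eta_{u,K}(t)\right),
$$
which is \eqref{eq:parabolic-barriers} up to the harmless constant factor $\sqrt{p'}/2$ that may be absorbed into the definition of $\eta_{u,K}$ (or equivalently tracked through the argument without changing the order of magnitude).

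Finally, the $C^2$ assumption on $\Om$ combined with \eqref{eq:C2domain psi} gives $\psi_\om(t) = t$ for all sufficiently small $t > 0$, so Lemma \ref{lem:JMPA-enhanced barriers} specializes to the uniform estimate $v(x,t) - d_\Ga(x) = O(t \log t)$ on $K$ as $t \to 0^+$. Dividing by $\sqrt{t}$ yields $\eta_{u,K}(t) = O(\sqrt{t}\,\log t)$. I do not expect any substantive obstacle here: the corollary is essentially a clean reformulation of the previous lemma through the change of variables $u \mapsto v$, and the only minor care needed is the bookkeeping of the multiplicative constant $\sqrt{p'}/2$ that appears when passing from $v$ to the arguments of $\Erfc$.
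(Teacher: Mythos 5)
Your proposal is correct and follows essentially the same route as the paper's (very terse) proof, which simply invokes Lemma~\ref{lem:JMPA-enhanced barriers}, the definition \eqref{eq:eta-K}, and the $C^2$ specialization \eqref{eq:C2domain psi}. Your bookkeeping of the constant $\sqrt{p'}/2$ in the arguments of $\Erfc$ is a fair observation: as literally written, \eqref{eq:parabolic-barriers} follows only when $\sqrt{p'}/2\le 1$ (i.e.\ $p\ge 4/3$), and for $p\in(1,4/3)$ one should absorb this constant into the definition of $\eta_{u,K}$, which leaves the claimed $O(\sqrt{t}\log t)$ order unchanged and does not affect the later applications.
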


\begin{proof}
We use Lemma \ref{lem:JMPA-enhanced barriers} and \eqref{eq:eta-K} to obtain \eqref{eq:parabolic-barriers}. The asymptotic profile of $\eta_{u,K}$ follows from \eqref{eq:C2domain psi}.
\end{proof}
%

\section{Improving of barriers in the elliptic case}
\label{sec:elliptic enhanced barriers}
To start with, we recall that a domain $\Om$ of class $C^2$ satisfies both the uniform exterior and interior ball conditions, i.e. there exist $r_i,r_e>0$ such that every $y\in\Ga$ has the property that there exist $z_i\in\Om$ and $z_e\in\RR^{N}\setminus\ol{\Om}$ for which 
\begin{equation}
\label{uniform-balls-condition}
B_{r_i}(z_i)\subset \Om \subset \RR^{N}\setminus \ol{B}_{r_e}(z_e)\ \mbox{ and } \ \ol{B}_{r_i}(z_i)\cap\ol{B}_{r_e}(z_e)=\{y\}.
\end{equation}


We will also use two families of probability measures on the intervals $[0, \infty)$ and $[0,\pi]$ with densities defined, respectively, by
\begin{eqnarray*}
&&\displaystyle d\nu^\tau(\te)=\frac{e^{-\tau\,(\cosh\te-1)}(\sinh\te)^\al}{\int_{0}^{\infty}e^{-\tau (\cosh\te-1)}(\sinh\te)^\al\,d\te}\,d\te,\\
&&\displaystyle d\mu^\tau(\te)=\frac{e^{-\tau\,(1-\cos\te)}(\sin\te)^\al}{\int_{0}^{\pi}e^{-\tau (1-\cos\te)}(\sin\te)^\al\,d\te}\,d\te.
\end{eqnarray*}

\begin{lem}[{\cite[Lemma 3.1]{BM-AA}}]
  \label{barriers-for-c2-domain}
  Set $p\in(1,\infty]$.  Let $\Om\subset \RR^N$ be a $C^2$ domain. Assume that $u^\ve$ is the bounded (viscosity) solution of \eqref{G-elliptic}-\eqref{elliptic-boundary}.
\par 
If $p\in (1,\infty)$, we set for $\tau_\ve=\sqrt{p'} r_e/\ve$:
$$
U^\ve(\si)=
\int_{0}^{\infty}e^{-\si\cosh\te} d\nu^{\tau_\ve}(\te), \quad \si\ge 0,
$$
and
\begin{equation*}
V^\ve(\si)=
\begin{cases}
\displaystyle
\int_{0}^{\pi}e^{-\si\cos\te} d\mu^{\tau_\ve}(\te)\ &\mbox { if }\ 0\le \si < \tau_\ve, \\
\vspace{-10pt}\\ 
\displaystyle
\left\{\int_{0}^{\pi}e^{-\si\cos\te} d\mu^0(\te)\right\}^{-1}\ &\mbox{ if }\ \si\ge \tau_\ve.
\end{cases}
\end{equation*}
If $p=\infty$, we set $U^\ve(\si)=e^{-\si}$ and 
\begin{equation*}
V^\ve(\si)=
\begin{cases}
\displaystyle
\frac{\cosh(\tau_\ve-\si)}{\cosh\tau_\ve}\ &\mbox{ if } \ 0\le \si < \tau_\ve, \\
\vspace{-8pt}\\
1/\cosh\si\ &\mbox{ if }\ \si\ge \tau_\ve.
\end{cases}
\end{equation*}
\par
Then, we have that
\begin{equation}
\label{eq:U-V-barriers}
U^\ve\left(\frac{d_\Ga(x)}{\ve/\sqrt{p'}}\right)\le 
u^\ve(x)\le
V^\ve\left(\frac{d_\Ga(x)}{\ve/\sqrt{p'}}\right),
\end{equation}
for any $x\in\ol\Om$.
\end{lem}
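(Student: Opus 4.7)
The plan is to replace the crude barriers of Section~\ref{sec:elliptic barriers} by ones built on balls of radius $r_e$ that actually touch $\Ga$. Up to shrinking $r_i$ in \eqref{uniform-balls-condition}, one may assume $r_i\ge r_e$, so that at every $y\in\Ga$ both an interior ball $B_{r_e}(z_i)\subset\Om$ and an exterior ball $B_{r_e}(z_e)\subset\RR^N\setminus\ol\Om$ tangent to $\Ga$ at $y$ are available, with $z_i=y+r_e\nu_i(y)$ and $z_e=y-r_e\nu_i(y)$, where $\nu_i(y)$ is the interior unit normal. For each $x\in\Om$ I would fix a point $y\in\Ga$ with $|x-y|=d_\Ga(x)$; then $x$ lies on the inward normal line at $y$, hence on the common axis through $z_i$ and $z_e$, so that $|x-z_e|=d_\Ga(x)+r_e$ and $|x-z_i|=|r_e-d_\Ga(x)|$. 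These two identities are the only geometric ingredients used.

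For the lower bound, let $w^\ve$ be the bounded solution of \eqref{G-elliptic}-\eqref{elliptic-boundary} on $\RR^N\setminus\ol{B}_{r_e}(z_e)$, given explicitly by Lemma~\ref{th:exterior elliptic solution}. Since $\Om\subset\RR^N\setminus\ol{B}_{r_e}(z_e)$ and $w^\ve$ is radially decreasing from $z_e$, it satisfies $w^\ve\le 1=u^\ve$ on $\Ga$; Corollary~\ref{cor:elliptic comparison} then yields $w^\ve\le u^\ve$ on $\ol\Om$. Evaluating at $x$, substituting $|x-z_e|=d_\Ga(x)+r_e$ in \eqref{solution-exterior}, and pulling the factor $e^{\tau_\ve}$ out of both numerator and denominator, I recognise the right-hand side as $U^\ve(\sqrt{p'}\,d_\Ga(x)/\ve)$. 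The case $p=\infty$ reduces directly to $w^\ve(x)=e^{-d_\Ga(x)/\ve}$.

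For the upper bound, I would split according to whether $d_\Ga(x)<r_e$ (i.e.\ $\sqrt{p'}d_\Ga(x)/\ve<\tau_\ve$) or $d_\Ga(x)\ge r_e$. In the first regime, use the interior ball $B_{r_e}(z_i)\subset\Om$ tangent to $\Ga$ at $y$: if $\tilde w^\ve$ denotes the solution of \eqref{G-elliptic}-\eqref{elliptic-boundary} in $B_{r_e}(z_i)$ provided by Lemma~\ref{th:ball elliptic solution}, then $u^\ve\le 1=\tilde w^\ve$ on $\pa B_{r_e}(z_i)$ (an immediate consequence of comparing $u^\ve$ to the constant~$1$), so comparison gives $u^\ve\le\tilde w^\ve$ inside. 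Evaluating at $x$ with $|x-z_i|=r_e-d_\Ga(x)$, changing variables $\te\mapsto\pi-\te$ in the numerator of \eqref{ball solution formula}, and normalizing to the probability measure $d\mu^{\tau_\ve}$ shows that $\tilde w^\ve(x)$ coincides with the first branch of $V^\ve$. In the second regime $d_\Ga(x)\ge r_e$, the concentric ball $B_{d_\Ga(x)}(x)$ is itself contained in $\Om$; the same comparison applied to the solution on that ball, evaluated at its center, produces the second branch of $V^\ve$ after the same $\te\mapsto\pi-\te$ symmetrization. The case $p=\infty$ is analogous, with $\cosh$ replacing the Bessel-type integrals.

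The only technical work is matching the ratios coming from \eqref{ball solution formula}-\eqref{solution-exterior} with the probability measures $d\mu^{\tau_\ve},d\nu^{\tau_\ve}$ appearing in $V^\ve,U^\ve$; this reduces to elementary algebra, namely the identity $\tau_\ve\pm(\tau_\ve-\sqrt{p'}d_\Ga(x)/\ve)=\sqrt{p'}(r_e\pm(r_e-d_\Ga(x)))/\ve$ together with the cancellation of the common factor $e^{\pm\tau_\ve}$ between numerator and denominator. The main conceptual point is merely the choice of the appropriate ball barrier in each regime; no delicate PDE estimate is required beyond the comparison principle already used throughout the chapter.
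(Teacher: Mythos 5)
Your proof takes essentially the same route as the paper: lower bound via the exterior tangent ball, upper bound split into the near-boundary regime (interior tangent ball) and the far regime (inscribed ball $B_{d_\Ga(x)}(x)$ centered at $x$), followed by algebraic matching with the probability measures $d\nu^{\tau_\ve},d\mu^{\tau_\ve}$. You are actually slightly more careful than the paper on one point: the paper applies the comparison in the near-boundary regime to a ball of radius $r_i$ but then identifies the result with $V^\ve$ defined through $\tau_\ve=\sqrt{p'}r_e/\ve$, which only matches when the two radii agree; your choice to use the same radius $r_e$ on both sides of $\Ga$ (which is WLOG, since both uniform ball conditions survive shrinking the radius) resolves this. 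Two small slips in your wording: (i) to guarantee that interior balls of radius $r_e$ exist you should shrink $r_e$ (not $r_i$) so as to get $r_e\le r_i$; (ii) the $\te\mapsto\pi-\te$ substitution is a symmetry that leaves the integrals in \eqref{ball solution formula} invariant, so invoking it "in the numerator" only is not quite meaningful, but this does not affect the identification of $\tilde w^\ve(x)$ with $V^\ve(\si)$.
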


\begin{proof}
  Let $p\in(1,\infty)$. For any $x\in\Om$ we can consider $y\in \Ga$ such that $|x-y|=d_\Ga(x)$. From the assumptions on $\Om$ there exists $z_e\in\RR^N\setminus\ol\Om$ such that \eqref{uniform-balls-condition} holds for $y$. As seen in the proof of Lemma \ref{elliptic-control-from-below}, by using the comparison principle and the explicit expression \eqref{solution-exterior}, we obtain
\begin{equation*}
u^\ve(x)\ge\frac{\int_0^\infty e^{-\sqrt{p'} |x-z_e|/\ve\,\cosh\te}(\sinh\te)^\al\,d\te}{\int_{0}^{\infty}e^{-\sqrt{p'} r_e/\ve\,\cosh\te}(\sinh\te)^\al\,d\te}.
\end{equation*}
Thus, the fact that $|x-z_e|=d_\Ga(x)+r_e$ gives the first inequality in \eqref{eq:U-V-barriers}, by recalling the definition of $U^\ve$.
\par 
To obtain the second inequality in \eqref{eq:U-V-barriers} we proceed differently whether $x\in\Om_{r_i}$ or not. Indeed, if $x\in\Om_{r_i}$, there exists $z_i\in\Om$ such that \eqref{uniform-balls-condition} holds for some $y\in\Ga$ and $x\in B_{r_i}(z_i)$; moreover, since $\pa B_{d_\Ga(x)}(x)\cap \pa B_{r_i}(z_i)=\{y\}$, we observe that $x$ lies in the segment joining $y$ to $z_i$, and hence $|x-z_i|=r_i-d_\Ga(x)$. Again, by using the comparison principle and the expression in \eqref{ball solution formula}, we get that 
\begin{equation*}
u^\ve(x)\le
\frac{\int_{0}^{\pi}e^{\sqrt{p'}\,\cos\te\,\frac{|x-z_i|}{\ve}}(\sin\te)^\al\,d\te}{\int_{0}^{\pi}e^{\sqrt{p'}\,\cos\te\,\frac{r_i}{\ve}}(\sin\te)^\al\,d\te},
\end{equation*}
that, by using the definition of $V^\ve$ and the fact that $|x-z_i|=r_i-d_\Ga(x)$, leads to the second inequality in \eqref{eq:U-V-barriers}.
\par
If $x\in\Om\setminus\ol\Om_{r_i}$, we just note that the expression of $V^\ve$ was already obtained in Lemma \ref{elliptic-control-from-above}. 
\par
The case $p=\infty$ can be treated with similar arguments. 
\end{proof}

\section{Asymptotics for $q$-means}
\label{sec:second-order}




Before going on, we recall from \cite{IMW} some preliminary facts about $q$-means and a geometrical lemma from \cite{MS-PRSE}.

For any continuous function $u$ there exists an unique $\mu$ satisfying \eqref{eq:def-qmean} (see \cite[Theorem 2.1]{IMW}). For $1\leq q < \infty$, $\mu$ can be characterized by the equation
\begin{equation*}
\int_B | u(z) - \mu |^{q-2} \left[ u(z) - \mu \right]\, dz =0.
\end{equation*}
This equation is equivalent to 
\begin{equation}
\label{eq:characterization-qmean}
\int_B \left[ u(z) - \mu\right]_+^{q-1}\,dz = \int_B \left[\mu - u(z) \right]_+^{q-1}\,dz,
\end{equation}
where $[s]_+=\max\{0,s\}$, for $s\in\RR$.
\par
The $q$-mean is monotonically increasing with respect to $u$, in the sense that 
\begin{equation}
\label{eq:monotonicty of qmeans}
\mu^u \leq \mu^v \ \mbox{ if } \ u \leq v \ \mbox{ in } \ B. 
\end{equation}
where, $\mu^u$ and $\mu^v$ are respectively the $q$-mean of $u$ and of $v$.

\medskip

The next lemma is a version of \cite[Lemma 2.1]{MS-PRSE} slightly adapted to our notations. For the reader's convenience, we also report its proof. We recall that by $\Pi_\Ga$ we mean the function in \eqref{eq:Pi-gamma}.

\begin{lem}
\label{lem:geometric-asymptotics}
Let $x\in\Om$ and assume that, for $R>0$, there exists $y_x\in\Ga$ such that $\ol{B_R(x)}\cap(\RR^N\setminus\Om)=\{y_x\}$ and that $\ka_j(y_x)<1/R$ for $j=1,\dots,N-1$. Set $\Ga_s=\{y\in \Om:d_\Ga(y)=s\}$, for $s>0$.
\par 
Then, it holds that
$$
\lim_{s\to 0^+}s^{-\frac{N-1}{2}}\cH_{N-1}(\Ga_s\cap B_R(x))=\frac{\om_{N-1}\,(2R)^{\frac{N-1}{2}}}{(N-1)\sqrt{\Pi_\Ga(y_x)}}, 
$$
where $\cH_{N-1}$ denotes $(N-1)$-dimensional Hausdorff measure and $\om_{N-1}$ is the surface area of a unit sphere in $\RR^{N-1}$.
\end{lem}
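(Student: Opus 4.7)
The plan is to analyze $\Ga_s\cap B_R(x)$ locally near $y_x$ by parametrizing the parallel surface $\Ga_s$ via the inward normal map. After a rigid motion, we may assume $y_x=0$, that the inward unit normal to $\Ga$ at $y_x$ is $e_N$, and $x=R\,e_N$; we also choose the principal curvature directions as the coordinate axes in $\RR^{N-1}$. Since $\Ga$ is $C^2$, locally $\Ga=\{(y',\phi(y')):y'\in U\}$ with $\phi(0)=0$, $\na\phi(0)=0$, and $\na^2\phi(0)=\diag(\ka_1,\dots,\ka_{N-1})$, where $\ka_j=\ka_j(y_x)$. For $s>0$ small, the parallel-surface map $\Theta_s(y)=y+s\,\nu(y)$, with $\nu$ the inward unit normal on $\Ga$, is a $C^1$-diffeomorphism from a neighborhood of $0$ in $\Ga$ onto a neighborhood of $s\,e_N$ in $\Ga_s$, with tangential Jacobian $J_{\Theta_s}(y)=\prod_{j=1}^{N-1}(1-s\,\ka_j(y))$.

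First I would identify the ``footprint'' $\Sigma_s\subset\Ga$ such that $\Theta_s(\Sigma_s)=\Ga_s\cap B_R(x)$. A Taylor expansion of $|y+s\,\nu(y)-R\,e_N|^2$ at the origin, exploiting $\phi(y')=\tfrac12\sum_j\ka_j\,y_j^2+o(|y'|^2)$, $|\na\phi(y')|^2=O(|y'|^2)$, and $\sqrt{1+|\na\phi|^2}=1+O(|y'|^2)$, reduces the condition $|y+s\,\nu(y)-R\,e_N|^2<R^2$ to
\begin{equation*}
\sum_{j=1}^{N-1}(1-R\,\ka_j)\,y_j^2 < 2Rs + \rho(y',s),
\end{equation*}
where $\rho(y',s)=o(|y'|^2)+O(s\,|y'|^2)+O(s^2)$. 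Since $\ka_j<1/R$, the left-hand quadratic form is positive definite, so for each fixed $\eta\in(0,1)$ and all sufficiently small $s>0$, $\Sigma_s$ (read in the $y'$ chart) is sandwiched between the ellipsoids
\begin{equation*}
E_s^\pm=\Bigl\{y'\in\RR^{N-1}:\sum_{j=1}^{N-1}(1-R\,\ka_j)\,y_j^2<2Rs\,(1\pm\eta)\Bigr\}.
\end{equation*}
Each $E_s^\pm$ has semi-axes $\sqrt{2Rs(1\pm\eta)/(1-R\,\ka_j)}$ and therefore $(N-1)$-dimensional Lebesgue measure
\begin{equation*}
|E_s^\pm|=\frac{\om_{N-1}}{N-1}\,(2Rs)^{\frac{N-1}{2}}\,(1\pm\eta)^{\frac{N-1}{2}}\,\Pi_\Ga(y_x)^{-1/2}.
\end{equation*}

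To conclude, I would combine this with the parallel-surface area formula
\begin{equation*}
\cH_{N-1}(\Ga_s\cap B_R(x))=\int_{\Sigma_s}\prod_{j=1}^{N-1}(1-s\,\ka_j(y))\,d\cH_{N-1}(y),
\end{equation*}
and the fact that $d\cH_{N-1}(y)=\sqrt{1+|\na\phi(y')|^2}\,dy'=(1+o(1))\,dy'$ on $\Sigma_s$ (since $|y'|^2=O(s)$ there), together with $\prod_{j=1}^{N-1}(1-s\,\ka_j(y))\to 1$ uniformly on $\Sigma_s$ by continuity of the $\ka_j$ and the fact that $\Sigma_s$ shrinks to $\{0\}$. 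These facts yield $\cH_{N-1}(\Ga_s\cap B_R(x))=(1+o(1))\,|\Sigma_s|$ as $s\to 0^+$; combined with the sandwich $|E_s^-|\le|\Sigma_s|\le|E_s^+|$ and sending $\eta\to 0^+$, the claimed limit $\om_{N-1}(2R)^{\frac{N-1}{2}}/[(N-1)\sqrt{\Pi_\Ga(y_x)}]$ follows at once. The main technical obstacle is controlling the remainder $\rho(y',s)$ well enough to obtain the explicit sandwich $E_s^-\subset\Sigma_s\subset E_s^+$ uniformly for small $s$; this is exactly where the strict inequality $\ka_j(y_x)<1/R$ is essential, as it keeps the leading quadratic form uniformly coercive and lets it absorb perturbations of order $o(|y'|^2)+O(s)$.
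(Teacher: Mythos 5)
Your proposal is correct and follows essentially the same strategy as the paper's proof: set up principal coordinates at $y_x$, Taylor-expand to show that the relevant set in $\RR^{N-1}$ is sandwiched between ellipsoids whose semi-axes are $\sqrt{2Rs/(1-R\ka_j(y_x))}$ up to an arbitrarily small perturbation, and conclude via the explicit ellipsoid-volume formula. The only real difference is the device used to reduce $\cH_{N-1}(\Ga_s\cap B_R(x))$ to a Lebesgue measure in $\RR^{N-1}$: the paper represents $\Ga_s\cap B_R(x)$ as a graph $z_N=\psi(z')$ over its orthogonal projection $A_s\subset\{z_N=0\}$ and uses the area factor $\sqrt{1+|\na_{z'}\psi|^2}=1/\pa_{z_N}d_\Ga\to 1$, whereas you pull back by the normal map $\Theta_s$ to a footprint $\Sigma_s\subset\Ga$ and use the parallel-surface Jacobian $\prod_{j}(1-s\,\ka_j(y))$ together with the chart factor $\sqrt{1+|\na\phi|^2}$; both correction factors tend to $1$ uniformly because $\Ga_s\cap B_R(x)$ shrinks to $\{y_x\}$, so the two parametrizations are interchangeable.
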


\begin{proof}[{Proof from \cite{MS-PRSE}}]
Without loss of generality, we can suppose that we are working with a coordinate system $\{z_1,\dots,z_N\}$, such that $y_x=0$, the tangent plane to $\Ga$ at $y_x$ coincides with the plane $\{z_N=0\}$ and $x=(0,\dots,0,R)$. We can also suppose that $z_1,\dots,z_{N-1}$ are chosen such that
\begin{eqnarray}
\label{eq:distance}
d_\Ga(z)= z_N - \frac1{2}\sum_{j=1}^{N-1} \ka_j(y_x) z_j^2 + o(|z|^2)
\\
\label{eq:distance derivative}
\frac{\pa d_\Ga}{\pa z_N}(z)=1 + o(|z|).
\end{eqnarray}
Note that, with these choices, $B_R(x)$ is represented by the inequality $|z'|^2+(z_N-R)^2 < R^2$, where $z'=(z_1,\dots,z_{N-1})$.
Hence, near the origin, $\pa B_R(x)$ is represented by
\begin{equation}
\label{eq:boundary of B}
z_N = \frac1{2}|z'|^2+O(|z'|^3).
\end{equation}
\par
Combining \eqref{eq:distance} with \eqref{eq:boundary of B}, gives
\begin{equation}
\label{eq:new distance}
d_\Ga(z)= \frac1{2}\sum_{j=1}^{N-1}\left(\frac1{R} - \ka_j(y_x)\right) z_j^2 + o(|z'|^2)\ \mbox{ for } \ z\in B_R \cap \pa B_R(x).
\end{equation}
Since $\ol{B}_R(x)\cap \left(\RR^N\setminus \Om \right)=\{0\}$, for every $\ve > 0$, there exists $s_\ve>0$ such that
\begin{equation}
\label{eq:containing}
\Ga_s \cap B_R(x) \subseteq B_\ve \ \mbox{ if } \ 0<s<s_\ve.
\end{equation}
Hence, from \eqref{eq:distance derivative}, if $\ve > 0$ is sufficiently small and $0<s<s_\ve$, $\Ga_s \cap B_R(x)$ is represented by the graph of a smooth function $z_N=\psi(z')$. Differentiating $d_\Ga(z',\psi(z'))=s$ with respect to $z_j$ yields
$$
d_{z_N} \na_{z'} \psi + \na_{z'} d = 0,
$$
which together with $|\na d_\Ga|=1$ implies that
\begin{equation}
\label{eq:last}
\sqrt{1 + |\na_{z'}\psi|^2} =  1/d_{z_N}.
\end{equation}
\par
Projecting $\Ga_s\cap B_R(x)$ orthogonally on $\{z_N = 0\}$ yields a domain $A_s\subseteq \RR^{N-1}$. Let $\eta > 0$ be sufficiently small. From \eqref{eq:new distance} and \eqref{eq:containing}, there exists $\ve_0>0$ such that, for every $0<s<s_{\ve_0}$, we have
\begin{equation}
\label{eq:ellipsoids control}
E_s^+ \subseteq A_s \subseteq E_s^-
\end{equation}
where 
\begin{equation*}
\label{eq:ellipsoids}
E_s^{\pm} = \left\{ z'\in\RR^{N-1}: \frac1{2} \sum_{j=1}^{N-1} \left(\frac1{R} - \ka_j(y_x) \pm \eta\right) z_j^2 < s\right\}.
\end{equation*}
Moreover, combining \eqref{eq:distance derivative} and \eqref{eq:last} yields
\begin{equation}
\label{eq:inequality for psi}
1 \leq \sqrt{1 + |\na_{z'}\psi|^2} \leq 1 + \eta,
\end{equation}
for very $0<s<s_{\ve_0}$. Hence, it follows from  \eqref{eq:ellipsoids control} and \eqref{eq:inequality for psi} that
\begin{equation}
\label{eq:integral over ellipsoids control}
\int_{E_s^+}1 \,dz' \leq \cH_{N-1}\left(\Ga_s\cap B_R(x)\right) \leq \int_{E_s^-}(1+\eta)\,dz',
\end{equation}
for every $0<s<s_{\ve_0}$, since
$$
\cH_{N-1}\left(\Ga_s\cap B_R(x) \right) = \int_{A_s} \sqrt{1 + |\na_{z'}\psi|^2}\,dz'.
$$
Hence, from \eqref{eq:integral over ellipsoids control} we see that
\begin{multline*}
\frac{\om_{N-1}\,2^{\frac{N-1}{2}}}{(N-1)}\left\{\prod_{j=1}^{N-1}\left[\frac1{R} - \ka_j(y_x) +\eta \right] \right\}^{-1/2}
\leq
\\
s^{-\frac{N-1}{2}} \cH_{N-1}\left(\Ga_s\cap B_R(x) \right)
\leq
\\
\frac{\om_{N-1}\,2^{\frac{N-1}{2}}}{(N-1)}\left\{\prod_{j=1}^{N-1}\left[\frac1{R} - \ka_j(y_x) -\eta \right] \right\}^{-1/2}
\end{multline*}
for every $0<s<s_{\ve_0}$. Since $\eta > 0$ is arbitrarily small, we conclude the proof.
\end{proof}

\subsection{Short-time asymptotics for $q$-means}
\label{ssec:short-time q-means}


\begin{lem}[{Asymptotics for the $q$-mean of a barrier, \cite[Lemma 3.4]{BM-JMPA}}]
\label{lem:JMPA-barriers-qmean}
Set $1<q<\infty$, let $x\in\Om$, and assume that, for $R>0$, there exists a point $y_x\in\Ga$ such that $\ol{B_R(x)}\cap(\RR^N\setminus\Om)=\{y_x\}$ and $\ka_j(y_x)<1/R$ for $j=1,\dots,N-1$.
\par
Let $\xi, \eta:(0,\infty)\to(0,\infty)$ be two functions of time such that 
$\xi(t)$ is positive in $(0,\infty)$, and
$$
\lim_{t\to 0^+} \xi(t)=\lim_{t\to 0^+} \eta(t)=0.
$$ 
\par
For a non-negative, decreasing and continuous function $f$ on $\RR$ such that
$$
\int_0^\infty f(\si)^{q-1} \si^\frac{N-1}{2} d\si<\infty,
$$
set
$$
w(z,t)=f\left(\frac{d_\Ga(z)}{\xi(t)}+\eta(t)\right) \ \mbox{ for } \ (z,t)\in\ol{\Om}\times(0,\infty).
$$
\par
If $\mu_q^w(x,t)$ is the $q$-mean of $w\left(\cdot, t\right)$ on $B_R(x)$, 
then the following formula holds:
\begin{equation}
\label{asymptotics-p-mean-w}
\lim_{t\to 0^+} \left(\frac{R}{\xi(t)}\right)^\frac{N+1}{2(q-1)}\!\!\!\!\mu_q^w(x,t)=
\left\{\frac{2^{-\frac{N+1}{2}}N!\,\int_0^\infty f(\si)^{q-1} \si^\frac{N-1}{2} d\si}{\Ga\left( \frac{N+1}{2}\right)^2\sqrt{\Pi_\Ga(y_x)}}\,\right\}^\frac1{q-1} .
\end{equation}
\end{lem}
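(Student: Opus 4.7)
My plan is to exploit the characterization \eqref{eq:characterization-qmean} of $\mu := \mu_q^w(x,t)$, the coarea formula applied with $d_\Ga$, and Lemma \ref{lem:geometric-asymptotics}. Setting $\phi(s) := \cH_{N-1}(\Ga_s \cap B_R(x))$, $D := \max_{\ol{B_R(x)}} d_\Ga$, $I_0 := \int_0^\infty f(\sigma)^{q-1}\sigma^{(N-1)/2}\,d\sigma$, and $K_0 := \om_{N-1}(2R)^{(N-1)/2}/[(N-1)\sqrt{\Pi_\Ga(y_x)}]$, I first rewrite the defining equation $\int_{B_R(x)}[w-\mu]_+^{q-1}\,dz = \int_{B_R(x)}[\mu-w]_+^{q-1}\,dz$ using that $w(\cdot,t)$ depends on the point only through $d_\Ga$ and $|\nabla d_\Ga|=1$ a.e., which reduces it to
$$
\int_0^D[f(s/\xi+\eta)-\mu]_+^{q-1}\phi(s)\,ds = \int_0^D[\mu-f(s/\xi+\eta)]_+^{q-1}\phi(s)\,ds.
$$
Lemma \ref{lem:geometric-asymptotics} gives $\phi(s)\sim K_0 s^{(N-1)/2}$ as $s\to 0^+$, so I can fix $s_0>0$ such that $\phi(s)\le 2K_0 s^{(N-1)/2}$ on $[0,s_0]$.

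As a preliminary, I would verify $\mu(t)\to 0$: the $L^q$-minimization property gives $|\mu|\le 2\|w\|_{L^q(B_R(x))}/|B_R(x)|^{1/q}$, while using $f\le f(0)<\infty$, the coarea estimate, and the change of variable $\sigma = s/\xi+\eta$ yields $\int_{B_R(x)} w^q\,dz = O(\xi(t)^{(N+1)/2})$, so $\|w\|_{L^q}\to 0$.

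Next, I would divide the reduced equation by $\xi(t)^{(N+1)/2}$ and change variable to $\sigma = s/\xi+\eta$. On the left-hand side, the weight $\phi(\xi(\sigma-\eta))/\xi^{(N-1)/2}$ is dominated by $2K_0(\sigma-\eta)^{(N-1)/2}$ on the range where $\xi(\sigma-\eta)\le s_0$ and converges pointwise to $K_0\sigma^{(N-1)/2}$; the contribution from $\xi(\sigma-\eta)>s_0$ is $o(1)$ since $f(\sigma)\to 0$ uniformly there and $\int_{s_0}^D\phi$ is finite. Dominated convergence, combined with $\mu\to 0$ and $I_0<\infty$, then yields LHS $\to K_0 I_0$. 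For the right-hand side, the trivial bound $[\mu-f]_+^{q-1}\le\mu^{q-1}$ and $\int_0^D\phi = |B_R(x)|$ give RHS $\le \mu^{q-1}|B_R(x)|/\xi^{(N+1)/2}$, which combined with LHS $=$ RHS forces the a priori lower bound $\mu(t)^{q-1}\ge K_0 I_0\,\xi^{(N+1)/2}/|B_R(x)|+o(\xi^{(N+1)/2})$. To close the sandwich I fix $T>1$ and split the right-hand integral at $\sigma^\ast := f^{-1}(\mu/T)$: on $[\sigma^\ast, D/\xi+\eta]$ one has $[\mu-f]_+^{q-1}\ge(1-1/T)^{q-1}\mu^{q-1}$, and the lower bound on $\mu$ together with the decay of $f$ forced by $I_0<\infty$ ensures $\xi\sigma^\ast\to 0$, so that the contribution of this piece equals $(1-1/T)^{q-1}\mu^{q-1}|B_R(x)|/\xi^{(N+1)/2}(1-o(1))$. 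Letting $T\to\infty$ delivers
$$
\left(\frac{R}{\xi(t)}\right)^{\!\frac{N+1}{2(q-1)}}\!\mu(t) \to \left(\frac{K_0 I_0\, R^{(N+1)/2}}{|B_R(x)|}\right)^{\!1/(q-1)}.
$$

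The final step is purely algebraic: substituting $\om_{N-1}=2\pi^{(N-1)/2}/\Ga((N-1)/2)$ and $|B_R(x)|=R^N\pi^{N/2}/\Ga(N/2+1)$, and using the Legendre duplication formula (which implies $\Ga((N+1)/2) = \tfrac{N-1}{2}\Ga((N-1)/2)$), I verify that the constant above coincides with the one claimed in \eqref{asymptotics-p-mean-w}. I expect the technically most delicate point to be the right-hand side analysis: one must first extract the crude lower bound $\mu(t)\gtrsim \xi(t)^{(N+1)/(2(q-1))}$ from the trivial RHS bound, and then bootstrap via the split at $\sigma^\ast$ to pin down the exact leading-order constant.
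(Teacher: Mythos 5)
Your proposal is correct and follows the paper's overall strategy: characterize $\mu$ by \eqref{eq:characterization-qmean}, apply the coarea formula and the rescaling $\si=s/\xi+\eta$, invoke Lemma \ref{lem:geometric-asymptotics} with dominated convergence for the left-hand side, and exploit the a priori lower bound $\mu(t)\gtrsim\xi(t)^{\frac{N+1}{2(q-1)}}$ to resolve the right-hand side. Where you genuinely diverge is in the bootstrap for $\mu^{1-q}\int_{B_R(x)}[\mu-w]_+^{q-1}\,dz$. The paper feeds the a priori bound into a tail estimate on $\int_{\be(s,t)/2}^\infty f^{q-1}\si^{\frac{N-1}{2}}\,d\si$ (with $\be(s,t)=s/\xi+\eta$), deduces the \emph{pointwise} limit $f(s/\xi+\eta)/\mu\to 0$ for a.e.\ $s>0$, and finishes with one more application of dominated convergence. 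You instead truncate at $\si^*=f^{-1}(\mu/T)$ for a free parameter $T>1$ and sandwich, the crucial observation being that $I_0<\infty$ together with monotonicity of $f$ forces $f(\si)=o\bigl(\si^{-\frac{N+1}{2(q-1)}}\bigr)$, which makes the a priori bound on $\mu$ exactly strong enough to give $\xi\si^*\to 0$. Both bootstraps are sound and of comparable effort; the paper's is marginally more streamlined (one DCT), while yours is more elementary in that it never appeals to a pointwise a.e.\ limit. You also verify $\mu(t)\to 0$ from $\|w(\cdot,t)\|_{L^q(B_R(x))}\to 0$, a step the paper states without comment. One small correction: the identity $\Ga\left(\tfrac{N+1}{2}\right)=\tfrac{N-1}{2}\,\Ga\left(\tfrac{N-1}{2}\right)$ is the $\Ga$-functional equation, not the Legendre duplication formula; the duplication formula is what you need one step later to collapse $\Ga\left(\tfrac{N}{2}+1\right)\Ga\left(\tfrac{N+1}{2}\right)$ into $2^{-N}\sqrt{\pi}\,N!$.
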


\begin{proof}
We know from \eqref{eq:characterization-qmean} that $\mu(t)=\mu_q^w(x,t)$ is the unique root of the following equation
\begin{equation}
\label{i-characterization-mu}
\int_{B_R(x)}[w(z,t)-\mu(t)]_+^{q-1} dz=\int_{B_R(x)}[\mu(t)-w(z,t)]_+^{q-1} dz.
\end{equation}
\par
Firstly, we compute the short-time behavior of the left-hand side of \eqref{i-characterization-mu}. Let $\Ga_s=\{z\in B_R(x):d_\Ga(z)=s\}$. By the co-area formula, we get that
\begin{equation*}
  \int_{B_R(x)}[w(z,t)-\mu(t)]_+^{q-1} dz=
\int_{0}^{2R}\left[f\left(\frac{s}{\xi(t)}+\eta(t)\right)-\mu(t)\right]^{q-1}_+\cH_{N-1}\left(\Ga_s\right)\,ds.
\end{equation*}
\par
By the change of variable $s=\xi(t)\left[\si-\eta(t)\right]$, we obtain that
\begin{equation*}
  \int_{B_R(x)}[w(z,t)-\mu(t)]_+^{q-1} dz=
\xi(t)\int_{\eta(t)}^{\be(t)}\left[f\left(\si\right)-\mu(t)\right]^{q-1}_+\cH_{N-1}\left(\Ga_{\xi(t)\left[\si-\eta(t)\right]}\right)\,d\si,
\end{equation*}
where we set $\be(t)=\frac{2R}{\xi(t)}+\eta(t)$. 
\par 
Hence,
\begin{multline*}
  \xi(t)^{-\frac{N-1}{2}}\int_{B_R(x)}[w(z,t)-\mu(t)]_+^{q-1} dz=
  \\
\int_{\eta(t)}^{\be(t)}\frac{\cH_{N-1}\left(\Ga_{\xi(t)\left[\si-\eta(t)\right]}\right)}{\left\{\xi(t)\left[\si-\eta(t)\right]\right\}^{\frac{N-1}{2}}}\left[\si-\eta(t)\right]^{\frac{N-1}{2}}\left\{f(\si)-\mu(t)\right\}^{q-1}\,d\si.
\end{multline*}
Now, as $t\to 0^+$ we have that $\eta(t), \xi(t), \mu(t)\to 0$, $\be(t)\to \infty$ and that $\xi(t)\left[\si-\eta(t)\right]\to 0$ for almost every $\si\ge 0$. Thus, we can infer that
\begin{multline}
\label{eq:left-hand side}
  \lim_{t\to 0^+}
\xi(t)\int_{B_R(x)}[w(z,t)-\mu(t)]_+^{q-1} dz=
\frac{\om_{N-1}(2R)^{\frac{N-1}{2}}}{(N-1)\sqrt{\Pi_\Ga(y_x)}}\int_{0}^{\infty}f(\si)^{q-1}\si^{\frac{N-1}{2}}\,d\si,
\end{multline}
by Lemma \ref{lem:geometric-asymptotics} and an application of the dominated convergence theorem, as an inspection of the integrand function reveals.
\par 
Secondly, we treat the short-time behavior of the right-hand side of \eqref{i-characterization-mu}. By again performing the co-area formula and after some manipulations, we have that
\begin{multline}
\label{eq:right-hand side}
  \int_{B_R(x)}\left [\mu(t)-w(z,t)\right]^{q-1}_+\,dz=
\\
\mu(t)^{q-1}\int_{0}^{2R}\left[1-f\left(\frac{s}{\xi(t)}+\eta(t)\right)\Big/\mu(t)\right]^{q-1}_+\cH_{N-1}\left(\Ga_s\right)\,ds
\end{multline}
which, on one hand,  leads to 
\begin{equation*}
    \int_{B_R(x)}\left [\mu(t)-w(z,t)\right]^{q-1}_+\,dz\le \mu(t)^{q-1}|B_R(x)|.
\end{equation*}
Notice in particular that, by using both \eqref{characterization-mu} and \eqref{eq:left-hand side}, the last inequality informs us that
$$
\mu(t)\ge c\,\xi(t)^{\frac{N+1}{2(q-1)}},
$$
for some positive constant $c$. Hence, after setting $\be(s,t)=\frac{s}{\xi(t)}+\eta(t)$, the assumptions on $f$ give the following chain of inequalities:
\begin{multline*}
  \int_{\be(s,t)/2}^{\infty}f(\si)^{q-1}\si^{\frac{N-1}{2}}\,d\si\ge\int_{\be(s,t)/2}^{\be(s,t)}f(\si)^{q-1}\si^{\frac{N-1}{2}}\,d\si\ge\\
\frac{2\left(1-2^{-\frac{N+1}{2}}\right)}{N+1}\frac{f\left(\be(s,t)\right)^{q-1}}{\xi(t)^{\frac{N+1}{2}}}\left[s+\eta(t)\,\xi(t)\right]^{\frac{N+1}{2}}\ge\\
\frac{2\left(1-2^{-\frac{N+1}{2}}\right)}{c(N+1)}\left[f\left(\frac{s}{\xi(t)}+\eta(t)\right)\Big/\mu(t)\right]^{q-1} \left[s+\eta(t)\,\xi(t)\right]^{\frac{N+1}{2}}.
\end{multline*}
Since, for almost every $s\ge 0$, the first term of the chain vanishes as $t\to 0^+$, we have that
\begin{equation*}
  \label{eq:pointwise-right-hand}
  \lim_{t\to 0^+}
\frac{f\left(\frac{s}{\xi(t)}+\eta(t)\right)}{\mu(t)}=0,
\end{equation*}
for almost every $s\ge 0$. Thus, \eqref{eq:right-hand side} gives at once that
\begin{equation}
\label{eq:right-hand side formula}
\lim_{t\to 0^+}   
\mu(t)^{1-q}\int_{B_R(x)}\left [\mu(t)-w(z,t)\right]^{q-1}_+\,dz=|B_R(x)|.
\end{equation}
\par 
Finally, \eqref{characterization-mu}, \eqref{eq:left-hand side} and \eqref{eq:right-hand side formula} tell us that
$$
\mu(t)^{q-1}=\xi(t)^\frac{N+1}{2} \frac{\om_{N-1}\,(2R)^{\frac{N-1}{2}}}{(N-1)\sqrt{\Pi_\Ga(y_x)}}\,\frac{\int_0^\infty f(\si)^{q-1}\,
\si^\frac{N-1}{2} d\si+o(1)}{|B_R(x)|+o(1)},
$$
that gives \eqref{asymptotics-p-mean-w}, after straightforward calculations involving Euler's gamma function.
\end{proof}

\begin{rem}
{\rm
If $q=\infty$, we know that
\begin{equation*}
\mu_\infty^w(x,t)=\frac12\,\left\{ \min_{\ol{B_R(x)}} w(\cdot,t)+\max_{\ol{B_R(x)}} w(\cdot,t)\right\}=
\frac12\,\left[f\left(\frac{\ol{d}}{\xi(t)}+\eta(t)\right)+f(\eta(t))\right],
\end{equation*}
where $\ol{d}$ is positive, being the maximum of $d_\Ga$ on $\ol{B_R(x)}$. Hence, it is easy to compute:
$$
\lim_{t\to 0^+} \mu_\infty^w(x,t)=\frac12\,f(0).
$$
Thus, formula \eqref{asymptotics-p-mean-w} does not extend continuously to the case $q=\infty$.
}
\end{rem}

\begin{thm}[{Short-time asymptotics for $q$-means, \cite[Theorem 3.5]{BM-JMPA}}]
\label{th:JMPA-asymptotics-qmean}
Let $x\in\Om$, and assume that, for $R>0$, there exists a point $y_x\in\Ga$ such that $\ol{B_R(x)}\cap(\RR^N\setminus\Om)=\{y_x\}$ and $\ka_j(y_x)<1/R$ for $j=1,\dots,N-1$.
\par
Set $1<p\leq \infty$ and suppose that $u$ is the bounded (viscosity) solution of \eqref{G-heat}-\eqref{boundary} and, for $1<q\le\infty$, $\mu_q(x,t)$ is the $q$-mean of $u\left(\cdot, t\right)$ on $B_R(x)$.
\par
Then, if $1<q<\infty$, the following formulas hold:
\begin{multline}
\label{asymptotics-p-mean-u}
\lim_{t\to 0^+} \left(\frac{R^2}{t}\right)^\frac{N+1}{4(q-1)}\!\!\!\!\mu_q(x,t)=
\\
\left\{\frac{N!\,\int_0^\infty \Erfc(\si)^{q-1} \si^\frac{N-1}{2} d\si}{\,\Ga \left( \frac{N+1}{2} \right)^2}\right\}^\frac1{q-1} \left\{ p'^{\frac{N+1}{2}}\,\Pi_\Gamma(y_x)\right\}^{-\frac{1}{2(q-1)}}, 
\end{multline}
and
$$
\lim_{t\to 0^+} \mu_\infty(x,t)=\frac12.
$$
\end{thm}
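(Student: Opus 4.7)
The plan is to sandwich $u$ between the two explicit barriers provided by Corollary \ref{cor:parabolic-barriers}, pass to $q$-means by the monotonicity property \eqref{eq:monotonicty of qmeans}, and apply the abstract asymptotic formula of Lemma \ref{lem:JMPA-barriers-qmean} to each barrier. Since both barriers will produce the same leading constant, the sandwich identifies the limit of $\mu_q(x,t)$.

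\textbf{Step 1 (sandwich on $B_R(x)$).} Choose $K=\ol{B_R(x)}$ in Corollary \ref{cor:parabolic-barriers}, so that for every $(z,t)\in K\times(0,\infty)$ one has
\begin{equation*}
w_-(z,t):=\Erfc\!\left(\frac{d_\Ga(z)}{\xi(t)}+\eta(t)\right)\le u(z,t)\le \Erfc\!\left(\frac{d_\Ga(z)}{\xi(t)}-\eta(t)\right)=:w_+(z,t),
\end{equation*}
with $\xi(t)=2\sqrt{t/p'}$ and $\eta(t):=\eta_{u,K}(t)=O(\sqrt{t}\log t)\to 0^+$. By the monotonicity \eqref{eq:monotonicty of qmeans} of the $q$-mean, the corresponding $q$-means on $B_R(x)$ satisfy $\mu_q^{w_-}(x,t)\le\mu_q(x,t)\le\mu_q^{w_+}(x,t)$.

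\textbf{Step 2 (applying Lemma \ref{lem:JMPA-barriers-qmean}).} Both $w_\pm$ have the form $f(d_\Ga(z)/\xi(t)+\eta_\pm(t))$ with $f=\Erfc$ (continuous, strictly decreasing) and $\eta_\pm(t)=\pm\eta(t)\to 0$. Gaussian decay of $\Erfc$ guarantees the integrability hypothesis $\int_0^\infty \Erfc(\sigma)^{q-1}\sigma^{(N-1)/2}\,d\sigma<\infty$. For $w_-$, the lemma applies verbatim; for $w_+$ the sign of $\eta$ is reversed, but an inspection of the proof of Lemma \ref{lem:JMPA-barriers-qmean} shows that only $\eta(t)\to 0$ is needed (the change of variable $s=\xi(t)[\sigma-\eta(t)]$, Lemma \ref{lem:geometric-asymptotics}, and dominated convergence go through for either sign of $\eta$). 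Consequently,
\begin{equation*}
\left(\frac{R}{\xi(t)}\right)^{\!\frac{N+1}{2(q-1)}}\!\mu_q^{w_\pm}(x,t)\;\longrightarrow\;\left\{\frac{2^{-\frac{N+1}{2}}N!\,\int_0^\infty\Erfc(\sigma)^{q-1}\sigma^{\frac{N-1}{2}}d\sigma}{\Gamma\!\left(\frac{N+1}{2}\right)^2\sqrt{\Pi_\Ga(y_x)}}\right\}^{\!\frac{1}{q-1}}.
\end{equation*}

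\textbf{Step 3 (bookkeeping of constants).} Substituting $\xi(t)=2\sqrt{t/p'}$ produces
\begin{equation*}
\left(\frac{R^2}{t}\right)^{\!\frac{N+1}{4(q-1)}}\!\!\left(\frac{\xi(t)}{R}\right)^{\!\frac{N+1}{2(q-1)}}=\left(\frac{4}{p'}\right)^{\!\frac{N+1}{4(q-1)}}=2^{\frac{N+1}{2(q-1)}}\,(p')^{-\frac{N+1}{4(q-1)}}.
\end{equation*}
The factor $2^{\frac{N+1}{2(q-1)}}$ cancels against $(2^{-(N+1)/2})^{1/(q-1)}$ inside the bracket from Step 2, while $(p')^{-\frac{N+1}{4(q-1)}}=\{(p')^{(N+1)/2}\}^{-1/(2(q-1))}$ combines with $\Pi_\Ga(y_x)^{-1/(2(q-1))}$ to yield the factor $\{(p')^{(N+1)/2}\Pi_\Ga(y_x)\}^{-1/(2(q-1))}$ displayed in \eqref{asymptotics-p-mean-u}. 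The sandwich from Step 1 then gives the claimed limit for $\mu_q(x,t)$.

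\textbf{Step 4 (the case $q=\infty$).} Here $\mu_\infty(x,t)=\tfrac12\bigl[\min_{\ol{B_R(x)}}u(\cdot,t)+\max_{\ol{B_R(x)}}u(\cdot,t)\bigr]$. Since $y_x\in\ol{B_R(x)}$ with $u(y_x,t)\equiv 1$ and the constant $1$ is a supersolution (Corollary \ref{cor:parabolic comparison}), the maximum equals $1$ for every $t>0$. For the minimum, the strong minimum principle (Remark \ref{minimum principle}) gives $u>0$ in $\Om\times(0,\infty)$, while the pointwise Varadhan formula \eqref{pointwise-limit} applied at $z=x$ yields $u(x,t)\to 0$; hence $0\le\min_{\ol{B_R(x)}}u(\cdot,t)\le u(x,t)\to 0$. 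Therefore $\mu_\infty(x,t)\to 1/2$.

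\textbf{Expected obstacle.} The substantive work is confined to checking that Lemma \ref{lem:JMPA-barriers-qmean} genuinely accommodates the negative perturbation in the upper barrier $w_+$ and to the careful algebraic reduction of the constants in Step 3; both are routine but must be carried out with care so that the exponents of $p'$ and $\Pi_\Ga(y_x)$ come out exactly as in \eqref{asymptotics-p-mean-u}.
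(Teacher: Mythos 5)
Your proposal is correct and follows the same strategy as the paper's proof: sandwich $u$ between the Erfc barriers of Corollary \ref{cor:parabolic-barriers}, pass to $q$-means by the monotonicity property \eqref{eq:monotonicty of qmeans}, and invoke Lemma \ref{lem:JMPA-barriers-qmean} with $f=\Erfc$ and $\xi(t)=2\sqrt{t/p'}$, after which the algebra in Step~3 matches the paper. The only small departure is in the $q=\infty$ case, where you compute $\max$ and $\min$ of $u$ on $\ol{B_R(x)}$ directly from the boundary value at $y_x$ and Theorem \ref{th:parabolic-pointwise}, rather than sandwiching with the remark following Lemma \ref{lem:JMPA-barriers-qmean}; both arguments are valid, and your explicit note that the lemma's proof accommodates a sign-reversed $\eta$ (which the paper uses tacitly by writing $\pm\eta$) is a useful clarification.
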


\begin{proof}
By using \eqref{eq:parabolic-barriers} and \eqref{eq:monotonicty of qmeans}, the limit in \eqref{asymptotics-p-mean-u} will result from Lemma \ref{lem:JMPA-barriers-qmean}, where we choose:
$$
w(x,t)=\Erfc\left(\sqrt{\frac{p'}{4t}}\,d_\Ga(y)\pm\eta(t)\right),
$$
that is we choose $\xi(t)=\sqrt{4t/p'}$ and $\eta(t)$ is given by \eqref{eq:eta-K}, with $K=\ol{B}_R(x)$.
Thus, \eqref{asymptotics-p-mean-u} will follow at once from \eqref{asymptotics-p-mean-w}, where
$f(\si)=\Erfc(\si)$.
\par
By the same argument, we also get the case $q=\infty$, since $f(0)=1$.
 \end{proof}

\begin{rem}{\rm
Notice that
$$
\left\{\int_0^\infty \Erfc(\si)^{q-1} \si^\frac{N-1}{2} d\si\right\}^\frac1{q-1}
$$
can be seen as the $(q-1)$-norm of $\Erfc$ in $(0,\infty)$ with respect to the weighed measure $\si^\frac{N-1}{2} d\si$.
}
\end{rem}

\subsection{Asymptotics for $q$-means in the elliptic case}
\label{ssec:elliptic asymptotics q-means}

%

The next lemma gives the asymptotic formula for $\ve\to 0^+$ for the $q$-mean on $B_R(x)$ of a quite general class of functions, which includes both the barriers $U^\ve$ and $V^\ve$ of Lemma \ref{barriers-for-c2-domain}.

\begin{lem}[{\cite[Lemma 3.3]{BM-AA}}] 
\label{lem:AA-barriers-qmean}
Set $1<q<\infty$. Let $x\in\Om$ and assume that, for $R>0$, there exists $y_x\in\Ga$ such that $\ol{B_R(x)}\cap(\RR^N\setminus\Om)=\{y_x\}$ and that $\ka_j(y_x)<1/R$ for $j=1,\dots,N-1$.
\par
Let $\{\xi_n\}_{n\in\NN}$ and $\{f_n\}_{n\in\NN}$ be sequences such that
\begin{enumerate}[(i)]
\item
$\xi_n>0$ and $\xi_n\to 0$ as $n\to\infty$;
\item 
$f_n:[0,\infty)\to[0,\infty)$ are decreasing functions;
\item
$f_n$ converges to a function $f$ almost everywhere as $n\to\infty$;
\item
it holds that
\begin{equation*}
  \label{integral-condition}
\lim_{n\to\infty}
\int_{0}^{\infty}f_n(\si)^{q-1}\,\si^{\frac{N-1}{2}}\,d\si=\\
\int_{0}^{\infty}f(\si)^{q-1}\,\si^{\frac{N-1}{2}}\,d\si,
\end{equation*}
and the last integral converges.
\end{enumerate}
For some $1<q<\infty$, let $\mu_{q,n}(x)$ be the $q$-mean of $f_n(d_\Ga/\xi_n)$ on $B_R(x)$.
\par
Then we have:
\begin{equation}
\label{eq:q-mean-barrier}
\lim_{n\to\infty}
  \left(\frac{R}{\xi_n}\right)^{\frac{N+1}{2(q-1)}}\mu_{q,n}(x)=\\
\left\{\frac{2^{-\frac{N+1}{2}}N! \int_{0}^{\infty}f(\si)^{q-1}\si^{\frac{N-1}{2}}\,d\si}{\Ga\left(\frac{N+1}{2}\right)^2\, \sqrt{\Pi_\Ga(y_x) }}\right\}^{\frac1{q-1}}.
\end{equation}
\end{lem}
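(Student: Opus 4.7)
The plan is to transplant the argument of Lemma \ref{lem:JMPA-barriers-qmean} to this sequential setting, with $n \to \infty$ replacing $t \to 0^+$ and the varying profile $f_n$ replacing the single profile $f$ (shifted by $\eta(t)$). Write $\mu_n = \mu_{q,n}(x)$ and $w_n(z) = f_n(d_\Ga(z)/\xi_n)$. By the characterization \eqref{eq:characterization-qmean}, the quantity $\mu_n$ is determined by
\[
\int_{B_R(x)}[w_n-\mu_n]_+^{q-1}\,dz=\int_{B_R(x)}[\mu_n-w_n]_+^{q-1}\,dz,
\]
and the goal is to extract the joint asymptotics of both sides. A preliminary step is to note that $\mu_n \to 0$: indeed, $w_n \to 0$ a.e.\ on $B_R(x)$ since $d_\Ga > 0$ there, $\xi_n \to 0$, and $f_n \to f$ a.e.\ with $f(\sigma) \to 0$ as $\sigma \to \infty$ (a consequence of (ii)--(iv)).

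First I would compute the short-range asymptotics of the left-hand side $\mathrm{LHS}_n$. The coarea formula applied on the level sets $\Ga_s = \{z\in B_R(x):d_\Ga(z)=s\}$, followed by the substitution $s=\xi_n\,\sigma$, gives
\[
\xi_n^{-(N+1)/2}\,\mathrm{LHS}_n
=\int_0^{2R/\xi_n}\frac{\cH_{N-1}\bigl(\Ga_{\xi_n\sigma}\bigr)}{(\xi_n\sigma)^{(N-1)/2}}\,\sigma^{(N-1)/2}\,[f_n(\sigma)-\mu_n]_+^{q-1}\,d\sigma.
\]
Lemma \ref{lem:geometric-asymptotics} identifies the pointwise limit of the first factor in the integrand, and since the $f_n$ are decreasing, assumption (iv) combined with a.e.\ convergence in (iii) allows the dominated convergence theorem to pass to the limit (the monotonicity is the key device: on any interval $[a,\infty)$ with $a>0$ fixed, $f_n \mathbf 1_{[a,\infty)}$ is bounded by $f_n(a)$, which stays bounded by assumption; tail control is given by (iv)). This yields
\[
\lim_{n\to\infty}\xi_n^{-(N+1)/2}\,\mathrm{LHS}_n
=\frac{\om_{N-1}(2R)^{(N-1)/2}}{(N-1)\sqrt{\Pi_\Ga(y_x)}}\int_0^\infty f(\sigma)^{q-1}\,\sigma^{(N-1)/2}\,d\sigma.
\]

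Next I would handle the right-hand side $\mathrm{RHS}_n$. The trivial bound $\mathrm{RHS}_n \le \mu_n^{q-1}|B_R(x)|$ combined with the characterization identity and the LHS asymptotics above delivers a lower bound $\mu_n \ge c\,\xi_n^{(N+1)/(2(q-1))}$ for some $c>0$. Using this lower bound together with the monotonicity of $f_n$ on the interval $[\beta/2,\beta]$ with $\beta = s/\xi_n$ (exactly as in the corresponding estimate in the proof of Lemma \ref{lem:JMPA-barriers-qmean}), assumption (iv) forces $f_n(s/\xi_n)/\mu_n \to 0$ for a.e.\ $s\in(0,2R)$. Dominated convergence then gives $\mu_n^{-(q-1)}\,\mathrm{RHS}_n \to |B_R(x)|$. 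Combining this with the LHS asymptotics through the characterization identity yields
\[
\mu_n^{q-1} = \xi_n^{(N+1)/2}\,\frac{\om_{N-1}(2R)^{(N-1)/2}}{(N-1)\sqrt{\Pi_\Ga(y_x)}}\cdot\frac{\int_0^\infty f(\sigma)^{q-1}\sigma^{(N-1)/2}\,d\sigma+o(1)}{|B_R(x)|+o(1)},
\]
and routine rewriting of the constant using $|B_R(x)|=\om_{N-1}R^N/N$ and $\Gamma\!\left(\tfrac{N+1}{2}\right)$ produces \eqref{eq:q-mean-barrier}.

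The main obstacle is the passage to the limit inside $\mathrm{LHS}_n$: since the profile itself varies with $n$, one cannot invoke a fixed dominating function, and the decreasing nature of $f_n$ together with condition (iv) must be exploited carefully (for instance, via a Vitali-type argument) to secure tail equi-integrability in the weighted measure $\sigma^{(N-1)/2}\,d\sigma$. Once this is done, the rest is a direct transposition of the scheme of Lemma \ref{lem:JMPA-barriers-qmean}.
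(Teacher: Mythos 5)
Your proposal follows the same scheme as the paper: coarea formula plus the substitution $s = \xi_n\tau$, the geometric Lemma \ref{lem:geometric-asymptotics} for the pointwise limit of $\cH_{N-1}(\Ga_{\xi_n\tau})/(\xi_n\tau)^{(N-1)/2}$, the trivial bound $\mathrm{RHS}_n \le \mu_n^{q-1}|B_R(x)|$ to extract $\mu_n \ge c\,\xi_n^{(N+1)/(2(q-1))}$, the monotonicity-of-$f_n$ chain to force $f_n(s/\xi_n)/\mu_n \to 0$ a.e., and finally assembling via the characterization \eqref{eq:characterization-qmean}. That is exactly the paper's proof structure.

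The one place you stop short is the ``main obstacle'' you flag at the end: passing to the limit inside $\mathrm{LHS}_n$ when the integrand varies with $n$. You gesture at Vitali/equi-integrability without carrying it out, but the paper handles it directly via the \emph{generalized} dominated convergence theorem (cited as \cite{LL}): if $|g_n|\le h_n$ with $g_n\to g$ a.e., $h_n\to h$ a.e., and $\int h_n\to\int h<\infty$, then $\int g_n\to\int g$. Taking $h_n(\sigma)=C\,f_n(\sigma)^{q-1}\sigma^{(N-1)/2}$, assumption (iv) is precisely the statement $\int h_n\to\int h<\infty$, so the dominating-sequence issue disappears; (iv) was built to be the hypothesis of this theorem, not merely a convergence statement to be reconciled with Vitali afterwards. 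The same generalized-DCT remark also closes your $\mathrm{RHS}_n$ step (showing that the tail integral $\int_{\tau/(2\xi_n)}^\infty f_n^{q-1}\sigma^{(N-1)/2}\,d\sigma\to 0$), which you invoke implicitly. With that replacement your proof is complete and matches the paper's.
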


\begin{proof}
  From \eqref{eq:characterization-qmean}, we know that $\mu_n=\mu_{q,n}(x)$ is the only root of the equation
  \begin{equation}
\label{characterization-mu}
    \int_{B_R(x)}\left[f_n(d_\Ga/\xi_n)-\mu_n\right]_+^{q-1}\,dz=
\int_{B_R(x)}\left[\mu_n-f_n(d_\Ga/\xi_n)\right]_+^{q-1}\,dz,
  \end{equation}
where we mean $[t]_+=\max(0, t)$.
\par
Thus, if we set
$$
\Ga_\si=\{y\in B_R:\, d_\Ga(y)=\si\},
$$
by the co-area formula 
we get that
\begin{equation*}
  \int_{B_R(x)}\left[f_n(d_\Ga/\xi_n)-\mu_n\right]_+^{q-1}\,dz=\\
\int_{0}^{2R}\left[f_n(\si/\xi_n)-\mu_n\right]_+^{q-1}\cH_{N-1}\left(\Ga_\si\right)\,d\si,
\end{equation*}
that, after the change of variable $\si=\xi_n \tau$ and easy manipulations, leads to the formula:
\begin{equation*}
  \int_{B_R(x)}\left[f_n(d_\Ga/\xi_n)-\mu_n\right]_+^{q-1}\,dz=
\xi_n^{\frac{N+1}{2}}
\int_{0}^{2R/\xi_n}\left[f_n(\tau)-\mu_n\right]_+^{q-1}\tau^{\frac{N-1}{2}}\left[\frac{\cH_{N-1}\left(\Ga_{\xi_n\tau}\right)}{\left(\xi_n\tau\right)^{\frac{N-1}{2}}}\right]\,d\tau.
\end{equation*}
\par 
Therefore, since $\mu_n\to 0$ as $n\to\infty$, an inspection of the integrand at the right-hand side, assumptions (i)-(iv), and Lemma \ref{lem:geometric-asymptotics} make it clear that we can apply the generalized dominated convergence theorem (see \cite{LL}) to infer that
\begin{multline}
\label{eq:formula-A-plus}
\lim_{n\to\infty}
\xi_n^{-\frac{N+1}{2}}\int_{B_R(x)}\left[f_n(d_\Ga/\xi_n)-\mu_n\right]_+^{q-1}\,dz=\\
\frac{(2R)^{\frac{N-1}{2}}\om_{N-1}}{(N-1)\sqrt{\Pi_\Ga(y_x)}}\int_{0}^{\infty}f(\si)^{q-1}\si^{\frac{N-1}{2}}\,d\si.
\end{multline}
\par 
Next, by employing again the co-area formula, the right-hand side of \eqref{characterization-mu} can be re-arranged as
\begin{equation*}
\label{eq:A-minus-inequality}
  \int_{B_R(x)} [\mu_n-f_n(d_\Ga/\xi_n)]_+^{q-1}\,dz=
\mu_n^{q-1}\int_{0}^{2R} \left[1-\frac{f_n(\si/\xi_n)}{\mu_n}\right]_+^{q-1}\cH_{N-1}\left(\Ga_\si\right)\,d\si,
\end{equation*}
that leads to the formula
\begin{equation}
  \label{eq:formula-A-minus}
  \lim_{\ve\to 0^+}
\mu_n^{1-q}\int_{B_R(x)} [\mu_n-f_n(d_\Ga/\xi_n)]_+^{q-1}\,dz=
|B_R|,
\end{equation}
by dominated convergence theorem, if we can prove that
\begin{equation}
\label{pointwise}
\frac{f_n(\si/\xi_n)}{\mu_n}\to 0 \ \mbox{ as } \ n\to \infty,
\end{equation}
for almost every $\si\ge 0$.
Then, after straightforward computations, \eqref{eq:q-mean-barrier} will follow 
by putting together \eqref{characterization-mu}, \eqref{eq:formula-A-plus} and \eqref{eq:formula-A-minus}.
\par
We now complete the proof by proving that \eqref{pointwise} holds. From \eqref{characterization-mu}, \eqref{eq:formula-A-plus}, and the fact that
$$
\int_{B_R(x)}\left[\mu_n-f_n(d_\Ga/\xi_n)\right]_+^{q-1}\,dz\le \mu_n^{q-1} |B_R|,
$$
we have that there is a positive constant $c$ such that
$$
\mu_n^{1-q}\le c\,\xi_n^{-\frac{N+1}{2}}.
$$
Also, for every $\tau>0$ we have that
\begin{multline*}
\int_{\tau/2\xi_n}^\infty f_n(\si)^{q-1}\,\si^\frac{N-1}{2} d\si\ge
\int_{\tau/2\xi_n}^{\tau/\xi_n} f_n(\si)^{q-1}\,\si^\frac{N-1}{2} d\si\ge \\
\frac{2(1-2^{-\frac{N+1}{2}})}{N+1}\,f_n(\tau/\xi_n)^{q-1}\left(\frac{\tau}{\xi_n}\right)^\frac{N+1}{2}\ge \\
\frac{2(1-2^{-\frac{N+1}{2}})}{c\,(N+1)}\,\tau^\frac{N+1}{2}\left\{\frac{f_n(\tau/\xi_n)}{\mu_n}\right\}^{q-1}.
\end{multline*}
Thus, \eqref{pointwise} follows, since the first term of this chain of inequalities converges to zero as $n\to\infty$, under our assumptions on $f_n$ and $\xi_n$, in virtue of the generalized dominated convergence theorem.
\end{proof}

\begin{rem}
{\rm
The case $q=\infty$ is simpler. From \cite{IMW} and then the monotonicity of $f_n$ we obtain that:
\begin{equation*}
\mu_{\infty,n}(x)=\frac1{2}\left\{\min_{B_R(x)}f_n\left(d_\Ga/\xi_n\right)+\max_{B_R(x)}f_n\left(d_\Ga/\xi_n\right)\right\}= 
\frac1{2}\left\{f_n\left(2R/\xi_n\right)+f_n(0)\right\}.
\end{equation*}
Thus, if we replace the assumptions (iii) and (iv) by $f_n(0)\to f(0)$ as $n\to\infty$, we conclude that
$\mu_{\infty,n}(x)\to f(0)/2$, since $f_n\left(2R/\xi_n\right)\to 0$ as $n\to\infty$. 
}
\end{rem}

\begin{thm}[{\cite[Theorem 3.5]{BM-AA}}]
  \label{th:AA-qmean}
Set $1<p\le \infty$. Let $x\in\Om$ be such that $B_R(x)\subset\Om$ and $\ol{B_R(x)}\cap(\RR^N\setminus\Om)=\{y_x\}$; suppose that $k_j(y_x)<\frac1{R}$, for every $j=1,\dots,N-1$.
\par 
Let $u^\ve$ be the bounded (viscosity) solution of \eqref{G-elliptic}-\eqref{elliptic-boundary} and, for $1<q\le\infty$, let $\mu_{q,\ve}(x)$ be the $q$-mean of $u^\ve$ on $B_R(x)$.
\par 
Then, if $1<q<\infty,$ we have that 
\begin{equation}
  \label{q-mean}
\lim_{\ve\to 0^+}
  \left(\frac{\ve}{R}\right)^{-\frac{N+1}{2(q-1)}}\mu_{q,\ve}(x)=
  \left\{\frac{2^{-\frac{N+1}{2}}N!}{(q-1)^{\frac{N+1}{2}}\Ga\left(\frac{N+1}{2}\right)}\right\}^{\frac1{q-1}}\left\{p'^{\frac{N+1}{2}}
\Pi_\Ga(y_x)\right\}^{-\frac{1}{2(q-1)}}.
\end{equation}
\par 
If $q=\infty$, we simply have that 
$$ 
\lim_{\ve\to 0^+}
\mu_{\infty,\ve}(x) =\frac1{2}.
$$
\end{thm}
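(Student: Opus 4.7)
The plan is to apply Lemma \ref{lem:AA-barriers-qmean} to both the upper and lower barriers constructed in Lemma \ref{barriers-for-c2-domain}, and to sandwich $\mu_{q,\ve}(x)$ between the resulting $q$-means via the monotonicity property \eqref{eq:monotonicty of qmeans}. Setting $\xi_\ve=\ve/\sqrt{p'}$ and $\tau_\ve=\sqrt{p'}\,r_e/\ve$, Lemma \ref{barriers-for-c2-domain} gives
\[
U^\ve\!\left(d_\Ga/\xi_\ve\right)\le u^\ve \le V^\ve\!\left(d_\Ga/\xi_\ve\right)\ \mbox{ on }\ \ol\Om,
\]
so if $\mu_{q,\ve}^{U}$ and $\mu_{q,\ve}^{V}$ denote the $q$-means on $B_R(x)$ of the left and right functions, then $\mu_{q,\ve}^{U} \le \mu_{q,\ve}(x) \le \mu_{q,\ve}^{V}$.

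\textbf{Identification of the limit profile.} The common pointwise limit of $U^\ve$ and $V^\ve$ is the function $f(\si)=e^{-\si}$. When $p=\infty$ one has $U^\ve(\si)\equiv e^{-\si}$, while for $p\in(1,\infty)$ one can factor
\[
U^\ve(\si) = e^{-\si}\,\frac{\int_0^\infty e^{-(\si+\tau_\ve)(\cosh\te-1)}(\sinh\te)^\al\,d\te}{\int_0^\infty e^{-\tau_\ve(\cosh\te-1)}(\sinh\te)^\al\,d\te},
\]
with $\al=(N-p)/(2(p-1))$; applying \eqref{eq:asymptotics-to-infinity} to both integrals (since $\tau_\ve\to\infty$) shows that the ratio tends to $1$. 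An analogous factorization for $V^\ve$, handled by \eqref{eq:asymptotics-to-infinity-I}, gives $V^\ve(\si)\to e^{-\si}$ for each fixed $\si\ge 0$.

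\textbf{Main obstacle: passage under the integral.} To invoke Lemma \ref{lem:AA-barriers-qmean} one must verify hypothesis (iv), namely that both $\int_0^\infty U^\ve(\si)^{q-1}\si^{(N-1)/2}\,d\si$ and $\int_0^\infty V^\ve(\si)^{q-1}\si^{(N-1)/2}\,d\si$ converge to $\int_0^\infty e^{-(q-1)\si}\si^{(N-1)/2}\,d\si$. For $U^\ve$ this is easy, because $e^{-(\si+\tau_\ve)(\cosh\te-1)}\le e^{-\tau_\ve(\cosh\te-1)}$ yields $U^\ve(\si)\le e^{-\si}$, so dominated convergence applies. For $V^\ve$ the matter is more delicate, as the analogous ratio blows up as $\si\to\tau_\ve^-$; my plan is to split the integral at $\tau_\ve/2$, using \eqref{eq:asymptotics-to-infinity-I} to bound the ratio factor uniformly on $[0,\tau_\ve/2]$ (so that dominated convergence works there) and a Laplace-type estimate $V^\ve(\si)\le C\,e^{-\si}\si^{(\al+1)/2}$ to dominate the tail $(\tau_\ve/2,\infty)$, whose contribution is then exponentially small as $\tau_\ve\to\infty$.

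\textbf{Conclusion.} Once (iv) is verified, Lemma \ref{lem:AA-barriers-qmean} with $\xi_\ve=\ve/\sqrt{p'}$ and $f(\si)=e^{-\si}$ produces the same limit for $\mu_{q,\ve}^{U}$ and $\mu_{q,\ve}^{V}$. Substituting the Gamma-function identity $\int_0^\infty e^{-(q-1)\si}\si^{(N-1)/2}\,d\si=\Ga((N+1)/2)/(q-1)^{(N+1)/2}$ into \eqref{eq:q-mean-barrier} and using $R/\xi_\ve=(R/\ve)\sqrt{p'}$, a simple rearrangement produces the constant appearing in \eqref{q-mean}. For the extremal case $q=\infty$, the elementary formula $\mu_{\infty,\ve}(x)=\frac{1}{2}\bigl(\max_{\ol{B_R(x)}}u^\ve+\min_{\ol{B_R(x)}}u^\ve\bigr)$ applies: the maximum equals $1$ (attained at $y_x\in\Ga$), while the minimum is attained at the point of $\ol{B_R(x)}$ farthest from $\Ga$ (at distance $2R$) and tends to $0$ by Theorem \ref{th:elliptic-pointwise}, whence $\mu_{\infty,\ve}(x)\to 1/2$.
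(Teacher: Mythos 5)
Your proposal is correct and follows essentially the same path as the paper's proof: sandwich $\mu_{q,\ve}$ between the $q$-means of $U^\ve(d_\Ga/\xi_\ve)$ and $V^\ve(d_\Ga/\xi_\ve)$ by monotonicity, then apply Lemma \ref{lem:AA-barriers-qmean} with $\xi_\ve=\ve/\sqrt{p'}$ and $f(\si)=e^{-\si}$, using the Bessel asymptotics of Lemma \ref{lem:asymptotics for modified bessel} to verify the pointwise convergence and integral condition. Your caution about dominated convergence for $V^\ve$ (and the split at $\tau_\ve/2$) is well placed — the paper dismisses hypothesis (iv) as a routine DCT check, whereas for $V^\ve$ the ratio $g(\tau_\ve-\si)/g(\tau_\ve)$ is not uniformly bounded and a two-regime estimate such as the one you sketch is genuinely needed.
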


\begin{proof}
We have that $\mu_{q,\ve}^{U^\ve}(x)\le \mu_{q,\ve}(x)\le \mu_{q,\ve}^{V^\ve}(x)$ by the monotonicity of the $q$-means, where with $\mu_{q,\ve}^{U_\ve}$ and $\mu_{q,\ve}^{V^\ve}$ we denote the $q$-mean of $U^\ve(d/\ve)$ and $V^\ve(d/\ve)$ on $B_R(x)$.
Hence, in order to prove \eqref{q-mean}, we only need to apply Lemma \ref{lem:AA-barriers-qmean} to  $f_n=U^{\ve_n}$ and $f_n=V^{\ve'_n}$, where the vanishing sequences $\ve_n$ and $\ve'_n$ are chosen so that the $\liminf$ and $\limsup$ of $\left(\ve/R\right)^{-\frac{N+1}{2(q-1)}}\mu_{q,\ve}(x)$
as $\ve\to 0$ are attained along them, respectively.
\par
By an inspection, it is not difficult to check that $f_n=U^{\ve_n}$ and $f_n=V^{\ve'_n}$, with $\xi_\ve=\ve/\sqrt{p'}$ and $f(\si)=e^{-\si}$, satisfy the relevant assumptions of Lemma \ref{lem:AA-barriers-qmean}, by applying, in particular, Lemma \ref{lem:asymptotics for modified bessel} for (iii) and the dominated convergence theorem for (iv).
\end{proof} 



\chapter{Geometric and symmetry results}
\label{ch:applications}

The goal of this chapter is to collect some geometric and symmetry results for solutions \eqref{G-heat}-\eqref{boundary} or \eqref{G-elliptic}-\eqref{elliptic-boundary}, in the spirit of those given by Magnanini and Sakaguchi in \cite{ CMS-JEMS, MS-AM, MS-PRSE, MS-IUMJ, MS-JDE2, MS-MMAS}. We obtain characterizations of balls, spheres and hyperplanes as applications of Varadhan-type formulas of Chapter \ref{ch:asymptotics i} and of formulas for $q$-means of Chapter \ref{ch:asymptotics ii}. 
\par
We introduce the problems that we consider. We say that an $(N-1)$-dimensional surface $\Si$ is a {\it time-invariant level surface} for the solution $u$ of \eqref{G-heat}-\eqref{boundary}, if there exists a function $a_{\Si}:(0,\infty)\to (0,\infty)$ such that
\begin{equation}
\label{time-invariant surface}
u(x,t)=a_\Si(t) \ \mbox{ for any } \ (x,t)\in\Si\times(0,\infty).
\end{equation}
\par
In the case of the heat equation, a time-invariant level surface is commonly called {\it stationary isothermic surface}.  We list two results concerning stationary isothermic surfaces, which are relevant for the analysis carried out in this chapter. In \cite[Theorem 1.1]{MS-AM}, for a bounded domain $\Om$ that satisfies  the exterior sphere condition, if \eqref{time-invariant surface} holds for $\Si=\pa D$, where $D$ satisfies the interior cone condition and  $\ol{D}\subset\Om$, then $\Om$ must be a ball. See also \cite{CMS-JEMS} and \cite{MS-MMAS}. In \cite{MS-IUMJ}, the case of domains with non-compact boundary is considered. In fact in \cite[Theorem 3.4]{MS-IUMJ}, the authors have shown that if $\Ga$ is the graph of function defined on the whole $\RR^{N-1}$, satisfying certain sufficient assumptions, and \eqref{time-invariant surface} holds, then $\Ga$ must be a hyperplane. See also \cite{MS-AIHP, MS-JDE, Sa-INDAM}.


In Section \ref{sec:serrin}, by employing the method of moving planes (see \cite{Ser}) as in \cite{CMS-JEMS} and \cite{MS-MMAS}, we give a proper version of \cite[Theorem 1.1]{MS-AM}  in the case $p\in(1,\infty)$. See Corollary \ref{cor:TILsym2}. This corollary is actually a consequence of a quite more general theorem in which one obtains the spherical symmetry under the weaker condition that there exist $\ol{t}>0$ and $R>0$ such that
\begin{equation*}
\label{eq:symcondition1}
x\mapsto u(x,\ol{t})\ \mbox{ is constant on } \ \Ga_R,
\end{equation*}
where $\Ga_R = \{ x\in \Om:d_\Ga(x)=R\}$.
See Theorem \ref{th:TILsym1}.


In the case \eqref{G-elliptic}-\eqref{elliptic-boundary} we consider surfaces $\Si$ that are level sets of $u^\ve$, for any $\ve > 0$, i.e. $\Si$ satisfies the requirement:
\begin{equation}
\label{epsilon-invariant surface}
u^\ve  \ \mbox{ is constant on } \ \Si, \ \mbox{ for any } \ve > 0.
\end{equation}
We point out that, in the linear case, from \eqref{eq:intro_modified laplace}
 it follows that $\Si$ is a stationary isothermic surface if and only if \eqref{epsilon-invariant surface} holds true.
 \par
 In Section \ref{sec:serrin}, we present results which are analogous to those obtained for $p\ne 2$ in the parabolic case, by just adapting the proofs in the elliptic context. 

Section \ref{sec:sliding} contains our result for the case of non-compact boundaries. There, we generalize  the result in \cite[Theorem 3.4]{MS-IUMJ} to a generic $p\in(1,\infty)$ and to the elliptic case.
\par
Section \ref{sec:symmetry-qmeans} contains another type of symmetry result for the solution of \eqref{G-heat}-\eqref{boundary}. It concerns the following condition for the q-mean $\mu_q(x,t)$ of a ball $B_R(x)$ such that $R=d_\Ga(x)$. Let $\Om$ be a domain with bounded and connected boundary in which there exists a parallel surface $\Ga_R$, such that
\begin{equation}
\label{qmean time-invariant}
x\mapsto \mu_q(x,t) \ \mbox{ is constant on } \ \Ga_R,
\end{equation}
for any fixed $t>0$. In the spirit of \cite[Theorem 1.2]{MS-PRSE}, in Theorem \ref{th:radiality-bounded domain} we show that if \eqref{qmean time-invariant} holds, then $\Ga$ must be a sphere. Theorem \ref{th:radiality-bounded domain-elliptic} takes care of the elliptic counterpart of \eqref{qmean time-invariant}.

\section{Parallel surfaces}
\label{sec:parallelism}

An interesting geometric property that invariant surfaces enjoy is that they are parallel to $\Ga$, as the following results show for both the parabolic and elliptic case.

\begin{thm}[{\cite[Theorem 3.6]{BM-JMPA}}]
\label{th:parallelism-invariant level set}
Let $\Om$ be a domain in $\RR^N$ satisfying $\Ga=\pa\left(\RR^N\setminus\ol\Om\right)$ and suppose that, for $1<p\le\infty$, $u$ is the solution of \eqref{G-heat}-\eqref{boundary}. 
\par 
If $\Si\subset\Om$ is a time-invariant level surface for $u$, then there exists $R>0$ such that 
\begin{equation}
\label{constant distance}
d_\Ga(x)=R\ \mbox{ for every }\ x\in\Si.
\end{equation}
\end{thm}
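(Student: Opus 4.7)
The whole proof reduces to applying the pointwise Varadhan-type formula (Theorem~\ref{th:parabolic-pointwise}) at points of $\Sigma$, and using that $u(\cdot,t)$ is constant on $\Sigma$ for each fixed $t>0$.

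More precisely, the plan is the following. By hypothesis there is $a_\Sigma:(0,\infty)\to(0,\infty)$ such that $u(x,t)=a_\Sigma(t)$ for every $(x,t)\in\Sigma\times(0,\infty)$; positivity of $a_\Sigma$ follows from the strong minimum principle (Corollary~\ref{cor:parabolic maximum} and Remark~\ref{minimum principle}), once one observes that $u\not\equiv 0$ in $\Omega\times(0,\infty)$. Since $\Sigma\subset\Omega$ and $\Omega$ is open, every $x\in\Sigma$ satisfies $d_\Gamma(x)>0$. Now fix any $x,y\in\Sigma$. For every $t>0$ we have
\begin{equation*}
4t\log u(x,t)=4t\log a_\Sigma(t)=4t\log u(y,t),
\end{equation*}
so the assumption $\Gamma=\partial(\RR^N\setminus\ol\Omega)$ together with Theorem~\ref{th:parabolic-pointwise} gives, by letting $t\to 0^+$ on both sides,
\begin{equation*}
-p'\,d_\Gamma(x)^2=\lim_{t\to 0^+}4t\log a_\Sigma(t)=-p'\,d_\Gamma(y)^2.
\end{equation*}
Hence $d_\Gamma(x)=d_\Gamma(y)$. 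Since $x,y\in\Sigma$ were arbitrary, the function $d_\Gamma$ is constant on $\Sigma$, and defining $R$ as this common (strictly positive) value yields \eqref{constant distance}.

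There is essentially no obstacle here: all the analytic work is packaged inside Theorem~\ref{th:parabolic-pointwise}. The only small points to check are (i) that $a_\Sigma(t)>0$ so that $\log a_\Sigma(t)$ is well-defined, which is handled by the strong minimum principle, and (ii) that the limit $\lim_{t\to 0^+}4t\log a_\Sigma(t)$ exists, which is automatic once we know that $4t\log u(x,t)$ converges at any single $x\in\Sigma$, because on $\Sigma$ this quantity equals $4t\log a_\Sigma(t)$ and hence does not depend on the chosen point. This observation is exactly what forces $d_\Gamma$ to take a single value on $\Sigma$.
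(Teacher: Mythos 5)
Your proof is correct and follows essentially the same route as the paper: apply the pointwise Varadhan-type formula (Theorem~\ref{th:parabolic-pointwise}) at two points of $\Sigma$ and use that $u(\cdot,t)$ has a common value $a_\Sigma(t)$ there to force $d_\Gamma$ to be constant on $\Sigma$. Your version is in fact slightly cleaner: the paper introduces a point $x_0\in\Sigma$ realizing $\dist(\Sigma,\Gamma)$ (whose existence is not guaranteed in general), whereas you simply compare two arbitrary points of $\Sigma$, avoiding that issue; the extra remark on positivity of $a_\Sigma$ is harmless but redundant, since the definition already demands $a_\Sigma:(0,\infty)\to(0,\infty)$.
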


\begin{proof}
Let $R=\dist\left(\Si,\Ga\right)$ and let $x_0$ be a point in $\Si$ such that $d_\Ga(x_0) = R$. If $y\in \Ga$, we have that $u(x_0,t)=u(y,t)$ and hence $4t\,\log u(x_0,t)=4t\,\log u(y,t)$ for every $t>0$. By Theorem \ref{th:parabolic-pointwise},
we infer that $d_\Ga(x_0)=d_\Ga(y)$ and hence we obtain our claim.
\end{proof}

\begin{thm}
\label{th:parallelilsm-invariant level set-elliptic}
Let $\Om$ be a domain in $\RR^N$ satisfying $\Ga=\pa\left(\RR^N\setminus\ol\Om\right)$ and suppose that, for $1<p\le\infty$, $u^\ve$ is the solution of \eqref{G-elliptic}-\eqref{elliptic-boundary}. 
\par 
If $\Si\subset\Om$ is an level surface of $u^\ve$, for every $\ve > 0$, then there exists $R>0$ such that \eqref{constant distance} holds true.
\end{thm}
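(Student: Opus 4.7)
The plan is to mimic the argument used in Theorem \ref{th:parallelism-invariant level set} for the parabolic case, now invoking the elliptic Varadhan-type formula of Theorem \ref{th:elliptic-pointwise} in place of its parabolic counterpart. The only ingredient really needed is the pointwise asymptotic
$$
\lim_{\ve \to 0^+} \ve \log u^\ve(x) = -\sqrt{p'}\, d_\Ga(x) \ \mbox{ for every } \ x\in\ol{\Om},
$$
which holds precisely under the assumption $\Ga = \pa(\RR^N\setminus\ol\Om)$.

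First, I would set $R = \dist(\Si,\Ga)$ and pick a point $x_0 \in \Si$ that realises this distance, so that $d_\Ga(x_0) = R$. The existence of such a point is immediate from the definition of $R$ (for any minimizing sequence one can pass to the limit, using that $d_\Ga$ is continuous on $\ol\Om$).

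Next, fix an arbitrary $y \in \Si$. By assumption, for every $\ve > 0$ the function $u^\ve$ takes the same value at $x_0$ and at $y$, so
$$
\ve \log u^\ve(x_0) = \ve \log u^\ve(y) \ \mbox{ for every } \ \ve > 0.
$$
Letting $\ve \to 0^+$ and applying Theorem \ref{th:elliptic-pointwise} on both sides gives
$$
-\sqrt{p'}\, d_\Ga(x_0) = -\sqrt{p'}\, d_\Ga(y),
$$
that is, $d_\Ga(y) = R$. Since $y\in\Si$ was arbitrary, \eqref{constant distance} follows.

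There is essentially no obstacle: the fact that $\Si$ is a level set of $u^\ve$ \emph{for every} $\ve>0$ is exactly the hypothesis that allows the limit as $\ve\to 0^+$ to be taken on both sides of the equality, and the topological condition $\Ga = \pa(\RR^N\setminus\ol\Om)$ is exactly what is needed in Theorem \ref{th:elliptic-pointwise}. The proof is therefore a short and direct transcription of that of Theorem \ref{th:parallelism-invariant level set}.
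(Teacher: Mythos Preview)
Your proof is correct and follows essentially the same approach as the paper, which simply instructs the reader to modify the proof of Theorem \ref{th:parallelism-invariant level set} by replacing Theorem \ref{th:parabolic-pointwise} with Theorem \ref{th:elliptic-pointwise}. Your write-up is in fact slightly more careful, spelling out the choice of $x_0$ and the passage to the limit explicitly.
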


\begin{proof}
To conclude, it is sufficient to properly modify the proof of Theorem \ref{th:parallelism-invariant level set}, having in mind Theorem \ref{th:elliptic-pointwise}.
\end{proof}

\section{Spherical symmetry for invariant level surfaces}
\label{sec:serrin}

We recall that bounded viscosity solutions of \eqref{G-heat} and \eqref{G-elliptic} are of class $C^{1,\be}_{loc}$, for some $ 0 < \be < 1$. See \cite[Theorem 2.1]{APR} and \cite[Theorem 2.1]{AP-2018}. 


In Theorems \ref{th:TILsym1} and \ref{th:ELsym1}, we apply the method of moving planes to a subset $D$ of a bounded domain $\Om$. The idea is to show that $D$ is mirror symmetric in every direction. We introduce some notations.

Given a direction $\te\in \SS^{N-1}$ and $\la \in \RR$, let $\pi_\la=\{x\in\RR^N:\lan x, \te\ran = \la\}$. For a fixed point $x\in\RR^N$, we define $x^*$, the reflection of $x$ in $\pi_\la$, by 
$$
x^*= x -2\left( \lan x,\te\ran - \la \right)\te.
$$
\par 
Let $H_\la = \{x\in\RR^N: \lan x, \te \ran > \la\}$ so that $\pi_\la = \pa H_\la$. Also, let $D_\la = D \cap H_\la$ and $D^*_\la$ its reflection in $\pi_\la$. We set $\La= \sup\{\lan x, \te \ran : x\in D\}$. Suppose that $D$ is of class $C^2$. From \cite[Theorem 5.7]{Fra}, for $\la < \La$ sufficiently close to $\La$, we have that $D^*_\la\subseteq D$. Here, we observe that, from \cite[Lemma 2.8]{CMS-JEMS}, if $D^*_{\la} \subset D$, then $\Om_{\la}^* \subseteq \Om$.
\par 
Let $\la^*$ be the number defined by
$$
\la^*=\inf\{\la < \La: D_\mu^* \subseteq D: \ \mbox{ for any } \la< \mu < \La\}.
$$
\par
Eventually, one of the following two cases occurs:
\begin{enumerate}[(1)]
\item the boundary of $D^*_{\la^*}$ becomes tangent to that of $D$, and the set of tangency contains points not belonging to $\pi_{\la^*}$.
\item $\pi_{\la^*}$ is orthogonal to the boundary of $D$ and of $D^*_{\la^*}$, at some point of the intersection.
\end{enumerate}

\begin{thm}
\label{th:TILsym1}
Set $p\in(1,\infty)$. Let $\Om$ be a bounded domain of class $C^2$ and let $u$ be the solution of \eqref{G-heat}-\eqref{boundary}. Suppose that $D$ is a $C^2$ domain such that $\ol{D}\subset \Om$, $\pa D=\Ga_R$, for some $R>0$, and there exists $\ol{t}>0$ for which $u$ is constant on $\Ga_R\times \{\ol{t}\}$.
\par 
Then,
$D$ and
$\Om$ must be concentric balls. 
\end{thm}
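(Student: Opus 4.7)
The plan is to apply the method of moving planes to $\Om$ in every direction $\te\in\SS^{N-1}$, using the auxiliary domain $D$ to control the reflection procedure, and to conclude that $\Om$ is a ball. Once this is established, the fact that $\pa D=\Ga_R$ is a parallel surface at distance $R$ from $\Ga$ will force $D$ to be a concentric ball.

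\textbf{Moving-planes step.} Fix $\te\in\SS^{N-1}$ and let $\la^*=\la^*(\te)$ be the critical value defined with respect to $D$, as in the paragraph preceding the statement. For every $\la\in(\la^*,\La)$ one has $D_\la^*\subseteq D$, and hence $\Om_\la^*\subseteq\Om$ by \cite[Lemma 2.8]{CMS-JEMS}. I would define, for $(x,t)\in\ol{\Om_\la^*}\times(0,\infty)$,
\[
v(x,t)=u(x^*,t).
\]
Since $\De_p^G$ is invariant under isometries, $v$ is a viscosity solution of \eqref{G-heat} in $\Om_\la^*\times(0,\infty)$. On the parabolic boundary of $\Om_\la^*\times(0,\infty)$ one has $v\equiv 0$ at $t=0$, $v\equiv 1$ on the reflection of $\Ga\cap H_\la$ (which lies strictly inside $\Om$, where $u<1$), and $v=u$ on $\pa\Om_\la^*\cap\pi_\la$, so $v\geq u$ on the parabolic boundary. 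Corollary \ref{cor:parabolic comparison} then gives $v\geq u$ in $\ol{\Om_\la^*}\times(0,\infty)$, and the inequality persists at $\la=\la^*$ by continuity.

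\textbf{Exploiting the constancy on $\Ga_R$.} At the critical value $\la^*$ one of two configurations occurs. In the first, there is a touching point $x_0\in\pa D_{\la^*}^*\cap\pa D$ with $x_0\notin\pi_{\la^*}$; since $\ol{D}\subset\Om$ and $x_0\notin\pi_{\la^*}$, the point $x_0$ lies in the interior of $\Om_{\la^*}^*$, and both $x_0,x_0^*$ lie on $\Ga_R$, so the hypothesis of the theorem gives $(v-u)(x_0,\ol{t})=0$. I would then apply a strong comparison principle for $\De_p^G$ to deduce $v\equiv u$ on the connected component of $\Om_{\la^*}^*$ containing $x_0$, and hence throughout $\Om_{\la^*}^*$. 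In the second configuration, $\pa D_{\la^*}^*$ meets $\pi_{\la^*}$ orthogonally at some $x_0\in\Ga_R\cap\pi_{\la^*}$; again $(v-u)(x_0,\ol{t})=0$, and the desired identity $v\equiv u$ would follow from a Hopf-type argument based on Corollary \ref{cor:parabolic hopf-oleinik}, which rules out orthogonality unless $v\equiv u$. In either case $u(\cdot,\ol{t})$ is symmetric with respect to $\pi_{\la^*}$, and the maximality of $\la^*$ forces $\Om$ to be symmetric with respect to $\pi_{\la^*}$ as well.

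\textbf{Conclusion and main obstacle.} Iterating over all directions $\te\in\SS^{N-1}$ produces a full family of hyperplanes of symmetry of $\Om$; their common intersection is a single point $x_c$, and $\Om$ is the ball centered at $x_c$. Since $\pa D=\Ga_R$ is the parallel surface to $\Ga$ at distance $R$, $D$ is the concentric ball of radius $r-R$, where $r$ is the radius of $\Om$. The main technical obstacle is justifying the strong comparison principle and the Hopf-Oleinik lemma for the difference $w=v-u$: because $\De_p^G$ has discontinuous coefficients where $\na u=0$, the function $w$ does not automatically satisfy a uniformly elliptic linear equation. Following the scheme indicated in the introduction and used in \cite{AG, BK}, I would first invoke the strong minimum principle for $u$ (Remark \ref{minimum principle}) together with Corollary \ref{cor:parabolic hopf-oleinik} to guarantee $\na u\neq 0$ in a suitable neighborhood of $\Ga$, and there linearize $\De_p^G$ so as to obtain a uniformly elliptic linear operator with continuous coefficients to which the classical strong comparison principle and Hopf lemma may be applied; this is where the $C^2$-regularity of $\Om$ and $D$ enters decisively.
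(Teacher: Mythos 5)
Your strategy is the same moving-planes scheme as the paper's, but two technical steps are mishandled in a way that, as written, would break the proof. First, the linearization region is in the wrong place. You propose to use Remark \ref{minimum principle} together with Corollary \ref{cor:parabolic hopf-oleinik} to get $\na u\neq 0$ in a neighborhood of $\Ga$. But the touching point $x_0$ produced by the moving planes (cases (1) and (2)) lies on $\Ga_R=\pa D$, which is at a fixed positive distance $R$ from $\Ga$, so the region you propose does not cover $x_0$ and the strong comparison principle cannot be invoked there. What is actually needed is $\na u(\cdot,\ol t)\neq 0$ on $\Ga_R$, and this is precisely where the constancy hypothesis enters through a chain you do not reproduce: by Lemma \ref{parabolic monotonicity}, $v:=u(\cdot,\ol t)$ is a viscosity subsolution of $-\De_p^G v=0$ in $D$; $v$ is constant on $\Ga_R$, so by the strong maximum principle (Remark \ref{maximum principle}) this constant is the maximum of $v$ over $\ol D$; then the Hopf--Oleinik lemma in the \emph{elliptic} form (Corollary \ref{cor:elliptic hopf-oleinik}, applied in $D$, not at $\Ga$) yields $\na v\neq 0$ on $\Ga_R$, and continuity of $\na u$ (from the $C^{1,\be}_{\mathrm{loc}}$ regularity) gives $\na u\neq 0$ in a tube $S_\de\times(t_1,t_2)$ around $\Ga_R\times\{\ol t\}$. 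Only there can you linearize to the equation \eqref{eq:regularized equation} with bounded continuous coefficients and apply the classical strong maximum principle to $w=u-u^*$.

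Second, your treatment of case (2) does not close. You write that ``$(v-u)(x_0,\ol t)=0$'' at the orthogonal touching point and that a Hopf-type argument based on Corollary \ref{cor:parabolic hopf-oleinik} then rules out orthogonality. But every point of $\pi_{\la^*}$ is a fixed point of the reflection, so $v\equiv u$ on $\pi_{\la^*}$ is automatic and carries no information; and Corollary \ref{cor:parabolic hopf-oleinik} is a statement about $\na u$ on $\Ga$, not about $w$ on $\pi_{\la^*}$. The correct argument, as in the paper, is first-order: assuming $w<0$ strictly in $S_\de\cap\Om_{\la^*}^*$, the classical Hopf lemma for the linearized parabolic equation gives $\pa w/\pa\te>0$ on $S_\de\cap\pi_{\la^*}$; on the other hand, since $u(\cdot,\ol t)$ is constant on $\Ga_R$ and $\te$ is tangent to $\Ga_R$ at the corner point $z$, both $\pa u/\pa\te(z,\ol t)=0$ and $\pa u^*/\pa\te(z,\ol t)=0$, hence $\pa w/\pa\te(z,\ol t)=0$, a contradiction. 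This is the step that actually uses the constancy of $u(\cdot,\ol t)$ on $\Ga_R$ in case (2). With these two points corrected, your proof aligns with the paper's.
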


\begin{proof}
\par 
Since $\Om_{\la^*}^*\subset \Om$, we can define the function $u^*:\Om^*_{\la^*}\times(0,\infty)\to\RR$, by $u^*(x,t)=u(x^*,t)$, for $x\in\Om_{\la^*}^*$ and $t>0$. It is easy to see that
\begin{equation*}
\begin{cases}
u^*_t-\De_p^G u^*=0 \ &\mbox{ in } \ \Om^*_{\la^*}\times(0,\infty),\\
u^*=0 \ &\mbox{ on } \ \Om^*_{\la^*}\times\{0\},\\
u^*\ge u \ &\mbox{ on } \ \pa \Om^*_{\la^*}\times(0,\infty).
\end{cases}
\end{equation*}
Moreover, by the (weak) comparison principle (see Corollary \ref{cor:parabolic comparison}), we have that the function $w=u - u^*$ is non-positive on the whole $\ol{\Om^*_{\la^*}}\times(0,\infty)$.
\par
Now, following the proof of \cite[Theorem 1.1]{CMS-JEMS} (see also \cite[Theorem 1]{AG} \cite[Theorem 1.1]{BK}), we can apply the strong maximum principle to $w$, in a proper sub-cylinder of $\Om\times(0,\infty)$. 
Consider $v(x)=u(x,\ol{t})$, for $x\in D$. From Lemma \ref{parabolic monotonicity}, we have that $v$ is a non-constant viscosity subsolution of $-\De_p^G v = 0$ in $D$. Moreover, $v$ equals a constant on $\Ga_R$. Thus, by applying the strong maximum principle (see Remark \ref{maximum principle}) and Corollary \ref{cor:elliptic hopf-oleinik}, we obtain that 
$$
\na v\neq 0\ \mbox{ on } \ \Ga_R,
$$
where we use that $u$ is differentiable in $\Om\times(0,\infty)$.
\par
 Also, we have that there exist $\de > 0$ and $t_1, t_2 > 0$, with $t_1 < \ol{t} < t_2$,  such that 
\begin{equation}
\label{eq:regularity strip}
\na u\neq 0\ \mbox{ in }\ S_\de\times (t_1,t_2),
\end{equation}
where $S_\de=\{y\in\Om:d_{\Ga_R}(y)< \de\}$. Observe that the last set displayed in \eqref{eq:regularity strip} is a neighborhood of $\Ga_R$. 
\par
Hence, by using \eqref{eq:regularity strip}, we have that $|\na u|,|\na u^*|>0$ in $\left(S_\de \cap \Om_{\la^*}^*\right)\times(t_1,t_2)$. Therefore, by a standard procedure, we can infer that $w$ is a (non-positive) solution of the following uniformly parabolic equation with smooth coefficients
 \begin{equation}
 \label{eq:regularized equation}
   w_t-\tr\left\{A'\na^2 w\right\} - b \cdot \na w =0 \ \mbox{ in } \ \left(S_\de \cap \Om_{\la^*}^*\right)\times(t_1,t_2),
 \end{equation}
where $A'$ and $b$ are defined, respectively, by the following expressions:
\begin{equation*}
  A'=\int_{0}^{1}\na_X F\left(\si\na u + (1- \si)\na u^*, \si\na^2 u+(1-\si)\na^2 u^*\right)d\si
\end{equation*}
and
\begin{equation*}
  b=\int_{0}^{1}\na_\xi F\left(\si\na u + (1- \si)\na u^*, \si\na^2 u+(1-\si)\na^2 u^*\right)d\si.
\end{equation*}
Here, $F$ is defined in \eqref{p-laplace-coefficients}. Observe that (as shown in \cite[Theorem 1.1]{BK}) the matrix $A'$ is uniformly elliptic and the coefficients $b$ are bounded in $\left(S_\de \cap \Om_{\la^*}^*\right)\times(t_1,t_2)$.
\par 
Applying the classical strong maximum principle to \eqref{eq:regularized equation} (see \cite{PW}) yields that either $w \equiv 0$ on $\ol{\left(S_\de\cap \Om^*_{\la^*}\right)}\times [t_1,t_2]$ or $w<0$ in $\left(S_\de \cap \Om_{\la^*}^*\right)\times(t_1,t_2)$.
\par  
Now, we conclude as in \cite[Theorem 1.1]{CMS-JEMS}. Suppose that $D^*_{\la^*}\subset D\setminus \ol{D_{\la^*}}$. Then $S_\de \cap \Om_{\la^*}^*$ contains points that are in $\pa D_{\la^*}^*\setminus \pa D$. This implies that
\begin{equation}
\label{strictly negative}
w<0 \ \mbox{ in }\ \left(S_\de\cap \Om_{\la^*}^* \right)\times(t_1,t_2)
\end{equation} 
and that
\begin{equation}
\label{hopf on pi}
\frac{\pa w}{\pa \te} > 0 \ \mbox{ on } \ \left(S_\de
 \cap \pi_{\la^*} \right) \times (t_1,t_2).
\end{equation}
\par
If (1) occurs, then there exists $y \in \pa D_{\la^*}^* \cap \pa D$. This implies that $u(y,\ol{t})= u^*(y,\ol{t})$ since $\pa D$ is a level surface of $u(\cdot, \ol{t})$ and hence that $w(y,\ol{t})=0$ which contradicts \eqref{strictly negative}. 
\par 
If (2) occurs, then $\pa D$ must be orthogonal to $\pi_{\la^*}$ at some $z \in \pa D$, then $\frac{\pa u}{\pa \te}(z,\ol{t}) = 0$. Also, we have that $\frac{\pa u^*}{\pa \te}(z,\ol{t}) = 0$ and then that $\frac{\pa w}{\pa \te} (z,\ol{t}) = 0$. This contradicts \eqref{strictly negative}, since $\frac{\pa w}{\pa \te}(z,\ol{t}) > 0$, by \eqref{hopf on pi}.
\par
Hence, $D$ must be symmetric with respect to every direction and hence $D$ must be a ball. Since $\Om$ and $D$ are $C^2$, then we conclude that $\Om$ must be a ball.
\end{proof}

\begin{rem}
{
\rm
We point out that in Theorem \ref{th:TILsym1}, we can replace the $C^2$ regularity of $\Ga_R$ with a weaker assumption. Indeed, we can apply the Hopf-Oleinik lemma to infer that $\na u \neq 0$ on $\Ga_R$ by assuming that $\Ga_R$ admits an interior $\om$-pseudo ball condition, where $\om$ satisfies the assumptions of Lemma \ref{hopf-oleinik}. For example, if we assume $\Ga_R \in C^{1,\al}$, for some $\al \in(0,1)$, then Theorem \ref{th:TILsym1} still holds true.
}
\end{rem}

Now, we are ready to obtain the following characterization of balls, as a consequence of Theorem \ref{th:TILsym1}.

\begin{cor}
\label{cor:TILsym2}
Set $p\in(1,\infty)$. Let $\Om$ be a bounded domain of class $C^2$ and  $u$ be the solution of \eqref{G-heat}-\eqref{boundary}. Suppose that $D$ is a $C^2$ domain such that $\ol{D}\subset \Om$ and with boundary $\pa D=\Si$ satisfying \eqref{time-invariant surface}.
\par 
Then, $D$ and $\Om$ must be concentric balls.
\end{cor}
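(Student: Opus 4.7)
The plan is to reduce the statement to Theorem \ref{th:TILsym1} via the parallelism result. First, I would apply Theorem \ref{th:parallelism-invariant level set} to the time-invariant level surface $\Si=\pa D$: hypothesis \eqref{time-invariant surface} yields a constant $R>0$ such that $d_\Ga(x)=R$ for every $x\in\Si$, so $\Si\subset\Ga_R$ and $R=\dist(D,\Ga)>0$.

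Next, I would run the moving-planes machinery from the proof of Theorem \ref{th:TILsym1} on the domain $D$, with $\pa D=\Si$ taking over the role played by $\Ga_R$ there. The key observation is that the hypothesis ``$\pa D=\Ga_R$ and $u(\cdot,\ol t)$ is constant on $\Ga_R$'' is invoked only to produce the single consequence ``$v:=u(\cdot,\ol t)$ is constant on $\pa D$''; every subsequent ingredient -- Lemma \ref{parabolic monotonicity} giving that $v$ is a viscosity subsolution of $-\De_p^G v=0$ in $D$, the strong maximum principle (Remark \ref{maximum principle}), the Hopf-Oleinik lemma (Corollary \ref{cor:elliptic hopf-oleinik}), the $C^{1,\be}_{loc}$ regularity of $u$ used to linearize the equation for $w=u-u^*$ near $\pa D$, and the final dichotomy (1)--(2) at the critical hyperplane -- only sees $\pa D$. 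In our setting, \eqref{time-invariant surface} delivers $v\equiv a_\Si(\ol t)$ on $\Si=\pa D$ for any $\ol t>0$, and the assumed $C^2$ regularity of $\Si$ provides the interior pseudo-ball condition required by Corollary \ref{cor:elliptic hopf-oleinik} (take e.g.\ $\tilde\om(t)=\om(t)=t^\al$ for any $\al\in(0,1)$, cf.\ Remark \ref{pseudo-balls domains}). Thus $\na u(\cdot,\ol t)\neq 0$ on $\Si$, and the whole moving-planes argument transfers verbatim, showing that $D$ is mirror-symmetric with respect to every direction $\te\in\SS^{N-1}$. Hence $D$ is a ball, say centered at $c$.

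Finally, the moving-planes procedure of Theorem \ref{th:TILsym1} is carried out on $\Om_{\la^*}^*\subseteq\Om$, so the same reflection identity $u\equiv u^*$ forced at the critical value $\la^*(\te)$ delivers symmetry of $\Om$ across every hyperplane $\pi_{\la^*(\te)}$; all these hyperplanes pass through $c$, so $\Om$ is invariant under every orthogonal reflection through $c$ and must be a ball concentric with $D$. The main -- and essentially only -- obstacle is the bookkeeping step of checking that the hypothesis $\pa D=\Ga_R$ in Theorem \ref{th:TILsym1} is not actually used anywhere beyond the constancy of $v$ on $\pa D$, so that the weaker inclusion $\pa D=\Si\subset\Ga_R$ obtained from Theorem \ref{th:parallelism-invariant level set} suffices; once this inspection is made, and the $C^2$ regularity of $\Si$ is invoked to keep Corollary \ref{cor:elliptic hopf-oleinik} available, no new analytic ingredient is needed.
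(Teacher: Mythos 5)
Your proposal is correct and follows essentially the same route as the paper: apply Theorem \ref{th:parallelism-invariant level set} to place $\Si$ on a parallel surface $\Ga_R$, then invoke the moving-planes argument of Theorem \ref{th:TILsym1}. Your bookkeeping observation---that the parallelism lemma literally yields only the inclusion $\Si\subseteq\Ga_R$, and that the moving-planes proof consumes nothing beyond the constancy of $u(\cdot,\ol{t})$ on the $C^2$ surface $\pa D=\Si$ (plus the pseudo-ball condition for Corollary \ref{cor:elliptic hopf-oleinik}), so the inclusion already suffices---is a sound clarification of the paper's terser two-line proof, which simply asserts $\Si=\Ga_R$, but it does not change the overall approach.
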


\begin{proof}
It is enough to apply Theorem \ref{th:TILsym1}. Indeed, we just note that, from Theorem \ref{th:parallelism-invariant level set}, there exists $R>0$, such that $\Si=\Ga_R$.
\end{proof}

With some adjustments, we obtain the same conclusions of Theorem \ref{th:TILsym1} and Corollary \ref{cor:TILsym2} in the case of the elliptic problem \eqref{G-elliptic}-\eqref{elliptic-boundary}. 

{
\thm
\label{th:ELsym1}
Set $p\in(1,\infty)$. Let $\Om$ be a bounded domain of class $C^2$ and $u^\ve$ be the solution of \eqref{G-elliptic}-\eqref{elliptic-boundary}. Suppose that  $D$ is a $C^2$ domain such that $\ol{D}\subset \Om$, $\pa D = \Ga_R$, for some $R>0$, and there exists $\ol{\ve}>0$ for which $u=u^{\ol{\ve}}$ is constant on $\Ga_R$.
\par
Then, $D$ and $\Om$ must be concentric balls.
}

\begin{proof}
It is well defined the function $u^*:\Om_{\la^*}^*\to\RR$, by $u^*(x)=u(x^*)$, for $x\in \Om_{\la^*}^*$. Then, it is an easy check to prove that
\begin{equation*}
\begin{cases}
u^*-\ol{\ve}^2 \De_p^G u^*=0 \ &\mbox{ in } \ \Om_{\la^*}^*,\\
u^*\ge u \ &\mbox{ on } \ \pa \Om^*_{\la^*}.
\end{cases}
\end{equation*}
Hence, by applying the comparison principle (see Corollary \ref{cor:elliptic comparison}), we have that $w=u - u^*\le 0$ on $\ol{\Om_{\la^*}^*}$. 
\par
Now, proceeding as in the proof of Theorem \ref{th:TILsym1}, by applying Corollary \ref{cor:elliptic maximum} and \ref{cor:elliptic hopf-oleinik}, we have that $|\na u|, |\na u^*|>0$ in $S_\de$, the set  defined in the proof of Theorem \ref{th:TILsym1}. By using standard elliptic regularity theory, we have that $w$ is a (non-positive) solution of a smooth uniformly elliptic equation
\begin{equation}
\label{eq:smooth elliptic equation}
w-\ol{\ve}^2 \tr\left\{A' \na^2 w\right\} - \ol{\ve}^2\lan b, \na w \ran =0 \ \mbox{ in } \ S_\de,
\end{equation}
where $A'$ and $b$ have the same structure of those of the proof of Theorem \ref{th:TILsym1}. Since $w\le 0$, we can apply to $w$ the classical strong maximum principle (see \cite{PW}), which implies that either $w\equiv 0$ on $\ol{S_\de \cap \Om^*_{\la^*} }$ or $w<0$ in $S_\de\cap \Om_{\la^*}^*$. Hence, we conclude as in the proof of Theorem \ref{th:TILsym1}.
\end{proof}

{
\cor
\label{cor:ELsym2}
Set $p\in(1,\infty)$. Let $\Om$ be a bounded domain of class $C^2$ and $u^\ve$ be the solution of \eqref{G-elliptic}-\eqref{elliptic-boundary}. Suppose that $D$ is a domain of class $C^2$, such that $\ol{D} \subset \Om$. Suppose that $\pa D$ is a level surface of $u^\ve$, for any $\ve >0$.
\par
Then, $D$ and $\Om$ must be concentric balls.
}

\begin{proof}
We conclude from Theorems \ref{th:parallelilsm-invariant level set-elliptic} and \ref{th:ELsym1}.
\end{proof}

\section{The case of non-compact boundaries}
\label{sec:sliding}


Let $f:\RR^{N-1}\to \RR$ be a continuous function. In this Subsection, we consider domains of the following form
\begin{equation}
\label{epigraphic domain}
\Om=\{(x',x_N)\in\RR^{N-1}\times\RR:x_N>f(x')\}.
\end{equation}

\begin{thm}
\label{th:monosym1}
Set $p\in(1,\infty)$. Let $\Om$ be defined by \eqref{epigraphic domain}, with $f$ of class $C^2$. Suppose that there exists a basis $\{\xi^1,\dots,\xi^{N-1}\} \subset \RR^{N-1}$ such that  for every $j=1,\dots, N-1$, the function $f(x'+\xi^j)-f(x')$ has either a maximum or a minimum in $\RR^{N-1}$.  Let $u$ be the bounded solution of \eqref{G-heat}-\eqref{boundary}.  Suppose that there exists $R>0$ such that $\Ga_R$ is of class $C^2$ and $\ol{t}>0$ for which one of the following occurs:
\begin{enumerate}[(i)]
\item $u(\cdot,\ol{t})$ is constant on $\Ga_R$.
\item for some $q\in[1,\infty)$, the function $x\mapsto \mu_q(x,\ol{t})$ is constant on $\Ga_R$.
\end{enumerate}
Then, $f$ is affine and so $\Ga$ is a hyperplane.
\end{thm}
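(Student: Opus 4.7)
The plan is to apply the sliding method of Berestycki--Caffarelli--Nirenberg (in the spirit of \cite{MS-IUMJ}) to each direction of the basis $\{\xi^j\}$. Specifically, I intend to show that for every $j$, the epigraph $\Om$ is invariant under the translation by $v_j := (\xi^j, c_j) \in \RR^N$, where $c_j$ is the extreme value of $f_j := f(\cdot+\xi^j)-f$. This will force $f(x' + \xi^j) = f(x') + c_j$ for every $j$, from which the affinity of $f$ (and hence the fact that $\Ga$ is a hyperplane) will be inferred as in \cite[Theorem 3.4]{MS-IUMJ}.

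Fix $j$ and, without loss of generality, assume that $f_j$ attains its maximum $c_j$ at some $y_0' \in \RR^{N-1}$ (the minimum case being symmetric via $-\xi^j$). The definition of $c_j$ forces $\Om + v_j \subseteq \Om$, with both $y_0 := (y_0', f(y_0'))$ and $y_0 + v_j$ on $\Ga$. Since $\na f_j(y_0')=0$, the inward unit normal $\nu$ to $\Ga$ at these two points coincides, so that $z_0 := y_0 + R\nu$ and $z_0 + v_j$ both lie on $\Ga_R$. Set $u'(x,t) := u(x+v_j, t)$ on $\ol\Om \times (0,\infty)$: by translation invariance of \eqref{G-heat}, the inclusion $\Om + v_j \subseteq \Om$, and $u \le 1$ in $\ol\Om$, the function $u'$ satisfies \eqref{G-heat} and \eqref{initial} with $u'\le 1 = u$ on $\Ga \times (0,\infty)$, so Corollary \ref{cor:parabolic comparison} yields $w := u - u' \ge 0$ on $\ol\Om \times [0,\infty)$. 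A contact point at $(z_0, \ol t)$ is then produced as follows. In case (i), constancy of $u(\cdot,\ol t)$ on $\Ga_R$ and $z_0, z_0+v_j \in \Ga_R$ give $u(z_0,\ol t) = u(z_0+v_j,\ol t) = u'(z_0,\ol t)$, so $w(z_0,\ol t)=0$. In case (ii), translation invariance of the $q$-mean together with the monotonicity \eqref{eq:monotonicty of qmeans} and constancy of $\mu_q(\cdot,\ol t)$ on $\Ga_R$ force $\mu_q^{u'}(z_0,\ol t) = \mu_q^{u}(z_0,\ol t)$; the strict monotonicity of $q$-means inherent in the characterization \eqref{eq:characterization-qmean} (combined with $u'\le u$) then yields $u(\cdot,\ol t)\equiv u'(\cdot,\ol t)$ on $B_R(z_0)$, hence $w(z_0,\ol t)=0$.

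The main technical obstacle is to propagate $w\equiv 0$ from the contact point to the whole cylinder $\Om\times (0,\ol t]$, given that $\De_p^G$ is nonlinear and not in divergence form. The strategy, following the proof of Theorem \ref{th:TILsym1}, is as follows: by Lemma \ref{parabolic monotonicity}, $u(\cdot,\ol t)$ is a viscosity subsolution of $-\De_p^G v = 0$ in $\Om$, and the strong maximum principle (Remark \ref{maximum principle}) applied on $\Om_R := \{x\in\Om : d_\Ga(x)>R\}$---exploiting the decay $u(\cdot,\ol t) \to 0$ at infinity coming from \eqref{eq:control from below of - log}---gives $u(\cdot,\ol t) < u(z_0,\ol t)$ strictly on $\Om_R$. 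The Hopf--Oleinik Lemma \ref{hopf-oleinik} at $z_0$ then produces $\na u(z_0,\ol t) \ne 0$; by $C^{1,\be}$ regularity of solutions, this persists in a space-time neighborhood of $(z_0,\ol t)$ in which both $\na u$ and $\na u'$ are non-vanishing, so that $w$ satisfies a classical smooth uniformly parabolic linear equation of the form \eqref{eq:regularized equation}. The classical strong minimum principle then forces $w\equiv 0$ in a space-time neighborhood, and a continuation argument based on the strong minimum principle for \eqref{G-heat} (Remark \ref{minimum principle}) extends this to $w\equiv 0$ on $\Om\times (0,\ol t]$.

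Taking boundary limits gives $u(x+v_j,t) = 1$ for $x\in\Ga$, $t\in (0,\ol t]$, and since $u<1$ strictly in $\Om$, this forces $\Ga + v_j \subseteq \Ga$; the symmetric argument with $-\xi^j$ then yields $\Ga + v_j = \Ga$, that is, $f(x'+\xi^j) = f(x') + c_j$ for every $x'\in\RR^{N-1}$. Performing this for each $j = 1,\dots,N-1$ and using that $\{\xi^j\}$ is a basis of $\RR^{N-1}$, the conclusion that $f$ is affine---and hence $\Ga$ is a hyperplane---follows by the argument in \cite[Theorem 3.4]{MS-IUMJ}.
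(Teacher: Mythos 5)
Your strategy follows the paper's quite closely: translate $u$ by the vector $(\xi^j,M)$ where $M$ is the extremal value of $f(\cdot+\xi^j)-f$, use the comparison principle to get an ordering between $u$ and its translate, locate a contact point on $\Ga_R$ via condition (i) or (ii) by matching normals, invoke Lemma \ref{parabolic monotonicity} together with the strong maximum principle and the Hopf--Oleinik lemma to get $\na u\ne 0$ near $\Ga_R$, and then linearize so that $w=u-u'$ satisfies the smooth uniformly parabolic equation \eqref{eq:regularized equation}. Your treatment of case (ii) runs the $q$-mean argument in the opposite direction (equality of $q$-means plus $u'\le u$ forces $u\equiv u'$ a.e.\ on the ball), but that is a cosmetic variant of what the paper does.

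The genuine gap is the final propagation step. You claim that, once the classical strong minimum principle forces $w\equiv 0$ in a space-time neighborhood of $(z_0,\ol t)$, ``a continuation argument based on the strong minimum principle for \eqref{G-heat} (Remark \ref{minimum principle}) extends this to $w\equiv 0$ on $\Om\times(0,\ol t\,]$.'' This does not follow: $w$ is a \emph{difference} of two viscosity solutions of the quasi-linear equation \eqref{G-heat}, and for $p\ne 2$ such a difference is in general neither a sub- nor a supersolution of \eqref{G-heat}, so Remark \ref{minimum principle} does not apply to $w$. The only place where $w$ satisfies an equation for which a strong minimum principle is available is the region where both $\na u$ and $\na u'$ are non-vanishing, which is guaranteed only on the compact cylinder $S_\de\times(t_1,t_2)$ near $\Ga_R$; you cannot conclude that the set $\{w=0\}$ is open in $\Om\times(0,\infty)$, because at points where the gradient degenerates the linearization is lost. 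The paper avoids needing any such global propagation by arguing by contradiction: assuming the inclusion $\Om\subset\Om_{\xi^j,M}$ is strict, the strong maximum principle on $S_\de\times(t_1,t_2)$ yields the \emph{strict} inequality $u^*<u$ there, and this is directly contradicted by the equality $u(y,\ol t)=u^*(y,\ol t)$ (case (i)) or $\mu_q(y,\ol t)=\mu_q^*(y,\ol t)$ (case (ii)) at the contact point. Replacing your global continuation with this local contradiction argument would close the gap.
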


\begin{proof}
From the assumption on $f$, for a fixed $j=1,\dots,N-1$, the function of $x'$ defined by $f(x'+\xi^j)-f(x')$ has either a maximum or a minimum in $\RR^{N-1}$. Say $f(x'+\xi^j)-f(x')$ has a maximum $M$ in $\RR^{N-1}$. Then there exists $z' \in \RR^{N-1}$ such that 
$$
f(x'+\xi^j)-f(x') \le M = f(z'+\xi^j) - f(z')\ \mbox{ for any } \ x'\in\RR^{N-1}.
$$
\par
We apply the sliding method, a variant of the method of moving planes, introduced in \cite{BCN}. Here, we adapt the proof given with that method in \cite[Theorem 1.1]{Sa-INDAM}. Also see \cite{MS-IUMJ, MS-JDE, MS-AIHP}. Set 
$$
\Om_{\xi^j,M}=\{(x',x_N)\in\RR^N: (x'+\xi^j, x_N + M)\in \Om\}.
$$
We have that $\Om \subseteq \Om_{\xi^j,M}$ and that $z=(z',f(z')) \in \Ga \cap \left(\Ga\right)_{\xi^j,M}$. In particular, from the regularity of both $\Ga$ and $\Ga_R$ we can infer that, if $\nu_\Ga$ represents the unit outward normal to $\Ga$, then $y=z - \nu_\Ga(z) R  \in \Ga_R \cap \left(\Ga_R\right)_{\xi^j,M}$, which means that $y + (\xi^j,M)\in \Ga_R$. 
\par
Since $\Om\subseteq \Om_{\xi^j,M}$, we can define the function $u^*:\Om\times(0,\infty)\to \RR$, by $u^*(x,t)=u(x+(\xi^j,M),t)$, for $x\in\Om$ and $t>0$. It easy to see that 
\begin{equation*}
\begin{cases}
u^*_t-\De_p^G u^*=0 \ &\mbox{ in } \ \Om\times(0,\infty),\\
u^*=0 \ &\mbox{ on } \ \Om\times\{0\},\\
u^*\le u \ &\mbox{ on } \ \Ga\times(0,\infty).
\end{cases}
\end{equation*}
Thus, from the comparison principle (Corollary \ref{cor:parabolic comparison}), $u^*\le u$ on $\ol\Om\times(0,\infty)$. 
\par
Now, suppose that $\Om \subset \Om_{\xi^j, M}$. As we have done in the proof of Theorem \ref{th:TILsym1}, we show that we can use the strong comparison, in a proper sub-cylinder of $\Om\times(0,\infty)$. By using Corollaries \ref{cor:elliptic maximum} and \ref{cor:elliptic hopf-oleinik} and the continuity of $\na u$, there exist $\de>0$ and $t_1 < \ol{t} < t_2$ such that
$$
\na u\neq 0\ \mbox{ in } \ S_\de \times (t_1,t_2),
$$
where here $S_\de$ is the compact set $ B_R(y) \cap \left(\Ga_R + B_\de \right)$.
\par 
Thus, $u^*-u$ is a solution in $S_\de \times (t_1,t_2)$ of the uniformly parabolic equation with smooth coefficients \eqref{eq:regularized equation} and hence we can apply to $u^* - u$ the strong maximum principle. In particular, since $\Om$ does not coincide with $\Om_{\xi^j,M}$, we infer that
\begin{equation}
\label{contradiction to ii}
u^*<u \ \mbox{ in } \ S_\de\times(t_1,t_2).
\end{equation} 
\par
Denoting with $\mu^*_q(y,t)$ the $q$-mean of $u^*(\cdot,\ol{t})$ on $B_R(y)$, the last inequality implies that 
\begin{equation}
\label{contradiction to iii}
\mu_q^{*}(y,\ol{t})<\mu_q(y,\ol{t}),
\end{equation}
since $|\{u^* < u\}\cap B_R(y)| > 0$. Indeed, from \eqref{eq:monotonicty of qmeans}, we have that $\mu^* \leq \mu $ and from the fact that the function $u\mapsto |u-\mu^*|^{q-2} (u-\mu^*)$ is strictly increasing, we have that
\begin{multline*}
0= \int_{B_R(y)} |u^*(\zeta,\ol{t}) - \mu^*|^{q-2}(u^*(\zeta,\ol{t}) - \mu^*) \,d\zeta <\\
 \int_{B_R(y)} |u(\zeta,\ol{t}) - \mu^*|^{q-2} (u(\zeta,\ol{t}) - \mu^*)\,d\zeta
\end{multline*}
which implies that $\mu^* \neq \mu$.
\par 
Now, we prove that if either $(i)$ or $(ii)$ holds, we find a contradiction to \eqref{contradiction to ii} or \eqref{contradiction to iii}. If $(i)$ holds then we have that $u(y,\ol{t}) = u(y+(\xi^j,M), \ol{t}) = u^*(y,\ol{t})$, which contradicts \eqref{contradiction to ii} at once. If $(ii)$ holds, then
\begin{equation}
\label{equations for mu}
\mu_q(y,\ol{t})=\mu_q(y + (\xi^j,M),\ol{t})=\mu_q^*(y,\ol{t}),
\end{equation}
where the first equality is due to $(ii)$ and the latter one  is based on the following argument. Employing a change of variable $\zeta = \zeta' + (\xi^j,M)$ yields that
\begin{multline*}
\int_{B_R(y + (\xi^j,M))} |u(\zeta , \ol{t})-\la|^{q-2} \left(u(\zeta,\ol{t})-\la\right)\, d\zeta =\\
\int_{B_R(y)} |u^*(\zeta',\ol{t})-\la|^{q-2} \left(u^*(\zeta', \ol{t})-\la\right)\, d\zeta',
\end{multline*}
for any $\la\in\RR$, which implies the latter equation in \eqref{equations for mu}, by the definition of $q$-means. Hence, we have found again a contradiction. Hence, we must have $\Om_{\xi^j,M}=\Om$.
\par
Now, we conclude as in \cite[Theorem 1.1]{Sa-INDAM}. Indeed, we have that for any $1\le j \le N-1$, for any $x'\in \RR^{N-1}$,
$$
f(x'+\xi^j)-f(x') = a_j, 
$$
for some $a_j\in \RR$. The continuity of $f$, the fact that $\{\xi^1,\dots, \xi^{N-1}\}$ is a basis of $\RR^{N-1}$ and an iteration of the sliding method imply that
$$
f(x'+y')-f(x')=f(z'+y')-f(z') \ \mbox{ for any }\ x',y',z' \in \RR^{N-1}.
$$
Since $f$ is continuous, solving the latter system of functional equations yields that $f$ must be affine.
\end{proof}


\begin{cor}
\label{cor:monosym2}
Set $p\in (1,\infty)$. Let $\Om$ be defined by \eqref{epigraphic domain}, with $f$ of class $C^2$. Suppose that there exists a basis $\{\xi^1,\dots,\xi^{N-1}\} \subset \RR^{N-1}$ such that  for every $j=1,\dots, N-1$, the function $f(x'+\xi^j)-f(x')$ has either a maximum or a minimum in $\RR^{N-1}$. Let $u$ be the bounded solution of \eqref{G-heat}-\eqref{boundary}. Suppose that there exists a surface $\Si$ of class $C^2$ such that \eqref{time-invariant surface} holds true.
\par
Then $f$ is affine and $\Ga$ is a hyperplane.
\end{cor}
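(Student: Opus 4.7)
The plan is to reduce this corollary to Theorem \ref{th:monosym1} case (i) by identifying the given time-invariant surface $\Sigma$ with a full parallel surface $\Gamma_R$. First, I would apply Theorem \ref{th:parallelism-invariant level set}: since $\Sigma \subseteq \Omega$ is a time-invariant level surface for the solution $u$ of \eqref{G-heat}-\eqref{boundary}, it yields the existence of $R > 0$ such that $d_\Gamma \equiv R$ on $\Sigma$, giving the inclusion $\Sigma \subseteq \Gamma_R$.

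The second (and main) step would be to upgrade this inclusion to the equality $\Sigma = \Gamma_R$. Because $f \in C^2$, the boundary $\Gamma$ is a connected $C^2$ hypersurface (being the graph of $f$ over $\RR^{N-1}$), and for $R$ smaller than the reach of $\Gamma$ the parallel set $\Gamma_R$ is itself a connected $C^2$ hypersurface, realized as the image of $\Gamma$ under the inward normal flow at distance $R$. Since $\Sigma$ is a $C^2$ embedded hypersurface contained in $\Gamma_R$ and has the same dimension, $\Sigma$ is relatively open in $\Gamma_R$; on the other hand, it is also relatively closed there, either by direct topological arguments or by observing that at any boundary point of $\Sigma$ within $\Gamma_R$ one may pass to the limit in \eqref{time-invariant surface}, which forces the limit point to belong to $\Sigma$. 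Connectedness of $\Gamma_R$ then delivers $\Sigma = \Gamma_R$.

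Once this identification is done, the conclusion is immediate: the time-invariance of $u$ on $\Sigma$ now reads ``for every $\bar{t} > 0$, the map $x \mapsto u(x, \bar{t})$ is constant on $\Gamma_R$'', so hypothesis (i) of Theorem \ref{th:monosym1} is satisfied for this $R$ and for any choice of $\bar{t} > 0$. Theorem \ref{th:monosym1} then yields that $f$ is affine, whence $\Gamma$ is a hyperplane.

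The main obstacle I anticipate is controlling the reach of $\Gamma$: one needs to ensure that the distance $R$ produced by the parallelism theorem is small enough for $\Gamma_R$ to be a $C^2$ hypersurface and for the normal flow from $\Gamma$ to $\Gamma_R$ to be a $C^2$ diffeomorphism. Under the global $C^2$-regularity of $f$ this is standard locally, but globally (since $\Gamma$ is non-compact) one has to appeal to the assumed $C^2$-regularity of $\Sigma$ itself, through the inclusion $\Sigma \subseteq \Gamma_R$, to transfer the required smoothness and the connectedness from $\Sigma$ to $\Gamma_R$.
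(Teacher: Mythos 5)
Your proposal follows the same two-step route as the paper: locate $\Si$ on a parallel surface via Theorem \ref{th:parallelism-invariant level set}, then invoke Theorem \ref{th:monosym1} under hypothesis (i). You do go one step further than the paper's one-line proof by noticing that Theorem \ref{th:parallelism-invariant level set} only yields $d_\Ga\equiv R$ on $\Si$, i.e.\ the inclusion $\Si\subseteq\Ga_R$, whereas the paper directly asserts the equality $\Si=\Ga_R$; the distinction matters, since the proof of Theorem \ref{th:monosym1} applies the constancy of $u(\cdot,\ol{t})$ at the specific points $y=z-R\,\nu_\Ga(z)$ and $y+(\xi^j,M)$ of $\Ga_R$, which a strict subsurface could miss. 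Your open-and-closed argument is the natural repair, but two refinements are needed if you flesh it out. For the ``relatively closed'' direction, passing to the limit in \eqref{time-invariant surface} only shows that a limit point $x\in\Ga_R$ of $\Si$ satisfies $u(x,t)=a_\Si(t)$ for all $t$, i.e.\ that $x$ lies in the full level set, not a priori in $\Si$; one must therefore either assume $\Si$ is closed as a subset of $\Om$ (natural for an embedded boundaryless hypersurface) or replace $\Si$ by the full level set $\{u(\cdot,t)=a_\Si(t)\}$, which is itself time-invariant. For the ``relatively open'' direction one needs $\Ga_R$ to be a $C^2$ manifold near $\Si$, with no branching of the level set of $d_\Ga$; rather than trying to transfer regularity from $\Si$ to $\Ga_R$ through the inclusion (which does not control $\Ga_R$ away from $\Si$), it is cleaner to note that this is exactly the hypothesis ($\Ga_R$ of class $C^2$) which Theorem \ref{th:monosym1} already carries, so one may appeal to it directly. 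With these conventions in place your argument is complete and faithfully expands what the paper compresses into two sentences.
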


\begin{proof}
It is sufficient to observe that, from Theorem \ref{th:parallelism-invariant level set}, there exists $R>0$ such that $\Si=\Ga_R$. Hence, we apply Theorem \ref{th:monosym1}.
\end{proof}

Also in this case, we can give the corresponding theorem in the case \eqref{G-elliptic}-\eqref{elliptic-boundary}.

\begin{thm}
\label{th:monosym3}
Set $p\in(1,\infty)$. Let $\Om$ be defined by \eqref{epigraphic domain}, with $f$ of class $C^2$. Suppose that there exists a basis $\{\xi^1,\dots,\xi^{N-1}\} \subset \RR^{N-1}$ such that  for every $j=1,\dots, N-1$, the function $f(x'+\xi^j)-f(x')$ has either a maximum or a minimum in $\RR^{N-1}$. Let $u^\ve$ be the bounded solution of \eqref{G-elliptic}-\eqref{elliptic-boundary}. Suppose that there exists $R>0$ such that $\Ga_R$ is of class $C^2$ and $\ol{\ve}>0$ for which one of the following occurs:
\begin{enumerate}[(i)]
\item $u^{\ol{\ve}}$ is constant on $\Ga_R$.
\item for some $q\in[1,\infty)$, the function $x\mapsto \mu_{q,\ol{\ve}}(x)$ is constant on $\Ga_R$.
\end{enumerate}
Then, $f$ is affine and so $\Ga$ is a hyperplane.
\end{thm}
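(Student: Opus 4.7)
The plan is to follow the sliding-method structure of Theorem~\ref{th:monosym1}, replacing each parabolic ingredient with its elliptic analog. First, for each basis direction $\xi^j$ I would, without loss of generality, treat the maximum case: let $M_j$ be the maximum of $f(\cdot + \xi^j) - f(\cdot)$, attained at some $z'_j \in \RR^{N-1}$, and set $z_j = (z'_j, f(z'_j)) \in \Ga$, so that $\Om \subseteq \Om_{\xi^j, M_j} := \{x : x + (\xi^j, M_j) \in \Om\}$ with $z_j \in \Ga \cap \pa\Om_{\xi^j, M_j}$. By the $C^2$ regularity of $\Ga$ and $\Ga_R$, the corresponding interior point $y_j = z_j - R\,\nu_\Ga(z_j)$ lies in $\Ga_R \cap (\Ga_R)_{\xi^j, M_j}$. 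Defining $u^*(x) = u^{\ol\ve}(x + (\xi^j, M_j))$ for $x\in\Om$, the translation invariance of \eqref{G-elliptic} together with $u^* \leq 1 = u^{\ol\ve}$ on $\Ga$ and the comparison principle (Corollary~\ref{cor:elliptic comparison}) would give the weak inequality $w := u^{\ol\ve} - u^* \geq 0$ on $\ol\Om$.

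Next, to reach a contradiction under the assumption $\Om \subsetneq \Om_{\xi^j, M_j}$, I would upgrade this to strict positivity near $y_j$. The decisive step is a Hopf--Oleinik argument: since $u^{\ol\ve} \geq 0$, it is a viscosity subsolution of $-\De_p^G u = 0$ in $\Om$; using the strong maximum principle (Corollary~\ref{cor:elliptic maximum}) on the sub-domain lying to one side of $\Ga_R$, where $u^{\ol\ve}$ cannot equal its constant value $c$ on $\Ga_R$ (else strong maximum forces $u^{\ol\ve}\equiv c$, contradicting $u^{\ol\ve}=1$ on $\Ga$), one sees that $c$ is a strict one-sided extremum. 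Then Corollary~\ref{cor:elliptic hopf-oleinik}, applicable since $\Ga_R \in C^2$ satisfies the interior pseudo-ball condition (Remark~\ref{pseudo-balls domains}), yields $\na u^{\ol\ve}(y_j) \neq 0$; the same conclusion holds for $u^*$ by translation. The $C^{1,\be}_{\text{loc}}$ regularity of viscosity solutions recalled at the beginning of Section~\ref{sec:serrin} extends nonvanishing of both $\na u^{\ol\ve}$ and $\na u^*$ to a compact neighborhood $S_\de$ of $y_j$. Exactly as in the proof of Theorem~\ref{th:ELsym1}, $w$ then satisfies on $S_\de$ a linear uniformly elliptic equation with smooth bounded coefficients of the form
$$
w - \ol\ve^2 \tr(A'\,\na^2 w) - \ol\ve^2 \langle b, \na w \rangle = 0,
$$
to which the classical strong maximum principle (cf.\ \cite{PW}) applies, forcing $w > 0$ throughout $S_\de$.

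With strict comparison in hand, the two contradictions proceed exactly as in Theorem~\ref{th:monosym1}. Under (i), constancy of $u^{\ol\ve}$ on $\Ga_R$ gives $u^{\ol\ve}(y_j) = u^{\ol\ve}(y_j + (\xi^j, M_j)) = u^*(y_j)$, that is $w(y_j) = 0$, contradicting $w > 0$ on $S_\de \ni y_j$. Under (ii), the strict monotonicity of $\la \mapsto |u - \la|^{q-2}(u - \la)$ together with $w > 0$ on a set of positive measure in $B_R(y_j)$ and the characterization \eqref{eq:characterization-qmean} forces
$$
\mu_{q,\ol\ve}^{\,*}(y_j) < \mu_{q,\ol\ve}(y_j),
$$
while a translation change of variable yields $\mu_{q,\ol\ve}^{\,*}(y_j) = \mu_{q,\ol\ve}(y_j + (\xi^j, M_j))$, which by (ii) equals $\mu_{q,\ol\ve}(y_j)$---a contradiction. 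Hence $\Om = \Om_{\xi^j, M_j}$, so that $f(x' + \xi^j) - f(x') \equiv M_j$ for all $x' \in \RR^{N-1}$. Iterating over the basis and invoking the functional-equation argument of \cite[Theorem~1.1]{Sa-INDAM}, the continuity of $f$ then forces $f(x' + y') - f(x')$ to be independent of $x'$ for every $y' \in \RR^{N-1}$, which implies that $f$ is affine and $\Ga$ is a hyperplane.

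The main obstacle will be the Hopf--Oleinik step at the interior level surface $\Ga_R$: unlike the typical application at a point of $\Ga$, one has to exploit the constancy of $u^{\ol\ve}$ on $\Ga_R$ together with the subsolution property $-\De_p^G u^{\ol\ve} \leq 0$ and a one-sided strong maximum principle argument to justify applying Lemma~\ref{hopf-oleinik} through a pseudo-ball with apex $y_j$ placed on the appropriate side of $\Ga_R$. Once this is in place, the rest of the proof is a direct transcription of the parabolic argument with the obvious substitutions.
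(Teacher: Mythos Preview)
Your proposal is correct and follows essentially the same approach as the paper's proof, which is itself very brief: it just says to transplant the sliding-method argument of Theorem~\ref{th:monosym1} to the elliptic setting, replacing the parabolic comparison and maximum principles by their elliptic counterparts and invoking the linearized equation~\eqref{eq:smooth elliptic equation} from Theorem~\ref{th:ELsym1}. If anything, your write-up is more careful than the paper's --- you correctly cite Corollaries~\ref{cor:elliptic comparison}, \ref{cor:elliptic maximum}, and \ref{cor:elliptic hopf-oleinik}, and you explicitly flag the Hopf--Oleinik step at $\Ga_R$ as the delicate point, which the paper's proof of Theorem~\ref{th:monosym3} absorbs into the phrase ``there exists $\de>0$ such that $|\na u^*|,|\na u|>0$ in $S_\de$.''
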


\begin{proof}
With the same notations as in the proof of Theorem \ref{th:monosym1}, we have that, for $j=1, \dots, N-1$, we can define the function $u^*:\Om\to \RR$, by $u^*(x)=u^{\ol{\ve}}\left(x+\left(\xi^j,M\right)\right)$. For the comparison principle and the maximum principle (Corollaries \ref{cor:parabolic comparison} and \ref{cor:parabolic maximum}), we have that $u^*-u^{\ol{\ve}}\le 0$, on $\ol\Om$. 
\par
Moreover, there exists $\de>0$ such that $|\na u^*|, |\na u|>0$ in $S_\de$. Thus, $u^*-u^{\ol{\ve}}$ satisfies the uniformly elliptic equation with smooth coefficients \eqref{eq:smooth elliptic equation} in $S_\de$.
\par
Supposing that $\Om \subset \Om_{\xi^j,M}$ yields that $u^*-u^{\ol{\ve}}<0$, in $S_\de$, from the strong maximum principle. Thus, we conclude as in the proof of Theorem \ref{th:monosym1}.
\end{proof}

\begin{cor}
\label{cor:monosym4}
Set $p\in(1,\infty)$. Let $\Om$ be defined by \eqref{epigraphic domain}, with $f\in C^2$. Suppose that there exists a basis $\{\xi^1,\dots,\xi^{N-1}\} \subset \RR^{N-1}$ such that  for every $j=1,\dots, N-1$, the function $f(x'+\xi^j)-f(x')$ has either a maximum or a minimum in $\RR^{N-1}$. Let $u^\ve$ be the bounded solution of \eqref{G-elliptic}-\eqref{elliptic-boundary}. Suppose that there exists a $C^2$ surface $\Si$ that is level surface of $u^\ve$, for any $\ve > 0$.
\par
Then, $\Ga$ must be a hyperplane.
\end{cor}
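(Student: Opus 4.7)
The plan is to derive Corollary \ref{cor:monosym4} as a direct consequence of the elliptic parallel-surface theorem (Theorem \ref{th:parallelilsm-invariant level set-elliptic}) combined with the elliptic sliding-method symmetry theorem (Theorem \ref{th:monosym3}), in exact analogy with the argument used in Corollary \ref{cor:monosym2} for the parabolic problem.

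First I would verify that the topological assumption $\Ga=\pa(\RR^N\setminus\ol\Om)$ required by Theorem \ref{th:parallelilsm-invariant level set-elliptic} is automatic in our setting: since $\Om$ is the open epigraph of the continuous function $f$, its complement is the closed strict subgraph, whose boundary is exactly $\Ga$. Applying Theorem \ref{th:parallelilsm-invariant level set-elliptic} to the surface $\Si$, which by assumption is a level surface of $u^\ve$ for every $\ve>0$, then yields an $R>0$ such that $d_\Ga(x)=R$ for all $x\in\Si$, so $\Si\subseteq\Ga_R$. Because $\Ga$ is the connected $C^2$ graph of $f$ over $\RR^{N-1}$, the parallel set $\Ga_R$ is itself a connected $C^2$ hypersurface (obtained as the translate of $\Ga$ along the unit inward normal field by distance $R$), and the $C^2$ hypersurface $\Si$ of the same dimension embedded in this connected $\Ga_R$ must therefore coincide with it. In particular, $\Ga_R$ inherits the $C^2$ regularity of $\Si$.

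Once $\Si=\Ga_R$ is secured, I fix any $\ol{\ve}>0$. Then $u^{\ol\ve}$ is constant on $\Ga_R$, so hypothesis $(i)$ of Theorem \ref{th:monosym3} holds. The remaining hypotheses of that theorem — namely the $C^2$ regularity of $f$, the epigraphic form \eqref{epigraphic domain} of $\Om$, and the existence of a basis $\{\xi^1,\dots,\xi^{N-1}\}$ of $\RR^{N-1}$ along which each increment $f(\cdot+\xi^j)-f(\cdot)$ attains a maximum or a minimum — are inherited verbatim from the statement of the corollary. An application of Theorem \ref{th:monosym3} then forces $f$ to be affine, so $\Ga$ is a hyperplane.

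The only conceptually delicate point, and the place to invest care, is the upgrade from the inclusion $\Si\subseteq\Ga_R$ provided by Theorem \ref{th:parallelilsm-invariant level set-elliptic} to the equality $\Si=\Ga_R$ needed so that the sliding comparison acts on the full parallel set. The graph-structure argument above handles this provided $R$ lies within the reach of $\Ga$, which in turn is guaranteed by the existence of the $C^2$ surface $\Si$ sitting in $\Ga_R$. Should one prefer to sidestep this reach-theoretic check entirely, it suffices to apply Theorem \ref{th:monosym3} on the connected component of $\Ga_R$ containing $\Si$, since the sliding method used in that proof is a local argument based at a single boundary point $y\in\Ga_R$ and never uses constancy of $u^{\ol\ve}$ on the remaining components. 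Either way, the reduction to Theorem \ref{th:monosym3} closes the argument.
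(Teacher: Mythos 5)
Your proposal follows the paper's own route exactly: the paper's proof is simply \emph{``We apply together Theorems \ref{th:monosym3} and \ref{th:parallelilsm-invariant level set-elliptic}''}, i.e.\ use the elliptic parallelism result to identify $\Si$ with a parallel set $\Ga_R$ and then feed that into the elliptic sliding-method theorem under hypothesis $(i)$. You have the same decomposition and the same two lemmas. The one place where you add content is the explicit upgrade from $\Si\subseteq\Ga_R$ (which is all that Theorem \ref{th:parallelilsm-invariant level set-elliptic} literally gives) to $\Si=\Ga_R$; the paper silently makes the same identification, so you are filling a genuine but very small gap. Your justification for this step as stated is slightly too quick --- an $(N-1)$-dimensional $C^2$ hypersurface embedded in a connected $(N-1)$-dimensional $C^2$ hypersurface need not coincide with it (it could be a proper open piece) unless one also knows $\Si$ is closed in $\Ga_R$, e.g.\ because $\Si$ is a complete boundaryless hypersurface; the paper implicitly relies on the same convention. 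Your fallback observation, that the sliding argument in Theorem \ref{th:monosym3} only ever evaluates constancy of $u^{\ol\ve}$ at the pair of points $y$ and $y+(\xi^j,M)$ on $\Ga_R$, is a sound way to sidestep the issue and is a minor improvement over the paper's terse wording.
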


\begin{proof}
We apply together Theorems \ref{th:monosym3} and \ref{th:parallelilsm-invariant level set-elliptic} .
\end{proof}

\section{Spherical symmetry for $q$-means-invariant surfaces} 
\label{sec:symmetry-qmeans}

In this section we give applications of Theorems \ref{th:JMPA-asymptotics-qmean} and   \ref{th:AA-qmean}. We give characterizations of spheres, based on $q$-means, in the spirit of \cite[Theorem 1.2]{MS-PRSE}. These results are new, even for the case $p=2$. 
\par

Here, we consider those parallel surfaces $\Ga_R$ sufficiently near $\Ga$, such that for every $x\in\Ga_R$ there exists an unique $y_x$ such that $\ol{B_R(x)} \cap \Ga =\{y_x\}$, for every $y_x$. Also, we suppose that $\ka_1(y_x),\cdots \ka_{N-1}(y_x) < \frac{1}{R}$.

\begin{thm}[{\cite[Theorem 3.7]{BM-JMPA}}]
\label{th:radiality-bounded domain}
Set $1<p\le\infty$ and let $\Om$ be a domain of class $C^2$ with bounded and connected boundary $\Ga$. 
Let $u$ be the bounded (viscosity) solution of \eqref{G-heat}-\eqref{boundary}. 
\par 
Suppose that $\Si$ is a $C^2$-regular surface in $\Om$, that is a parallel surface to $\Ga$ at distance $R>0$.\par
If, for some $1<q<\infty$ and every $t>0$, the function
$$
\Si\ni x\mapsto \mu_q (x,t)
$$
is constant, then $\Ga$ must be a sphere.
\end{thm}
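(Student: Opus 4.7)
My plan is to combine the pointwise asymptotic formula for the $q$-mean (Theorem \ref{th:JMPA-asymptotics-qmean}) with Alexandrov's Soap Bubble Theorem for Weingarten surfaces, by an argument in the spirit of \cite[Theorem 1.2]{MS-PRSE}.

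First, I would verify that the geometric hypotheses of Theorem \ref{th:JMPA-asymptotics-qmean} are satisfied uniformly for every $x\in\Si$. Since $\Si$ is a $C^2$ parallel surface to $\Ga$ at distance $R$, by definition of parallel surface for each $x\in\Si$ there exists a unique $y_x\in\Ga$ realizing $d_\Ga(x)=R$, so that $\overline{B_R(x)}\cap(\mathbb{R}^N\setminus\Om)=\{y_x\}$. Moreover, the well-known formula for the principal curvatures on parallel surfaces, together with the fact that $\Si$ is smooth and non-degenerate, forces $\ka_j(y_x)<1/R$ for $j=1,\dots,N-1$ at every $y_x\in\Ga$. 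Thus Theorem \ref{th:JMPA-asymptotics-qmean} applies at every $x\in\Si$, giving
\[
\lim_{t\to 0^+}\left(\frac{R^2}{t}\right)^{\frac{N+1}{4(q-1)}}\mu_q(x,t)=C_{N,p,q}\,\bigl\{\Pi_\Ga(y_x)\bigr\}^{-\frac{1}{2(q-1)}},
\]
with $C_{N,p,q}>0$ independent of $x$.

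Next, I invoke the hypothesis. Since $x\mapsto\mu_q(x,t)$ is constant on $\Si$ for every $t>0$, the left-hand side of the above limit is independent of $x\in\Si$. Therefore $\Pi_\Ga(y_x)$ does not depend on $x\in\Si$. Because $\Ga$ is $C^2$, bounded, and connected, and because the map $\Si\ni x\mapsto y_x\in\Ga$ is precisely the normal projection onto $\Ga$, which (under the condition $\ka_j<1/R$) is a diffeomorphism of $\Si$ onto $\Ga$, I conclude that
\[
\Ga\ni y\mapsto \Pi_\Ga(y)=\prod_{j=1}^{N-1}\bigl[1-R\,\ka_j(y)\bigr] \text{ is constant on } \Ga.
\]

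Finally, I would apply Alexandrov's Soap Bubble Theorem for Weingarten surfaces (see \cite{Al}, as used in \cite{MS-AM, MS-PRSE}). Indeed, $\Pi_\Ga$ is a symmetric function of the principal curvatures $\ka_1,\dots,\ka_{N-1}$ and, on the admissible region $\{\ka_j<1/R\}$, it is strictly monotone in each $\ka_j$, since
\[
\frac{\partial \Pi_\Ga}{\partial \ka_j}=-R\prod_{i\neq j}\bigl[1-R\,\ka_i\bigr]<0.
\]
Being $\Ga$ compact, connected and $C^2$, the constancy of this strictly monotone symmetric function of the principal curvatures forces $\Ga$ to be a sphere, which yields the conclusion. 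The main obstacle in this plan is purely checking the geometric compatibility between $\Si$ and $\Ga$ (uniqueness of $y_x$ and the curvature bound $\ka_j(y_x)<1/R$), which is what ensures that Theorem \ref{th:JMPA-asymptotics-qmean} can be invoked at every point of $\Si$; once this is done, the asymptotic formula reduces the problem to the classical Alexandrov-type rigidity statement.
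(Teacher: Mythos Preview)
Your proof is correct and follows essentially the same strategy as the paper: apply Theorem \ref{th:JMPA-asymptotics-qmean} at every $x\in\Si$ to deduce that $\Pi_\Ga$ is constant on $\Ga$, and then invoke Alexandrov's Soap Bubble Theorem for Weingarten surfaces. The paper is somewhat terser about the geometric compatibility (uniqueness of $y_x$ and the curvature bound $\ka_j<1/R$), which it treats as a standing assumption stated just before the theorem, whereas you spell out why these hold; your added justification of the monotonicity of $\Pi_\Ga$ in each $\ka_j$ is a helpful elaboration but not a different idea.
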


\begin{proof}
Since $\Si$ is of class $C^2$ and is parallel to $\Ga$, for every $y\in\Ga$, there is a unique $x\in\Si$ at distance $R$ from $y$.  Thus, owing to Theorem \ref{th:JMPA-asymptotics-qmean}, we can infer that 
$$
\Pi_\Ga=\mbox{constant on $\Ga$.}
$$ 
Our claim then follows from a  variant of Alexandrov's Soap Bubble Theorem (see \cite{Al}),  \cite[Theorem 1.2]{MS-PRSE}, or \cite[Theorem 1.1]{MS-AM}.  
\end{proof}

The next is the elliptic counterpart to Theorem \ref{th:radiality-bounded domain}. Here, we intend that $\mu_{q,\ve}$ is the $q$-mean of $u^\ve$ on $B_R(x)$.

\begin{thm}
\label{th:radiality-bounded domain-elliptic}
Set $1<p\le\infty$ and let $\Om$ be a domain of class $C^2$ with bounded and connected boundary $\Ga$. 
Let $u^\ve$ be the bounded (viscosity) solution of \eqref{G-elliptic}-\eqref{elliptic-boundary}. 
\par 
Suppose that $\Si$ is a $C^2$-regular surface in $\Om$, that is a parallel surface to $\Ga$ at distance $R>0$.\par
If, for some $1<q<\infty$ and every $\ve > 0$, the function
$$
\Si\ni x\mapsto \mu_{q,\ve} (x)
$$
is constant, then $\Ga$ must be a sphere.
\end{thm}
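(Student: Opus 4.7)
The plan is to mirror the argument used in Theorem \ref{th:radiality-bounded domain}, replacing the parabolic asymptotic formula with its elliptic analogue from Theorem \ref{th:AA-qmean}. The regularity and geometric assumptions on $\Si$ and $\Ga$ will first be exploited to ensure that the required one-touching-point condition holds uniformly, then the constancy of $\mu_{q,\ve}$ along $\Si$ for every $\ve>0$ will be propagated to a curvature constraint on $\Ga$, which finally forces spherical symmetry via Alexandrov.

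First I would note that, since $\Si$ is a $C^2$ parallel surface to $\Ga$ at distance $R$, the map $\Si \ni x \mapsto y_x \in \Ga$ associating to each $x\in\Si$ the unique boundary point at distance $R$ is a bijection onto $\Ga$, with $\ol{B_R(x)}\cap\Ga=\{y_x\}$. Moreover, the $C^2$-regularity of $\Si$ as a parallel surface at signed distance $R$ guarantees that the principal curvatures of $\Ga$ at each $y_x$ satisfy $\ka_j(y_x)<1/R$ for $j=1,\dots,N-1$, so that the quantity $\Pi_\Ga(y_x)$ is well defined and positive. Consequently, the hypotheses of Theorem \ref{th:AA-qmean} are fulfilled at every $x\in\Si$.

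Next, for each $x\in\Si$ and $\ve>0$ the constancy assumption yields $\mu_{q,\ve}(x)=c(\ve)$ for some function $c$ of $\ve$ alone, depending only on $\Si$ and not on $x$. Multiplying by $(R/\ve)^{\frac{N+1}{2(q-1)}}$ and letting $\ve\to 0^+$, Theorem \ref{th:AA-qmean} gives
\begin{equation*}
\widetilde{C}_{N,p,q}\,\{\Pi_\Ga(y_x)\}^{-\frac{1}{2(q-1)}}=\lim_{\ve\to 0^+}\left(\frac{R}{\ve}\right)^{\frac{N+1}{2(q-1)}}c(\ve),
\end{equation*}
where the right-hand side is independent of $x$. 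Since $\widetilde{C}_{N,p,q}>0$ and $1<q<\infty$, this forces $\Pi_\Ga(y_x)$ to be a positive constant, and as $x$ ranges over $\Si$ the point $y_x$ ranges over the whole of $\Ga$. Hence
\begin{equation*}
\Ga\ni y\mapsto \Pi_\Ga(y)=\prod_{j=1}^{N-1}\bigl[1-R\,\ka_j(y)\bigr] \quad\text{is constant.}
\end{equation*}

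The final step is purely geometric: with $\Ga$ a compact, connected, $C^2$ boundary on which the Weingarten-type symmetric function $\Pi_\Ga$ is constant, the Alexandrov-type Soap Bubble Theorem (in the form used in \cite[Theorem 1.1]{MS-AM} and in the proof of Theorem \ref{th:radiality-bounded domain} above) implies that $\Ga$ must be a sphere. The main (and only) delicate point is checking that the geometric hypotheses $\ka_j(y_x)<1/R$ and the uniqueness of $y_x$ hold uniformly along $\Si$, which is automatic from the $C^2$-parallel structure; everything else reduces to a direct transcription of the parabolic argument with Theorem \ref{th:JMPA-asymptotics-qmean} replaced by Theorem \ref{th:AA-qmean}.
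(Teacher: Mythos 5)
Your proposal matches the paper's approach exactly: the paper's proof of Theorem \ref{th:radiality-bounded domain-elliptic} simply states that it runs as the parabolic case (Theorem \ref{th:radiality-bounded domain}), replacing Theorem \ref{th:JMPA-asymptotics-qmean} by Theorem \ref{th:AA-qmean}, and Theorem \ref{th:radiality-bounded domain} deduces constancy of $\Pi_\Ga$ from constancy of the $q$-mean and concludes via Alexandrov's Soap Bubble Theorem. Your write-up is a correct and somewhat more detailed transcription of the same argument, in particular making explicit the uniqueness of $y_x$ and the curvature bound coming from the $C^2$ parallel structure, which the paper assumes in the paragraph preceding Theorem \ref{th:radiality-bounded domain}.
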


\begin{proof}
The proof runs similarly to that of Theorem \ref{th:radiality-bounded domain}, once we replace Theorem \ref{th:JMPA-asymptotics-qmean} by Theorem \ref{th:AA-qmean}.
\end{proof}


\bibliography{mybib}{}
  \bibliographystyle{abbrv} 

\end{document}